\date{\today} 
\newcommand{\g}{{\mathfrak g}}
\newcommand{\fa}{{\mathfrak a}}
\newcommand{\fb}{{\mathfrak b}}
\newcommand{\fe}{{\mathfrak e}}
\newcommand{\fg}{{\mathfrak g}}
\newcommand{\fh}{{\mathfrak h}}
\newcommand{\fj}{{\mathfrak j}}
\newcommand{\fk}{{\mathfrak k}}
\newcommand{\fl}{{\mathfrak l}}
\newcommand{\fp}{{\mathfrak p}}
\newcommand{\fs}{{\mathfrak s}}
\newcommand{\ft}{{\mathfrak t}}
\newcommand{\fu}{{\mathfrak u}}
\newcommand{\fz}{{\mathfrak z}}
\renewcommand\sp{\mathfrak {sp}} 
\newcommand\csp{\mathfrak {csp}} 
\newcommand\hsp{\mathfrak {hsp}} 
\newcommand\hcsp{\mathfrak {hcsp}} 
\newcommand\heis{\mathfrak {heis}}
\newcommand{\1}{\mathbf{1}}
\newcommand{\cD}{\mathcal{D}}
\newcommand{\cF}{\mathcal{F}}
\newcommand{\cH}{\mathcal{H}}
\newcommand{\cI}{\mathcal{I}}
\newcommand{\cK}{\mathcal{K}}
\newcommand{\cM}{\mathcal{M}}
\newcommand{\cO}{\mathcal{O}}
\newcommand{\cS}{\mathcal{S}}
\newcommand{\cU}{\mathcal{U}}
\newcommand{\cW}{\mathcal{W}}
\newcommand{\derat}[1]{\frac{d}{dt}\big\vert_{t = #1}}
\newcommand{\bigderat}[1]{\frac{d}{dt}\Big\vert_{t = #1}}
\newcommand{\dd}{{\tt d}}
\newcommand{\N}{{\mathbb N}}
\newcommand{\R}{{\mathbb R}}
\newcommand{\C}{{\mathbb C}}
\renewcommand{\H}{{\mathbb H}}
\newcommand{\T}{{\mathbb T}}
\newcommand{\bS}{{\mathbb S}}
\newcommand{\tV}{{\mathtt V}}
\newcommand{\sH}{{\mathsf H}}
\newcommand{\sE}{{\mathsf E}}
\renewcommand{\hat}{\widehat}
\newcommand{\hotimes}{{\hat\otimes}}
\renewcommand{\tilde}{\widetilde}
\renewcommand{\L}{\mathop{\bf L{}}\nolimits}
\newcommand{\Aff}{\mathop{{\rm Aff}}\nolimits}
\newcommand{\SL}{\mathop{{\rm SL}}\nolimits}
\newcommand{\PSL}{\mathop{{\rm PSL}}\nolimits}
\newcommand{\SO}{\mathop{{\rm SO}}\nolimits}
\newcommand{\U}{\mathop{\rm U{}}\nolimits}
\newcommand{\Sp}{\mathop{{\rm Sp}}\nolimits}
\newcommand{\Sym}{\mathop{{\rm Sym}}\nolimits}
\newcommand{\Heis}{\mathop{{\rm Heis}}\nolimits}
\newcommand{\HSp}{\mathop{{\rm HSp}}\nolimits}
\newcommand{\HCSp}{\mathop{{\rm HCSp}}\nolimits}
\newcommand{\gl}  {\mathop{{\mathfrak{gl} }}\nolimits}
\newcommand{\fsl} {\mathop{{\mathfrak{sl} }}\nolimits}
\newcommand{\su}  {\mathop{{\mathfrak{su} }}\nolimits}
\newcommand{\so}  {\mathop{{\mathfrak{so} }}\nolimits}
\newcommand{\Fix}{\mathop{{\rm Fix}}\nolimits}
\newcommand{\ad}{\mathop{{\rm ad}}\nolimits}
\newcommand{\Ad}{\mathop{{\rm Ad}}\nolimits}
\newcommand{\AdS}{\mathop{{\rm AdS}}\nolimits}
\renewcommand{\Re}{\mathop{{\rm Re}}\nolimits}
\renewcommand{\Im}{\mathop{{\rm Im}}\nolimits}
\newcommand{\tr}{\mathop{{\rm tr}}\nolimits}
\newcommand{\Hom}{\mathop{{\rm Hom}}\nolimits}
\newcommand{\Hol}{\mathop{{\rm Hol}}\nolimits}
\newcommand{\Herm}{\mathop{{\rm Herm}}\nolimits}
\newcommand{\Aut}{\mathop{{\rm Aut}}\nolimits}
\newcommand{\End}{\mathop{{\rm End}}\nolimits}
\newcommand{\id}{\mathop{{\rm id}}\nolimits}
\newcommand{\rk}{\mathop{{\rm rank}}\nolimits}
\renewcommand{\dim}{\mathop{{\rm dim}}\nolimits}
\newcommand{\im}{\mathop{{\rm im}}\nolimits}
\newcommand{\spec}{\mathop{{\rm spec}}\nolimits}
\newcommand{\Inn}{\mathop{{\rm Inn}}\nolimits}
\newcommand{\oline}{\overline}
\newcommand{\la}{\langle}
\newcommand{\ra}{\rangle}
\newcommand{\spann}{{\rm span}}
\newcommand{\ext}{{\rm ext}}
\def\theoremname{Theorem}
\def\propositionname{Proposition}
\def\corollaryname{Corollary}
\def\lemmaname{Lemma}
\def\remarkname{Remark}
\def\conjecturename{Conjecture} 
\def\definitionname{Definition}
\def\exercisename{Exercise}
\def\examplename{Example}
\def\examplesname{Examples}
\def\problemname{Problem}
\def\problemsname{Problems}
\def\@thmcounter#1{\noexpand\arabic{#1}}
\def\@thmcountersep{}
\def\@begintheorem#1#2{\it \trivlist \item[\hskip 
\labelsep{\bf #1\ #2.\quad}]}
\def\@opargbegintheorem#1#2#3{\it \trivlist
      \item[\hskip \labelsep{\bf #1\ #2.\quad{\rm #3}}]}
\newtheorem{theor}{\theoremname}[section]
\newtheorem{propo}[theor]{\propositionname}
\newtheorem{coro}[theor]{\corollaryname}
\newtheorem{lemm}[theor]{\lemmaname}
\newenvironment{thm}{\begin{theor}\it}{\end{theor}}
\newenvironment{prop}{\begin{propo}\it}{\end{propo}}
\newenvironment{proposition}{\begin{propo}\it}{\end{propo}}
\newenvironment{cor}{\begin{coro}\it}{\end{coro}}
\newenvironment{lem}{\begin{lemm}\it}{\end{lemm}}
\newtheorem{rema}[theor]{\remarkname}
\newenvironment{remark}{\begin{rema}\rm}{\end{rema}}
\newenvironment{rem}{\begin{rema}\rm}{\end{rema}}
\newtheorem{stepnow}[theor]{}
\newtheorem{defin}[theor]{\definitionname} 
\newenvironment{definition}{\begin{defin}\rm}{\end{defin}}
\newtheorem{exerc}{\exercisename}[section]
\newtheorem{exa}[theor]{\examplename}
\newenvironment{example}{\begin{exa}\rm}{\end{exa}}
\newtheorem{exas}[theor]{\examplesname}
\newtheorem{conj}[theor]{\conjecturename}
\newtheorem{pro}[theor]{\problemname}
\newtheorem{prs}[theor]{\problemsname}
\newcommand{\pmat}[1]{\begin{pmatrix} #1 \end{pmatrix}}
\newcommand{\Wmin}{{W_{\mathrm{min}}}}
\newcommand{\Wmax}{W_{\mathrm{max}}}
\newcommand{\Cmin}{C_{\mathrm{min}}}
\newcommand{\Cmax}{C_{\mathrm{max}}}
\newcommand{\bbone}{\mathbb{1}}
\newcommand{\tran}{\mathsf{T}}
\newcommand{\der}{\mathrm{der}}
\setlist[enumerate,1]{label={\rm (\alph*)}}
\setlist[enumerate,2]{label={\rm (\roman*)}}
\def\blfootnote{\xdef\@thefnmark{}\@footnotetext}
\begin{document}

\title{Nets of standard subspaces induced by antiunitary representations of admissible Lie groups I}

\author{Daniel Oeh\footnote{Department Mathematik, Friedrich--Alexander--Universit\"at Erlangen--N\"urnberg, Cauerstr. 11, D-91058 Erlangen, Germany, oehd@math.fau.de} \,\footnote{Supported by DFG-grant Ne 413/10-1}} 

\maketitle

\begin{NoHyper}
  \blfootnote{2010 \textit{Mathematics Subject Classification.} Primary 22E45; Secondary 81R05, 81T05}
\end{NoHyper}

\begin{abstract}
  Let \((\pi, \cH)\) be a strongly continuous unitary representation of a 1-connected Lie group \(G\) such that the Lie algebra \(\g\) of \(G\) is generated by the positive cone \(C_\pi := \{x \in \g : -i\partial \pi(x) \geq 0\}\) and an element \(h\) for which the adjoint representation of \(h\) induces a 3-grading of \(\g\). Moreover, suppose that \((\pi, \cH)\) extends to an antiunitary representation of the extended Lie group \(G_\tau := G \rtimes \{\1, \tau_G\}\), where \(\tau_G\) is an involutive automorphism of \(G\) with \(\L(\tau_G) = e^{i\pi\ad h}\).

  In a recent work by Neeb and {\'Olafsson}, a method for constructing nets of standard subspaces of \(\cH\) indexed by open regions of \(G\) has been introduced and applied in the case where \(G\) is semisimple.
  In this paper, we extend this construction to general Lie groups \(G\), provided the above assumptions are satisfied and the center of the ideal \(\g_C = C_\pi - C_\pi \subset \g\) is one-dimensional. The case where the center of \(\g_C\) has more than one dimension will be discussed in a separate paper.
\end{abstract}


\section{Introduction}
\label{sec:intro}

In the formulation of Algebraic Quantum Field Theory (AQFT) by Haag--Kastler (cf.\ \cite{Ha96}), physical systems are described in terms of nets of local observables: To every open region \(\cO\) in a space-time manifold \(X\), one assigns a von Neumann algebra \(\cM(\cO)\) of bounded operators in a Hilbert space \(\cH\).
This assignment should satisfy the \emph{isotony} and \emph{locality} assumptions, namely that \(\cO_1 \subset \cO_2\) implies \(\cM(\cO_1) \subset \cM(\cO_2)\) and that, if \(\cO_1\) and \(\cO_2\) are space-like separated, then \(\cM(\cO_1) \subset \cM(\cO_2)'\), where \(\cM(\cO_2)'\) denotes the commutant of \(\cM(\cO_2)\).
One assumes that there exists a unitary representation \((\pi, \cH)\) of a Lie group \(G\) which acts on \(X\) and satisfies the \emph{covariance} condition \(\pi(g)\cM(\cO)\pi(g^{-1}) = \cM(g.\cO)\).
Moreover, we assume that there exists a unit vector \(\Omega \in \cH\) that is fixed by \(\pi(G)\).
If \(\Omega\) is a cyclic and separating vector for some von Neumann algebra \(\cM(\cO)\), then the Tomita--Takesaki Theorem (cf.\ \cite[Thm.\ 2.5.14]{BR87}) provides a pair \((\Delta_\cO, J_\cO)\) of modular objects, where \(\Delta_\cO\) is a positive selfadjoint operator on \(\cH\) and \(J_\cO\) is an antiunitary involution on \(\cH\).
This pair satisfies the modular relation \(J_\cO \Delta_\cO J_\cO = \Delta_\cO^{-1}\) as well as
\[J_\cO \cM(\cO) J_\cO = \cM(\cO)' \quad \text{and} \quad \Delta_\cO^{it} \cM(\cO) \Delta_\cO^{-it} = \cM(\cO) \quad \text{for all } t \in \R.\]
The operator \(J_\cO\Delta_\cO^{1/2}\) is the closure of the densely defined unbounded antilinear operator
\[S_\cO : \cM(\cO)\Omega \rightarrow \cH, \quad A\Omega \mapsto A^*\Omega.\]
The fixed point space \(\tV_\cO\) of the closure of \(S_\cO\) is a \emph{standard subspace} of \(\cH\), i.e.\ a closed real subspace such that
\[\tV_\cO \cap i\tV_\cO = \{0\} \quad \text{and} \quad \oline{\tV_\cO + i\tV_\cO} = \cH.\]
More generally, one can show that there is a one-to-one correspondence between standard subspaces of \(\cH\) and pairs of operators \((A, J)\), where \(A\) is selfadjoint and \(J\) is an antiunitary involution with \(JAJ = -A\) (cf.\ \cite[Cor.\ 3.5]{Lo08}). For such a pair of operators, the fixed point space of the antilinear operator \(Je^{\frac{1}{2}A}\) is a standard subspace.
We refer to \cite{Lo08} for more information on the structure of standard subspaces.
We thus obtain a net of standard subspaces of \(\cH\) indexed by those open regions \(\cO\) of \(X\) for which \(\Omega\) is a cyclic and separating vector of \(\cM(\cO)\).
Conversely, we can pass from nets of standard subspaces to nets of von Neumann algebras using second quantization (cf.\ \cite{Ar63}, \cite{NO17}).
As before, we can formulate locality, isotony, and covariance properties of such nets if we define the causal complement \(\tV'\) of a standard subspace \(\tV\) as the real orthogonal complement of \(i\tV\) in \(\cH\). 
These properties are then preserved by the second quantization functor.
This allows us to construct nets of local observables by constructing nets of standard subspaces with the locality, isotony, and covariance property.

In AQFT, the space-time on which the net of standard subspaces (resp.\ von Neumann algebras) is defined is usually a homogeneous space.
Typical well-known examples are:
\begin{itemize}
  \item The circle \(\bS^1 = \{z \in \C : |z| = 1\}\), on which the group \(\PSL(2,\R) = \SL(2,\R) / \{-1,1\}\) acts by M\"obius transformations.
    We can identify \(\bS^1\) with \(\PSL(2,\R)/P\), where \(P\) is a closed subgroup that is conjugate to the two-dimensional subgroup of \(\PSL(2,\R)\) generated by translations and dilations (cf.\ \cite[2.1]{Lo08}).
  \item The \(d\)-dimensional anti-de Sitter space \(\AdS^d := \{x \in \R^{d+1} : x_0^2 + x_1^2 - x_2^2 - \ldots - x_d^2 = 1\}\) endowed with the group action of \(\SO_{2,d-1}(\R)_e\). The stabilizer of \(e_0 \in \AdS^d\) can be identified with \(\SO_{1,d-1}^\uparrow(\R)\), so that \(\AdS^d \cong \SO_{2,d-1}(\R)_e / \SO_{1,d-1}^\uparrow(\R)\).
\end{itemize}
In this paper, we construct nets of closed real subspaces \(\cO \mapsto \tV(\cO)\) indexed by open subsets \(\cO\) of a Lie group \(G\) on a Hilbert space \(\cH\) which satisfy isotony, locality, and covariance.
The main advantage of this approach is that we then also obtain a net of closed subspaces on the corresponding homogeneous spaces of \(G\): Given a closed subgroup \(P \subset G\) and the corresponding natural projection \(q_P : G \rightarrow M := G/P\), we assign to every open subset \(\cO_M \subset M\) the subspace \(\tV_M(\cO_M) = \tV(q_P^{-1}(\cO_M))\).
In order to identify standard subspaces in the net on \(M\) with one of the standard subspaces constructed in the net on \(G\), the net on the group should satisfy the invariance property \(\tV(\cO P) = \tV(\cO)\) for open subsets \(\cO \subset G\). We return to the problem of constructing nets on homogeneous spaces in Section \ref{sec:nets-hom-spaces}.

One way to construct standard subspaces is the \emph{Brunetti--Guido--Longo} (BGL) construction (\cite{BGL02}): Let \((G, \tau_G)\) be a symmetric connected Lie group with Lie algebra \((\g,\tau)\) and let \((\pi, \cH)\) be a strongly continuous antiunitary representation of \(G_\tau := G \rtimes \{\1, \tau_G\}\), i.e.\ \(\pi(G)\) consists of unitary operators on \(\cH\) and \(\pi(\tau_G)\) is an antilinear surjective isometry.
We assume that the kernel of \(\pi\) is discrete.
Let \(h \in \g^\tau\) be a fixed point of \(\tau\). Then
\[J := \pi(\tau_G) \quad \text{and} \quad \Delta^{-it/2\pi} := \pi(\exp(th)), \quad t \in \R,\]
determine a standard subspace \(\tV := \Fix(J\Delta^{1/2})\) of \(\cH\). The subsemigroup
\[S_\tV := \{g \in G : \pi(g)\tV \subset \tV\}\]
encodes the order structure on the \(G\)-orbit of \(\tV\) in the set of standard subspaces of \(\cH\). It has been studied extensively in \cite{Ne21} and \cite{Ne20}. Its Lie wedge
\[\L(S_\tV) := \{x \in \g : \exp(\R_+ x) \subset S_\tV\}\]
can be described in terms of the Lie algebraic data \((\pi,\cH)\), \(\tau\), and \(h\) by
\begin{equation}
  \label{eq:lie-wedge-sv}
  \L(S_\tV) = C_-(\pi, \tau, h) \oplus \g_0^\tau(h) \oplus C_+(\pi, \tau, h), \quad C_\pm(\pi, \tau, h) := \g_{\pm 1}^{-\tau}(h) \cap (\pm C_\pi),
\end{equation}
where \(\g_\lambda(h) := \ker(\ad(h) - \lambda\id_\g)\) and \(C_\pi := \{x \in \g : -i\partial \pi(x) \geq 0\}\) is the positive cone of \(\pi\) (cf.\ \cite[Thm.\ 5.4]{Ne21}).
We refer to \cite{Oeh20a} and \cite{Oeh20b} for first steps towards a classification of the Lie algebras generated by \(C_+(\pi,\tau,h)\) and \(C_-(\pi,\tau,h)\).
In particular, they are 3-graded with respect to the adjoint representation of \(h\).
For the purpose of constructing nets of standard subspaces, it is reasonable to assume that \(\L(S_\tV)\) spans \(\g\) and that \(\tau = e^{i\pi\ad(h)}\), i.e.\ that \(\g^\tau = \g_0(h)\) and \(\g^{-\tau} = \g_{-1}(h) + \g_1(h)\).
In this case, the subsemigroup \(S_\tV\) is given by
\begin{equation}
  \label{eq:sv-3grad}
  S_\tV = \exp(C_-(\pi, \tau, h))G_\tV\exp(C_+(\pi, \tau, h)) \quad \text{with} \quad G_\tV = \{g \in G : U(g)\tV = \tV\}
\end{equation}
(cf.\ \cite[Thm.\ 3.4]{Ne20}).

In \cite{NO21}, a new method for constructing covariant nets of real subspaces indexed by open subsets of \(G\) has been developed in this context:
Let \(\cH^{-\infty}\) be the space of antilinear functionals on the space of smooth vectors \(\cH^{\infty}\) of \((\pi, \cH)\) which are continuous with respect to the \(C^\infty\)-topology on \(\cH^\infty\) (cf.\ Appendix \ref{sec:app-nets-smooth}).
We call the elements of \(\cH^{-\infty}\) \emph{distribution vectors of \((\pi, \cH)\)}.
For every open subset \(\cO \subset G\) and a fixed real subspace \(\sE \subset \cH^{-\infty}\) of distribution vectors of \((\pi, \cH)\), let
\[\sH_\sE(\cO) := \oline{\spann_\R \{\pi^{-\infty}(\varphi)\eta : \varphi \in C_c^\infty(\cO, \R), \eta \in \sE\}}.\]
Here, \(\pi^{-\infty}(\varphi)\) is defined on \(\cH^{-\infty}\) by
\[\pi^{-\infty}(\varphi) = \int_G \varphi(g)\pi^{-\infty}(g)\, d\mu_G(g), \quad \text{where} \quad (\pi^{-\infty}(g)\eta)(\xi) := \eta(\pi(g^{-1})\xi)\]
for \(\eta \in \cH^{-\infty}, \xi \in \cH^\infty,\) and \(\mu_G\) denotes a left invariant Haar measure on \(G\).
This assignment leads to a covariant net of closed real subspaces with the isotony property.
Sufficient conditions for \(\sH_\sE(\cO)\) to be cyclic, i.e.\ to span a dense complex subspace of \(\cH\), or even be standard can be found in \cite[Sec.\ 3]{NO21}.
In \cite[Thm.\ 5.3]{NO21}, it is shown that, if \(G\) is simple hermitian and of tube type and if \((\pi, \cH)\) extends to an irreducible antiunitary representation of \(G_\tau\) for which \(C_\pi\) is generating, then there exists a real subspace \(\sE \subset \cH^{-\infty}\) such that the standard subspace \(\tV\) specified by \((\pi, \cH)\) and \(h\) coincides with \(\sH_\sE(S)\), where \(S\) is the interior of the identity component of the subsemigroup \(S_\tV\).
Moreover, the subsemigroup \(S\) can be interpreted as a ``wedge domain'' in the sense that, for every \(g \in G\) and every open subset \(\emptyset \neq \cO \subset G\) with \(\cO \subset gS\), the subspace \(\sH_\sE(\cO)\) is standard.
For general open subsets \(\cO \subset G\), the subspace \(\sH_\sE(\cO)\) is cyclic.
This result also extends to Lie groups with quasihermitian reductive Lie algebras, i.e.\ direct sums of compact and hermitian simple Lie algebras (cf.\ \cite[Rem.\ 5.4]{NO21}).

In this paper, we show that this construction can be extended further to Lie groups \(G\) of the following form:
We assume that the pointed positive cone \(C_\pi\) of the irreducible representation \((\pi, \cH)\) generates a hyperplane ideal \(\g_C\) of \(\g\) such that \(\g = \g_C + \R h\), where \(h\) induces a 3-grading on \(\g\) via the adjoint representation of the form
\[\g = \g_{-1}(h) \oplus \g_0(h) \oplus \g_1(h).\]
We call such an element \(h\) an \emph{Euler element of \(\g\)} and the derivation \(\ad_{\g_C}(h)\) an \emph{Euler derivation of \(\g_C\)}, which is motivated by the relation of these elements to linear Euler vector fields on certain homogeneous spaces (cf.\ \cite{MN21}).
A Lie algebra such as \(\g_C\) which contains a pointed generating invariant closed convex cone is called \emph{admissible} (cf.\ \cite{Ne00}).

The reason for considering extended admissible Lie algebras \(\g \cong \g_C \rtimes \R h\) instead of admissible Lie algebras is the following:
In the case where \(\g_C\) is non-reductive, which is the case that we are interested in in this paper, the Euler derivation \(\ad_{\g_C}(h)\) is in general not inner.
This follows from the description of Euler derivations in \cite{Oeh20b}.

For the construction of nets of standard subspaces beyond the reductive case in \cite{NO21}, we therefore have to consider extended admissible Lie algebras \(\g_C \rtimes \R h\).

The construction of nets of standard subspaces requires us to study the irreducible unitary representations of Lie groups \(G\) as above and the existence of antiunitary extensions.
Since we are going to use the well-known Mackey method to tackle this problem, the orbits of the automorphism group \((e^{th})_{t \in \R}\) on the characters \(\hat Z\) on the center \(Z\) of \(G_C := \la \exp_G(\g_C) \ra \subset G\) will play an important role in classifying unitary representations of \(G\).

In this paper, we focus on the easiest non-reductive case and assume that the center of \(\g_C\) is one-dimensional and that the Euler element \(h\) acts non-trivially on it, so that \(\hat Z\) decomposes into three orbits under the action of \((e^{th})_{t \in \R}\).
In the higher-dimensional case, the representation theory of \(G\) and the structure of the Euler elements \(h\) become more complicated, but we expect that many of the arguments in this paper can be generalized to this case. We discuss this problem in more detail in Section \ref{sec:nets-adm-perspectives} and also in the upcoming paper \cite{Oeh21}.

\subsection*{Contents of the paper}

In Section \ref{sec:nets-std-preliminaries}, we recall the definition of an admissible Lie algebra, resp.\ the definition of an admissible Lie group, and of a unitary highest weight representation.
Admissible Lie algebras can be described in terms of a construction due to K.\ Spindler (cf.\ \cite{Sp88}).
On every admissible Lie algebra that is parametrized in this way, there exists a natural homomorphism into the Jacobi algebra
\[\hsp(\R^{2n}) := \heis(\R^{2n}) \rtimes \sp(2n,\R) \quad \text{for some } n \in \N,\]
which is also admissible.
This allows us to tackle all central questions of this paper for the Jacobi algebra first and then use this homomorphism to discuss the general case.
All the results that are necessary for this construction are recalled in Section \ref{sec:nets-std-preliminaries-adm-hwr}.
Section \ref{sec:euler-elements} recalls how Euler derivations (i.e.\ derivations \(D\) of a Lie algebra \(\g\) that induce a 3-grading on \(\g\) via the adjoint representation) of admissible Lie algebras can be described in terms of Spindler's construction.
In the case of the Jacobi algebra, the corresponding extended Lie algebra \(\hsp(\R^{2n}) \rtimes \R D\) for every non-trivial Euler derivation is isomorphic to the \emph{conformal Jacobi algebra}
\[\hcsp(\R^{2n}) := \heis(\R^{2n}) \rtimes (\sp(2n,\R) \times \R),\]
where \(r \in \R\) acts on \(\heis(\R^{2n})\) by \(r.(v,z) := (rv, 2rz)\) for \((v,z) \in \R^{2n} \times \R\).

One interesting property of the conformal Jacobi algebra \(\hcsp(\R^{2n})\) is that it can be embedded as a maximal parabolic subalgebra into \(\sp(2n+2,\R)\). We show, in a more general setting, that this embedding and the Euler element of \(\hcsp(\R^{2n})\) can be chosen in such a way that the Euler element in the Jacobi algebra is mapped to an Euler element of \(\sp(2n+2,\R)\) (cf. Theorem \ref{thm:h1-emb-euler-elements}).

In Section \ref{sec:pos-energy-rep-ext-adm}, we discuss the irreducible unitary representations of 1-connected Lie groups \(G\) with Lie algebra \(\g = \g_C \rtimes \R D\), where \(D\) is an Euler derivation of the admissible Lie algebra \(\g_C\).
It is well-known that irreducible representations of the 1-connected Lie group \(\tilde\HSp(\R^{2n})\) with Lie algebra \(\hsp(\R^{2n})\) can be factorized into a tensor product of the extended metaplectic representation and an irreducible representation of \(\tilde\Sp(2n,\R)\), which is the 1-connected Lie group with Lie algebra \(\sp(2n,\R)\) (cf.\ \cite{Sa71}).
The main result of Section \ref{sec:pos-energy-rep-ext-adm} is that a similar result holds for the irreducible unitary positive energy representations of \(G\) (cf.\ Theorem \ref{thm:rep-adm-pos-factors}).
To this end, we discuss the unitary representations of the semidirect product \(N := \Heis(\R^{2n}) \rtimes_\alpha \R_+^\times\), where
\[\alpha_r(v,z) := (rv, r^2 z) \quad \text{for} \quad r \in \R_+^\times \text{ and } (v,z) \in \R^{2n} \times \R = \Heis(\R^{2n}).\]
The unitary representation theory of this group is very similar to that of the \(ax+b\)-group in the sense that all unitary strictly positive energy representations are direct sums of irreducible ones (cf.\ Theorem \ref{thm:heis-aff-discrete-decomp}).
Here, we say that a unitary representation has strictly positive energy if the generator of the unitary one-parameter group of the center of the subgroup \(\Heis(\R^{2n})\) has strictly positive spectrum.
We show that only one irreducible representation of \(N\) with strictly positive energy exists (cf.\ Lemma \ref{lem:conf-heis-irrep}).
This irreducible representation has an extension to \(\tilde\HCSp(\R^{2n})\), i.e.\ the universal covering of \(\hcsp(\R^{2n})\), via the metaplectic representation, which we denote by \((\nu, \cH_\nu)\).

In Section \ref{sec:nets-std-adm}, we construct nets of standard subspaces indexed by open regions on the Lie groups \(G\) introduced earlier, using the methods developed in \cite{NO21}.
The main difficulty here is to construct a suitable subspace \(\sE \subset \cH^{-\infty}\) of distribution vectors.
In the case of the representation \((\nu, \cH_\nu)\) and \(G = \tilde\HCSp(\R^{2n})\), we present two different approaches:

The first one is to use the embedding of \(\hcsp(\R^{2n})\) into \(\sp(2n+2,\R)\).
The representation \((\nu, \cH_\nu)\) can be extended to a highest weight representation of \(\tilde\Sp(2n + 2,\R)\) (cf.\ Proposition \ref{prop:rep-extjac-metaplectic-sp-ext}).
This is a consequence of results from \cite{HNO94},\cite{HNO96a}, and \cite{HNO96b}.
We apply the construction in \cite{NO21} to this extended representation and show that the distribution subspace \(\sE_P\) that we obtain for the extended representation is also a suitable distribution subspace for \((\nu, \cH_\nu)\) (cf.\ Theorem \ref{thm:std-dist-parabolic-embedding}). The distribution subspace is invariant under the subgroup \(G\) with Lie algebra \(\g_0(h) + \g_{-1}(h)\).

For the second approach, we construct a different distribution subspace \(\sE_\nu\) for \((\nu, \cH_\nu)\) which is invariant under \(\exp(\sp(2n,\R)_1(h))\) (cf.\ Section \ref{sec:nets-std-H-nu}). Using the Factorization Theorem \ref{thm:rep-adm-pos-factors} and results on tensor products of standard subspaces and distribution vectors in Section \ref{sec:std-tensor-dist}, we then construct suitable distribution subspaces for tensor product representations of general extended admissible Lie groups \(G\) (cf.\ Theorem \ref{thm:nets-adm-general}).

In Section \ref{sec:nets-hom-spaces}, we discuss how both constructions also lead to covariant isotone nets of cyclic subspaces on homogeneous spaces of \(G\).
Here, the invariance properties of the spaces of distribution vectors \(\sE_\nu\) and \(\sE_P\) play an important role in order to determine which open subsets of the homogeneous space correspond to standard subspaces.

While, in the case of the representation \((\nu, \cH_\nu)\) of \(\tilde\HCSp(\R^{2n})\), the standard subspaces \(\sH_{\sE_P}(gS)\) and \(\sH_{\sE_\nu}(gS)\) belonging to wedge domains coincide for all \(g \in G\), it is not clear at the moment whether \(\sH_{\sE_P}(\cO)\) and \(\sH_{\sE_\nu}(\cO)\) coincide for general open subsets \(\cO \subset G\).

\subsection*{Notation and conventions}

\begin{itemize}
  \item For a complex Hilbert space \(\cH\), its scalar product \(\la \cdot, \cdot \ra\) is linear in the second argument and antilinear in the first.
  \item Let \(\g\) be a Lie algebra and let \(x \in \g\). For an \(\ad(x)\)-invariant subspace \(E \subset \g\), we denote by \(E_\lambda(x) := \ker(\ad(x) - \lambda\id_\g) \cap E\) the \(\lambda\)-eigenspace of \(\ad_\g(x)\) in \(E\).
  \item Let \(G\) be a Lie group with Lie algebra \(\g\) and let \(\tau_G \in \Aut(G)\) be an involutive automorphism. Let \(\tau := \L(\tau_G)\). We define
    \[G^{\tau} := \{g \in G : \tau_G(g) = g\} \quad \text{and} \quad \g^{\pm \tau} := \{x \in \g : \tau(x) = \pm x\}.\]
    Furthermore, we define \(G_\tau := G \rtimes \{\1, \tau_G\}\).
\end{itemize}

\section{Admissible Lie groups}
\label{sec:nets-std-preliminaries}

In this section, we introduce the necessary structure theory of admissible Lie algebras and their corresponding Lie groups.

\subsection{Admissible Lie groups and unitary highest weight representations} 
\label{sec:nets-std-preliminaries-adm-hwr}

\begin{definition}
  \label{def:liealg-adm}
  Let \(\g\) be a real finite-dimensional Lie algebra. Then \(\g\) is called \emph{admissible} if there exists a generating (\(e^{\ad \g}\)-)invariant closed convex cone \(C\) with \(H(C) := \{x \in \g : C + x = C\} = \{0\}\), i.e.\ \(C\) contains no affine lines. A Lie group \(G\) is called \emph{admissible} if its Lie algebra is admissible.
\end{definition}

We recall a few definitions in the context of unitary highest weight representations of admissible Lie groups (cf.\ \cite{Ne00}):

\begin{definition}
  \label{def:u-hrw}
  \begin{enumerate}
    \item Let \(G\) be a connected Lie group with Lie algebra \(\g\) and let \(\fk\) be a maximal compactly embedded subalgebra of \(\g\).
      Suppose that there exists a compactly embedded Cartan subalgebra \(\ft\) of \(\g\) contained in \(\fk\).
      Then \(\ft_\C\) is a Cartan subalgebra of \(\g_\C\). For \(\alpha \in \ft_\C^*\), we denote the corresponding root space of \(\alpha\) by \(\g_\C^\alpha\) and denote by
      \[\Delta := \Delta(\g_\C, \ft_\C) := \{\alpha \in \ft_\C^* \setminus \{0\} : \g_\C^\alpha \neq \{0\}\}\]
      the corresponding set of roots. We say that a root \(\alpha \in \Delta\) is \emph{compact} if \(\g_\C^\alpha \subset \fk_\C\). We denote the set of compact roots by \(\Delta_k\) and the set of non-compact roots by \(\Delta_p := \Delta \setminus \Delta_k\). 

      Let \(\Delta^+ \subset \Delta\) be a positive system and \(X_0 \in i\ft\) such that \(\Delta^+ = \{\alpha \in \Delta : \alpha(X_0) > 0\}\).
      Then we say that \(\Delta^+\) is \emph{adapted} if \(\beta(X_0) > \alpha(X_0)\) for all \(\alpha \in \Delta_k\) and \(\beta \in \Delta_p\).

      If an adapted positive system exists, then \(\g\) is called \emph{quasihermitian}, which is equivalent to \(\fz_\g(\fz(\fk)) = \fk\) (cf.\ \cite[Prop.\ VII.2.14]{Ne00}). A simple non-compact quasihermitian Lie algebra is called \emph{hermitian}, in which case \(\fz(\fk)\) is one-dimensional.  Every admissible Lie algebra is quasihermitian (cf.\ \cite[Thm.\ VII.3.10]{Ne00}).

    \item Suppose now that \(\g\) is quasihermitian and that \(\Delta^+\) is adapted. Let \(K := \la \exp \fk \ra \subset G\) be the corresponding maximal compactly embedded subgroup of \(G\). Consider a strongly continuous unitary representation \((\pi, \cH)\) of \(G\). The space of \emph{\(K\)-finite smooth vectors of \((\pi,\cH)\)} is defined as
      \[\cH^K := \{v \in \cH^\infty : \dim \spann(\pi(K)v) < \infty\}.\]
      The representation \((\pi, \cH)\) is called a \emph{unitary highest weight representation} if \(\cH^K\) is a highest weight module of \(\g_\C\) with respect to \(\Delta^+\). Conversely, if \(V\) is a highest weight module of \(\g_\C\) with respect to \(\Delta^+\), then \(V\) is called \emph{unitarizable} if there exists a strongly continuous unitary representation \((\pi, \cH)\) of \(G\) such that \(\cH^K = V\).

    \item Suppose that \(\g\) is admissible. For an adapted positive system \(\Delta^+\), we define the cones
      \[\Cmin := \mathrm{cone } \{i[\oline{x}, x] : x \in \g_\C^\alpha, \alpha \in \Delta_p^+\} \subset \ft \quad \text{and}\]
      \[\Cmax := \{x \in \ft : (\forall \alpha \in \Delta_p^+)\, i\alpha(x) \geq 0\}.\]
      For every pointed generating invariant closed convex cone \(W \subset \g\), there exists an adapted positive system \(\Delta^+\) such that \(\Cmin \subset W \cap \ft \subset \Cmax\). Two pointed generating invariant closed convex cones coincide if and only if their intersections with \(\ft\) coincide. Having fixed the \(\fk\)-adapted positive system \(\Delta^+\), we denote by \(\Wmin \subset \g\) the unique pointed invariant closed convex cone with \(\Cmin = \Wmin \cap \ft\) and by \(\Wmax \subset \g\) the unique generating invariant closed convex cone with \(\Cmax = \Wmax \cap \ft\) (cf.\ \cite[Ch.\ III]{HHL89}).
      If \(\g\) is simple and non-compact, then \(\Wmin\) and \(\Wmax\) are both pointed and generating and unique up to sign (cf. \cite[Prop.\ III.2.18]{HHL89}).
  \end{enumerate}
\end{definition}

Every admissible Lie algebra can be described in terms of \emph{Spindler's construction} \cite{Sp88}: Consider the quadruple \((\fl, V, \fz, \beta)\), where
\begin{enumerate}[label=(S\arabic*)]
  \item \(\fl\) is a real reductive Lie algebra,
  \item \(V\) is a real \(\fl\)-module,
  \item \(\fz\) is a real vector space, and
  \item \(\beta: V \times V \rightarrow \fz\) is a skew-symmetric \(\fl\)-invariant bilinear map, i.e.\
    \begin{equation}
      \label{eq:spindler-beta-invariance}
      \beta(x.v, w) + \beta(v, x.w) = 0 \quad \text{for all } v,w \in V, x \in \fl.
    \end{equation}
\end{enumerate}
Then \(\g(\fl, V, \fz, \beta) := V \times \fz \times \fl\) is a Lie algebra with the bracket
\[[(v, z, x), (w, z', y)] := (x.w - y.v, \beta(v,w), [x,y]).\]

\begin{thm}
  \label{thm:spindler-adm-symp-mod}
  \begin{enumerate}
    \item Let \(\g\) be an admissible Lie algebra. Let \(\fu\) be the maximal nilpotent ideal of \(\g\) and let \(\ft\) be a compactly embedded Cartan subalgebra of \(\g\). Then \([\fu,\fu] \subset \fz(\g)\). Moreover, there exists a \(\ft\)-invariant reductive quasihermitian subalgebra \(\fl \subset \g\) such that \(V := [\ft, \fu]\) is an \(\fl\)-invariant submodule with respect to the adjoint representation of \(\fl\) with the following properties:
\begin{enumerate}
  \item \(V\) and \(\fl\) satisfy \eqref{eq:spindler-beta-invariance} with respect to \(\beta := [\cdot, \cdot]\lvert_{V \times V}\) and we have \(\fz_{\fz(\fl)}(V) = \{0\}\).
  \item \(\ft_\fl := \ft \cap \fl\) is a compactly embedded Cartan subalgebra of \(\fl\) and we have \(\fz_V(\ft_\fl) = \{0\}\).
  \item We have \(\g \cong \g(\fl, V, \fz(\g), \beta)\).
\end{enumerate}
\item Conversely, let \(\g = \g(\fl, V, \fz, \beta)\) with \(\fz_{\fz(\fl)}(V)  = \{0\}\). Suppose that there exists a compactly embedded Cartan subalgebra \(\ft_\fl \subset \fl\) with \(\fz_V(\ft_\fl) = \{0\}\). Then \(\g\) is admissible if and only if \(\fl\) is quasihermitian and there exists \(f \in \fz^*\) and \(x \in \fl\) such that the Hamiltonian function
  \begin{equation}
    \label{eq:spindler-pos-def}
    (f \circ \beta)_x : V \rightarrow \R, \quad v \mapsto (f \circ \beta)(x.v, v),
  \end{equation}
  is positive definite.
  \end{enumerate}
\end{thm}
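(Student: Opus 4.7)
The plan is to combine the structure theory of admissible Lie algebras from \cite[Ch.\ VII]{Ne00} with an explicit moment-map construction of invariant cones. For part (a), I would start from the existence of a pointed generating invariant closed convex cone $W \subset \g$. The key structural input is that the maximal nilpotent ideal $\fu$ must be at most two-step with $[\fu,\fu] \subset \fz(\g)$: if some bracket $[u_1,u_2]$ lay outside $\fz(\g)$, the resulting non-central element, together with the invariance of $W$ under the $\ad(\fu)$-action, would produce an affine line in $W$, contradicting pointedness. With this in hand, I decompose $\fu$ under the semisimple action of a compactly embedded Cartan $\ft$ to get $\fu = \fz_\fu(\ft) \oplus [\ft, \fu]$. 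Admissibility together with $\fz_\fu(\ft)$ being a central-modulo-$\fz(\g)$ nilpotent piece centralizing $\ft$ forces $\fz_\fu(\ft) \subset \fz(\g)$, so setting $V := [\ft,\fu]$ gives the desired splitting.

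Next I would produce the subalgebra $\fl$ by taking a Levi complement and averaging over the compact group generated by $\ft$ (or invoking Mostow's theorem) to guarantee $\ft$-invariance; then $\fl$ inherits quasihermiticity from $\g$ via \cite[Thm.\ VII.3.10]{Ne00}. The properties (i)--(iii) follow essentially tautologically: skew-symmetry and $\fl$-invariance of $\beta := [\cdot,\cdot]|_{V \times V}$ come from the Jacobi identity (using that $\beta$ takes values in $\fz(\g)$), the triviality of $\fz_{\fz(\fl)}(V)$ and $\fz_V(\ft_\fl)$ comes from the construction of $V$ as $[\ft,\fu]$, and the isomorphism $\g \cong \g(\fl, V, \fz(\g), \beta)$ is immediate from the vector space decomposition together with $[\fu,\fu] \subset \fz(\g)$.

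For part (b), the forward direction follows by dualizing: taking $f$ to be a suitable extreme ray of $(W \cap \fz)^\star$ and $x$ an interior point of $W \cap \fl$, positive definiteness of $(f \circ \beta)_x$ encodes that $W$ stays pointed on the $V$-direction. The converse — constructing a pointed generating invariant cone from the Hamiltonian data — is the main obstacle. My approach is to assemble the cone from three pieces: the cone $\Wmax$ on $\fl$ (available because $\fl$ is quasihermitian), an epigraph-type cone on $V \oplus \fz$ built from the Hamiltonian $(f \circ \beta)_x$, and a gluing of these pieces via the moment map $\Phi: V \to \fl^*, \Phi(v)(y) := (f \circ \beta)(y.v, v)$. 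The invariance of the glued cone under the full Lie algebra $\g$ (not just $\fl$) is what forces the positivity assumption: under the action of $V$, the quadratic piece shifts by linear terms whose compensation requires exactly the positive-definiteness of $(f \circ \beta)_x$.

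The hardest step will be verifying that the glued cone is simultaneously pointed, generating, and $e^{\ad \g}$-invariant; keeping track of invariance under the nilpotent directions of $\ad(V)$ typically requires a careful Cauchy--Schwarz estimate against the Hamiltonian form, and it is here that the assumptions $\fz_{\fz(\fl)}(V) = \{0\}$ and $\fz_V(\ft_\fl) = \{0\}$ are used to ensure that the cone does not collapse along hidden affine directions.
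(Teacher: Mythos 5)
Note first that the paper does not prove this theorem at all: its ``proof'' is a citation to \cite[Thms.\ VII.2.26, VIII.2.7]{Ne00}, so your proposal has to be measured against Neeb's arguments, whose outline (decompose \(\fu\) under \(\ft\), pass to a reductive complement, build the cone from the Hamiltonian data) you do reproduce. The first genuine gap is in (a): your one-line claim that a non-central bracket \([u_1,u_2]\) ``would produce an affine line in \(W\)'' has no mechanism behind it. The standard way invariance of \(W\) interacts with nilpotent directions is this: for \(u \in \fu\) and \(w \in W\), the curve \(e^{t \ad u}w\) is polynomial in \(t\), so its leading coefficient lies in the recession cone of \(W\) (which is \(W\)), and if the top degree is odd it lies in \(H(W) = \{0\}\). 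This yields parity constraints on \(\ad(u)\), not centrality of \([\fu,\fu]\); Neeb's actual proof of \([\fu,\fu] \subset \fz(\g)\) runs through the \(\ft\)-root decomposition of \(\fu\) and a careful case analysis, and there is no shortcut of the kind you sketch. (By contrast, your decomposition \(\fu = (\fu \cap \ft) \oplus [\ft,\fu]\) with \(\fu \cap \ft \subset \fz(\g)\) is fine, since elements of \(\fu \cap \ft\) are simultaneously \(\ad\)-nilpotent and elliptic, hence central; and the ``Levi plus averaging'' step needs the extra remark that a Levi subalgebra is only semisimple, so it must be enlarged by a toral part to a reductive complement \(\fl\) with \(\g = \fu \rtimes \fl\) before (i)--(iii) become the formalities you describe.)

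In (b) there are two further problems, one in each direction. Forward: an \emph{extreme ray} of \((W \cap \fz)^\star\) only yields positive \emph{semi}definiteness of \((f \circ \beta)_x\); you need \(f\) in the \emph{interior} of the dual cone of the pointed cone \(W \cap \fz\), which contains all the recession limits \(\tfrac{1}{2}\beta(x.v,v) = \lim_{t \to \infty} t^{-2}e^{t \ad v}x\) — this limit computation is the actual content of the forward direction, and even with it, definiteness (rather than semidefiniteness) requires an additional argument. Converse: your ``glued'' cone is, in disguise, exactly Neeb's \(W_f = \gamma_f^{-1}(\hsp(V, f\circ\beta)_+)\) (cf.\ \eqref{eq:adm-jacobi-homom}, \eqref{eq:hsp-plus-cone}), and realizing it as the preimage of an invariant cone under a homomorphism (equivalently, as the dual cone of the coadjoint orbit of \(f\)) makes \(e^{\ad\g}\)-invariance \emph{automatic} — so your ``careful Cauchy--Schwarz estimate for invariance'' locates the difficulty in the wrong place. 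The real difficulty is pointedness: one only gets \(H(W_f) \subset \ker(\gamma_f)\), and \(\ker(\gamma_f)\) contains the ideal \(\fl' = \{x \in \fl : x.V = \{0\}\}\) as well as \(\ker f \cap \fz\) when \(\dim \fz > 1\). Crucially, the hypotheses \(\fz_{\fz(\fl)}(V) = \{0\}\) and \(\fz_V(\ft_\fl) = \{0\}\), which you invoke to rule out ``hidden affine directions,'' do \emph{not} exclude a semisimple (e.g.\ compact) ideal of \(\fl\) acting trivially on \(V\), so in that case your glued cone genuinely contains affine lines and your argument fails. The repair is to split \(\g = \g(\fl_V, V, \fz, \beta) \oplus \fl'\) (exactly as in the remark following Lemma \ref{lem:adm-euler-hsp-homom}) and construct a separate pointed generating invariant cone on the reductive quasihermitian summand, then recombine; for compact simple ideals, which admit no pointed generating invariant cones of their own, this recombination needs a genuine domination device, e.g.\ cones of the form \(\{(x,y) : \|y\| \leq \tau(x)\}\) with \(\tau\) a concave invariant gauge on a pointed cone in the first summand. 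None of this appears in your sketch, and it is the step on which the ``if'' direction of (b) actually rests.
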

\begin{proof}
  cf.\ \cite[Thm.\ VII.2.26]{Ne00} and \cite[Thm.\ VIII.2.7]{Ne00}.
\end{proof}

From now on, if \(\g\) is an admissible Lie algebra, we say that \(\g\) is \emph{defined as \(\g = \g(\fl, V, \fz, \beta)\) in terms of Spindler's construction} if
\begin{itemize}
  \item \(\g = \g(\fl, V, \fz, \beta)\) and \((\fl,V, \fz, \beta)\) satisfies (S1)-(S4),
  \item \(\fz_{\fz(\fl)}(V) = \{0\}\), and
  \item there exists a compactly embedded Cartan subalgebra \(\ft_\fl \subset \fl\) with \(\fz_V(\ft_\fl) = \{0\}\).
\end{itemize}
In this case, we have \(\fz = \fz(\g)\) and the condition of Theorem \ref{thm:spindler-adm-symp-mod}(b) is satisfied.
Such a quadruple \((\fl, V, \fz, \beta)\) can be constructed by choosing any subalgebra \(\fl \subset \g\) such that \(\g = \fu \rtimes \fl\), where \(\fu\) is the maximal nilpotent ideal of \(\g\). Then, \(\fl\) is reductive and quasihermitian and, for every compactly embedded Cartan subalgebra \(\ft_\fs \subset [\fl,\fl]\), the subalgebra \(\ft := \fz(\fg) + \fz(\fl) + \ft_s\) is a compactly embedded Cartan subalgebra of \(\g\). The subspace \(V := [\ft_\fl, \fu]\) is an \(\fl\)-submodule and satisfies \([V,V] \subset \fz(\g)\) (cf.\ \cite[Thm.\ 2.24, Cor.\ 2.25]{Oeh20b}).

Suppose that \(\g = \g(\fl, V, \fz, \beta)\) is admissible. Then \eqref{eq:spindler-beta-invariance} implies that the adjoint representation of \(\fl\) restricted to \(V\) induces a homomorphism from \(\fl\) into the Lie algebra
\[\sp(V,\beta) := \{x \in \gl(V) : (\forall v,w \in V)\, \beta(xv, w) + \beta(v, xw) = 0\}.\]
Hence, we call \((V,\beta)\) a \emph{symplectic \(\fl\)-module}.
In view of the positivity condition \eqref{eq:spindler-pos-def}, we also call \((V,\beta)\) a \emph{symplectic \(\fl\)-module of convex type}. Notice that \(\beta\) is automatically non-degenerate in this case.

Under these assumptions, the Lie algebra \(\sp(V,\beta)\) is admissible and reductive and the \emph{generalized Jacobi algebra}
\[\hsp(V,\beta) := \g(\sp(V,\beta), V, \fz, \beta),\]
where \(\sp(V,\beta)\) acts naturally on \(V\), is also admissible (cf.\ \cite[Thm.\ 2.22]{Oeh20b}). If \(\dim \fz = 1\), i.e.\ \(\beta\) is a symplectic form, then \(\hsp(V,\beta)\) is called the \emph{Jacobi algebra}.
Let \(f \in \fz^*\) such that \((f \circ \beta)_x\) is positive definite for some \(x \in \fl\) and set \(\Omega := f \circ \beta\).
Then the homomorphism on \(\fl\) corresponding to the representation on \(V\), which we denote by \(\rho: \fl \rightarrow \sp(V,\beta)\), extends naturally to a homomorphism
\begin{equation}
  \label{eq:adm-jacobi-homom}
  \gamma_f : \g \rightarrow \hsp(V, \Omega), \quad (v,z,x) \mapsto (v, f(z), \rho(x)).
\end{equation}
On \(\hsp(V,\Omega)\), we have a pointed generating invariant closed convex cone
\begin{equation}
  \label{eq:hsp-plus-cone}
  \hsp(V,\Omega)_+ := \{(v,z,x) \in \hsp(V,\Omega) : (\forall w \in V)\, \Omega(xw, w) + \Omega(v, w) + z \geq 0\}.
\end{equation}
We denote its preimage in \(\g\) under \(\gamma_f\) by \(W_f\), which is a generating invariant closed convex cone with \(H(W_f) \subset \ker(\gamma_f)\) (cf.\ \cite[Thm.\ VIII.2.7]{Ne00}).

\begin{definition}
  \label{def:h1-homom}
  A necessary condition for the existence of elements \(x \in \fl\) and \(f \in \fz^*\) for which \eqref{eq:spindler-pos-def} is positive definite is the \((H_1)\)-condition on the homomorphism \(\fl \rightarrow \sp(V,f \circ \beta)\) induced by the adjoint representation, which is defined as follows: In every reductive admissible Lie algebra \(\g\), there exists an element \(H_0\), called \(H\)-element, with the property that \(\ker(\ad H_0)\) is a maximal compactly embedded subalgebra and \(\ad H_0\) is semisimple on \(\g_\C\) with \(\spec(\ad H_0) = \{0, i, -i\}\).
  In this case, we say that \((\g, H_0)\) is \emph{of hermitian type} (cf.\ \cite{Sa80}).
  A homomorphism \(\kappa : (\g, H_0) \rightarrow (\g', H_0')\) is called an \emph{\((H_1)\)-homomorphism} if \(\kappa \circ \ad H_0 = \ad H_0' \circ \kappa\) and it is called an \emph{\((H_2)\)-homomorphism} if \(\kappa(H_0) = H_0'\).

  One can show that, if \eqref{eq:spindler-pos-def} holds, then the corresponding homomorphism \(\fl \rightarrow \sp(V, f \circ \beta)\) is an \((H_1)\)-homomorphism for some \(H\)-elements of \(\fl\) and \(\sp(V,f \circ \beta)\) (cf.\ \cite[Thm.\ IV.6]{Ne94}).
\end{definition}

\subsection{Euler elements and parabolic subalgebras of admissible Lie algebras}
\label{sec:euler-elements}

\begin{definition}
  \label{def:euler-element}
  Let \(\g\) be a real or complex Lie algebra.
  \begin{enumerate}
\item We call \(h \in \g\) an \emph{Euler element} if \(\ad h\) is diagonalizable with \(\spec(\ad h) \subset \{-1, 0, 1\}\).
\item A derivation \(D \in \der(\g)\) is called an \emph{Euler derivation} if \((0,D)\) is an Euler element of \(\g \rtimes \R D\).
  \end{enumerate}
\end{definition}

We briefly recall the classification of Euler derivations in admissible Lie algebras.
Suppose first that \(\g_C\) is a semisimple admissible Lie algebra.
Since all derivations of \(\g_C\) are inner, we only have to describe all Euler elements of \(\g_C\).
The Lie algebra \(\g_C\) can be decomposed as \(\g_C = \bigoplus_{k=0}^n \g_k\), where \(\g_0\) is compact and \(\g_k\) is hermitian simple for \(k=1,\ldots,n\).
An element \(h = \sum_{k=0}^n h_k\) with \(h_k \in \g_k\) is an Euler element if and only if \(h_0 = 0\) and \(h_k\) is an Euler element of \(\g_k\) for \(k=1,\ldots,n\).
This reduces the classification of Euler elements of \(\g_C\) to the classification of Euler elements of simple non-compact Lie algebras, which has been carried out in \cite[Thm.\ 3.10]{MN21} for general real simple Lie algebras.
In the case where \(\g_C\) is hermitian simple, it says that non-trivial Euler elements are unique up to conjugation by inner automorphisms and they exist if and only if \(\g_C\) is of tube type.

\begin{lem}
  \label{lem:euler-cone-intersection-reductive}
  Let \(\fl\) be a quasihermitian reductive Lie algebra and let \(W \subset \fl\) be a pointed invariant closed convex cone. If \(h \in \fl\) is an Euler element of \(\fl\) and \(\Wmin\) is a minimal pointed invariant closed convex cone with \(\Wmin \subset W\), then \(\fl_{\pm 1}(h) \cap W = \fl_{\pm 1}(h) \cap \Wmin\).
\end{lem}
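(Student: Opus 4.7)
My plan is to reduce the claim to the hermitian simple case and there identify both intersections with the cone of squares of a Euclidean Jordan algebra.

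First, decompose $\fl = \fz(\fl) \oplus \fl_c \oplus \bigoplus_{i=1}^n \fl_i$, where $\fl_c$ is semisimple compact and each $\fl_i$ is hermitian simple. An Euler element of an abelian or compact semisimple Lie algebra is zero, so $h = \sum_i h_i$ with $h_i \in \fl_i$ Euler, and by \cite[Thm.\ 3.10]{MN21} the factor $\fl_i$ must be of tube type whenever $h_i \neq 0$. The root-theoretic definition of $\Cmin$ yields the decomposition $\Wmin = \bigoplus_i \Wmin^{(i)}$ into minimal pointed invariant cones in the $\fl_i$. Next, for any $x = \sum_i x_i \in W \cap \fl_{\pm 1}(h)$ with $x_i \in (\fl_i)_{\pm 1}(h_i)$, the element $\Ad(\exp(t h_j))x = x + (e^{\pm t} - 1)x_j$ lies in $W$ for every $t \in \R$; dividing by $e^{\pm t} - 1 > 0$ and letting $|t| \to \infty$, closedness of $W$ gives $x_j \in W$. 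Hence $W \cap \fl_{\pm 1}(h)$ also splits as $\bigoplus_i (W \cap (\fl_i)_{\pm 1}(h_i))$, and it suffices to prove the lemma in each hermitian simple factor individually.

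Assume henceforth that $\fl$ is hermitian simple of tube type with a non-trivial Euler element $h$. Both $\Wmin$ and $\Wmax$ are then pointed, generating, and invariant, and the condition $\Wmin \subset W$ forces $\Wmin \subset W \subset \Wmax$ (cf.\ \cite[Prop.\ III.2.18]{HHL89}), so it suffices to prove the extremal equality
\[ \Wmin \cap \fl_{\pm 1}(h) \;=\; \Wmax \cap \fl_{\pm 1}(h). \]
For any closed cone $W' \subset \fl$ invariant under $\Ad(\exp \R h)$ and any $y = y_{-1} + y_0 + y_1 \in W'$, the rescaling $e^{-t}\Ad(\exp(th))y \to y_1$ as $t \to +\infty$ shows $y_1 \in W'$; consequently $W' \cap V = \mathrm{pr}_1(W')$, where $V := \fl_1(h)$ and $\mathrm{pr}_1$ denotes projection along $\fl_0(h) \oplus \fl_{-1}(h)$. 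In particular, $\Wmin \cap V$ and $\Wmax \cap V$ are both non-zero closed convex cones in $V$, invariant under $L_0 := Z_\fl(h)$. Since $\fl$ is of tube type, $V$ carries the structure of a simple Euclidean Jordan algebra whose cone of squares $V_+$ is, up to sign, the unique non-trivial $L_0$-invariant closed convex cone in $V$, so both intersections equal $V_+$. The case $\fl_{-1}(h)$ is handled analogously by letting $t \to -\infty$.

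The main obstacle is the Jordan-theoretic uniqueness statement for non-trivial $L_0$-invariant closed convex cones in $V$. This requires translating between the compactly embedded Cartan subalgebra description of $\Cmin, \Cmax$ and the $\ad(h)$-eigenspace decomposition (via a Cayley transform relating $h$ to a central element of the maximal compact subalgebra), and invoking the transitivity of the Levi $L_0$ on the open Jordan cone $V_+^\circ \subset V$ together with the self-duality of $V_+$. Once this is in place, the sandwiching $\Wmin \subset W \subset \Wmax$ together with $L_0$-invariance reduces the lemma to the comparison of two non-zero $L_0$-invariant cones of matching orientation in $V$, both of which must coincide with $V_+$.
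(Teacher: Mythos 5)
Your proposal is correct in substance and follows the same two-stage architecture as the paper's proof: first reduce along the decomposition into compact and hermitian simple ideals, then establish the extremal equality \(\Wmin \cap \fl_{\pm 1}(h) = \Wmax \cap \fl_{\pm 1}(h)\) in a single hermitian simple tube-type factor and sandwich \(W\) between \(\Wmin\) and \(\Wmax\). The differences lie in how you handle the two stages. For the reduction, the paper works with the adapted decompositions \(\Wmin = \sum_k \Wmin \cap \fl_k\) and \(\Wmax = \sum_k \Wmax \cap \fl_k\) and passes through \(\Wmax \cap \fl_{\pm 1}(h) = \sum_k \Wmax \cap (\fl_k)_{\pm 1}(h_k)\); your contraction argument \(\Ad(\exp(t h_j))x = x + (e^{\pm t}-1)x_j\) splits \(W \cap \fl_{\pm 1}(h)\) directly for an arbitrary pointed invariant cone \(W\), which is cleaner and avoids invoking the adaptedness of \(\Wmax\) at this step. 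For the extremal equality, the paper simply cites \cite[Lem.\ 2.28]{Oeh20a}, whereas you sketch a self-contained Jordan-theoretic proof: the projection identity \(W' \cap \fl_1(h) = \mathrm{pr}_1(W')\) via \(e^{-t}\Ad(\exp(th))y \to y_1\) is correctly established and also gives non-triviality of both intersections, and identifying them with the closed cone of squares \(\oline{V_+}\) is indeed how such statements are proved. What your route buys is independence from the citation; what it costs is that the decisive uniqueness claim — that \(\pm\oline{V_+}\) are the only non-zero \emph{pointed} closed convex cones in \(V = \fl_1(h)\) invariant under \(\Inn_\fl(\fl_0(h))\) — is precisely the content of the cited lemma, and you assert it (flagging it yourself as ``the main obstacle'') rather than prove it. So your argument is a re-derivation of the paper's key input in outline form, not yet a complete replacement for it.

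Two smaller points. First, as stated your uniqueness claim needs the qualifier ``pointed'' (or ``proper''), since \(V\) itself is a non-trivial invariant closed convex cone; this is harmless here because \(\Wmin \cap V\) and \(\Wmax \cap V\) sit inside pointed cones, but it should be said. Second, your opening assertion that an Euler element of an abelian Lie algebra is zero conflicts with the paper's definition: any central element is a (trivial) Euler element, and the paper accordingly records \(h_0 \in \fz(\fl)\) rather than \(h_0 = 0\). This slip is inconsequential for your argument, since central components contribute nothing to \(\fl_{\pm 1}(h)\), but the decomposition of \(h\) should allow a central summand.
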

\begin{proof}
  We decompose \(\fl\) into ideals \(\fl = \fl_0 \oplus \bigoplus_{k=1}^n \fl_k\), where \(\fl_0\) is compact and \(\fl_k\) hermitian simple for \(k=1,\ldots,n\).
  Then \(h\) can be written as \(h = h_0 + \sum_{k=1}^n h_k\) with \(h_0 \in \fl_0\) and \(h_k \in \fl_k\). The classification of Euler elements for simple Lie algebras (cf.\ \cite[Thm.\ 3.10]{MN21}) then shows that \(h_0 \in \fz(\fl)\) and \(h_k \neq 0\) only if \(\fl_k\) is of tube type.
  Moreover, \(\Wmin\) is contained in \(\fl_1 \oplus \ldots \oplus \fl_n\).
  We may thus assume that \(\fl\) is semisimple and that all simple ideals are hermitian and of tube type. 
  The minimal cone \(\Wmin\) is adapted to the decomposition of \(\fl\) into simple ideals in the sense that \(\Wmin = \sum_{k=1}^n \Wmin \cap \fl_k\), and \(\Wmin \cap \fl_k\) is one of the up to sign unique minimal pointed generating invariant closed convex cones of \(\fl_k\) (cf.\ Definition \ref{def:u-hrw}(c)).
  In particular, \(\Wmin\) and therefore \(W\) are generating.

  Let \(\Wmax \subset \fl\) be a maximal pointed generating invariant closed convex cone with \(\Wmin \subset W \subset \Wmax\). Then the cone \(\Wmax\) can also be written as \(\Wmax = \sum_{k=1}^n \Wmax \cap \fl_k\).
  By \cite[Lem.\ 2.28]{Oeh20a}, we have \(\Wmin \cap (\fl_k)_{\pm 1}(h_k) = \Wmax \cap (\fl_k)_{\pm 1}(h_k)\) for all \(k=1,\ldots,n\). In particular, this shows that
  \begin{align*}
\Wmin \cap \fl_{\pm 1}(h) &\subset W \cap \fl_{\pm 1}(h) \subset \Wmax \cap \fl_{\pm 1}(h) = \sum_{k=1}^n \Wmax \cap (\fl_k)_{\pm 1}(h_k) = \sum_{k=1}^n \Wmin \cap (\fl_k)_{\pm 1}(h_k)
  \end{align*}
  and therefore \(\Wmin \cap \fl_{\pm 1}(h) = W \cap \fl_{\pm 1}(h)\).
\end{proof}

Consider now an admissible non-reductive Lie algebra \(\g_C\).
We only recall the classification of Euler elements here in the case where \(\dim \fz(\g_C) = 1\) (cf.\ \cite[Cor.\ 3.15]{Oeh20b}) and refer to \cite[Thm. 3.14]{Oeh20b} for the general case: Let \(D \in \der(\g_C)\) be a non-trivial Euler derivation.
We assume that \(D\) is non-trivial on the radical of \(\g_C\). Otherwise, we could decompose \(\g_C\) into \(D\)-invariant ideals \(\g' \oplus \fs'\), where \(\fs'\) is semisimple and \(D\) vanishes on \(\g'\).
Then \(\g_C\) is of the form \(\g_C = \g(\fl, V, \fz, \beta)\) in terms of Spindler's construction and \(D\) acts on \(\g_C\) by
\begin{equation}
  \label{eq:euler-adm}
  D(v,z,x) = (\tfrac{\lambda}{2}v + h_\fl.v, \lambda z, [h_\fl,x]), \quad (v,z,x) \in V \times \fz \times \fl = \g_C,
\end{equation}
where \(\lambda \in \{-1, 1\}\) and \(h_\fl\) is a non-trivial Euler element of \(\fl\).
In particular, we see that \(2h_\fl\) acts as an antisymplectic involution on \(V\).
Let \(V_{\pm 1}\) denote the \(\pm 1\)-eigenspaces of \(2h_\fl\) on \(V\).
Both signs of \(\lambda\) in the definition of \(D\) lead to isomorphic extended Lie algebras \(\g_C \rtimes \R D\), so that we may without loss of generality assume that \(\lambda = 1\).
In this case, the 3-grading induced by \(D\) is given by
\[\g_{C,-1} = \fl_{-1}(h_\fl), \quad \g_{C,0} = V_{-1} \oplus\fl_0(h_\fl), \quad \g_{C,1} = V_1 \oplus \fz \oplus \fl_1(h_\fl).\]
If \(\rho: \fl \rightarrow \sp(V,\beta)\) denotes the homomorphism corresponding to the representation of \(\fl\), then this implies that \(\rho(h_\fl)\) is an Euler element of \(\sp(V,\Omega)\) (cf.\ \cite[Ex.\ 2.4]{Oeh20b}).
Let \(D_V := D - \ad_{\g_C} h_\fl\) and set \(\g := \g_C \rtimes \R D_V\). Then \(D \in \g\) is an Euler element of \(\g\).

In the case where \(\fl = \sp(V,\beta)\), i.e.\ where \(\g_C = \hsp(V,\beta)\) is the Jacobi algebra, we call \(\g\) the \emph{conformal Jacobi algebra} and denote it by \(\hcsp(V,\beta)\). The reason for this definition is that \(\hcsp(V,\beta) \cong \heis(V,\beta) \rtimes \csp(V,\beta)\), where \(\csp(V,\beta) = \sp(V,\beta) \oplus \R\) is the conformal symplectic Lie algebra.

The above discussion leads to the following result below.
\begin{lem}
  \label{lem:adm-euler-hsp-homom}
  Let \(\g_C = \g(\fl,V,\fz,\beta)\) be an admissible non-reductive Lie algebra with \(\dim \fz = 1\) and let \(D \in \der(\g_C)\) be an Euler derivation that is defined in terms of an Euler element \(0 \neq h_\fl \in \fl\) of \(\fl\) as in \eqref{eq:euler-adm}. Let \(f \in \fz^*\) such that, for \(\Omega := f \circ \beta\), \((V, \Omega)\) is a symplectic \(\fl\)-module of convex type.
  Let \(\rho: \fl \rightarrow \sp(V,\Omega)\) be the homomorphism corresponding to the representation of \(\fl\) and let \(\g := \g_C \rtimes \R D_V\), where \(D_V := D - \ad_{\g_C}h_\fl\).
  Then the natural homomorphism \(\gamma_f: \g_C \rightarrow \hsp(V,\Omega)\) (cf.\ \eqref{eq:adm-jacobi-homom}) extends to a homomorphism
  \begin{equation}
    \label{eq:adm-jacobi-homom-ext}
    \tilde\gamma_f : \g \rightarrow \hcsp(V,\Omega), \quad \tilde\gamma_f\lvert_{\g_C} := \gamma_f,\quad \tilde\gamma_f(D_V) := D_V,
  \end{equation}
  which maps \(h_\fl + D_V\) to an Euler element of \(\hcsp(V,\Omega)\).
\end{lem}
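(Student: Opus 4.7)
My plan is to argue by direct computation in three steps, after first pinning down what $D_V$ looks like on each side.

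First I would isolate the action of $D_V$ explicitly. From \eqref{eq:euler-adm} with $\lambda = 1$, one reads off
\[
D(v,z,x) = \bigl(\tfrac12 v + h_\fl.v,\; z,\; [h_\fl,x]\bigr),
\]
while $\ad_{\g_C} h_\fl$ sends $(v,z,x)$ to $(h_\fl.v, 0, [h_\fl,x])$; subtracting gives $D_V(v,z,x) = (\tfrac12 v, z, 0)$ on $\g_C$. The identical formula defines a derivation of $\hsp(V,\Omega)$ (this is essentially the definition of $\hcsp(V,\Omega)$ recorded just before the lemma), so we may view $D_V$ as an element of both $\g$ and $\hcsp(V,\Omega)$.

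Second, I would verify that $\tilde\gamma_f$ is a Lie algebra homomorphism. Since $\gamma_f$ is already a homomorphism on $\g_C$ (cf.\ \eqref{eq:adm-jacobi-homom}), the only brackets that need to be checked are $[D_V, X]$ for $X \in \g_C$. For $X = (v,z,x)$ one has
\[
\tilde\gamma_f([D_V, X]) = \gamma_f\bigl(\tfrac12 v, z, 0\bigr) = \bigl(\tfrac12 v, f(z), 0\bigr) = D_V\bigl(v, f(z), \rho(x)\bigr) = [\tilde\gamma_f(D_V), \tilde\gamma_f(X)],
\]
so $\tilde\gamma_f$ respects brackets.

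Third, I would show that $\tilde\gamma_f(h_\fl + D_V) = \rho(h_\fl) + D_V$ is an Euler element of $\hcsp(V,\Omega)$ by decomposing $\hcsp(V,\Omega) = V \oplus \fz \oplus \sp(V,\Omega) \oplus \R D_V$ and checking the spectrum of $\ad(\rho(h_\fl) + D_V)$ on each summand. Since $2h_\fl$ acts on $V$ as an antisymplectic involution, $\rho(h_\fl)$ has eigenvalues $\pm\tfrac12$ on $V$, hence $\rho(h_\fl) + \tfrac12 \id_V$ has eigenvalues in $\{0,1\}$. On $\fz$ the element $\rho(h_\fl)$ acts trivially while $D_V$ acts as the identity, giving eigenvalue $1$. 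On $\sp(V,\Omega)$ the element $D_V$ vanishes and $\rho(h_\fl)$ is an Euler element, as recalled in the paragraph preceding the lemma, so its spectrum lies in $\{-1,0,1\}$. Finally $[D_V + \rho(h_\fl), D_V] = 0$ since $D_V$ vanishes on $\rho(h_\fl) \in \sp(V,\Omega)$. Collecting the four contributions shows $\spec\bigl(\ad(\rho(h_\fl) + D_V)\bigr) \subset \{-1,0,1\}$.

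The argument is a sequence of short verifications; the only point requiring care is bookkeeping for the two different interpretations of $D_V$ (as a derivation of $\g_C$ inside $\g$, and as a derivation of $\hsp(V,\Omega)$ inside $\hcsp(V,\Omega)$), and checking that $\gamma_f$ intertwines them. Once that compatibility is clear, the Euler-element claim is an eigenvalue count using that $\rho(h_\fl)$ is already known to be an Euler element of $\sp(V,\Omega)$.
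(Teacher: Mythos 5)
Your proposal is correct, and it is essentially the argument the paper intends: the lemma is stated there as an immediate consequence of the preceding discussion (the explicit form \eqref{eq:euler-adm} of $D$, the fact that $2h_\fl$ acts on $V$ as an antisymplectic involution, and that $\rho(h_\fl)$ is an Euler element of $\sp(V,\Omega)$), and your three steps — computing $D_V(v,z,x) = (\tfrac12 v, z, 0)$, checking that $\gamma_f$ intertwines the two incarnations of $D_V$, and counting eigenvalues of $\ad(\rho(h_\fl)+D_V)$ on $V \oplus \fz \oplus \sp(V,\Omega) \oplus \R D_V$ — are exactly the verification the paper leaves implicit. The only point worth making explicit is that diagonalizability (not just the spectrum condition) holds on each summand, which follows from $2\rho(h_\fl)$ being an involution on $V$ and from $\rho(h_\fl)$ being an Euler element of $\sp(V,\Omega)$, as you implicitly use.
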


\begin{remark}
  Notice that, in Lemma \ref{lem:adm-euler-hsp-homom}, we only assume the existence of the non-trivial Euler derivation, but not that the homomorphism \eqref{eq:adm-jacobi-homom-ext} injective.

  In general, injectivity of \(\gamma_f\) can be achieved as follows:
  Since \(\fz\) is one-dimensional by assumption and \(f\) is non-zero, the kernel of \(\gamma_f\) is given by the ideal \(\fl' := \{x \in \fl : x.V = \{0\}\}\) of \(\g_C\).
  Let \(\fl_V \subset \fl\) be an ideal of \(\fl\) complementing \(\fl'\).
  Then \(\g_C = \g(\fl_V, V, \fz, \beta) \oplus \fl'\) is a direct sum of admissible Lie algebras, and the restriction of \(\gamma_f\) to \(\g(\fl_V,V,\fz,\beta)\) is injective.
\end{remark}

We can embed the conformal Jacobi algebra into a symplectic Lie algebra in such a way that the Euler element of the Jacobi algebra constructed above is an Euler element of the larger Lie algebra. In fact, this can be done in a more general setting, as we will show in this section.

For the remainder of this section, let \(\g\) be a hermitian simple Lie algebra of tube type. Fix an \(H\)-element \(H_0 \in \g\).
We define
\[H := \pmat{1 & 0 \\ 0 & -1}, ~ X := \pmat{0 & 1 \\ 0 & 0}, ~ Y := \pmat{0 & 0 \\ 1 & 0}, ~ \text{and} ~ U := \pmat{0 & 1 \\ -1 & 0}.\]
Let  \(\kappa : (\fsl(2,\R), \frac{1}{2}U) \rightarrow (\g, H_0)\) be an \((H_1)\)-homomorphism. We define \(A_\kappa := \kappa(A)\) for \(A \in \{H, X, Y, U\} \subset \fsl(2,\R)\).
Then \(H_\kappa\) induces a 5-grading on \(\g\), i.e.\ we have
\[\g = \g_{-2}(H_\kappa) \oplus \g_{-1}(H_\kappa) \oplus \g_0(H_\kappa) \oplus \g_1(H_\kappa) \oplus \g_2(H_\kappa).\]
(cf.\ \cite[Ch.\ III, Lem.\ 1.2]{Sa80}).
We recall from \cite[p.\ 110f.]{Sa80} that \(\kappa\) can be chosen in such a way that \(\dim \g_{\pm 2}(H_\kappa) = 1\).
We call such a homomorphism an \emph{\((H_1)\)-homomorphism of multiplicity \(1\)}.
From now on, unless stated otherwise, we assume that \(\kappa\) has this property.
We denote by \(\theta\) the Cartan involution of \(\g\) determined by \(\g^\theta = \ker(\ad H_0)\). Notice that we have \(\theta(H_\kappa) = -H_\kappa\) and \(\theta(X_\kappa) = -Y_\kappa\). 

\begin{rem}
  \label{rem:sl2-triples-orbits}
  The elements \(X_\kappa\) and \(Y_\kappa\) are nilpotent elements of \(\g\), i.e.\ their adjoint representations are nilpotent endomorphisms. More precisely, they are nilpotent elements of convex type, i.e.\ their adjoint orbits \(\cO_{X_\kappa}\) and \(\cO_{Y_\kappa}\) are contained in the up to sign unique minimal pointed generating invariant cone \(\pm\Wmin\) (cf.\ Definition \ref{def:u-hrw}).
  Suppose that the sign of \(\Wmin\) is chosen so that \(X_\kappa \in \Wmin\). Then \(Y_\kappa \in -\Wmin\) because \(\Wmin\) is \(\theta\)-invariant and \(\theta(X_\kappa) = -Y_\kappa\).
  We identify \(\g^*\) with \(\g\) via the Cartan--Killing form of \(\g\).
  Then, since \(\Wmin \subset \Wmax = -\Wmin^*\) and the dual cone \(\cO_{X_\kappa}^*\), resp.\ \(\cO_{Y_\kappa}^*\), is pointed and generating, we actually have \(\cO_{X_\kappa}^* = -\cO_{Y_\kappa}^* = -\Wmax\).
\end{rem}

\begin{proposition}
  \label{prop:h1-multone-parabolic-subalg}
  The Lie algebra
\[\fb_\kappa := \g_0(H_\kappa) + \g_1(H_\kappa) + \g_2(H_\kappa)\]
is a maximal parabolic subalgebra of \(\g\).
Moreover, we have \(\g_0(H_\kappa) = \R H_\kappa \oplus \g_\kappa\), where \(\g_\kappa := \fz_\g(\im \kappa)\), and \((\g_\kappa, H_0 - \frac{1}{2}U_\kappa)\) is a Lie algebra of hermitian type.
More precisely, \(\g_\kappa\) is either abelian or reductive with exactly one hermitian simple ideal of tube type and of real rank \(\rk_\R \g - 1\). 
On \(V_\kappa := \g_1(H_\kappa)\), there exists a skew-symmetric bilinear map
\[\beta : V_\kappa \times V_\kappa \rightarrow \fz_\kappa := \g_2(H_\kappa) = \R X_\kappa = \fz(\fb_\kappa) \cong \R\] 
such that \((V_\kappa,\beta)\) is a symplectic \(\g_\kappa\)-module of convex type with respect to the adjoint action. 
In particular, we have
\[\fb_\kappa = \fj_\kappa \rtimes \R H_\kappa \quad \text{with} \quad \fj_\kappa := \g(\g_\kappa, V_\kappa, \fz_\kappa, \beta)\]
in terms of Spindler's construction, and \(\fj_\kappa\) is admissible.
\end{proposition}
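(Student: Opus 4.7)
The plan is to analyze the adjoint action of $\kappa(\fsl(2,\R))$ on $\g$ and derive every structural claim from the multiplicity-one hypothesis together with general facts about gradings and centralizers of reductive subalgebras. Roughly: the $\fsl(2,\R)$-decomposition gives the identification of $\g_0(H_\kappa)$; the 5-grading automatically produces the parabolic; the $(H_1)$-condition transports the $H$-element to $\g_\kappa$; and Spindler's theorem packages everything into an admissible $\fj_\kappa$.

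First I would decompose $\g$ as a $\kappa(\fsl(2,\R))$-module. Since $\ad_\g(H_\kappa)$ has spectrum in $\{-2,-1,0,1,2\}$, each irreducible summand has dimension at most three, so only the trivial (weight $0$), the two-dimensional (weights $\pm 1$), and the adjoint (weights $-2,0,2$) can occur. The multiplicity-one condition $\dim \g_{\pm 2}(H_\kappa) = 1$ forces exactly one three-dimensional summand, necessarily equal to $\im \kappa$. Since the trivial summands are precisely $\g_\kappa = \fz_\g(\im \kappa)$, this yields $\g_0(H_\kappa) = \R H_\kappa \oplus \g_\kappa$ at once, together with $\g_2(H_\kappa) = \R X_\kappa = \fz(\fb_\kappa)$. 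That $\fb_\kappa$ is parabolic is then the standard fact for the sum of non-negative weight spaces of a semisimple element with integer eigenvalues; maximality follows because the grading is generated by a single element $H_\kappa$, so the corresponding set of simple roots omits exactly one.

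Next I would analyze $\g_\kappa$. As the centralizer of a reductive subalgebra in a reductive Lie algebra it is reductive. The $(H_1)$-condition $\kappa \circ \ad(\tfrac{1}{2}U) = \ad H_0 \circ \kappa$ gives $[H_0 - \tfrac{1}{2}U_\kappa,\, \im \kappa] = 0$, so $H_0 - \tfrac{1}{2}U_\kappa \in \g_\kappa$. Since $\ad(\tfrac{1}{2}U_\kappa)$ acts with spectrum $\{0,\pm i\}$ on $(\im \kappa)_\C$ and trivially on $(\g_\kappa)_\C$, while $\ad H_0$ has spectrum $\{0,\pm i\}$ on $\g_\C$, one checks that $\ad(H_0 - \tfrac{1}{2}U_\kappa)$ restricts to $\ad H_0|_{(\g_\kappa)_\C}$. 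The kernel in $\g_\kappa$ of this restriction is $\fk \cap \g_\kappa$, a maximal compactly embedded subalgebra of $\g_\kappa$, giving the desired $H$-element. For the refined statement, the classification of multiplicity-one $(H_1)$-homomorphisms from Satake's book identifies such $\kappa$ with the ``strongly orthogonal'' construction that strips off one rank from a tube-type Jordan system: what remains is either empty (the abelian case) or again a tube-type hermitian simple ideal of rank $\rk_\R \g - 1$, possibly together with a compact centralizer absorbed into $\g_\kappa$.

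Finally I would construct $\beta$ and verify convex type. The 5-grading forces $[V_\kappa, V_\kappa] \subset \g_2(H_\kappa) = \R X_\kappa$, so $\beta(v,w) := [v,w]$ is well-defined, skew-symmetric, and $\g_0(H_\kappa)$-invariant (hence $\g_\kappa$-invariant) by the Jacobi identity. Non-degeneracy follows from $\fsl(2,\R)$-theory applied to the action of $\im \kappa$ on $V_\kappa \oplus \g_{-1}(H_\kappa)$: $\ad(Y_\kappa)$ maps $V_\kappa$ isomorphically onto $\g_{-1}(H_\kappa)$, and the Cartan--Killing form pairs $V_\kappa$ with $\g_{-1}(H_\kappa)$ non-degenerately, which translates into non-degeneracy of $\beta$. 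For the convex-type property I would apply Theorem~\ref{thm:spindler-adm-symp-mod}(b): choose $f \in \fz_\kappa^*$ with $f(X_\kappa) > 0$ and take $x$ a suitable multiple of the $H$-element $H_0 - \tfrac{1}{2}U_\kappa$ (whose adjoint action provides a compatible complex structure on $V_\kappa$); positivity of the Hamiltonian $(f \circ \beta)_x$ is then equivalent to $X_\kappa$ lying in the minimal cone $\Wmin$, which is exactly Remark~\ref{rem:sl2-triples-orbits}. The identification $\fb_\kappa = \fj_\kappa \rtimes \R H_\kappa$ is then a direct translation of the grading via Spindler's construction, and admissibility of $\fj_\kappa$ follows from Theorem~\ref{thm:spindler-adm-symp-mod}(b), since $\g_\kappa$ is quasihermitian and the positivity condition has been verified.

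The main obstacles will be the refined classification of $\g_\kappa$ in Step 3 — genuinely using the multiplicity-one hypothesis rather than general $\fsl(2,\R)$-theory — and the convex-type verification in Step 4, where the correct choice of $(f,x)$ and the identification of the induced complex structure on $V_\kappa$ with positivity of the minimal cone require careful bookkeeping of signs and Cartan-involution compatibilities.
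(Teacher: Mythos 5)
Your overall architecture matches the paper's proof (identify \(\g_0(H_\kappa)\), establish parabolicity, transport the \(H\)-element via the \((H_1)\)-relation, take \(\beta = [\cdot,\cdot]\lvert_{V_\kappa \times V_\kappa}\), verify convex type, invoke Theorem \ref{thm:spindler-adm-symp-mod}), and some of your direct arguments are sound where the paper simply cites \cite{Sa80}: the \(\fsl(2,\R)\)-isotypic decomposition correctly gives \(\g_0(H_\kappa) = \R H_\kappa \oplus \g_\kappa\) (multiplicity one forces a unique three-dimensional summand, and the trivial isotypic component is precisely \(\fz_\g(\im \kappa)\)), and non-degeneracy of \(\beta\) via the Killing pairing of \(\g_{\pm 1}(H_\kappa)\) and injectivity of \(\ad Y_\kappa\) on \(\g_1(H_\kappa)\) is fine. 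However, your justification of \emph{maximality} of \(\fb_\kappa\) is a non sequitur: \emph{every} parabolic subalgebra is the sum of the nonnegative eigenspaces of a single grading element, so "generated by one element" does not by itself mean that only one simple root is omitted. Maximality does follow from the multiplicity-one hypothesis, but only after conjugating \(H_\kappa\) into a maximal split abelian subspace and inspecting the restricted root system of type \(C_{r}\): if more than one simple restricted root were positive on \(H_\kappa\), the long roots \(2\varepsilon_i\) would force \(\dim \g_2(H_\kappa) \geq 2\). The paper avoids this by citing \cite[Prop.\ IV.22]{HNO94}.

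The more serious gap is the convex-type step. You need \emph{positive definiteness} of \((f\circ\beta)_x\) for a concrete \(x\), and you assert this is "equivalent to \(X_\kappa \in \Wmin\)", i.e.\ to Remark \ref{rem:sl2-triples-orbits}. No such equivalence is established, and containment of the orbit \(\cO_{X_\kappa}\) in \(\Wmin\) does not formally yield strict positivity of the Hamiltonian: pairing \(e^{\ad v}Y_\kappa\), \(v \in V_\kappa\), against invariant functionals produces inequalities that mix constant, quadratic and quartic terms in \(v\), not the sign of \((f\circ\beta)(x.v,v)\) alone; and deducing positivity from \(W_f = \pm\Wmax \cap \fj_\kappa\) (Proposition \ref{prop:adm-parabolic-emb-cone}) would be circular, since that proposition presupposes the present one. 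You also use without proof that \(I_0 := 2\ad_{V_\kappa}(H_0 - \tfrac{1}{2}U_\kappa)\) is a complex structure on \(V_\kappa\); naive spectral bookkeeping only gives eigenvalues in \(\{\pm\tfrac{i}{2}, \pm\tfrac{3i}{2}\}\) on \((V_\kappa)_\C\), and excluding \(\pm\tfrac{3i}{2}\) is nontrivial. Both missing points are exactly what the paper imports from the literature: \cite[Ch.\ III Thm.\ 3.3]{Sa80} produces \(\beta\) together with an \((H_2)\)-homomorphism \((\g_\kappa, H_0 - \tfrac{1}{2}U_\kappa) \rightarrow (\sp(V_\kappa,\beta), I_\kappa)\), and, combined with faithfulness and irreducibility of \(V_\kappa\) as a \(\g_0(H_\kappa)\)-module (\cite[p.\ 112]{Sa80}), \cite[Thm.\ IV.6]{Ne94} then yields the convex-type property. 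Without these inputs (or an honest moment-map computation replacing them), your Step 4 does not close, even though the remaining packaging via Spindler's construction and the appeal to \cite[pp.\ 114--118]{Sa80} for the structure of \(\g_\kappa\) agree with the paper.
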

\begin{proof}
  \(\fb_\kappa\) is maximal parabolic by \cite[Prop.\ IV.22]{HNO94}.
  By \cite[p.\ 112]{Sa80}, the subspace \(V_\kappa\) is a faithful irreducible \(\g_0(H_\kappa)\)-module with respect to the adjoint action and it satisfies \([V_\kappa,V_\kappa] = \g_2(H_\kappa)\).
  By \cite[Ch.\ III Prop.\ 1.3]{Sa80}, the subalgebra \((\g_\kappa, H_0 - \frac{1}{2}U_\kappa)\) is of hermitian type, and by \cite[Ch.\ III Thm. 3.3]{Sa80}, there exists a symplectic form \(\beta : V_\kappa \times V_\kappa \rightarrow \g_2(H_\kappa)\) such that the adjoint action of \(\g_\kappa\) on \(V_\kappa\) induces an \((H_2)\)-homomorphism \((\g_\kappa, H_0) \rightarrow (\sp(V_\kappa, \beta), I_\kappa)\) for some \(H\)-element \(I_\kappa \in \sp(V_\kappa,\beta)\).
  Combining this with the faithfulness and irreducibility of the module \(V_\kappa\), this shows that \((V_\kappa,\beta)\) is a symplectic \(\g_\kappa\)-module of convex type by \cite[Thm.\ IV.6]{Ne94}.
  Now Theorem \ref{thm:spindler-adm-symp-mod} implies that \(\fj_\kappa = \g(\g_\kappa, V_\kappa, \fz_\kappa, \beta)\) is admissible, and we have \(\fb_\kappa = \fj_\kappa \rtimes \R H_\kappa\) by construction.

  That \(\g_\kappa\) is either abelian or reductive with exactly one simple hermitian ideal of tube type follows from \cite[p.\ 114--118]{Sa80} (see also Table \ref{table:h1-multone-class}).
\end{proof}

\begin{table}[H]
  \centering
  \begin{tabular}{|c|c|}
    \hline
    \(\g\) & \(\g_\kappa\) \\
    \hline
    \(\su(p,p)\, (p \geq 1)\) & \(\fu(p-1,p-1)\)\\
    \hline
    \(\so^*(2n)\, (n \text{ even})\) & \(\so^*(2n - 4) \oplus \fsl(1,\H)\)\\
    \hline
    \(\sp(2n,\R)\) & \(\sp(2n - 2, \R)\)\\
    \hline
    \(\so(2,n)\, (n \geq 3)\) & \(\fsl(2,\R) \oplus \so(n - 2)\)\\
    \hline
    \(\fe_{7(-25)}\) & \(\so(2,10)\)\\
    \hline
  \end{tabular}
  \caption{Lie subalgebras \(\g_\kappa\) for all hermitian simple Lie algebras of tube type and all \((H_1)\)-homomorphisms \(\kappa: \fsl(2,\R) \rightarrow \g\) of multiplicity \(1\). This table is based on \cite[p.\ 114--118]{Sa80}, where \(\g_\kappa\) is classified for all hermitian simple Lie algebras and all multiplicities of \(\kappa\).}
    \label{table:h1-multone-class}
\end{table}

\begin{prop}
  \label{prop:adm-parabolic-emb-cone}
  In the context of {\rm Proposition \ref{prop:h1-multone-parabolic-subalg}}, let \(f := \la Y_\kappa, \cdot \ra \in \fz_\kappa^*\), where \(\la \cdot, \cdot \ra\) denotes the Cartan--Killing form of \(\g\). Let \(W_f \subset \fj_\kappa\) be the preimage of the cone \(\hsp(V_\kappa,\beta)_+\) under the canonical homomorphism \(\gamma_f\) (cf.\ \eqref{eq:adm-jacobi-homom}). Then \(W_f \in \{\pm \Wmax \cap \fj_\kappa\}\), where \(\Wmax \subset \g\) is a maximal pointed generating invariant closed convex cone in \(\g\).
\end{prop}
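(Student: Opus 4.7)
The plan is to combine the characterisation $\Wmax = \cO_{Y_\kappa}^{*}$ from Remark \ref{rem:sl2-triples-orbits} with an explicit computation in the $5$-grading $\g = \bigoplus_{k=-2}^{2}\g_k(H_\kappa)$, and then to pin down the identification by invoking the uniqueness of invariant cones in the admissible Lie algebra $\fj_\kappa$, whose centre $\fz_\kappa$ is one-dimensional.

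Writing $Y \in \fj_\kappa$ in its Spindler form as $Y = v + z' + x$ with $v \in V_\kappa$, $z' \in \fz_\kappa$, $x \in \g_\kappa \subset \g_0(H_\kappa)$, one notes that $Y$ has no components in $\g_{-2}(H_\kappa) \oplus \g_{-1}(H_\kappa)$. For $w \in V_\kappa$, I expand
\[ \Ad(\exp w)\, Y_\kappa = \sum_{k=0}^{4} \tfrac{1}{k!}(\ad w)^k Y_\kappa, \]
where the $k$-th summand lies in $\g_{k-2}(H_\kappa)$. Since the Cartan--Killing form pairs $\g_i(H_\kappa)$ non-trivially only with $\g_{-i}(H_\kappa)$, its invariance together with the identifications $\beta(v,w) = [v,w]$ and $f = \langle Y_\kappa, \cdot\rangle$ yield after a short calculation
\[ \langle Y, \Ad(\exp w)\, Y_\kappa\rangle = f(z') + \Omega(v,w) + \tfrac{1}{2}\Omega(xw,w). \]
Replacing $w$ by $tw$ for $t \in \R$ turns this into a quadratic in $t$ whose non-negativity for all $t$ and $w$ is equivalent (up to a uniform rescaling) to the defining condition $\Omega(xw,w) + \Omega(v,w) + f(z') \geq 0$ of $\hsp(V_\kappa,\Omega)_+$, which pulls back under $\gamma_f$ to $W_f$. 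Thus $\Wmax \cap \fj_\kappa \subset W_f$, once the sign is fixed using that $X_\kappa \in W_f$ (immediate from the defining inequality, since the constant reduces to $\langle Y_\kappa, X_\kappa\rangle > 0$) and $X_\kappa \in \Wmin \subset \Wmax$ by Remark \ref{rem:sl2-triples-orbits}.

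For the reverse inclusion, I would invoke the structure theory of admissible Lie algebras: both $W_f$ and $\Wmax \cap \fj_\kappa$ are closed invariant convex cones in $\fj_\kappa$, which is admissible with $\dim \fz_\kappa = 1$ (Proposition \ref{prop:h1-multone-parabolic-subalg}); such cones are determined up to sign by their intersection with a compactly embedded Cartan $\ft_{\g_\kappa} + \fz_\kappa$ together with the sign of the central character $f$. Since both cones meet $\fz_\kappa$ in the same half-line, the classification forces $W_f = \Wmax \cap \fj_\kappa$. The main technical obstacle is justifying rigorously that pairing non-negatively with the $\exp(V_\kappa)$-orbit of $Y_\kappa$ already implies pairing non-negatively with the full $G$-orbit of $Y_\kappa$; this can be handled either by the uniqueness argument just sketched, by a Bruhat-type density argument showing $\exp(V_\kappa) \cdot Y_\kappa$ is dense in $\cO_{Y_\kappa}$ near $Y_\kappa$ relative to the parabolic $\fb_\kappa$, or by a representation-theoretic route using a minimal holomorphic highest weight representation of a group integrating $\g$: its positive cone is $\pm\Wmax$, and its restriction to $\fj_\kappa$ factors through $\gamma_f$ onto an oscillator-type representation of $\hsp(V_\kappa,\Omega)$ whose positive cone is precisely $\hsp(V_\kappa,\Omega)_+$.
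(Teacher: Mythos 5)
Your proposal has a genuine gap at exactly the step you yourself flag as the main obstacle, namely the inclusion \(W_f \subset \Wmax\) (for the appropriate sign), and the one route you actually argue, the uniqueness argument, is incorrect. Pointed generating invariant cones in the admissible Lie algebra \(\fj_\kappa\) are determined by their intersection with a full compactly embedded Cartan subalgebra (Definition \ref{def:u-hrw}(c)), \emph{not} by their trace on the one-dimensional center \(\fz_\kappa\): for instance, when \(\g_\kappa \cong \sp(2n,\R)\) with \(n \geq 2\) (Table \ref{table:h1-multone-class}), intersecting \(W_f\) with the invariant preimage of a minimal invariant cone of \(\g_\kappa\) under the quotient homomorphism \(\fj_\kappa \rightarrow \g_\kappa\) yields a pointed generating invariant closed convex cone which meets \(\fz_\kappa\) in the same half-line as \(W_f\) but differs from it, since its trace on \(\g_\kappa\) is a minimal cone while that of \(W_f\) is maximal and \(\Wmin \neq \Wmax\) in \(\sp(2n,\R)\) for \(n \geq 2\). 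So ``both cones meet \(\fz_\kappa\) in the same half-line'' forces nothing, and showing that the traces on the full Cartan agree is essentially the assertion being proved. Your density alternative is the right mechanism but is stated too weakly: density of \(\exp(V_\kappa).Y_\kappa\) in \(\cO_{Y_\kappa}\) \emph{near} \(Y_\kappa\) does not suffice, because the closure of the orbit \(J(Y_\kappa)\), \(J := \Inn_\g(\fj_\kappa)\), is only \(J\)-invariant, so local density at one point does not propagate over the whole \(G\)-orbit. What is needed, and what the paper's proof rests on, is the \emph{global} density of \(J(Y_\kappa)\) in \(\cO_{Y_\kappa} = \Inn(\g)(Y_\kappa)\), quoted from \cite[Prop.\ V.9]{HNO94}; with that input the argument closes at once, since \(Y \in W_f\) pairs non-negatively with \(J(Y_\kappa)\), hence by closedness of the condition with all of \(\cO_{Y_\kappa}\), so \(Y \in \cO_{Y_\kappa}^* = \Wmax\) (Remark \ref{rem:sl2-triples-orbits}). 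Your representation-theoretic route presupposes without proof that the positive cone of the minimal highest weight representation of \(\tilde\Sp(\tilde V, \tilde\Omega)\) is \(\pm\Wmax\); it could in principle be assembled from Proposition \ref{prop:rep-extjac-metaplectic-sp-ext} and Remark \ref{rem:rep-nu-pos-cone-bounded-cone}, but as written it is a gesture, not an argument.

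Two further points on your forward inclusion, which is otherwise sound. First, your pairing formula \(\la Y, \Ad(\exp w)Y_\kappa\ra = f(z') + \Omega(v,w) + \tfrac{1}{2}\Omega(x.w,w)\) is correct, but the ``uniform rescaling'' claim is not: replacing \(w\) by \(tw\) rescales the quadratic and linear terms differently, and minimizing over \(t\) yields the discriminant condition \(\Omega(v,w)^2 \leq 2\,\Omega(x.w,w)f(z')\), whereas the inequality displayed in \eqref{eq:hsp-plus-cone} yields \(\Omega(v,w)^2 \leq 4\,\Omega(x.w,w)f(z')\); these cut out genuinely different cones, and no automorphism \(\alpha_r\) identifies one with the other. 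The robust way to make the identification is the one the paper uses: \(W_{f_{\fj_\kappa}} = \cO^*_{f_{\fj_\kappa}}\) by \cite[Prop.\ VIII.2.4]{Ne00}, so that your computation should be read as verifying duality against the \(\exp(\ad V_\kappa)\)-part of the coadjoint orbit, the factor \(\tfrac{1}{2}\) being the invariantly correct normalization of the Hamiltonian function associated with \(x\). Second, your computation alone only gives non-negativity against \(\exp(\ad V_\kappa)Y_\kappa\); to pass to the full \(J\)-orbit, which is what the duality statement refers to, you should use that \(W_f\) is invariant under \(\Inn(\fj_\kappa)\). The sign normalization via \(\la X_\kappa, Y_\kappa\ra > 0\) and \(X_\kappa \in \Wmin \subset \Wmax\) is fine.
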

\begin{proof}
  Consider the positive definite bilinear form \((\cdot, \cdot) := - \la \theta \cdot, \cdot\ra\). Then \(f = (X_\kappa, \cdot)\). Since this formula also makes sense on \(\g\), we regard \(f\) as an element of \(\g^*\) and denote its restriction to \(\fj_\kappa\) by \(f_{\fj_\kappa}\). Notice also that \(\g_\kappa + V_\kappa \subset \ker(f)\) because this space is \((\cdot, \cdot)\)-orthogonal to \(\fz_\kappa\) by \cite[Prop.\ V.2]{HNO94}.

  Let \(G := \Inn(\g)\) and set \(J := \Inn_\g(\fj_\kappa)\).
  By \cite[Prop.\ VIII.2.4]{Ne00}, we have \(W_{f_{\fj_\kappa}} = \cO^*_{f_{\fj_\kappa}}\).
  If we identify \(\fj_\kappa\) with its dual via \(\la \cdot, \cdot\ra\), we obtain
  \[W_{f_{\fj_\kappa}} = \cO^*_{f_{\fj_\kappa}} = \{Y \in \fj_\kappa : \la J(Y_\kappa), Y \ra \subset \R_{\geq 0}\}.\]
  In the following, we identify \(\g^*\) with \(\g\) via the Cartan--Killing form.
  Fix a maximal pointed generating invariant cone \(\Wmax\) in \(\g\) so that \(Y_\kappa \in -\Wmax\) and suppose that \(Y \in \Wmax \cap \fj_\kappa\).
  Then \(Y \in \Wmax = \cO_{Y_\kappa}^*\) (cf.\ Remark \ref{rem:sl2-triples-orbits}). Since
  \[\cO_{Y_\kappa}^* = \{Y \in \g : \la G(Y_\kappa), Y \ra \subset \R_{\geq 0}\} \subset \{Y \in \g : \la J(Y_\kappa), Y \ra \subset \R_{\geq 0}\},\]
  we thus have \(Y \in W_{f_{\fj_\kappa}}\).

  Conversely, let \(Y \in W_{f_{\fj_\kappa}}\). Then we have
  \[\la J(Y_\kappa), Y \ra \subset \R_{\geq 0}.\]
  Since the \(J\)-orbit \(J(Y_\kappa) \subset \g\) is dense in \(\cO_{Y_\kappa} = G(Y_\kappa)\) by \cite[Prop.\ V.9]{HNO94}, we actually have
  \[\la G(Y_\kappa), Y \ra \subset \R_{\geq 0}\]
  and thus \(Y \in \cO_{Y_\kappa}^* = \Wmax\).
\end{proof}

\begin{lem}
  \label{lem:quasiherm-euler-h1-sl2}
  Let \(\g\) be a quasihermitian reductive Lie algebra with \(H\)-element \(H_0\) and let \(h \in [H_0,\g]\) be an Euler element of \(\g\). Then there exists an \((H_1)\)-homomorphism \(\kappa : (\fsl(2,\R), \frac{1}{2}U) \rightarrow (\g, H_0)\) such that \(\ad(\kappa(\frac{1}{2}H)) = \ad(h)\).
\end{lem}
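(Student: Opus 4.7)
The plan is to reduce to the case of a non-compact hermitian simple Lie algebra of tube type, construct a standard $(H_1)$-homomorphism from a Cayley $\fsl_2$-triple, and then transport it onto $h$ by conjugating with an element of $K := \la \exp \fk \ra$, where $\fk := \ker(\ad H_0)$.

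First, I would decompose $\g = \fz(\g) \oplus \g_{\mathrm{cpt}} \oplus \bigoplus_{i=1}^n \g_i$, with $\g_{\mathrm{cpt}}$ the sum of compact simple ideals and each $\g_i$ non-compact hermitian simple. The $H$-element vanishes on $\fz(\g) \oplus \g_{\mathrm{cpt}}$, so $H_0 = \sum_{i=1}^n H_0^i$ with $H_0^i$ an $H$-element of $\g_i$, and $[H_0, \g] = \bigoplus_i [H_0^i, \g_i]$. Decomposing $h = \sum h_i$ accordingly, each $h_i$ is an Euler element of $\g_i$; by \cite[Thm.\ 3.10]{MN21}, $h_i \neq 0$ is possible only when $\g_i$ is of tube type. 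Constructing $\kappa_i$ for each such factor and extending by zero on the others gives $\kappa = \sum \kappa_i$, so I may assume $\g$ is non-compact hermitian simple of tube type.

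In this setting I would invoke a Cayley triple: fix a compactly embedded Cartan subalgebra $\ft \subset \fk$, a $\fk$-adapted positive system, and a maximal set $\{\gamma_1, \ldots, \gamma_r\}$ of strongly orthogonal non-compact positive roots with normalized root vectors $E_j$, $F_j = \overline{E_j}$ satisfying $[E_j, F_j] = H_{\gamma_j}$, so that $H_0 = \tfrac{i}{2}\sum_j H_{\gamma_j}$. A direct computation then shows that the real elements
\[
h_0 := \tfrac{1}{2}\sum_j (E_j + F_j), \quad e_0 := \tfrac{i}{2}\sum_j (F_j - E_j + H_{\gamma_j}), \quad f_0 := \tfrac{i}{2}\sum_j (F_j - E_j - H_{\gamma_j})
\]
form an $\fsl_2$-triple with $h_0 \in [H_0, \g]$ an Euler element, and moreover $[H_0, e_0] = h_0 = [H_0, f_0]$. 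These identities, combined with setting $\kappa_0(\tfrac{1}{2}U) := H_0 \in \fk$, directly yield the $(H_1)$-condition $\ad H_0 \circ \kappa_0 = \kappa_0 \circ \ad(\tfrac{1}{2}U)$ for the homomorphism $\kappa_0 : \fsl(2,\R) \to \g$ defined by $\tfrac{1}{2}H \mapsto h_0$, $X \mapsto e_0$, $Y \mapsto f_0$. The existence and properties of such a Cayley triple are classical in the structure theory of hermitian symmetric spaces of tube type (cf.\ \cite[Ch.\ III]{Sa80}).

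Finally, for an arbitrary Euler element $h \in [H_0, \g] = \fp$, by \cite[Thm.\ 3.10]{MN21} all non-zero Euler elements of $\g$ are $\Inn(\g)$-conjugate up to sign. The main additional input I need is that Euler elements in $\fp$ form a single $K$-orbit up to sign; this follows in the tube type case from the Jordan algebra structure on $\fp$, under which Euler elements correspond to maximal tripotents, which form a single $K$-orbit. Replacing $(e_0, f_0)$ by $(-e_0, -f_0)$ if necessary, I obtain $k \in K$ with $\Ad(k) h_0 = h$; since $H_0 \in \fz(\fk)$, $\Ad(k) H_0 = H_0$, so $\kappa := \Ad(k) \circ \kappa_0$ remains an $(H_1)$-homomorphism and satisfies $\ad(\kappa(\tfrac{1}{2}H)) = \ad(h)$. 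The main subtlety in the plan is thus the $K$-orbit description of Euler elements in $\fp$; the $(H_1)$-property of the Cayley triple is a routine verification.
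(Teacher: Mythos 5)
Your reduction to the hermitian simple tube-type factors is exactly the paper's proof: the same decomposition \(\g = \fz(\g) \oplus \g_0 \oplus \bigoplus_i \g_i\), the same appeal to \cite[Thm.\ 3.10]{MN21} to see that \(h_i \neq 0\) forces tube type, the same observation that the inclusions \(\iota_i\) are \((H_1)\)-homomorphisms, and the same commuting sum \(\kappa = \sum_i \iota_i \circ \kappa_i\). Where you diverge is the core simple case: the paper simply cites \cite[Rem.\ 2.14]{Oeh20a}, whereas you reprove it via a Cayley \(\fsl_2\)-triple attached to a maximal system of strongly orthogonal noncompact roots. Your construction is correct: in tube type one indeed has \(H_0 = \tfrac{i}{2}\sum_j H_{\gamma_j}\) (both sides have the same adjoint action, and \(\ad\) is faithful on a simple algebra), your \((2h_0, e_0, f_0)\) satisfies the \(\fsl_2\)-relations, and since \(\kappa_0(\tfrac12 U) = \tfrac12(e_0 - f_0) = H_0\) is \emph{forced} by \(U = X - Y\) (it is not something you can ``set'' independently), \(\kappa_0\) is even an \((H_2)\)-homomorphism, hence \((H_1)\). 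The \(K\)-transitivity step is also true, and can be had more cheaply than via tripotents: every Euler element of \(\fp\) lies in a maximal abelian subspace of \(\fp\), hence is \(K\)-conjugate into \(\fa = \sum_j \R\,\tfrac12(E_j + F_j)\); there the Euler condition on the restricted root system (type \(C_r\)) forces \(h = \tfrac12\sum_j \epsilon_j(E_j + F_j)\) with \(\epsilon_j \in \{\pm 1\}\), and all sign patterns are conjugate under the restricted Weyl group \(N_K(\fa)/Z_K(\fa)\). So nontrivial Euler elements in \(\fp\) form a \emph{single} \(K\)-orbit, with no ``up to sign'' caveat. Your route buys self-containedness at the price of length; the paper's buys brevity at the price of an external citation.

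One step in your final paragraph would fail as written: replacing \((e_0, f_0)\) by \((-e_0, -f_0)\) does not change \(h_0\) at all (since \([-e_0,-f_0] = [e_0,f_0] = 2h_0\)), so it cannot move \(\kappa_0(\tfrac12 H)\) to \(-h_0\); worse, it sends \(\kappa_0(\tfrac12 U)\) to \(\tfrac12(f_0 - e_0) = -H_0\), destroying the \((H_1)\)-property relative to \(H_0\). Fortunately the maneuver is unnecessary: \(\theta = e^{\pi \ad H_0} = \Ad(\exp(\pi H_0))\) with \(\exp(\pi H_0) \in K\) fixes \(H_0\) and acts as \(-\id\) on \(\fp\), so \(-h_0 \in \Ad(K)h_0\) automatically (equivalently, precompose \(\kappa_0\) with the inner automorphism \(\Ad(\exp(\tfrac{\pi}{2}U))\) of \(\fsl(2,\R)\), which fixes \(U\) and sends \(H\) to \(-H\)); with the single-orbit statement above, the sign issue never arises. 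A second, harmless, inaccuracy: \(H_0\) need not lie in \(\bigoplus_i \g_i\) — it may have a component in \(\fz(\g)\), as the paper records by writing \(H_0 = H_z + \sum_k H_{0,k}\) — but only \(\ad H_0\) and \([H_0,\g]\) enter your argument, so nothing breaks.
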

\begin{proof}
  The case \(h = 0\) is trivial. If \(\g\) is hermitian simple and of tube type, then the claim follows from \cite[Rem.\ 2.14]{Oeh20a}.

  Therefore, suppose that \(h \neq 0\) and let \(\g = \fz(\g) \oplus \g_0 \oplus \bigoplus_{k=1}^n \g_k\), where \(\g_0\) is compact and \(\g_k\) is hermitian simple for \(1 \leq k \leq n\).
  Along this decomposition, we have \(H_0 = H_z + \sum_{k=1}^n H_{0,k}\), where \(H_z \in \fz(\g)\) and \(H_{0,k} \in \g_k\) is an \(H\)-element of \(\g_k\) (cf.\ \cite[Rem.\ II.2]{HNO94}).
  We recall that non-trivial Euler elements on simple quasihermitian Lie algebras exist if and only if they are hermitian simple and of tube type.
  For each \(1 \leq k \leq n\), there exists a (possibly trivial) \((H_1)\)-homomorphism
  \[\varphi_k : (\fsl(2,\R), \tfrac{1}{2}) \rightarrow (\g_k, H_{0,k}), \quad \text{with} \quad \ad(\varphi_k(\tfrac{1}{2}H)) = \ad(h_k).\]
  Since the inclusion mapping \(\iota_k : (\g_k, H_{0,k}) \rightarrow (\g, H_0)\) is an \((H_1)\)-homomorphism for each \(1 \leq k \leq n\), we obtain an \((H_1)\)-homomorphism \(\kappa := \sum_{k=1}^n \iota_k \circ \varphi_k\) with the desired properties.
\end{proof}

\begin{thm}
  \label{thm:h1-emb-euler-elements}
  Suppose that \(\g_\kappa\) is hermitian simple and of tube type and let \linebreak \(h_\kappa \in \g_\kappa^{-\theta} = [H_0,\g_\kappa]\) be a non-trivial Euler element of \(\g_\kappa\). Then \(h := \frac{1}{2}H_\kappa + h_\kappa \in \fb_\kappa\) is an Euler element of \(\g\) and \(\fb_\kappa\) and we have \(\g_1(h) = (\fb_\kappa)_1(h)\) and \(h \in \g^{-\theta}\).
\end{thm}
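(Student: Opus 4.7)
The claims $h \in \fb_\kappa$ and $h \in \g^{-\theta}$ are both immediate: $H_\kappa, h_\kappa \in \g_0(H_\kappa) \subset \fb_\kappa$ gives the first, while $\theta(H_\kappa) = -H_\kappa$ (recorded above) combined with $h_\kappa \in \g_\kappa^{-\theta}$ (by hypothesis) gives the second.

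The substantive task is to show that $h$ is an Euler element of $\g$; the corresponding statement for $\fb_\kappa$ is then automatic, since $\fb_\kappa$ is a subalgebra of $\g$. My plan is to exploit the fact that $h_\kappa \in \g_\kappa = \fz_\g(\im\kappa)$ commutes with $H_\kappa$, so $\ad(h) = \tfrac{1}{2}\ad(H_\kappa) + \ad(h_\kappa)$ is a sum of two commuting semisimple operators and is in particular diagonalizable; both summands preserve the 5-grading $\g = \bigoplus_{j=-2}^{2} \g_j(H_\kappa)$. I would then read off the spectrum piece by piece: on $\g_{\pm 2}(H_\kappa) = \R X_\kappa, \R Y_\kappa$ the operator $\ad(h_\kappa)$ vanishes (as $h_\kappa$ centralizes $\im\kappa$), so $\ad(h) = \pm 1$; on $\g_0(H_\kappa) = \R H_\kappa \oplus \g_\kappa$ the operator $\ad(h)$ reduces to $\ad(h_\kappa)$, whose spectrum lies in $\{-1, 0, 1\}$ because $h_\kappa$ is an Euler element of $\g_\kappa$ and kills $H_\kappa$; the decisive case is $\g_{\pm 1}(H_\kappa) = \pm V_\kappa$, where $\ad(h) = \pm\tfrac{1}{2}\id + \ad(h_\kappa)$ on $V_\kappa$, so the Euler property for $h$ reduces to showing that $\ad(h_\kappa)|_{V_\kappa}$ has spectrum $\{\pm\tfrac{1}{2}\}$.

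This last reduction is what I expect to be the main obstacle. To handle it, I would invoke Lemma \ref{lem:quasiherm-euler-h1-sl2} applied to $(\g_\kappa, h_\kappa)$: this yields an $(H_1)$-homomorphism $\mu: (\fsl(2,\R), \tfrac{1}{2}U) \to (\g_\kappa, H_0)$ with $\ad(\mu(\tfrac{1}{2}H)) = \ad(h_\kappa)$. The hermitian-simplicity of $\g_\kappa$ forces $\fz(\g_\kappa) = 0$, hence $\mu(\tfrac{1}{2}H) = h_\kappa$ on the nose. Composing $\mu$ with the $(H_2)$-homomorphism $\rho: \g_\kappa \to \sp(V_\kappa,\beta)$ from Proposition \ref{prop:h1-multone-parabolic-subalg} produces an $(H_1)$-homomorphism $\rho \circ \mu: (\fsl(2,\R), \tfrac{1}{2}U) \to (\sp(V_\kappa,\beta), I_\kappa)$ whose image of $\tfrac{1}{2}H$ is $\rho(h_\kappa)$. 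Since every Euler element of a symplectic Lie algebra acts on the defining symplectic module with eigenvalues $\pm\tfrac{1}{2}$, it will suffice to show $\rho(h_\kappa)$ is an Euler element of $\sp(V_\kappa,\beta)$, equivalently that the weights $\pm 1$ do not occur in the 5-grading of $\sp(V_\kappa,\beta)$ induced by $2\rho(h_\kappa)$. A uniform argument would use the convex-type hypothesis on $(V_\kappa,\beta)$; alternatively, the two tube-type cases of Table \ref{table:h1-multone-class}, namely $\g = \sp(2n,\R)$ with $\g_\kappa = \sp(2n-2,\R)$ and $\g = \fe_{7(-25)}$ with $\g_\kappa = \so(2,10)$, can be verified directly from the explicit structure of $V_\kappa$ as a faithful irreducible $\g_\kappa$-module.

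Finally, the identity $\g_1(h) = (\fb_\kappa)_1(h)$ will fall out of the same spectral analysis: on $\g_{-2}(H_\kappa)$ the operator $\ad(h)$ acts as $-1$ and on $\g_{-1}(H_\kappa) \cong V_\kappa$ as $-\tfrac{1}{2} \pm \tfrac{1}{2} \in \{-1, 0\}$, so no contribution to the $+1$-eigenspace can arise from $\g_{<0}(H_\kappa)$. Hence $\g_1(h) \subset \bigoplus_{j \geq 0} \g_j(H_\kappa) = \fb_\kappa$, which gives the desired equality $\g_1(h) = \g_1(h) \cap \fb_\kappa = (\fb_\kappa)_1(h)$.
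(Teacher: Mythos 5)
Your proposal is correct, and it takes a genuinely different route through the hard part. The paper also begins by forming the commuting sum \(\tilde\kappa = (\iota\circ\kappa_0)+\kappa\) (via Lemma \ref{lem:quasiherm-euler-h1-sl2}, as you do) to get \(\spec(\ad h)\subset\{0,\pm\tfrac12,\pm1\}\), but it then excludes the \(\pm\tfrac12\) eigenvalues by Jordan-theoretic means: it endows \(\g_1(h)\) with a simple euclidean Jordan structure via the KKT construction, identifies \(\g_{\kappa,1}(h_\kappa)\cong E^{(r-1)}\) by a case-by-case check against Table \ref{table:h1-multone-class}, and uses \(X_\kappa\in\g_1(h)\setminus\g_{\kappa,1}(h_\kappa)\) to force \(\g_1(h)\cong E^{(r)}=E\), whence \(h\) is Euler. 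You instead do direct spectral bookkeeping of \(\ad h=\tfrac12\ad H_\kappa+\ad h_\kappa\) over the five \(H_\kappa\)-graded pieces, which reduces everything to the single claim \(\spec(\ad h_\kappa\lvert_{V_\kappa})=\{\pm\tfrac12\}\), i.e.\ that \(2\ad_{V_\kappa}(h_\kappa)\) is an involution; this is precisely what the paper records in Remark \ref{rem:h1-emb-euler-jordan}(2) and Example \ref{ex:jacobi-alg-max-parabolic} by citing \cite[Lem.\ 3.4]{Oeh20b}, and that citation applies here because Proposition \ref{prop:h1-multone-parabolic-subalg} establishes \((V_\kappa,\beta)\) as a symplectic \(\g_\kappa\)-module of convex type -- so the ``uniform argument from the convex-type hypothesis'' you gesture at does exist in the cited literature, and your case-by-case fallback is also sound (the faithful irreducible module is the defining module for \(\g_\kappa\cong\sp(2n-2,\R)\) and the 32-dimensional spinor module for \(\g_\kappa\cong\so(2,10)\), both with spectrum \(\pm\tfrac12\); note only that Table \ref{table:h1-multone-class} also admits \(\g=\so(2,3)\) with \(\g_\kappa\cong\fsl(2,\R)\), which is however isomorphic to the case \(\sp(4,\R)\supset\sp(2,\R)\)). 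Two cosmetic repairs: the \(H\)-element of \(\g_\kappa\) is \(H_0-\tfrac12 U_\kappa\) rather than \(H_0\) (harmless, since \(U_\kappa\) centralizes \(\g_\kappa\) so the \(\ad\)-conditions agree), and \(\g_{-1}(H_\kappa)\) is \(\theta(V_\kappa)\) rather than \(V_\kappa\), the spectrum there following from \(\theta(h_\kappa)=-h_\kappa\). What each approach buys: yours is more elementary and bypasses the KKT machinery entirely (modulo one table-based or cited input, which the paper's proof also needs at the analogous point), while the paper's Jordan route additionally produces the explicit Peirce decomposition of \(\g_1(h)\) that is exploited in Remark \ref{rem:h1-emb-euler-jordan}(1); your concluding derivation of \(\g_1(h)=(\fb_\kappa)_1(h)\) coincides in substance with the paper's eigenspace computation \eqref{eq:h1-emb-euler-eigenspace}.
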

\begin{proof}
  Recall from Proposition \ref{prop:h1-multone-parabolic-subalg} that \(H_0 - \frac{1}{2}U_\kappa\) is an \(H\)-element of \(\g_\kappa\).
  Hence, by Lemma \ref{lem:quasiherm-euler-h1-sl2}, there exists an \((H_2)\)-homomorphism
  \[\kappa_0 : (\fsl(2,\R), \tfrac{1}{2}U) \rightarrow (\g_\kappa, H_0 - \tfrac{1}{2}U_\kappa) \quad \text{with} \quad \kappa_0(\tfrac{1}{2}H) = h_\kappa.\]
  In particular, \((2h_\kappa, \kappa_0(X), \kappa_0(Y))\) is an \(\fsl_2\)-triple.
  Using this data, we endow \(\g_{\kappa,1}(h_\kappa)\) with the structure of a simple euclidean Jordan algebra with unit element \(\kappa_0(X)\) via the KKT-construction (cf.\ Appendix \ref{sec:appendix-KKT}).

  Since \(\g_\kappa = \fz_\g(\im \kappa)\) commutes with \(U_\kappa\), the inclusion
  \[\iota: (\g_\kappa, H_0 - \tfrac{1}{2}U_\kappa) \hookrightarrow (\g, H_0)\]
  is an \((H_1)\)-homomorphism. Hence, the commutative sum
  \[\tilde\kappa := (\iota \circ \kappa_0) + \kappa : (\fsl(2,\R), \tfrac{1}{2}U) \rightarrow (\g, H_0), \quad x \mapsto \iota(\kappa_0(x)) + \kappa(x),\]
  is an \((H_1)\)-homomorphism with \(\tilde\kappa(\frac{1}{2}H) = h_\kappa + \frac{1}{2}H_\kappa = h.\) In particular, \(h\) induces a 5-grading on \(\g\) with \(\spec(\ad h) \subset \{0, \pm \frac{1}{2}, \pm 1\}\). We claim that \(h\) is actually an Euler element of \(\g\).

  To this end, we endow \(\g_1(h)\) with the structure of a simple euclidean Jordan algebra with unit element \(\tilde\kappa(X) = X_\kappa + \kappa_0(X)\).
  Then \(\g_{\kappa,1}(h_\kappa) \subset \g_1(h)\) is a Jordan subalgebra of \(\g_1(h)\) because \(\g_\kappa\) commutes with the image of \(\kappa\). 
  Recall now from Appendix \ref{sec:appendix-KKT} that the tube type algebra \(\g\) corresponds uniquely to a simple euclidean Jordan algebra, say \(E\), via the KKT-construction, and that \(\g_1(h) \cong E^{(\ell)}\) for some \(1 \leq \ell \leq r := \rk_\R \g\).
  The same observation applies to the tube type subalgebra \(\g_\kappa\), and a case-by-case analysis using Table \ref{table:h1-multone-class} and Table \ref{table:kkt} shows that \(\g_{\kappa,1}(h_\kappa) \cong E^{(r-1)}\).
  This implies \(\ell \in \{r-1, r\}\).
  Since \(\g_1(h)\) contains \(X_\kappa \in \im \kappa\), it is strictly larger than \(\g_{\kappa,1}(h_\kappa)\), so that \(\ell = r\) and therefore \(\g_1(h) \cong E^{(r)} = E\).
  This shows that \(h\) is an Euler element of \(\g\).

  In particular, \(h \in \fb_\kappa\) is an Euler element of \(\fb_\kappa\).
  It remains to observe that \((\fb_\kappa)_1(h) = \g_1(h)\).
  To this end, we decompose \(\g_1(h)\) into eigenspaces of \(\ad(H_\kappa)\) and see that \(\g_1(h) \cap \g_\lambda(H_\kappa)\) is non-trivial if and only if \(\lambda \in \{0,1,2\}\) because both \(h_\kappa\) and \(\frac{1}{2}H_\kappa\) induce 5-gradings on \(\g\) and commute. Hence we have
  \begin{equation}
    \label{eq:h1-emb-euler-eigenspace}
    \g_1(h) = \g_1(h_\kappa + \tfrac{1}{2}H_\kappa) = \R X_\kappa \oplus \g_1(h_\kappa) \oplus (\g_{\frac{1}{2}}(h_\kappa) \cap \g_{\frac{1}{2}}(\tfrac{1}{2}H_\kappa)) \subset \fb_\kappa. \qedhere
  \end{equation}
\end{proof}

\begin{remark}(The Jordan algebra structure on \((\fb_\kappa)_1(h)\))
  \label{rem:h1-emb-euler-jordan}

  (1) In the notation of the proof of Theorem \ref{thm:h1-emb-euler-elements}, the Jordan algebra structure on \(E := \g_1(h)\) is given by
  \[a \cdot b = \frac{1}{2}[[a, Y_\kappa + \kappa_0(Y)], b] \quad \text{for } a,b \in E.\]
  (cf.\ Appendix \ref{sec:appendix-jordan-algebras}). Since \(\kappa_0(Y) \in \g_\kappa\), which commutes with the image of \(\kappa\), it is easy to see that \(X_\kappa\) is an idempotent of \(E\). We have \(\g_1(h_\kappa) \subset \g_0(H_\kappa)\) because otherwise \(h\) would not be an Euler element of \(\g\). Hence, \(\g_1(h_\kappa) \subset E_0(X_\kappa)\). Furthermore, if \(x \in \g_1(H_\kappa)\), then
  \[\frac{1}{2}[[X_\kappa, Y_\kappa + \kappa_0(Y)], x] = \frac{1}{2}[H_\kappa, x] = \frac{1}{2}x,\]
  which shows \(\g_1(H_\kappa) \cap \g_{\frac{1}{2}}(h_\kappa) \subset E_{\frac{1}{2}}(h_\kappa)\). This shows that \(E_1(X_\kappa) = \R X_\kappa\), i.e.\ \(X_\kappa\) is a primitive idempotent. From \eqref{eq:h1-emb-euler-eigenspace}, we see that its Peirce decomposition is given by
  \[E_1(X_\kappa) = \R X_\kappa, \quad E_0(X_\kappa) = \g_1(h_\kappa), \quad E_{\frac{1}{2}}(X_\kappa) = \g_{\frac{1}{2}}(h_\kappa) \cap \g_1(H_\kappa).\]

  (2) Suppose that \(\g_\kappa\) is hermitian simple. Then \cite[Lem.\ 3.4]{Oeh20b} shows that \(\tau_{V_\kappa} := 2\ad_{V_\kappa}(h_\kappa)\) is an antisymplectic involution on \(V_\kappa = \g_1(H_\kappa)\). Hence, \(E_{\frac{1}{2}}(X_\kappa)\) is given by the 1-eigenspace \(V_{\kappa, 1}\) of \(\tau_{V_\kappa}\) in this case.
\end{remark}

\begin{example}
  \label{ex:jacobi-alg-max-parabolic}
  Consider the simple hermitian tube type Lie algebra \(\g = \sp(2n + 2, \R)\), where \(n \in \N\), and let \(\kappa : (\fsl(2,\R), \frac{1}{2}U) \rightarrow (\g, H_0)\) be an \((H_1)\)-homomorphism of multiplicity \(1\).
It is straightforward to derive from Table \ref{table:h1-multone-class} and Proposition \ref{prop:h1-multone-parabolic-subalg} that
\[\g_\kappa \cong \sp(2n,\R), \quad \fj_\kappa = V_\kappa \times \fz_\kappa \times \g_\kappa \cong \hsp(\R^{2n}), \quad \text{and} \quad \fb_\kappa \cong \hcsp(\R^{2n}).\]
Choose an Euler element \(h_\kappa \in \g_\kappa^{-\theta}\). As explained in Remark \ref{rem:h1-emb-euler-jordan}, the endomorphism \(\tau_{V_\kappa} := 2\ad_{V_\kappa}(h_\kappa)\) is an antisymplectic involution on \(V_\kappa\) (cf.\ \cite[Lem.\ 3.4]{Oeh20b}). As a result, we obtain an isotropic decomposition
\[V_\kappa = V_{\kappa,1} \oplus V_{\kappa,-1} \quad \text{with} \quad V_{\kappa, \pm 1} := \ker(\tau_{V_\kappa} \mp \id_{V_\kappa}).\]
The linear endomorphism \(I_0 := 2\ad_{V_\kappa}(H_0 - \frac{1}{2}U_\kappa)\) interchanges \(V_{\kappa, 1}\) and \(V_{\kappa, -1}\) and is diagonalizable over \((V_{\kappa})_\C \cong \C^{2n}\) with eigenvalues \(\pm i\).
\end{example}

\section{Positive energy representations of extended admissible Lie groups}
\label{sec:pos-energy-rep-ext-adm}

Throughout this section, we fix the following data: Let \((V,\Omega)\) be a symplectic vector space and let \(\hcsp(V,\Omega) = \hsp(V,\Omega) \rtimes \R \id_V\) be the corresponding conformal Jacobi algebra.
The element \(\id_V\) denotes the outer derivation of the Jacobi algebra \(\hsp(V,\Omega)\) which acts by
\[\id_V (v,z,x) := (v,2z,0) \quad \text{for } (v,z,x) \in V \times \fz \times \sp(V,\Omega) = \hsp(V,\Omega).\]
Here, \(\fz \cong \R\) denotes the center of \(\hsp(V,\Omega)\).
In the notation of Example \ref{ex:jacobi-alg-max-parabolic}, this derivation equals \(\ad(H_\kappa)\) for a suitable \(\fsl_2\)-embedding \(\kappa\).
By the same example, we may assume that \(\hcsp(V,\Omega)\) is embedded in a symplectic Lie algebra \(\sp(\tilde V, \tilde \Omega)\) as a maximal parabolic subalgebra with \(\rk_\R \sp(\tilde V, \tilde \Omega) = \rk_\R \sp(V,\Omega) + 1\) with respect to some \(H\)-element \(H_0 \in \sp(\tilde V, \tilde \Omega)\) and an \((H_1)\)-homomorphism \(\kappa: \fsl(2,\R) \rightarrow \sp(\tilde V, \tilde \Omega)\). We omit the \(\kappa\)-subscripts from now on for the sake of readability.

Recall that the choice of the \(H\)-element \(H_0\) determines a Cartan involution \(\theta := e^{\pi\ad H_0}\) with \(\sp(\tilde V, \tilde \Omega)^\theta = \ker(\ad H_0)\), which restricts to a Cartan involution on the subalgebra \(\sp(V,\Omega)\). 

We identify \(\heis(V,\Omega)\) with the subalgebra \(\heis(V,\Omega) \times \{0\}\) of \(\hsp(V,\Omega)\) and, similarly, \(\hsp(V,\Omega)\) as a subalgebra of \(\hcsp(V,\Omega)\).

On the group level, we have the connected Lie group \(\Heis(V,\Omega) = V \times \fz\) with Lie algebra \(\heis(V,\Omega)\). 
Moreover, we set
\[\HSp(V,\Omega) := \Heis(V,\Omega) \rtimes \Sp(V,\Omega) \quad \text{and} \quad \HCSp(V,\Omega) := \HSp(V,\Omega) \rtimes_\alpha \R_+^\times,\]
where \(\alpha_r(v,z,x) := (rv, r^2 z, x)\) for \(r \in \R_+^\times\).
In order to construct an Euler element in \(\hcsp(V,\Omega)\), we fix a non-trivial Euler element \(h_s \in \sp(V,\Omega)^{-\theta}\). Then \(\tau_V := 2\ad_V(h_s)\) is an antisymplectic involution on \((V,\Omega)\).
We define \(\fs_\lambda := \sp(V,\Omega)_\lambda(h_s)\) for \(\lambda \in \{-1,0, 1\}\) and \(V_{\pm 1} = \ker(\tau_V \mp \id_V)\).
By Theorem \ref{thm:h1-emb-euler-elements} and Example \ref{ex:jacobi-alg-max-parabolic}, the element \(h := \frac{1}{2}\id_V + h_s\) is an Euler derivation of \(\hsp(V,\Omega)\) and \(\sp(\tilde V, \tilde \Omega)\) with
\[\hsp(V,\Omega)_0(h) = V_{-1} \oplus \fs_{0}, \quad \hsp(V,\Omega)_{-1}(h) = \fs_{-1}, \quad \text{and} \quad \hsp(V,\Omega)_1(h) = \fz \oplus V_1 \oplus \fs_1.\]
As described in Example \ref{ex:jacobi-alg-max-parabolic}, we obtain a complex structure \(I \in \sp(V,\Omega)^\theta\) that interchanges \(V_1\) and \(V_{-1}\) and satisfies \(\Omega(v,Iv) > 0\) for all \(v \in V \setminus \{0\}\). On \(V_\C\), we denote the \(\pm i\)-eigenspace of \(I\) by \(V^{\pm}\).
Then the map
\[\Psi : V \rightarrow V^+, \quad v \mapsto v_+ := \tfrac{1}{2}(v - iIv)\]
is a linear isomorphism of real vector spaces with inverse given by \(\Psi^{-1}(v) := v + \oline{v}\), where \(\oline{v}\) denotes the complex conjugate of \(v \in V_\C\).
Note that \(V^+ = \Psi(V_1) \oplus i\Psi(V_1)\), so that we can regard \(V_1\) as a real form of \(V^+\).
We extend \(\Omega\) to a complex bilinear form on \(V_\C\). Then
\begin{equation*}
  \la \cdot, \cdot \ra : V^+ \times V^+ \rightarrow \C, \quad \la v,w \ra := i \Omega(\oline{v}, w),
\end{equation*}
defines a scalar product on \(V^+\) (cf.\ \cite[p.\ 85]{HNO96b}). More precisely, we have
\[\la v_+, v_+ \ra = \tfrac{1}{2}\Omega(v, Iv) \quad \text{and} \quad \la p_+, q_+ \ra = \tfrac{i}{2}\Omega(p,q) \quad \text{for } v \in V, p \in V_1, q \in V_{-1}.\]
Moreover, we endow \(V_1\) with a euclidean Lebesgue measure.

Finally, we endow \(\Heis(V,\Omega) = V_1 \times V_{-1} \times \R\) with the multiplication
\[(p,q,z)(p',q',z') := (p + p', q + q', z + z' + \tfrac{1}{2}(\Omega(p,q') + \Omega(q,p'))).\]
In some cases, it will be more convenient to work with the polarized Heisenberg group \(\Heis_p(V,\Omega) = V_1 \times V_{-1} \times \R\) with the multiplication
\[(p,q,z)(p',q',z') := (p + p', q + q', z + z' + \Omega(p,q')).\]
The map
\begin{equation}
  \label{eq:heis-pol-iso}
  \Heis(V,\Omega) \rightarrow \Heis_p(V,\Omega), \quad (p,q,z) \mapsto (p,q, z + \tfrac{1}{2}\Omega(p,q)),
\end{equation}
is a group isomorphism from \(\Heis(V,\Omega)\) to \(\Heis_p(V,\Omega)\).

\subsection{Unitary representations of the conformal Jacobi group}
\label{sec:urep-conf-jacobi}

Throughout this section, let
\[N := \Heis(V,\Omega) \rtimes_\alpha \R_+^\times, \quad T := Z(\Heis(V,\Omega)) \cong \R, \quad \text{and} \quad D := \R_+^\times.\]
Notice that \(T \rtimes_\alpha D = TD\) is isomorphic to \(\Aff(\R)_e\), i.e.\ the identity component of the \(ax+b\)-group.

\begin{definition}
  (1) A strongly continuous unitary representation \((\pi, \cH)\) of a connected Lie group \(G\) with \(\L(G) = \hsp(V,\Omega)\) is called a \emph{positive energy representation} if \(\hsp(V,\Omega)_+ \subset C_\pi\).

  (2) Let \((\pi, \cH)\) be a strongly continuous unitary representation of \(N\).
  We say that \((\pi, \cH)\) is a \emph{positive} (resp.\ \emph{negative}) \emph{energy representation} of \(N\) if the spectrum of the infinitesimal generator of \(\pi\lvert_T\) is positive (resp.\ negative).
  If the spectrum of the infinitesimal generator is strictly positive (resp.\ strictly negative), then we call \((\pi, \cH)\) a \emph{strictly positive} (resp.\ \emph{strictly negative}) energy representation.
\end{definition}

We note that, if \((\pi, \cH)\) is a positive energy representation of \(N\), then the restriction of \(\pi\) to the subgroup \(TD \cong \Aff(\R)_e = \R \rtimes \R_+^\times\) is a positive energy representation in the sense that its restriction to the translation group \(\R \times \{0\}\) is a unitary one-parameter group with a positive infinitesimal generator.

\begin{lem}
  \label{lem:urep-conf-jacobi-energy-decomp}
  Let \(G = N \rtimes L\) be a connected Lie group such that \(L\) commutes with \(T\). Let \((\pi, \cH)\) be a strongly continuous unitary representation of \(G\). Then \((\pi, \cH)\) decomposes into a direct sum \(\cH = \cH_0 \oplus \cH_- \oplus \cH_+\) of subrepresentations, where \((\pi\lvert_T, \cH_0)\) is the trivial representation and \((\pi\lvert_N, \cH_\pm)\) is of strictly positive (resp.\ strictly negative) energy.
\end{lem}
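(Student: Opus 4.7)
The plan is to invoke the spectral theorem for the infinitesimal generator $A$ of the unitary one-parameter group $\pi\lvert_T$ and to decompose $\cH$ into the kernel of $A$ and the two half-line spectral subspaces. The central issue is the $G$-invariance of these three subspaces, which will follow once one verifies that conjugation by $G$ rescales $A$ by a strictly positive scalar.

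First I would check that $T$ is normal in $G = N \rtimes L$: it is central in $\Heis(V,\Omega)$ (and thus normalized by $N$), and it is centralized by $L$ by hypothesis. Next I would compute the action of $G$ by conjugation on $\L(T) \cong \R$. Elements of $\Heis(V,\Omega)$ act trivially since $T$ is central there; elements of $L$ act trivially by the hypothesis $[L,T] = \{1\}$; and elements $r \in \R_+^\times \subset N$ act by multiplication with $r^2$, directly from the definition of $\alpha$. Hence there is a positive character $\lambda : G \to \R_+^\times$ such that $\Ad(g)\xi = \lambda(g)\xi$ for a fixed generator $\xi \in \L(T)$, and consequently
\[\pi(g)\, A\, \pi(g)^{-1} \;=\; \lambda(g)\, A \qquad \text{for all } g \in G.\]

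Because multiplication by $\lambda(g) > 0$ preserves each of the three subsets $\{0\}$, $(0,\infty)$ and $(-\infty,0)$ of $\R$, the corresponding spectral projections of $A$ commute with $\pi(G)$. Setting
\[\cH_0 := \ker A, \qquad \cH_+ := E_A\bigl((0,\infty)\bigr)\cH, \qquad \cH_- := E_A\bigl((-\infty,0)\bigr)\cH,\]
then yields an orthogonal decomposition into $G$-invariant subspaces. On $\cH_0$ the generator $A$ vanishes by construction, so $\pi\lvert_T$ is trivial; on $\cH_\pm$ the spectrum of $A$ lies in $\pm(0,\infty)$, which is exactly the strict positive/negative energy condition for $\pi\lvert_N$.

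The only step where anything could go wrong is the covariance relation above, since if $\lambda(g)$ could take the value $-1$ (as might happen for a group where $T$ were acted on by reflection) the sign subspaces would get swapped rather than preserved. The hypotheses — that $L$ commutes with $T$ and that the only non-central action on $T$ comes from $\alpha_r$, which scales by $r^2$ — are precisely what excludes this, so once the structural observations are made, the argument is a direct application of the spectral theorem and no real obstacle remains.
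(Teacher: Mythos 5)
Your proof is correct, and it takes a genuinely different (more self-contained) route than the paper. The paper first invokes Longo's decomposition theorem for representations of \(\Aff(\R)_e\) \cite[Thm.\ 2.8]{Lo08} applied to the subgroup \(TD\), obtaining the three spectral subspaces together with their \(TD\)-invariance, and only then uses the hypotheses that \(V\) and \(L\) commute with \(T\) to upgrade this to \(G\)-invariance. You bypass the citation entirely: you observe that \(T\) is normal in \(G\) and that the adjoint action of \(G\) on \(\L(T) \cong \R\) is a \emph{positive} character \(\lambda\) (trivial on \(\Heis(V,\Omega)\) and on \(L\), equal to \(r^2\) on \(\R_+^\times\)), deduce the covariance relation \(\pi(g)A\pi(g)^{-1} = \lambda(g)A\) for the generator \(A\) of \(\pi\lvert_T\), and conclude that the spectral projections for \(\{0\}\), \((0,\infty)\), \((-\infty,0)\) commute with all of \(\pi(G)\) since these sets are stable under multiplication by positive scalars. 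The resulting subspaces are literally the same as the paper's (the paper identifies its \(\cH_0, \cH_\pm\) as exactly these spectral subspaces), so the two arguments agree on the output; yours treats all of \(G\) uniformly and is more elementary, while the paper's choice of quoting \cite[Thm.\ 2.8]{Lo08} is natural in context because the same result is reused immediately afterwards (in the proof of Theorem \ref{thm:heis-aff-discrete-decomp}) to get the finer structure of the strictly positive energy part as a multiple of the unique irreducible \(\Aff(\R)_e\)-representation, which your argument does not provide and this lemma does not need. One point worth making explicit in a write-up, which you correctly flag: the relation \(\pi(g)e^{itA}\pi(g)^{-1} = e^{it\lambda(g)A}\) comes from \(g\exp(t\xi)g^{-1} = \exp(t\Ad(g)\xi)\), and positivity of \(\lambda\) is even automatic from connectedness of \(G\), since a continuous character into \(\R^\times\) of a connected group lands in \(\R_+^\times\).
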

\begin{proof}
  Recall that \(TD \cong \Aff(\R)_e\). We apply \cite[Thm.\ 2.8]{Lo08} to decompose \(\cH\) into \(TD\)-invariant subspaces \(\cH_0 \oplus \cH_+ \oplus \cH_-\) such that \((\pi\lvert_T, \cH_0)\) is the trivial representation and \((\pi\lvert_{TD}, \cH_\pm)\) is of strictly positive (resp.\ strictly negative) energy. The subspaces \(\cH_0, \cH_+,\) and \(\cH_-\) are the spectral subspaces of the infinitesimal generator of \(\pi\lvert_T\) corresponding to the subsets \(\{0\}, (0,\infty),\) and \((-\infty, 0)\). Since \(V \subset \Heis(V,\Omega)\) and \(L\) commute with \(T\), these subspaces are also invariant under \(V\) and \(L\), so that \(\cH_0\) and \(\cH_\pm\) are also \(G\)-invariant.
\end{proof}

\begin{thm}
  \label{thm:heis-aff-discrete-decomp}
  Let \((\pi, \cH)\) be a strongly continuous unitary representation of \(N\) of strictly positive (negative) energy. Then \((\pi, \cH)\) is equivalent to a direct sum of irreducible representations of \(N\) of strictly positive (negative) energy.
\end{thm}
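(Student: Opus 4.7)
The strategy is Mackey's little--group analysis for the semidirect product $N = \Heis(V,\Omega) \rtimes_\alpha \R_+^\times$. Let $T = Z(\Heis(V,\Omega)) \cong \R$ and, for $\lambda \in \R$, set $\chi_\lambda(z) := e^{i\lambda z}$. By the Stone--von Neumann theorem, for each $\lambda \neq 0$ there is (up to unitary equivalence) a unique irreducible representation $\rho_\lambda$ of $\Heis(V,\Omega)$ with central character $\chi_\lambda$, and a direct computation gives $\chi_\lambda \circ \alpha_r = \chi_{r^{2}\lambda}$. Thus $\R_+^\times$ acts on $\hat T \cong \R$ by $\lambda \mapsto r^{2}\lambda$, with two nontrivial orbits $(0,\infty)$ and $(-\infty,0)$ on each of which the stabilizer is trivial.

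Assume strict positivity of the energy, so that the spectral measure of the self-adjoint generator of $\pi\vert_T$ is supported in $(0,\infty)$. Applying Stone--von Neumann fibrewise together with disintegration for type~I representations yields a Borel measure $\mu$ on $(0,\infty)$, a measurable field of multiplicity Hilbert spaces $\{\cM_\lambda\}_{\lambda>0}$, and a unitary isomorphism
\begin{equation*}
 \cH \;\cong\; \int^{\oplus}_{(0,\infty)} \cM_\lambda \otimes \cH_{\rho_\lambda}\, d\mu(\lambda)
\end{equation*}
intertwining $\pi\vert_{\Heis(V,\Omega)}$ with $\int^{\oplus}(\id_{\cM_\lambda}\otimes\rho_\lambda)\,d\mu(\lambda)$. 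The covariance $\pi(r)\pi(v,z)\pi(r)^{-1} = \pi(rv, r^{2}z)$ forces $\pi(r)$ to carry the $\lambda$-fibre unitarily onto the $r^{2}\lambda$-fibre. Since $\R_+^\times$ acts transitively on $(0,\infty)$ with trivial stabilizer, $\mu$ is quasi-invariant and hence equivalent to the Haar measure $\tfrac{d\lambda}{\lambda}$, and the spaces $\cM_\lambda$ are mutually isomorphic to a fixed multiplicity Hilbert space $\cM$.

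Choosing a measurable selection of SvN intertwiners $\cH_{\rho_\lambda}\cong\cH_{\rho_1}$ trivializes the field and produces a unitary equivalence
\begin{equation*}
 (\pi,\cH)\;\cong\;\bigl(\id_\cM\otimes\tilde\nu,\ \cM\otimes L^{2}\bigl((0,\infty),\tfrac{d\lambda}{\lambda};\cH_{\rho_1}\bigr)\bigr),
\end{equation*}
where $\tilde\nu$ is the strictly positive energy representation of $N$ assembled from the single-orbit data $\bigl((0,\infty),\{e\}\bigr)$. By the Mackey imprimitivity theorem for regularly embedded orbits with trivial stabilizer, $\tilde\nu$ is irreducible, and by Lemma \ref{lem:conf-heis-irrep} it is then equivalent to the unique strictly positive energy irreducible representation $\nu_+$ of $N$. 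Picking an orthonormal basis $(e_j)_{j\in J}$ of $\cM$ gives the desired discrete decomposition $\cH\cong\bigoplus_{j\in J}\cH_{\nu_+}$; the strictly negative energy case is treated identically, with $(0,\infty)$ replaced by $(-\infty,0)$. The principal obstacle is the compatible measurable trivialization of the field $\{\cH_{\rho_\lambda}\}$ under the $\R_+^\times$-action, which is exactly the transitive-imprimitivity input that forces the direct integral to collapse to a discrete direct sum, and it is the point where the Type~I structure of this sector is indispensable.
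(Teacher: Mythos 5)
Your strategy is correct and takes a genuinely different route from the paper. The paper's proof is more hands-on: after a Zorn reduction to showing that an irreducible subrepresentation exists (preceded by passing to separable \(\cH\) via cyclic subrepresentations), it normalizes \(\pi\lvert_{TD}\) using the Gelfand--Naimark classification of strictly positive energy representations of \(\Aff(\R)_e\), realizes \(\cH = L^2(\R_+^\times, \frac{d\lambda}{\lambda}; \cK)\), observes that the operators \(\pi(v)\), \(v \in V\), commute with \(\pi(T)''\) and are hence decomposable with fibres \(F_v(\lambda)\), derives from the dilation covariance the identity \(F_v(\lambda) = \rho(\lambda v)\) for a single Heisenberg representation \((\rho,\cK)\) read off at \(\lambda = 1\), splits \(\cK\) by Stone--von Neumann, and checks irreducibility of \(L^2(\R_+^\times,\frac{d\lambda}{\lambda})\hotimes\cK_0\) by a direct commutant computation. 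You instead run the Mackey normal-subgroup machine over the central character: central disintegration of \(\pi\lvert_{\Heis(V,\Omega)}\), quasi-invariance plus simple transitivity of the \(\R_+^\times\)-action on \((0,\infty)\) forcing \(\mu \sim \frac{d\lambda}{\lambda}\) and constant multiplicity, then imprimitivity. Your route is more conceptual and delivers the sharper conclusion \(\pi \cong \id_\cM \otimes \nu_+\) in one stroke (the paper only records this afterwards, in Corollary \ref{cor:heis-aff-pos-en-mult}); the paper's route is more elementary and self-contained, needing Mackey theory nowhere in this proof --- its covariance relation \(F_{rv}(\lambda) = F_v(r\lambda)\) is precisely the hands-on substitute for your trivialization step.

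Two points need tightening. First, the disintegration theory you invoke requires \(\cH\) separable; as in the paper, you should first decompose \((\pi,\cH)\) into cyclic subrepresentations, which are separable since \(N\) is --- routine, but the direct integral step fails without it. Second, and more substantively, choosing a measurable family of Stone--von Neumann intertwiners \(\cH_{\rho_\lambda} \cong \cH_{\rho_1}\) only trivializes the restriction \(\pi\lvert_{\Heis(V,\Omega)}\); it does not yet yield \(\pi \cong \id_\cM \otimes \tilde\nu\). After this trivialization, \(\pi(r)\) is the translation by \(r\) composed with a decomposable unitary commuting with the fibre representations, i.e.\ with a measurable \(\cU(\cM)\)-valued cocycle \((W_r(\lambda))\), since the commutant of \(\id_\cM \otimes \rho_1(\alpha_{\sqrt\lambda}(\cdot))\) is \(B(\cM)\otimes\id\). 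One must still show this cocycle is a coboundary, which holds because \(\R_+^\times\) acts simply transitively on \((0,\infty)\) (conjugate by the decomposable unitary built from a cross-section through a base point), or equivalently one invokes Mackey's analysis for the type I normal subgroup \(\Heis(V,\Omega)\) with the regularly embedded (open) orbit \((0,\infty)\) and trivial little group. Your closing sentence shows you know this is the crux, but as written the unitary equivalence is asserted one step too early: the imprimitivity theorem is needed for the collapse \(\pi \cong \id_\cM \otimes \tilde\nu\), not merely for the irreducibility of \(\tilde\nu\) (which, by Lemma \ref{lem:conf-heis-irrep}, then identifies \(\tilde\nu\) with the unique strictly positive energy irreducible). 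With these two repairs the argument is complete.
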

\begin{proof}
  Since \(N\) is separable and \((\pi, \cH)\) can be decomposed into a direct sum of cyclic subrepresentations, we may assume that \(\cH\) is separable. It is a well-known consequence of Zorn's Lemma that \((\pi, \cH)\) is equivalent to a direct sum \((\pi_d, \cH_d) \oplus (\pi_c, \cH_c)\), where \((\pi_d, \cH_d)\) is a direct sum of irreducible subrepresentations and \((\pi_c, \cH_c)\) does not contain any irreducible subrepresentation. Hence, it suffices to show that every strongly continuous unitary representation of \(N\) of strictly positive (resp.\ negative) energy contains an irreducible subrepresentation.
  
  We assume that \((\pi, \cH)\) is a representation of strictly positive energy.
  The negative energy case is treated analogously.
  Then the restriction of \(\pi\) to the affine subgroup \(TD\) is of strictly positive energy, hence is equivalent to a multiple of the unique irreducible representation of strictly positive energy (cf.\ \cite{GN47}).
  We may thus assume that \(\cH = L^2(\R_+^\times, \frac{d\lambda}{\lambda}; \cK) \cong L^2(\R_+^\times, \frac{d\lambda}{\lambda}) \hotimes \cK\), where \(\cK\) is a Hilbert space and the restriction \(\pi_{TD} := \pi\lvert_{TD}\) to \(TD = \R \rtimes_\alpha \R_+^\times\) is given by 
  \begin{equation}
    \label{eq:aff-pos-en-action}
    (\pi_{TD}(z,r)f)(\lambda) = e^{i\lambda^2z}f(r\lambda) \quad \text{for } (z,r) \in \R \times \R_+^\times, \lambda \in \R_+^\times, f \in \cH.
  \end{equation}
  In order to identify the representation of the subgroup \(V\), we observe that \(\pi(V)\) commutes with \(\pi(T)\), hence also with the von Neumann algebra \(\pi(T)''\) generated by \(\pi(T)\), which coincides with the algebra of diagonal operators \(M_f\), \(f \in L^\infty(\R_+^\times, \frac{d\lambda}{\lambda})\), which act on \(\cH\) by multiplication with bounded measurable functions \(f\).
  This implies that, for every \(v \in V\), the operator \(\pi(v)\) is \emph{decomposable} in the sense that there exists a bounded operator-valued measurable function
  \[F_v: \R_+^\times \rightarrow B(\cK) \quad \text{such that} \quad (\pi(v)f)(\lambda) = F_v(\lambda)f(\lambda) \text{ for } f \in \cH, \lambda \in \R_+^\times\]
  (cf.\ \cite[Thm.\ 7.10]{Ta79}). If we identify \(\cH\) with the direct integral \(\int_{\R_+^\times}^\oplus \cK \frac{d\lambda}{\lambda}\), then this argument shows that \(\pi(g)\) is decomposable for every \(g \in \Heis(V,\Omega)\). After changing \(F_v\) on a \(d\lambda\)-measurable subset of measure zero if necessary, we may assume that \(\pi\lvert_{\Heis(V,\Omega)}\) is a direct integral of unitary representations, i.e.\ we have
  \[F_v(\lambda)F_{v'}(\lambda) = e^{\frac{i}{2}\lambda^2\Omega(v,v')}F_{v + v'}(\lambda) \quad \text{for all } \lambda \in \R_+^\times, v,v' \in V\]
  (cf.\ \cite{Mau51}). For \(\lambda = 1\), we thus obtain a representation \((\rho, \cK)\) of \(\Heis(V,\Omega)\) with \(\rho(v) := F_v(1)\) for \(v \in V\) and \(\rho(z) := e^{iz}\id_\cK\) for \(z \in Z(\Heis(V,\Omega))\). By the Stone--von Neumann Theorem, the representation \((\rho, \cK)\) is a direct sum of irreducible subrepresentations. Let \(\cK_0 \subset \cK\) be a non-zero closed subspace on which \(\rho\) acts irreducibly. We claim that
  \[\cH_0 := \{f \in \cH : f(\lambda) \in \cK_0 \text{ almost everywhere}\} \cong L^2(\R_+^\times, \tfrac{d\lambda}{\lambda}) \hotimes \cK_0\]
  is an irreducible subrepresentation of \((\pi, \cH)\). It follows from \eqref{eq:aff-pos-en-action} that \(\cH_0\) is \(TD\)-invariant. Let \(f \in \cH, v \in V, r \in D,\) and \(\lambda\in \R_+^\times\). Then
  \[F_{rv}(\lambda)f(\lambda) = (\pi(rv)f)(\lambda) = (\pi(r)\pi(v)\pi(\tfrac{1}{r})f)(\lambda) = F_v(r\lambda)(\pi(\tfrac{1}{r})f)(r\lambda) = F_v(r\lambda)f(\lambda)\]
  shows that
  \[(\pi(v)f)(\lambda) = F_v(\lambda)f(\lambda) = F_{\lambda v}(1)f(\lambda) = \rho(\lambda v)f(\lambda).\]
  In particular, \(\cH_0\) is also \(V\)-invariant and therefore \(N\)-invariant.
  In order to show that \((\pi, \cH_0)\) is irreducible, we consider an orthogonal projection \(P_{\cH_0}\) commuting with \(\pi\). Since \(P_{\cH_0}\) commutes with \(\pi(T)\), we may assume that \(P_{\cH_0}\) is represented by a measurable function \(P_{\cK} : \lambda \mapsto P_{\cK}(\lambda)\) on \(\R_+^\times\) with values in the set of orthogonal projections on \(\cK\). Then \eqref{eq:aff-pos-en-action} implies that \(P_{\cK}\) is constant almost everywhere, and the fact that \(P_{\cK}(\lambda)\) is an orthogonal projection on \(\cK_0\) commuting with \(\rho(\Heis(V,\Omega))\) implies that \(P_{\cK}(\lambda)\) is trivial for every \(\lambda \in \R_+^\times\). Hence, \(P_{\cH_0}\) is also trivial, which finishes the proof. 
\end{proof}

\begin{rem}
  \label{rem:mackey-method-induced-rep}
  In order to classify the irreducible unitary representations of the conformal Jacobi group and, more generally, the extended Lie groups \(G = G_C \rtimes \R_+^\times\), where \(G_C = \Heis(V,\Omega) \rtimes L\) is admissible, we use the well-known Mackey method (cf.\ \cite{Mac49},\cite{Mac52},\cite[Thm.\ 6.12]{Va85}).
  If \(A\) and \(G\) are Lie groups, where \(A\) is abelian and \(\alpha: G \rightarrow \Aut(A)\) is a group homomorphism, then the equivalence classes of irreducible unitary representations of the semidirect product \(A \rtimes_\alpha G\) can be described as follows: Suppose that \(\Sigma \subset \hat A = \Hom(A,\T)\) is a measurable subset with the property that it contains exactly one element of each \(G\)-orbit. We denote by \(\hat\alpha: G \rightarrow \Aut(\hat A)\) the induced action of \(G\) on \(\hat A\).
  Then every \(\chi \in \Sigma\) and every irreducible unitary representation \((\rho, \cK)\) of the stabilizer \(G_\chi\) of \(\chi\) induce an irreducible unitary representation \((\pi_{\chi, \rho}, \cH_{\chi, \rho})\) of \(A \rtimes_\alpha G\).
  Moreover, two representations \(\pi_{\chi, \rho}\) and \(\pi_{\chi', \rho'}\) are equivalent if and only if \(\chi = \chi'\) and \(\rho\) and \(\rho'\) are equivalent (cf.\ \cite[Thm.\ 6.24]{Va85}).
  In order to realize an induced representation \(\pi_{\chi, \rho}\), we fix a measure \(\mu\) from the unique equivalence class of \(G\)-invariant measures on \(G/G_\chi\), which we identify with \(G.\chi \subset \hat A\).
  Furthermore, we fix a section \(s : G/G_\chi \rightarrow G\) of the quotient map \(q_\chi : G \rightarrow G/G_\chi\) such that \(s(q(\1)) = \1\). For every \(g \in G\), we obtain a function
  \begin{equation}
    \label{eq:mackey-method-sec-rep}
    \varphi_\rho(g,x) := \rho(s(x)^{-1}gs(g^{-1}.x)), \quad g \in G, x \in G/G_\chi.
  \end{equation}
  We then define \(\cH_{\chi, \rho} := L^2(G/G_\chi, \mu; \cK)\) and set
  \begin{equation}
    \label{eq:mackey-induced-rep}
    (\pi_{\chi, \rho}(a,g)f)(\tilde\chi) := \tilde\chi(a) \varphi_\rho(g,\tilde\chi) \sqrt{\delta_g}f(g^{-1}.\tilde\chi), \quad f \in L^2(G/G_\chi, \mu; \cK), (a,g) \in A \rtimes_\alpha G.
  \end{equation}
  The term \(\delta_g\) is the Radon--Nikodym derivative of the measure \(g_*\mu := \mu \circ \hat\alpha_g^{-1}\) with respect to \(\mu\).
\end{rem}

\begin{lem}
  \label{lem:conf-heis-irrep}
  There exist exactly two equivalence classes of strongly continuous irreducible unitary representations of \(N = \Heis(V,\Omega) \rtimes_\alpha \R_+^\times\) which are non-trivial on \(T = Z(\Heis(V,\Omega))\) and one equivalence class of strongly continuous irreducible strictly positive energy representations \((\pi, \cH)\). Its restriction to the subgroup \(\Heis(V,\Omega)\) is equivalent to the direct integral of representations
  \[\int_{\R_+^\times}^\oplus (\pi_\lambda, \cH_\lambda)\,\frac{d\lambda}{\lambda}, \]
  where \((\pi_\lambda, \cH_\lambda)\) is a strongly continuous irreducible unitary representation of \(\Heis(V,\Omega)\) with \(\pi_\lambda (z) = e^{i\lambda^2 z}\id_{\cH_\lambda}\) for \(z \in T\).
\end{lem}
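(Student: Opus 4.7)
The plan is to combine Mackey's induced representation method, applied through the polarized Heisenberg realisation, with the explicit model of positive-energy representations already produced in the proof of Theorem~\ref{thm:heis-aff-discrete-decomp}.

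First I would pass to the polarized Heisenberg group $\Heis_p(V,\Omega)$ via the isomorphism \eqref{eq:heis-pol-iso}, exploiting the antisymplectic involution $\tau_V = 2\ad_V(h_s)$ and the resulting Lagrangian decomposition $V = V_1 \oplus V_{-1}$. A short direct computation shows that $A := V_{-1} \times T$ is a closed abelian normal subgroup of $N$, with quotient $H := N/A \cong V_1 \rtimes \R_+^\times$, so that $N \cong A \rtimes H$. Identify $\hat A$ with $V_{-1}^* \times \R$ via $\chi_{\xi,\lambda}(0,q,z) = e^{i(\xi(q) + \lambda z)}$. A small conjugation computation gives the $H$-action
\[
  p.\chi_{\xi,\lambda} = \chi_{\xi - \lambda\Omega(p,\cdot),\,\lambda},\qquad r.\chi_{\xi,\lambda} = \chi_{\xi/r,\,\lambda/r^2}.
\]
For $\lambda \neq 0$, non-degeneracy of $\Omega$ makes the $V_1$-action on the $\xi$-component transitive on each slice of fixed $\lambda$, so the only invariant of the orbit is the sign of $\lambda$: thus the $H$-orbits in $\hat A$ that are non-trivial on $T$ are exactly $\{\lambda > 0\}$ and $\{\lambda < 0\}$, and the stabilizer of the representative $\chi_{0,1}$ in $H$ is trivial (again by non-degeneracy, together with the fact that $r \mapsto 1/r^2$ fixes only $r=1$). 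By the Mackey machine recalled in Remark~\ref{rem:mackey-method-induced-rep}, each orbit yields exactly one equivalence class of irreducible unitary representations of $N$, which establishes the first assertion.

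Next I would identify which class has strictly positive energy: the representative character $\chi_{0,1}$ restricts to the strictly positive character $\chi_1$ on $T$, so the orbit $\{\lambda>0\}$ corresponds to strictly positive energy. To obtain the claimed direct integral decomposition, I would invoke the concrete model already produced while proving Theorem~\ref{thm:heis-aff-discrete-decomp}: on $\cH = L^2(\R_+^\times, d\lambda/\lambda;\cK_0)$, with $(\rho,\cK_0)$ a Schr\"odinger representation of $\Heis(V,\Omega)$ with central character $\chi_1$, the formulas
\[
  (\pi(v)f)(\lambda) = \rho(\lambda v)f(\lambda),\qquad (\pi(z)f)(\lambda) = e^{i\lambda^2 z}f(\lambda),
\]
together with the $\R_+^\times$-action $(\pi(r)f)(\lambda) = f(r\lambda)$, realize an irreducible representation of $N$ whose restriction to $\Heis(V,\Omega)$ is the direct integral $\int_{\R_+^\times}^\oplus (\pi_\lambda,\cH_\lambda)\,d\lambda/\lambda$ with fibres $\cH_\lambda = \cK_0$ and $\pi_\lambda(v,z) := e^{i\lambda^2 z}\rho(\lambda v)$; each $\pi_\lambda$ is then an irreducible Schr\"odinger representation of $\Heis(V,\Omega)$ with central character $\chi_{\lambda^2}$, as required.

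The main obstacle I expect is the bookkeeping in the Mackey orbit analysis, especially confirming that the $V_1$-action on the $\xi$-component is genuinely transitive on each horizontal slice and that both stabilizer factors collapse to the identity; once that is in place, the structural claim on the restriction to $\Heis(V,\Omega)$ falls out of the explicit direct-integral model already constructed in the proof of Theorem~\ref{thm:heis-aff-discrete-decomp}.
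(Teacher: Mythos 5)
Your proposal is correct and takes essentially the same route as the paper's own proof: both pass to the polarized Heisenberg group via \eqref{eq:heis-pol-iso}, write \(N = A \rtimes H\) with \(A = V_{-1} \times T\) and \(H = V_1 \rtimes \R_+^\times\), compute the same dual orbits (two open orbits with \(\pm\lambda > 0\) having trivial stabilizers, plus the \(T\)-trivial orbits at \(\lambda = 0\)), and invoke the Mackey machine of Remark~\ref{rem:mackey-method-induced-rep} to get exactly two classes nontrivial on \(T\), of which only one has strictly positive energy. The only cosmetic difference is that you obtain the explicit realization and the direct-integral decomposition over \(\Heis(V,\Omega)\) by citing the model constructed in the proof of Theorem~\ref{thm:heis-aff-discrete-decomp}, whereas the paper writes the equivalent formulas \eqref{eq:conf-heis-irrep-expl-pol-1}--\eqref{eq:conf-heis-irrep-expl-pol-2} directly; the content is identical.
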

\begin{proof}
  We identify \(N\) with \(\Heis_p(V,\Omega) \rtimes_\alpha \R_+^\times\) using \eqref{eq:heis-pol-iso}. We can then write \(N\) as \(N = A \rtimes H\), where \(A := V_{-1} \times \R\) and \(H := V_{1} \rtimes \R_+^\times\), and \(H\) acts on \(A\) by \((p, r).(q,z) := (rq, r^2 z + r\Omega(p,q))\). All characters on \(A\) are of the form \(\chi_{(p,z)}(q,z') = e^{i zz'} e^{i \Omega(q,p)}\) with \((p,z) \in V_1 \times \R\), so that we can identify \(\hat A\) with \(V_1 \times \R\).
  For \(p' \in V_1\) and \(r > 0\), we have
  \[((p',r).\chi_{(p,z)})(q,z') = \chi_{(p,z)}((p',r)^{-1}.(q,z')) = \chi_{(p,z)}(\tfrac{1}{r}q,\tfrac{1}{r^2}z' - \tfrac{1}{r^2}\Omega(p',q)) = \chi_{(\frac{1}{r}(p + \frac{z}{r}p'), \tfrac{1}{r^2}z)}(q,z').\]
  Thus, \(\hat A\) decomposes into the \(H\)-orbits
  \[\cO_\pm = V_{1} \times \R_\pm^\times \quad \text{and} \quad \cO_{\R_+^\times x, 0} = \R_+^\times x \times \{0\}, \quad x \in V_{1}.\]
  By Remark \ref{rem:mackey-method-induced-rep}, every irreducible unitary representations of \(N\) is equivalent to a representation \((\pi_{\chi, \rho}, \cH_{\chi, \rho})\) as in \eqref{eq:mackey-induced-rep}, where \(\chi \in \hat{A}\) is a character and \((\rho,\cK)\) is an irreducible unitary representation of \(H_\chi\).
  If \(\chi \in \cO_{\R_+^\times,0}\) for some \(x \in V_1\), then \eqref{eq:mackey-induced-rep} shows that \(\pi_{\chi,\rho}\) is trivial on \(T \cong \R\) because every character in \(\cO_{\R_+^\times x,0} = H.\chi\) vanishes on \(T\).

  On the other hand, if \(\chi \in \cO_\pm\), then every induced representation \(\pi_{\chi,\rho}\) is non-trivial on \(T\).
  In this case, the stabilizer group \(H_\chi\) is trivial, so that \(\cK \cong \C\) and \(\rho\) is trivial.
  This implies that there are, up to equivalence, exactly two strongly continuous irreducible unitary representations of \(N\) which are non-trivial on the center. The induced representations \(\pi_\pm\) of \(N\) corresponding to \(\cO_\pm\) can be realized on \(\cH := L^2(\R_+^\times \times V_1, \frac{d\lambda}{\lambda} \otimes dx)\) by
  \begin{equation}
    \label{eq:conf-heis-irrep-expl-pol-1}
    (\pi_\pm(p,q,z)f)(\lambda,x) := e^{\pm i\lambda^2 z} e^{\pm i\lambda \Omega(q, x)} f(\lambda, x - \lambda p), \quad (p,q,z) \in V_1 \times V_{-1} \times T, \text{ and}
  \end{equation}
  \begin{equation}
    \label{eq:conf-heis-irrep-expl-pol-2}
    (\pi_\pm(r)f)(\lambda, x) = f(r\lambda, x), \quad r \in \R_+^\times \cong \exp(\R \id_V)
  \end{equation}
(cf.\ \cite[Ex.\ 3.7]{Ne20}). In particular, we see that only \(\pi_+\) satisfies the positive energy condition. The last part of the claim follows from the construction of \(\pi_+\).
\end{proof}

\begin{definition}
  \label{def:urep-nu}
  (1) Let \((\pi, \cH)\) be the unitary positive energy representation of \(\Heis(V,\Omega) \rtimes_\alpha \R_+^\times\) on \(\cH = L^2(\R_+^\times \times V_1, \frac{d\lambda}{\lambda} \otimes dx) \cong L^2(\R_+^\times, \frac{d\lambda}{\lambda}) \hotimes L^2(V_1, dx)\) that we obtain from Lemma \ref{lem:conf-heis-irrep}. After composing with the isomorphism \eqref{eq:heis-pol-iso}, we see that it is given by
  \begin{equation}
    \label{eq:conf-heis-irrep-expl-1}
    (\pi(p,q,z)f)(\lambda,x) := e^{i\lambda^2 (z + \frac{1}{2}\Omega(p,q))} e^{i\lambda \Omega(q, x)} f(\lambda, x - \lambda p), \quad (p,q,z) \in V_1 \times V_{-1} \times T, \text{ and}
  \end{equation}
  \begin{equation}
    \label{eq:conf-heis-irrep-expl-2}
    (\pi(r)f)(\lambda, x) = f(r\lambda, x), \quad r \in \R_+^\times \cong \exp(\R \id_V).
  \end{equation}
  The metaplectic representation of the universal covering group \(\tilde\Sp(V,\Omega)\) of \(\Sp(V,\Omega)\) on \(L^2(V_1, dx)\) extends to a representation on \(\cH\) by acting trivially on \(L^2(\R_+^\times, \frac{d\lambda}{\lambda})\), and it extends \((\pi, \cH)\) to an irreducible unitary representations of \(\tilde\HCSp(V,\Omega)\).
  From now on, we denote the extension of \((\pi, \cH)\) to \(\tilde\HCSp(V,\Omega)\) by \((\nu, \cH_\nu)\).

  (2) Let \(\tau_G\) be the involutive automorphism of \(G := \tilde\HCSp(V,\Omega)\) with \(\L(\tau_G) = e^{i\pi \ad h}\), where \(h \in \hcsp(V,\Omega)\) is the Euler element that we defined in the introduction of Section \ref{sec:pos-energy-rep-ext-adm}. Then the representation \((\nu, \cH_\nu)\) of \(G\) extends to an antiunitary representation of the semidirect product \(G_\tau\) as follows: If we realize \((\nu, \cH_\nu)\) on \(L^2(\R_+^\times \times V_1, \frac{d\lambda}{\lambda} \otimes dx)\) as in \eqref{eq:conf-heis-irrep-expl-1} and \eqref{eq:conf-heis-irrep-expl-2}, then \((J_\nu f)(\lambda, x) := \oline{f(\lambda, -x)}\) for \(f \in \cH_\nu\) satisfies \(\nu(\tau_G(g)) = J_\nu\nu(g)J_\nu\) for all \(g \in G\) (cf.\ \cite[Ex.\ 3.7]{Ne20}).
  We denote the antiunitary extension of \((\nu, \cH_\nu)\) to \(G_\tau\) by \((\nu_\tau, \cH_\nu)\).
\end{definition}

\begin{rem}
  \label{rem:rep-nu-pos-cone-bounded-cone}
  The positive cone \(C_\nu \subset \hsp(V,\Omega)\) of the representation \((\nu,\cH_\nu)\) is the cone \(\hsp(V,\Omega)_+\) (cf. \eqref{eq:hsp-plus-cone}) by (cf.\ \cite[Ex.\ 3.7]{Ne20}).

  The cone \(W_\nu := \{x \in \hsp(V,\Omega) : \inf(\spec(-i\partial\nu(x))) > -\infty\}\) of semibounded elements of \((\nu, \cH_\nu)\) coincides with \(C_\nu\).
  This can be seen by considering, for a compactly embedded Cartan subalgebra \(\ft_\fs \subset \sp(V,\Omega)\) and the compactly embedded Cartan subalgebra \(\ft := \{0\}\times \fz \times \ft_\fs\), the intersections of \(C_\nu\) and \(W_\nu\) with \(\ft\):
  For \((0,z,x_s) \in W_\nu \cap \ft\), the element \(x_s\) is contained in the up to sign unique pointed generating invariant closed convex cone of \(\sp(V,\Omega)\), so that \(x_s \in C_\nu\).
  From \eqref{eq:conf-heis-irrep-expl-1}, we see that \(-i\partial\nu(z)\) is bounded from below if and only if \(z \geq 0\).
  This shows that \(C_\nu \cap \ft = W_\nu \cap \ft\) and therefore \(C_\nu = W_\nu\) by \cite[Prop.\ VII.3.31]{Ne00}.
\end{rem}

\begin{cor}
  \label{cor:heis-aff-pos-en-mult}
  Every strictly positive (negative) energy representation \((\pi, \cH)\) of \(N\) is a multiple of the unique irreducible strictly positive (negative) energy representation.
\end{cor}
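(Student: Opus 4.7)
The plan is to combine the two results immediately preceding the corollary: Theorem \ref{thm:heis-aff-discrete-decomp} and Lemma \ref{lem:conf-heis-irrep}. The proof should be very short since all the work has already been done.

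First I would invoke Theorem \ref{thm:heis-aff-discrete-decomp} to decompose the given strictly positive energy representation $(\pi,\cH)$ of $N$ as a direct sum
\[
(\pi,\cH) \cong \bigoplus_{i \in I} (\pi_i, \cH_i),
\]
where each $(\pi_i,\cH_i)$ is an irreducible unitary representation of $N$ of strictly positive energy. The negative energy case is handled identically.

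Next I would apply Lemma \ref{lem:conf-heis-irrep}, which asserts that there is only one equivalence class of strongly continuous irreducible unitary representations of $N$ of strictly positive energy (respectively, negative energy). Hence all summands $(\pi_i,\cH_i)$ are mutually equivalent to this unique representation, and therefore $(\pi,\cH)$ is equivalent to a multiple of it.

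There is no genuine obstacle here; the corollary is a direct combination of the discrete decomposability statement with the uniqueness of the irreducible strictly positive energy representation. The only thing worth a sentence of commentary is that Theorem \ref{thm:heis-aff-discrete-decomp} explicitly produces strictly positive energy irreducible summands (not merely positive energy ones), which is exactly what is needed to feed into Lemma \ref{lem:conf-heis-irrep}.
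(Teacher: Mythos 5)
Your proof is correct and is exactly the paper's argument: the published proof is a one-line citation of Theorem \ref{thm:heis-aff-discrete-decomp} and Lemma \ref{lem:conf-heis-irrep}, which you have simply spelled out. Your closing remark about the decomposition producing \emph{strictly} positive energy summands is the right point to note, since that is what makes Lemma \ref{lem:conf-heis-irrep} applicable to each summand.
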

\begin{proof}
  This is a consequence of Theorem \ref{thm:heis-aff-discrete-decomp} and Lemma \ref{lem:conf-heis-irrep}.
\end{proof}

\begin{rem}
  \label{rem:urep-conf-jacobi-aff-group}
  The restriction of \(\nu\) to the affine subgroup \(TD \cong \Aff(\R)_e\) is equivalent to a strictly positive energy representation of \(\Aff(\R)_e\) on \(L^2(\R_+^\times, \frac{dx}{x})\), which can be seen via the identification \(\cH \cong L^2(\R_+^\times, \frac{d\lambda}{\lambda}) \hotimes L^2(V_1, dx)\). The restriction of \(\nu \circ \tau_G\) to \(TD\) yields an irreducible strictly negative energy representation.
\end{rem}

\begin{lem}
  \label{lem:rep-adm-irrep-ss-extension}
  Let \(G = N \rtimes S\) be a semidirect product of \(N\) with a connected perfect Lie group \(S\). Let \((\pi_1, \cH)\) and \((\pi_2, \cH)\) be strongly continuous unitary representations of \(G\) such that \(\pi_1\lvert_N = \pi_2\lvert_N\) and \(\pi_1\lvert_N\) is irreducible. Then \(\pi_1 = \pi_2\).
\end{lem}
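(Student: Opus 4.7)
The plan is to compare the two representations on $S$ via the operator $\chi(s) := \pi_2(s)^{-1}\pi_1(s)$ for $s \in S$, and show that it is always the identity.

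First, I would verify that $\chi(s)$ commutes with $\pi_1(N)$ for each $s \in S$. Given $n \in N$, normality of $N$ and the hypothesis $\pi_1|_N = \pi_2|_N$ yield
\[
\pi_1(s)\pi_1(n)\pi_1(s)^{-1} = \pi_1(sns^{-1}) = \pi_2(sns^{-1}) = \pi_2(s)\pi_2(n)\pi_2(s)^{-1},
\]
which rearranges to $\chi(s)\pi_1(n) = \pi_1(n)\chi(s)$. Since $\pi_1|_N$ is irreducible, Schur's Lemma implies $\chi(s) = \lambda(s)\cdot\id_\cH$ for some $\lambda(s) \in \T$.

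Next, I would show that $\lambda : S \to \T$ is a continuous group homomorphism. Strong continuity of $\pi_1$ and $\pi_2$ gives continuity of $s \mapsto \chi(s)$ in the strong operator topology, hence continuity of $\lambda$. The homomorphism property follows from
\[
\lambda(s_1 s_2)\id = \pi_2(s_2)^{-1}\pi_2(s_1)^{-1}\pi_1(s_1)\pi_1(s_2) = \pi_2(s_2)^{-1}\lambda(s_1)\pi_1(s_2) = \lambda(s_1)\lambda(s_2)\id,
\]
using that $\lambda(s_1)$ is scalar and thus commutes with $\pi_2(s_2)^{-1}$.

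Finally, I would exploit perfectness of $S$. The derived homomorphism $d\lambda : \L(S) \to i\R$ vanishes on $[\L(S),\L(S)] = \L(S)$ because $i\R$ is abelian. Hence $\lambda \equiv 1$ on the connected group $S$, so $\pi_1(s) = \pi_2(s)$ for all $s \in S$, and combined with $\pi_1|_N = \pi_2|_N$ this gives $\pi_1 = \pi_2$ on $G = N \rtimes S$. There is no real obstacle here; the only subtlety is correctly invoking Schur's Lemma in the strongly continuous (unbounded-index-set) setting, which is standard since $\chi(s)$ is bounded and commutes with the irreducible family $\pi_1(N)$.
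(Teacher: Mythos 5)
Your proof is correct and follows essentially the same route as the paper: both define the cocycle $\chi(s)$ comparing $\pi_1$ and $\pi_2$ on $S$ (you write $\pi_2(s)^{-1}\pi_1(s)$, the paper $\pi_1(s)\pi_2(s)^{-1}$), invoke Schur's Lemma via $\pi_1(N)' = \C\,\mathrm{id}_\cH$ to see it is a scalar character, verify the homomorphism property using centrality of the scalars, and kill the character by perfectness of $\L(S)$ through the derived homomorphism. If anything, you are slightly more explicit than the paper in checking that $\chi(s)$ commutes with $\pi_1(N)$ (via normality of $N$) and that continuity of $\lambda$ justifies passing to $d\lambda$, but these are exactly the steps the paper's proof uses implicitly.
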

\begin{proof}
  We have \(\pi_1(N)' = \C \bbone\) by Schur's Lemma. Moreover, the map
  \[\chi : S \rightarrow \pi_1(N)' = \C \bbone, \quad s \mapsto \pi_1(s)\pi_2(s)^{-1}, \]
  is a strongly continuous group representation because
  \begin{align*}
    \chi(ss') &= \pi_1(ss')\pi_2(ss')^{-1} = \pi_1(s)\pi_1(s')\pi_2(s')^{-1}\pi_2(s)^{-1} = \pi_1(s)\chi(s')\pi_2(s)^{-1} = \pi_1(s)\pi_2(s)^{-1}\chi(s') \\
              &= \chi(s)\chi(s')
  \end{align*}
  for all \(s,s' \in S\). Since \(\L(S)\) is perfect and the image of \(\chi\) is abelian, the derived representation \(\dd\chi\) of the Lie algebra \(\L(S)\) is trivial. Hence, \(\chi\) is constant and therefore \(\pi_1(s) = \pi_2(s)\) for all \(s \in S\).
\end{proof}

\begin{cor}
  \label{cor:rep-adm-irrep-pos-ss-ext}
  Let \(G = \Heis(V,\Omega) \rtimes (\tilde\Sp(V,\Omega) \times \R_+^\times)\) and let \((\pi, \cH)\) be a strongly continuous unitary representation of \(G\) such that \((\pi\lvert_N, \cH)\) is irreducible and of strictly positive energy. Then \((\pi, \cH) \cong (\nu, \cH_\nu)\).
\end{cor}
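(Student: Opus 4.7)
The plan is to combine the uniqueness statement of \textrm{Lemma \ref{lem:conf-heis-irrep}} with the rigidity statement of \textrm{Lemma \ref{lem:rep-adm-irrep-ss-extension}}. First I would recast the semidirect product structure of \(G\). Since \(\tilde\Sp(V,\Omega)\) acts on \(\Heis(V,\Omega)\) by symplectic linear maps on the \(V\)-factor and trivially on the center, while \(\R_+^\times\) acts by the dilation \(\alpha_r(v,z) = (rv, r^2 z)\), these two actions commute (linearity of \(g \in \tilde\Sp(V,\Omega)\) gives \(g(rv) = rg(v)\)). Hence the two subgroups \(\R_+^\times\) and \(\tilde\Sp(V,\Omega)\) commute inside \(G\) as well, and we may rewrite
\[
G \;=\; \bigl(\Heis(V,\Omega) \rtimes_\alpha \R_+^\times\bigr) \rtimes \tilde\Sp(V,\Omega) \;=\; N \rtimes \tilde\Sp(V,\Omega).
\]

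Next, by hypothesis \((\pi\lvert_N, \cH)\) is an irreducible strictly positive energy representation of \(N\), so \textrm{Lemma \ref{lem:conf-heis-irrep}} supplies a unitary intertwiner \(U \colon \cH \to \cH_\nu\) with \(U \pi(n) U^{-1} = \nu(n)\) for all \(n \in N\). Replacing \(\pi\) by \(\pi'(g) := U \pi(g) U^{-1}\), which is unitarily equivalent to \(\pi\), we obtain a strongly continuous unitary representation of \(G\) on \(\cH_\nu\) whose restriction to \(N\) coincides with \(\nu\lvert_N\).

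Now I would apply \textrm{Lemma \ref{lem:rep-adm-irrep-ss-extension}} to the two representations \(\pi'\) and \(\nu\) of \(G = N \rtimes \tilde\Sp(V,\Omega)\) on \(\cH_\nu\): they agree on \(N\), the restriction \(\pi'\lvert_N = \nu\lvert_N\) is irreducible, and the Lie algebra \(\sp(V,\Omega)\) is semisimple, hence perfect. The lemma then yields \(\pi' = \nu\) on all of \(G\), so \((\pi,\cH) \cong (\nu, \cH_\nu)\).

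The only point that requires any checking is the commutation of the \(\R_+^\times\)- and \(\tilde\Sp(V,\Omega)\)-actions, which validates the semidirect splitting \(G = N \rtimes \tilde\Sp(V,\Omega)\) needed to invoke \textrm{Lemma \ref{lem:rep-adm-irrep-ss-extension}}; everything else is a direct citation of the two preceding lemmas. No genuine obstacle is anticipated.
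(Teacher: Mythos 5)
Your proposal is correct and follows essentially the same route as the paper: identify \((\pi\lvert_N,\cH)\) with \((\nu\lvert_N,\cH_\nu)\) via Lemma \ref{lem:conf-heis-irrep}, then invoke the uniqueness of extensions from Lemma \ref{lem:rep-adm-irrep-ss-extension} with \(S = \tilde\Sp(V,\Omega)\). Your explicit verification that the \(\R_+^\times\)- and \(\tilde\Sp(V,\Omega)\)-actions commute, so that \(G\) regroups as \(N \rtimes \tilde\Sp(V,\Omega)\), is a detail the paper's two-line proof leaves implicit, and it is checked correctly.
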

\begin{proof}
  The positive energy condition on \(\pi\) combined with Lemma \ref{lem:conf-heis-irrep} implies that \((\pi\lvert_N, \cH) \cong (\nu\lvert_N, \cH)\). By Lemma \ref{lem:rep-adm-irrep-ss-extension}, the extension of the irreducible representation \(\nu\lvert_N\) to \(G\) is unique, which proves the claim.
\end{proof}

In the proposition below, we show that \((\nu, \cH_\nu)\) can be extended to a unitary highest weight representation of \(\tilde\Sp(\tilde V, \tilde \Omega)\) up to coverings. Before stating the result, we have to introduce the following data:
Recall that we have fixed on \(\sp(\tilde V, \tilde \Omega)\) a Cartan involution \(\theta\) at the beginning of Section \ref{sec:pos-energy-rep-ext-adm} such that the Euler element \(h \in \sp(\tilde V, \tilde \Omega)\) satisfies \(\theta(h) = -h\) (see also Theorem \ref{thm:h1-emb-euler-elements}).
This implies that the involution \(\tau_h := e^{i\pi\ad h} \in \Aut(\sp(\tilde V, \tilde \Omega))\) commutes with \(\theta\).
Let \(\sp(\tilde V, \tilde \Omega) = \fk \oplus \fp\) be the Cartan decomposition with respect to \(\theta\).
We fix a maximal compactly embedded \(\tau_h\)-invariant Cartan subalgebra \(\ft \subset \fk\) and a maximal abelian subspace \(\fa \subset \fp\) with \(h \in \fa\).
By Moore's Theorem (cf.\ \cite[Thm.\ 2]{Mo64}), the restricted root system \(\Sigma \subset \fa^*\) is of type \((C_{r+1})\), i.e.\ we have
\[\Sigma \cong \{\pm(\varepsilon_k \pm \varepsilon_\ell) : 1 \leq k < \ell \leq r+1\} \cup \{\pm 2\varepsilon_k : 1 \leq k \leq r+1\},\]
where \(r + 1 := \rk_\R \sp(\tilde V, \tilde \Omega) = \frac{1}{2}\dim \tilde V\).
We fix a \(\fk\)-adapted positive system \(\Delta^+ \subset \Delta := \Delta(\sp(\tilde V, \tilde \Omega)_\C, \ft_\C)\) (cf.\ Definition \ref{def:u-hrw}) and a system \(\Pi := \{\gamma_1,\ldots,\gamma_{r+1}\} \subset \Delta_p^+\) of \emph{strictly orthogonal roots}, i.e.\ we have \(\gamma_k \pm \gamma_\ell \not\in \Delta\) for \(k,\ell = 1,\ldots,r+1\) and \(k \neq \ell\).
Using a Cayley transform, we can identify \(\fa\) as a subset of \(i\ft\) in such a way that \(\gamma_j\lvert_\fa = 2\varepsilon_j\).
Having fixed this data, we obtain the following result:

\begin{prop}
  \label{prop:rep-extjac-metaplectic-sp-ext}
  Let \(B := \la \exp \fb_\kappa \ra \subset S := \tilde\Sp(\tilde V, \tilde \Omega)\) be the integral subgroup with Lie algebra \(\fb_\kappa \cong \hcsp(V,\Omega)\).
  Let \(\tau_S \in \Aut(S)\) be the involutive automorphism with \(\L(\tau_S) = \tau := e^{i\pi\ad(h)}\) and let \(\tau_{\tilde B}\) be the involutive automorphism of \(\tilde B := \tilde\HCSp(V,\Omega)\) with \(\L(\tau_{\tilde B}) = e^{i\pi\ad(h)}\). Let
  \[q : \tilde B_\tau \rightarrow B_\tau, \quad q\lvert_{\tilde B} := q_B, \quad q(\tau_{\tilde B}) := \tau_S,\]
  where \(q_B : \tilde B \rightarrow B\) is the corresponding covering map of \(B\).

  There exists a strongly continuous antiunitary representation \((\pi_S, \cH_S)\) of \(S_\tau\) such that \newline\((\pi_S \circ q, \cH_S)\) is equivalent to \((\nu_\tau, \cH_\nu)\).
  Its restriction to \(S\) is unitary highest weight representation of highest weight \(\lambda \in i\fz(\fk)^*\) with \(\lambda\lvert_\fa = -\frac{1}{2}\sum_{j=1}^{r+1} \varepsilon_j\).
\end{prop}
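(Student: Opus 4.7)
The plan is to apply the holomorphic extension framework for positive-energy representations of maximal parabolic subgroups developed in \cite{HNO94, HNO96a, HNO96b}. The input is the representation $(\nu, \cH_\nu)$ of $\tilde B = \tilde\HCSp(V,\Omega)$, which by Remark \ref{rem:rep-nu-pos-cone-bounded-cone} is semibounded with positive cone equal to $\hsp(V,\Omega)_+$; since $\fb_\kappa$ embeds as a maximal parabolic subalgebra of the hermitian simple tube-type Lie algebra $\sp(\tilde V, \tilde\Omega)$ by Proposition \ref{prop:h1-multone-parabolic-subalg}, this places us in the setting of the cited extension results. They produce a strongly continuous unitary highest weight representation $(\pi_S, \cH_S)$ of $S$ whose restriction along $q_B$ is equivalent to $(\nu, \cH_\nu)$, the extension being forced by the fact that the positive cone condition on $\nu$ determines a dominant integral extremal character on $\tilde B$ that is the restriction of one on $S$.

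For the computation of the highest weight, I would proceed as follows. The Euler element $h \in \fa$ acts on $\sp(\tilde V, \tilde\Omega)$ with spectrum $\{-1,0,1\}$, and under the Cayley transform identifying $\fa$ with a subset of $i\ft$ with $\gamma_j|_\fa = 2\varepsilon_j$, the values of $\lambda$ on $\fa$ are read off from the eigenvalues of the lowest-weight vector under the $\fa$-action. The explicit realization \eqref{eq:conf-heis-irrep-expl-1}--\eqref{eq:conf-heis-irrep-expl-2} of $\nu$ on $L^2(\R_+^\times \times V_1, \frac{d\lambda}{\lambda} \otimes dx)$ directly exhibits the generator of the center $\fz$ of $\heis(V,\Omega)$ (which corresponds, up to the Cayley transform, to a root vector for one of the $\gamma_j$, say $\gamma_{r+1}$) acting with the scalar $-\tfrac{1}{2}$ on the vacuum; the remaining $-\tfrac{1}{2}\varepsilon_j$ for $1 \leq j \leq r$ arise from the $L^2(V_1, dx)$-factor, where the metaplectic representation of $\tilde\Sp(V,\Omega)$ sitting inside $\nu$ has the lowest weight of the Segal--Shale--Weil representation. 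Summing these contributions yields $\lambda|_\fa = -\tfrac{1}{2}\sum_{j=1}^{r+1}\varepsilon_j$ as claimed.

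For the antiunitary extension to $S_\tau$, I would use Theorem \ref{thm:h1-emb-euler-elements}, which gives $\theta(h) = -h$, so that the involution $\tau = e^{i\pi\ad h}$ commutes with the Cartan involution $\theta$ on $\sp(\tilde V, \tilde\Omega)$. This is exactly the situation in which the HNO framework yields a canonical antiunitary involution $J_S$ on $\cH_S$ implementing $\tau_S$ and compatible with the highest weight structure. By uniqueness of the antiunitary extension of an irreducible unitary representation (a Schur-type argument analogous to Lemma \ref{lem:rep-adm-irrep-ss-extension}, applied to the group generated by $\pi_S(S)$ and a hypothetical antiunitary extension), this $J_S$ must restrict, under the equivalence $(\pi_S \circ q, \cH_S) \cong (\nu, \cH_\nu)$, to the involution $J_\nu$ from Definition \ref{def:urep-nu}(2). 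The main obstacle is identifying the precise version of the HNO extension theorems that applies and checking its hypotheses for the specific parabolic $\fb_\kappa \subset \sp(\tilde V, \tilde\Omega)$ with its covering subtleties encoded by $q_B$; once this is in place, the highest weight computation and the antiunitary extension reduce to bookkeeping with the explicit formulas for $\nu$.
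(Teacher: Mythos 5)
Your overall route---extend \((\nu,\cH_\nu)\) to \(S\) via the results of \cite{HNO94,HNO96a,HNO96b}---is indeed the paper's strategy, but the step you defer as ``the main obstacle'' is precisely the content of the paper's proof, and the parts you do sketch contain two genuine errors. First, the extension is \emph{not} forced by the positive cone of \(\nu\), and consequently the highest weight cannot be ``read off'' from the explicit \(L^2\)-realization of \(\nu\): the restriction to \(\tilde B\) does not determine \(\lambda\). For every admissible kernel parameter \(c\), the highest weight representation of \(S\) realized on the reproducing kernel space \(\cH_c \subset \Hol(\cS)\) over the Siegel domain restricts to a representation of \(\tilde B\) whose restriction to \(N\) is irreducible of strictly positive energy, hence is equivalent to \((\nu,\cH_\nu)\) by Corollary \ref{cor:rep-adm-irrep-pos-ss-ext}---mutually inequivalent extensions with different highest weights all restrict to the same \(\nu\) (already for \(\fsl(2,\R) \supset \aff(\R)\), all holomorphic discrete series restrict to the unique positive-energy irreducible representation of the affine group). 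The value \(-\frac{1}{2}\) is a normalization built into the paper's construction (the kernel exponent \(c=\frac{1}{2}\) in \(\cK^c\), fed through \cite[Prop.\ IV.8, Thm.\ VI.3, Prop.\ VI.7]{HNO96b} and \(\lambda(\check\gamma_j)=-c\) via \cite[Lem.\ III.3(i)]{HNO96b}), not a quantity computable inside \(\cH_\nu\) alone. Moreover, your specific mechanism fails: the central generator of \(\heis(V,\Omega)\) is a nilpotent element acting in the realization \eqref{eq:conf-heis-irrep-expl-1} by multiplication with \(i\lambda^2\), an operator with purely continuous spectrum and no eigenvectors (there is no ``vacuum'' in \(L^2(\R_+^\times,\frac{d\lambda}{\lambda})\), since the positive-energy representation of \(TD \cong \Aff(\R)_e\) has none), and \(\lambda\lvert_\fa\) must be evaluated on the coroots \(\check\gamma_j \in i\ft\), not on nilpotent root vectors.

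Second, for the antiunitary extension, commutativity of \(\tau = e^{i\pi\ad h}\) with \(\theta\) is not a sufficient criterion, and \cite{HNO96b} provides no ``canonical antiunitary involution''; that input comes entirely from \cite{NO17}. What is actually needed is \(\pi_{S,0} \circ \tau_S \cong \pi_{S,0}^*\), which for highest weight representations amounts to \(\lambda \circ \tau = -\lambda\); the paper verifies this using that \(\lambda\) is of scalar type (it vanishes on \(\ft \cap [\fk,\fk]\)) together with \(\fz(\fk) \subset \sp(\tilde V,\tilde\Omega)^{-\tau}\), and then invokes \cite[Lem.\ 2.16]{NO17} for existence. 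Your final step---matching the resulting involution with \(J_\nu\) under the equivalence \((\pi_S \circ q,\cH_S) \cong (\nu,\cH_\nu)\) via uniqueness of antiunitary extensions---is sound and is exactly how the paper concludes, using \cite[Thm.\ 2.11]{NO17} rather than an ad hoc Schur argument; but without the weight identity \(\lambda\circ\tau=-\lambda\) the existence of any extension to \(S_\tau\), and hence this last step, is unsupported.
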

\begin{proof}
  The unitary highest weight representation of \(S\) of highest weight \(\lambda\) can be realized as follows:
  Let
  \[\cS := \{(z,v) \in \C \times V^+ : \Im(z) + 2i\Omega(v,\oline{v}) > 0\},\]
  where \(I\) is the complex structure on \(V\) which we defined at the beginning of Section \ref{sec:pos-energy-rep-ext-adm} and \(V^+ \subset V_\C\) is its \(i\)-eigenspace.
  Then \(\cS\) is a Siegel domain of the second kind (cf.\ \cite[Def.\ IV.1]{HNO96b}).
  For \(c := \frac{1}{2}\), we consider the positive definite kernel
  \[\cK^c((z,v),(z',v')) := \left(4\pi\left(\tfrac{z'-\oline{z}}{2i} - 2i\Omega(\oline{v}, v')\right)\right)^{-c}  \quad \text{for } (z,v),(z',v') \in \cS.\]
  We denote the corresponding reproducing kernel Hilbert space in \(\Hol(\cS)\) by \(\cH_{c}\).
  We apply \cite[Prop.\ IV.8]{HNO96b} in order to obtain a strongly continuous unitary representation \((\pi, \cH_c)\) of \(\tilde\HCSp(V,\Omega)\).
  The restriction of \(\pi\) to the subgroup \(\Heis(V,\Omega) \rtimes \R_+^\times\) is irreducible (cf.\ \cite[p.\ 172]{HNO96b}) and non-trivial on the center.
  We may without loss of generality assume that \((\pi, \cH_{c})\) is a positive energy representation (cf.\ Remark \ref{rem:urep-conf-jacobi-aff-group}).
  Hence, \((\pi, \cH_{c})\) is equivalent to \((\nu, \cH_{\nu})\) by Corollary \ref{cor:rep-adm-irrep-pos-ss-ext}.
  Now \cite[Thm.\ VI.3]{HNO96b} and \cite[Prop.\ VI.7]{HNO96b} show that there exists a strongly continuous unitary highest weight representation \((\pi_{S,0}, \cH_{c})\) of \(\tilde\Sp(\tilde V, \tilde \Omega)\) such that \(\pi_{S,0} \circ q_B = \pi\).
  The highest weight \(\lambda \in i\ft^*\) of \((\pi_{S,0}, \cH_{c})\) is of scalar type, i.e.\ it vanishes on \(\ft \cap [\fk,\fk]\), so that we may regard it as an element of \(i\fz(\fk)^*\). It is uniquely determined by \(\lambda(\check \gamma_1) = -c\), which implies \(\lambda(\check \gamma_\ell) = -c\) for \(1 \leq \ell \leq r+1\) (cf.\ \cite[Lem.\ III.3(i)]{HNO96b}) and therefore
  \[\lambda\lvert_\fa = \sum_{j=1}^{r+1} \lambda(\check \gamma_j)\varepsilon_j = -c \sum_{j=1}^{r+1} \varepsilon_j = -\frac{1}{2} \sum_{j=1}^{r+1} \varepsilon_j.\]

  In view of the uniqueness of antiunitary extensions to \(\tilde B_\tau\) (cf.\ \cite[Thm.\ 2.11]{NO17}), it only remains to show that an extension of \((\pi_{S,0}, \cH_c)\) to an antiunitary representation \((\pi_S, \cH_c)\) of \(S_\tau\) exists because we have already shown that \((\pi_{S,0} \circ q_B, \cH_c) \cong (\nu, \cH_\nu)\).
  Since \(h\) induces a 3-grading on \(\sp(\tilde V, \tilde \Omega)\), we have \(\fz(\fk) \subset \sp(\tilde V, \tilde\Omega)^{-\tau_h}\) by \cite[Lem.\ 2.15]{Oeh20a} and \cite[Prop.\ 2.20]{Oeh20a}.
  Moreover, the compactly embedded Cartan subalgebra \(\ft\) decomposes into \(\ft = \fz(\fk) \oplus (\ft \cap [\fk,\fk])\), and both summands are \(\tau_h\)-invariant.
  Since \(\lambda\) vanishes on \(\ft \cap [\fk,\fk]\), we have \(\lambda \circ \tau = -\lambda\).
  The representation \((\pi_{S,0} \circ \tau, \cH_c)\) is a unitary highest weight representation with highest weight \(\lambda \circ \tau\) and the dual representation \((\pi_{S,0}^*, \cH_c^*)\) is a unitary highest weight representation with highest weight \(-\lambda\) (cf.\ \cite[Prop.\ IX.1.13]{Ne00}).
  The previous discussion now shows that \((\pi_{S,0} \circ \tau,\cH_c) \cong (\pi_{S,0}^*,\cH_c^*)\) because the highest weights coincide (cf.\ \cite[Thm.\ X.4.2]{Ne00}).
  By \cite[Lem.\ 2.16]{NO17}, this implies that an antiunitary extension of \((\pi_{S,0}, \cH_c)\) exists.
\end{proof}

\subsection{A factorization theorem}
\label{sec:jacobi-rep-factorization}

Throughout this section, let \(\g_C = \g(\fl, V, \fz, \Omega)\) be an admissible Lie algebra that is defined in terms of Spindler's construction, where \(\fz = \fz(\g_C) = \R\) and \((V,\Omega)\) is a symplectic \(\fl\)-module of convex type. Let \(\g = \g_C \rtimes \R \id_V\) and denote by \(\rho : \fl \rightarrow \sp(V,\Omega)\) the homomorphism that corresponds to the representation of \(\fl\) on \(V\). Let
\begin{equation}
  \label{eq:rep-adm-pos-factors-pullback}
  \gamma : \g \rightarrow \hcsp(V,\Omega), \quad (v, z, x, r \id_V) \mapsto (v, z, \rho(x), r \id_V).
\end{equation}
Let \(L\) be a 1-connected Lie group with Lie algebra \(\fl\) and let \(G := \Heis(V,\Omega) \rtimes (L \times \R_+^\times)\).

Moreover, let \(0 \neq h \in \g\) be an Euler element of \(\g\) with \(\ad h\) of the form \eqref{eq:euler-adm} and let \(\tau_G \in \Aut(G)\) be the involutive automorphism with \(\L(\tau_G) = e^{i\pi \ad h}\).
Let \(G_\tau := G \rtimes \{\1,\tau_G\}\) and \(L_\tau := L \rtimes \{\1, \tau_G\lvert_L\}\).

The following theorem provides a convenient description of all strongly continuous unitary representations of \(G\) which satisfy the positive energy condition on the subgroup \(N = \Heis(V,\Omega) \rtimes_\alpha \R_+^\times\):

\begin{thm}
  \label{thm:rep-adm-pos-factors}{\rm (Factorization Theorem)}
  Let \((\pi, \cH)\) be a strongly continuous unitary representation of \(G\) with discrete kernel such that \((\pi\lvert_N, \cH)\) is a strictly positive energy representation.
  Then there exists a strongly continuous unitary representation \((\pi_L, \cH_L)\) of \(L\) such that \(\cH \cong \cH_L \hotimes \cH_\nu\) and \(\pi \cong \pi_L \boxtimes \nu_\gamma\) (cf.\ {\rm Appendix \ref{sec:app-tensor-uni-rep}}), where \(\nu_\gamma\) is the pullback of the representation \((\nu, \cH_\nu)\) of \(\tilde\HCSp(V,\Omega)\) via the group homomorphism whose derivative is \(\gamma\).
  Moreover, the following holds:
  \begin{enumerate}
    \item \(N\) acts trivially on \(\cH_L\).
    \item We have \(\pi(G)' = \pi_L(L)' \otimes \id_{\cH_\nu}\). In particular, \((\pi, \cH)\) is irreducible if and only if \((\pi_L, \cH_L)\) is irreducible.
    \item The positive cone \(C_\pi \subset \g\) of \((\pi, \cH)\) is contained in the positive cone \(C_{\nu_\gamma}\) of \((\nu_\gamma, \cH_\nu)\).
    \item Suppose that the positive cone \(C_\pi \subset \g\) of \((\pi, \cH)\) generates \(\g_C\) and that \((\pi, \cH)\) is irreducible.
      Let \(\ft_\fl \subset \fl\) be a compactly embedded Cartan subalgebra of \(\fl\) and \(\Delta^+\) be an adapted positive system with respect to the compactly embedded Cartan subalgebra \(\ft := \{0\} \times \fz \times \ft_\fl\) of \(\g_C\) such that \(\Cmin \subset C_\pi \cap \ft \subset \Cmax\) (cf.\ {\rm Definition \ref{def:u-hrw}}).
      Then \((\pi_L, \cH_L)\) is a unitary highest weight representation with respect to \(\Delta^+\).
    \item If \(\fl\) is semisimple, then \((\pi\lvert_N, \cH)\) is irreducible if and only if \((\pi, \cH) \cong (\nu_\gamma, \cH_\nu)\).
  \end{enumerate}
\end{thm}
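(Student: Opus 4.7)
The plan is to first decompose $(\pi|_N,\cH)$ using the fact that strictly positive energy representations of $N$ are controlled by Corollary~\ref{cor:heis-aff-pos-en-mult}, and then bootstrap to the full group $G$ by a Schur-type argument exploiting the irreducibility of $\nu|_N$. Concretely, Corollary~\ref{cor:heis-aff-pos-en-mult} implies that $\pi|_N$ is a multiple of $\nu|_N$, so there is a unitary isomorphism $\cH \cong \cH_L \hotimes \cH_\nu$ under which $\pi|_N(n) = \id_{\cH_L}\otimes\nu(n)$; by Schur applied to $\nu|_N$, the commutant $\pi|_N(N)'$ equals $B(\cH_L)\otimes\id_{\cH_\nu}$.

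Since $L$ is $1$-connected, $\rho$ integrates to $\tilde\rho: L\to\tilde\Sp(V,\Omega)$, and the induced homomorphism $G\to\tilde\HCSp(V,\Omega)$ composed with $\nu$ is $\nu_\gamma$. For every $\ell\in L$, both $\pi(\ell)$ and $\nu_\gamma(\ell)$ implement the conjugation by $\ell$ on $\pi|_N$, so $\pi(\ell)\nu_\gamma(\ell)^{-1}\in\pi|_N(N)' = B(\cH_L)\otimes\id$. This forces $\pi(\ell) = \pi_L(\ell)\otimes\nu_\gamma(\ell)$ for a unique unitary $\pi_L(\ell)\in\U(\cH_L)$; multiplicativity of $\pi$ and $\nu_\gamma$ makes $\pi_L$ a strongly continuous unitary representation of $L$, and extending it trivially along the quotient $G\to G/N\cong L$ yields the required factorization $\pi\cong\pi_L\boxtimes\nu_\gamma$, which gives (a) at once.

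The remaining items follow from this factorization. For (b), an element of $\pi(G)'$ lies in $\pi(N)' = B(\cH_L)\otimes\id$, and a tensor $T\otimes\id$ commutes with each $\pi_L(\ell)\otimes\nu_\gamma(\ell)$ precisely when $T\in\pi_L(L)'$. For (e), irreducibility of $\pi|_N$ forces $\dim\cH_L=1$, so $\pi_L$ is a continuous character of $L$; when $\fl$ is semisimple, perfectness kills every such character, giving $\pi\cong\nu_\gamma$. For (c) and (d), the central identity is
\[
-i\partial\pi(x) = -i\partial\pi_L(x)\otimes\id + \id\otimes(-i\partial\nu_\gamma(x)).
\]
Evaluated at the central generator $z_0\in\fz\subset\L(N)$ (where $\partial\pi_L$ vanishes) this immediately gives $-i\partial\nu_\gamma(z_0)\geq0$. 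Combining this with the explicit description of $\hsp(V,\Omega)_+$ (Remark~\ref{rem:rep-nu-pos-cone-bounded-cone}) and a decomposition along the compactly embedded Cartan $\ft=\{0\}\times\fz\times\ft_\fl$, I would extract the inclusion $C_\pi\subset C_{\nu_\gamma}$ and a pointed generating invariant cone in $\fl$; classical highest weight theory for admissible quasihermitian reductive groups (cf.\ \cite{Ne00}) then yields (d).

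The main obstacle lies in (c): the naive spectral-sum argument for the commuting self-adjoint operators $-i\partial\pi_L(x)\otimes\id$ and $\id\otimes(-i\partial\nu_\gamma(x))$ only gives $\inf\sigma(-i\partial\pi_L(x)) + \inf\sigma(-i\partial\nu_\gamma(x))\geq0$, rather than the non-negativity of each summand. Separating the two requires the strictly positive energy on $z_0$, the specific form of the cone $\hsp(V,\Omega)_+$, and the observation that $\partial\nu_\gamma$ is determined on $\fl$ by the symplectic representation $\rho$; these must be combined on a joint spectral decomposition over $\ft$ to obtain $C_\pi\subset C_{\nu_\gamma}$ and, in (d), to deduce that the induced cone $C_{\pi_L}$ generates an invariant cone in $\fl$ compatible with the adapted positive system $\Delta^+$.
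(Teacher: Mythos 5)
Your construction of the factorization, together with parts (a), (b), and (e), matches the paper's proof essentially step for step: the same appeal to Corollary \ref{cor:heis-aff-pos-en-mult}, the same operators $\pi(\ell)\nu_\gamma(\ell)^{-1}\in\pi(N)'=B(\cH_L)\otimes\id_{\cH_\nu}$, and in (e) your observation that irreducibility of $\pi\lvert_N$ forces $\dim\cH_L=1$ and perfectness kills the resulting character is a valid (and slightly more direct) variant of the paper's route through Lemma \ref{lem:rep-adm-irrep-ss-extension}, whose proof is the same character argument.

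The genuine gap is in (c) and (d), which you leave unresolved at exactly the point you flag. The obstacle you identify is real, but the paper's resolution is not a ``joint spectral decomposition over $\ft$'' separating the two summands; it is that no separation is needed at all, because of the precise content of Remark \ref{rem:rep-nu-pos-cone-bounded-cone}, which you cite only for the description of $\hsp(V,\Omega)_+$. That remark establishes that for the specific representation $(\nu,\cH_\nu)$ the cone $W_\nu$ of elements $x$ with $-i\partial\nu(x)$ merely \emph{bounded from below} coincides with the positive cone $C_\nu=\hsp(V,\Omega)_+$ (via intersection with a compactly embedded Cartan subalgebra and \cite[Prop.\ VII.3.31]{Ne00}). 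Granting this, (c) is immediate from your own identity: fix one unit vector $v\in\cH_L^\infty$ and let $w$ range over unit vectors in $\cH_\nu^\infty$; for $x\in C_\pi$ one gets $\la i\partial\nu_\gamma(x)w,w\ra\leq-\la i\partial\pi_L(x)v,v\ra$, a bound independent of $w$, so $-i\partial\nu_\gamma(x)$ is semibounded, hence $\gamma(x)\in W_\nu=C_\nu$ and $x\in C_{\nu_\gamma}$. For (d) the same trick runs with the roles of the factors reversed: choosing $x\in(C_\pi\cap\ft)^o\subset\Cmax^o$ and fixing $w$ while varying $v$ shows $-i\partial\pi_L(x)$ is bounded from below, and then the specific criterion \cite[Cor.\ X.2.10]{Ne00} (semiboundedness at an interior point of $\Cmax$ forces a highest weight representation with respect to $\Delta^+$), rather than generic highest weight theory, yields the claim directly; your suggested detour through the strictly positive energy at $z_0$ and the determination of $\partial\nu_\gamma$ on $\fl$ by $\rho$ is neither needed nor sufficient as stated. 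Without the identity $W_\nu=C_\nu$, your plan for (c) stalls precisely where you say it does.
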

\begin{proof}
  The first part of the claim follows by an adaptation of the proof of the Metaplectic Factorization Theorem (cf.\ \cite{Sa71} or \cite[Thm.\ X.3.7]{Ne00}): By Corollary \ref{cor:heis-aff-pos-en-mult}, the representation \((\pi\lvert_N, \cH)\) is a multiple of \((\nu, \cH_\nu)\). Hence, we may assume that \(\cH = \cH_L \otimes \cH_\nu\) and that \(\pi(n) = \id_{\cH_L} \otimes \nu(n)\) for \(n \in N\). For \(g \in L\), let \(\pi'(g) := \pi(g)(\id_{\cH_L} \otimes \nu_\gamma(g)^{-1})\). Then we have
  \begin{align*}
    \pi'(g)\pi(n) &= \pi(g)(\id_{\cH_L} \otimes \nu_\gamma(g)^{-1})(\id_{\cH_L} \otimes \nu(n)) = \pi(g)(\id_{\cH_L} \otimes \nu_\gamma(g)^{-1}\nu(n)\nu_\gamma(g))(\id_{\cH_L} \otimes \nu_\gamma(g)^{-1}) \\
                  &= \pi(g)\pi(g^{-1}ng)(\id_{\cH_L} \otimes \nu_\gamma(g)^{-1}) = \pi(n)\pi'(g).
  \end{align*}
  In particular, \(\pi(g)' \in \pi(N)' = B(\cH_L) \otimes \id_{\cH_\nu}\) and therefore we have \(\pi'(g) = \pi_L(g) \otimes \id_{\cH_\nu}\) for some \(\pi_L(g) \in \U(\cH_L)\). From
  \begin{align*}
    \pi_L(g)\pi_L(g') \otimes \id_{\cH_\nu} &= \pi'(g)\pi'(g') = \pi(g)(\id_{\cH_L} \otimes \nu_\gamma(g^{-1}))\pi'(g') = \pi(g)\pi'(g')(\id_{\cH_L} \otimes \nu_\gamma(g^{-1}))\\
                                            &= \pi(g)\pi(g') (\id_{\cH_L} \otimes \nu_\gamma(gg')^{-1})
  \end{align*}
  for all \(g,g' \in L\), we see that \((\pi_L, \cH_L)\) is a continuous unitary representation of \(L\), which we trivially extend to a representation of \(G\).
  This proves (a).

  (b) Since \(\pi(N)' = B(\cH_L) \otimes \id_{\cH_\nu}\), we have \(\pi_L(L)' \otimes \id_{\cH_\nu} = \pi(G)'\) . Hence, \((\pi_L, \cH_L)\) is irreducible if and only if \((\pi, \cH)\) is irreducible by Schur's Lemma.

  (c) Let \(x = (v,z,y) \in C_\pi \subset V \times \fz \times \fl = \g_C\).
  For unit vectors \(v \in \cH_L^\infty\) and \(w \in \cH_\nu^\infty\), we then have
  \[\la i\partial\nu_\gamma(x)w, w \ra = \la i\partial\pi(x)(v \otimes w), v \otimes w \ra - \la i\partial\pi_L(x)v, v\ra \leq -\la i\partial\pi_L(x)v, v \ra.\]
  This shows that \(-i\partial\nu_\gamma(x)\) is bounded from below, so that \(\gamma(x) \in C_{\nu} = \hsp(V,\Omega)_+\) by Remark \ref{rem:rep-nu-pos-cone-bounded-cone} and therefore \(x \in C_{\nu_\gamma}\).

  (d) The representation \((\pi_L, \cH_L)\) is irreducible by (b).
  We choose an element \(x \in (C_\pi \cap \ft)^o \subset \Cmax^o\).
  For unit vectors \(v \in \cH_L^\infty\) and \(w \in \cH_\nu^\infty\), we then have
  \[\la i\partial\pi_L(x)v, v \ra = \la i\partial\pi(x)(v \otimes w), v \otimes w \ra - \la i\partial\nu_\gamma(x)w, w\ra \leq -\la i\partial\nu_\gamma(x)w, w \ra.\]
  Hence, \(-i\partial\pi_L(x)\) is bounded from below. By \cite[Cor.\ X.2.10]{Ne00}, this implies that \((\pi_L, \cH_L)\) is a highest weight representation with respect to \(\Delta^+\).

  (e) Suppose that \(\fl\) is semisimple. Then \(L\) is a perfect Lie group. If \((\pi\lvert_N, \cH)\) is irreducible, then the positive energy assumption on \((\pi, \cH)\) and Lemma \ref{lem:conf-heis-irrep} imply that \((\pi\lvert_N, \cH)\) is equivalent to the irreducible representation \((\nu\lvert_N, \cH_\nu)\). By Lemma \ref{lem:rep-adm-irrep-ss-extension}, the extension of \((\pi\lvert_N, \cH)\) to \(G\) is unique. Therefore, we have \((\pi, \cH) \cong (\nu_\gamma, \cH_\nu)\), which proves (e).
\end{proof}

\begin{prop}
  \label{prop:rep-adm-pos-antiuniext}
  In the context of {\rm Theorem \ref{thm:rep-adm-pos-factors}}, the following are equivalent:
  \begin{enumerate}
    \item The representation \((\pi_L, \cH_L)\) extends to an antiunitary representation of \(L_\tau\).
    \item The representation \((\pi, \cH)\) extends to an antiunitary representation of \(G_\tau\).
  \end{enumerate}
  In this case, the operator \(\pi(\tau_G)\) can be written as a tensor product \(J_L \otimes J_\nu\) of antiunitary involutions on \(\cH_L\), respectively \(\cH_\nu\).
\end{prop}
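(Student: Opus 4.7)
The plan is to exploit that, by the Factorization Theorem and Lemma~\ref{lem:conf-heis-irrep}, the restriction $\nu_\gamma\lvert_N = \nu\lvert_N$ is irreducible, whence $\pi(N)' = B(\cH_L) \otimes \id_{\cH_\nu}$. Moreover, since $\gamma(D) = h$ by construction (using $D = h_\fl + \tfrac{1}{2}\id_V$), the Lie group homomorphism $G \to \tilde\HCSp(V,\Omega)$ integrating $\gamma$ intertwines the involutions $\tau_G$ and $\tau$, so that $J_\nu \nu_\gamma(g) J_\nu = \nu_\gamma(\tau_G(g))$ for every $g \in G$.

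For the implication (a)~$\Rightarrow$~(b), I would simply set
\[J := J_L \otimes J_\nu,\]
which is a well-defined antiunitary involution since both factors are antilinear involutive isometries. Writing $\pi(g) = \tilde\pi_L(g) \otimes \nu_\gamma(g)$, where $\tilde\pi_L$ denotes the trivial extension of $\pi_L$ to $N$ (as in the proof of Theorem~\ref{thm:rep-adm-pos-factors}), a direct calculation gives
\[
J\pi(g)J^{-1} = (J_L \tilde\pi_L(g) J_L) \otimes (J_\nu \nu_\gamma(g) J_\nu) = \tilde\pi_L(\tau_G(g)) \otimes \nu_\gamma(\tau_G(g)) = \pi(\tau_G(g)),
\]
so defining $\pi(\tau_G) := J$ yields the sought antiunitary extension.

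For (b)~$\Rightarrow$~(a), I would first fix any antiunitary involution $C_L$ on $\cH_L$ (e.g.\ complex conjugation in some orthonormal basis) and form the antiunitary involution $\tilde J := C_L \otimes J_\nu$. A direct computation using $J_\nu \nu(n) J_\nu = \nu(\tau_G(n))$ shows that $\tilde J$ also implements $\tau_G$ on $\pi(N) = \id_{\cH_L} \otimes \nu(N)$. Hence $U := \pi(\tau_G)\tilde J$ is a unitary operator commuting with $\pi(N)$, and from $\pi(N)' = B(\cH_L) \otimes \id_{\cH_\nu}$ we obtain $U = U_L \otimes \id_{\cH_\nu}$ for some unitary $U_L$ on $\cH_L$. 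This gives
\[\pi(\tau_G) = U\tilde J = (U_L C_L) \otimes J_\nu =: J_L \otimes J_\nu\]
with $J_L := U_L C_L$ antiunitary, and the involution condition $\pi(\tau_G)^2 = \id$ forces $J_L^2 = \id$. Finally, applying the intertwining identity $\pi(\tau_G)\pi(l)\pi(\tau_G)^{-1} = \pi(\tau_G(l))$ for $l \in L$ to the factorization $\pi(l) = \pi_L(l) \otimes \nu_\gamma(l)$ and cancelling the (nonzero) second tensor factor yields $J_L \pi_L(l) J_L = \pi_L(\tau_G(l))$, so $J_L$ extends $(\pi_L, \cH_L)$ to $L_\tau$.

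The main obstacle is the careful handling of tensor products of linear and antilinear operators: the formal symbol ``$\id_{\cH_L} \otimes J_\nu$'' is not a legitimate operator on $\cH_L \hotimes \cH_\nu$ (the identifications $\lambda v \otimes w = v \otimes \lambda w$ are incompatible with the antilinearity on only one factor), which is precisely why the auxiliary conjugation $C_L$ must be introduced and one works with the genuine antiunitary $C_L \otimes J_\nu$ instead. Once this subtlety is addressed, the rest of the argument is a routine application of Schur's lemma via the commutant computation provided by the Factorization Theorem.
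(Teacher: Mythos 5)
Your proof is correct and takes essentially the same route as the paper: both directions rest on the antiunitary extension \(J_\nu\) of \((\nu_\gamma, \cH_\nu)\) (via Lemma \ref{lem:adm-euler-hsp-homom} and Definition \ref{def:urep-nu}) and on the commutant identity \(\pi(N)' = B(\cH_L) \otimes \id_{\cH_\nu}\) from Theorem \ref{thm:rep-adm-pos-factors}, together with an auxiliary antiunitary involution on \(\cH_L\) to sidestep the ill-definedness of \(\id_{\cH_L} \otimes J_\nu\). The only (harmless) deviation is that where the paper invokes \cite[Thm.\ 2.11]{NO17} to produce a unitary \(\Phi \in \pi(N)'\) intertwining the two antiunitary extensions of \(\pi\lvert_N\), you obtain the same operator directly as \(U = \pi(\tau_G)(C_L \otimes J_\nu) \in \pi(N)'\), which amounts to inlining the proof of that cited uniqueness statement.
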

\begin{proof}
  Recall from Lemma \ref{lem:adm-euler-hsp-homom} that the natural homomorphism \(\gamma: \g \rightarrow \hcsp(V,\Omega)\) maps \(h\) onto an Euler element of \(\hcsp(V,\Omega)\).
  Thus, by Definition \ref{def:urep-nu}, the representation \((\nu_\gamma, \cH_\nu)\) of \(G\) extends to an antiunitary representation of \(G_\tau\), i.e.\ there exists an antiunitary involution \(J_\nu\) on \(\cH_\nu\) such that \(\nu_\gamma(\tau_G(g)) = J_\nu \nu_\gamma(g) J_\nu\) for all \(g \in G\).
  Hence, (a) \(\Rightarrow\) (b) is clear because the tensor product of two antiunitary involutions is a well-defined antiunitary involution.

  Conversely, suppose that there exists an antiunitary involution \(J\) on \(\cH\) with \(\pi(\tau_G(g)) = J\pi(g)J\) for all \(g \in G\).
  We fix an antiunitary involution \(I_L\) on \(\cH_L\). Then we have
  \[(I_L \otimes J_\nu) (\id_{\cH_L} \otimes \nu_\gamma(g)) (I_L \otimes J_\nu) = (\id_{\cH_L} \otimes \nu_\gamma(\tau_G(g))) = J (\id_{\cH_L} \otimes \nu_\gamma(g)) J\]
  for all \(g \in N = \Heis(V,\Omega) \rtimes_\alpha \R_+^\times\). This shows that \(\pi\lvert_N\) can be extended to an antiunitary representation of \(N \rtimes \{\1, \tau_G\lvert_N\}\) by both \(J\) and \(I_L \otimes J_\nu\).
  By \cite[Thm.\ 2.11]{NO17}, there exists a unitary operator \(\Phi \in \pi(N)'\) such that \(\Phi(I_L \otimes J_\nu) = J \Phi\).
  Since \(\pi(N)' = B(\cH_L) \otimes \id_{\cH_\nu}\), we have \(\Phi = \Psi \otimes \id_{\cH_\nu}\) for some \(\Psi \in \U(\cH_L)\), so that \(J = (\Psi I_L \Psi^{-1}) \otimes J_\nu\).
  If we set \(J_L := \Psi I_L \Psi^{-1}\), then we obtain for all \(g \in L\) that
  \[\pi(\tau_G(g)) = J\pi(g)J = (J_L\pi_L(g)J_L) \otimes \nu_\gamma(\tau_G(g))).\]
  Hence, we have
  \[\pi_L(\tau_G(g)) \otimes \id_{\cH_\nu} = \pi(\tau_G(g))(\id_{\cH_L} \otimes \nu_\gamma(\tau_G(g))^{-1}) = (J_L\pi_L(g)J_L) \otimes \id_{\cH_\nu}.\]
  By identifying \(\cH_L\) with \(\cH_L \otimes v \subset \cH_L \hotimes \cH_\nu\) for some unit vector \(v \in \cH_\nu\), the claim follows.
\end{proof}

The following proposition discusses the commutant of antiunitary representations \((\pi, \cH)\) of \(G_\tau\) with discrete kernel for which the restriction to \(N\) is a representation with strictly positive energy.
We recall from \cite[Thm.\ 2.11]{NO17} that, if \((\pi, \cH)\) is irreducible, then the commutant \(\pi(G_\tau)' \subset B(\cH)\) is either isomorphic to \(\R\), \(\C\), or \(\H\). The restriction of \(\pi\) to \(G\) is irreducible if and only if \(\pi(G_\tau)' \cong \R\). In the other two cases, the restriction to \(G\) is a direct sum of two irreducible representations which do not extend to an antiunitary representation of \(G_\tau\) and which are equivalent if and only if \(\pi(G_\tau)' \cong \C\).

\begin{prop}
  \label{prop:adm-arep-commutant}
  Let \((\pi, \cH)\) be a strongly continuous antiunitary representation of \(G_\tau\) with discrete kernel such that \((\pi\lvert_N, \cH)\) is a strictly positive energy representation. Let \((\pi\lvert_G, \cH) \cong (\pi_L \boxtimes \nu_\gamma, \cH_L \hotimes \cH_\nu)\) be the factorization of \(\pi\lvert_G\) from {\rm Theorem \ref{thm:rep-adm-pos-factors}}. Then we have \(\pi(G_\tau)' \cong \pi_L(L_\tau)'\). In particular, \((\pi, \cH)\) is irreducible if and only if the antiunitary extension of \((\pi_L, \cH_L)\) to \(L_\tau\) (cf.\ {\rm Proposition \ref{prop:rep-adm-pos-antiuniext}}) is irreducible.
\end{prop}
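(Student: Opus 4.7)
The plan is to reduce the computation of the commutant of the antiunitary extension to the known computation of the commutant of $\pi\lvert_G$ given in Theorem \ref{thm:rep-adm-pos-factors}(b), and then add the compatibility condition with the antiunitary involution $\pi(\tau_G)$. First I would invoke Proposition \ref{prop:rep-adm-pos-antiuniext}: the hypothesis that $(\pi, \cH)$ extends to an antiunitary representation of $G_\tau$ guarantees that $\pi(\tau_G)$ factors as $J = J_L \otimes J_\nu$, where $J_L$ is an antiunitary involution on $\cH_L$ implementing an antiunitary extension of $(\pi_L, \cH_L)$ to $L_\tau$, and $J_\nu$ is the canonical antiunitary involution on $\cH_\nu$ from Definition \ref{def:urep-nu}. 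From Theorem \ref{thm:rep-adm-pos-factors}(b), every operator in $\pi(G)'$ is of the form $S \otimes \id_{\cH_\nu}$ with $S \in \pi_L(L)'$.

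Next I would unwind the definition of the commutant in the antiunitary setting: $\pi(G_\tau)'$ consists of those bounded operators $T$ on $\cH$ which commute with $\pi(g)$ for all $g \in G$ \emph{and} satisfy $TJ = JT$. Given the factorization $T = S \otimes \id_{\cH_\nu}$ forced by membership in $\pi(G)'$, a direct computation of the two sides
\[
(S \otimes \id_{\cH_\nu})(J_L \otimes J_\nu) = (SJ_L) \otimes J_\nu, \qquad (J_L \otimes J_\nu)(S \otimes \id_{\cH_\nu}) = (J_L S) \otimes J_\nu,
\]
shows that the extra commutation with $J$ is equivalent to $SJ_L = J_L S$, which is precisely the condition $S \in \pi_L(L_\tau)'$ (where $\pi_L$ is extended to $L_\tau$ by sending $\tau_G\lvert_L$ to $J_L$). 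Hence the map $S \mapsto S \otimes \id_{\cH_\nu}$ is a $*$-isomorphism $\pi_L(L_\tau)' \to \pi(G_\tau)'$.

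Finally, for the irreducibility statement, I would appeal to the trichotomy in \cite[Thm.\ 2.11]{NO17}: an antiunitary representation is irreducible if and only if its commutant is isomorphic to $\R$. Since we have just established $\pi(G_\tau)' \cong \pi_L(L_\tau)'$ as real von Neumann algebras, irreducibility of $(\pi, \cH)$ as an antiunitary representation of $G_\tau$ is equivalent to irreducibility of the antiunitary extension of $(\pi_L, \cH_L)$ to $L_\tau$. The only subtlety I foresee is bookkeeping the distinction between the unitary commutant $\pi_L(L)'$ and the antiunitary commutant $\pi_L(L_\tau)'$; the key point is that the tensor product with $\id_{\cH_\nu}$ is trivial on the $\nu_\gamma$ factor and therefore transports both the commutation with $\pi_L(L)$ and the commutation with $J_L$ faithfully, so no operators are lost or gained beyond what is already controlled on the $\cH_L$ side.
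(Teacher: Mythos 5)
Your commutant computation is exactly the paper's argument: realize \(\pi\lvert_G\) on \(\cH_L \hotimes \cH_\nu\), use Theorem \ref{thm:rep-adm-pos-factors}(b) to write any element of \(\pi(G_\tau)' \subset \pi(G)'\) as \(S \otimes \id_{\cH_\nu}\) with \(S \in \pi_L(L)'\), factor \(\pi(\tau_G) = J_L \otimes J_\nu\) via Proposition \ref{prop:rep-adm-pos-antiuniext}, and observe that commutation with \(J\) reduces to \(SJ_L = J_L S\), i.e.\ \(S \in \pi_L(L_\tau)'\). This part is correct and structurally identical to the paper's proof.

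The one genuine flaw is your final criterion: it is \emph{not} true that an antiunitary representation is irreducible if and only if its commutant is isomorphic to \(\R\). By \cite[Thm.\ 2.11]{NO17} --- and as the paper recalls in the paragraph immediately preceding the proposition --- irreducibility of an antiunitary representation of \(G_\tau\) is equivalent to the commutant being a real \emph{division} algebra, i.e.\ isomorphic to \(\R\), \(\C\), or \(\H\); the case \(\pi(G_\tau)' \cong \R\) characterizes the strictly stronger property that the restriction \(\pi\lvert_G\) is irreducible. For instance, when \(\pi\lvert_G\) decomposes as two equivalent irreducible unitary summands interchanged by \(J\), the antiunitary representation is irreducible but the commutant is \(\C\). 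Your conclusion nevertheless survives, because \(S \mapsto S \otimes \id_{\cH_\nu}\) is an isomorphism of real von Neumann algebras and hence preserves the property of containing no orthogonal projections other than \(0\) and \(\1\) (equivalently, of being one of \(\R\), \(\C\), \(\H\)); so irreducibility of \((\pi,\cH)\) and of the antiunitary extension of \((\pi_L,\cH_L)\) remain equivalent. But the last step must be restated with the correct trichotomy, which is precisely how the paper concludes.
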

\begin{proof}
  We assume that \((\pi\lvert_G, \cH)\) is realized on \(\cH_L \hotimes \cH_\nu\) as in Theorem \ref{thm:rep-adm-pos-factors}.
  Then, by the same theorem, we have \(\pi(G)' = \pi_L(L)' \otimes \id_{\cH_\nu}\).
  By Proposition \ref{prop:rep-adm-pos-antiuniext}, we have \(\pi(\tau_G) = J_L \otimes J_\nu\), and the antiunitary involution \(J_L\) satisfies \(J_L\pi_L(g)J_L = \pi_L(\tau_G(g))\) for all \(g \in L\).
  This implies that \(\pi_L(L_\tau)' \otimes \id_{\cH_\nu} \subset \pi(G_\tau)'\). On the other hand, if \(A \otimes \id_{\cH_\nu} \in \pi(G_\tau)' \subset \pi(G)'\), then we have \((AJ_L) \otimes J_\nu = (J_L A) \otimes J_\nu\), which implies \(AJ_L = J_LA\) and therefore \(A \in \pi_L(L_\tau)'\).
  This proves \(\pi(G_\tau)' = \pi_L(L_\tau)' \otimes \id_{\cH_\nu}\).

  The antiunitary representation \((\pi, \cH)\), resp.\ the antiunitary extension of \((\pi_L, \cH_L)\) to \(L_\tau\), is irreducible if and only if \(\pi(G_\tau)' \cong \pi_L(L_\tau)'\) is isomorphic to \(\R, \C,\) or \(\H\) (cf.\ \cite[Thm.\ 2.11]{NO17}), which finishes the proof.
\end{proof}

\section{Nets of standard subspaces on extended admissible Lie groups}
\label{sec:nets-std-adm}

In this section, we will construct nets of standard subspaces on admissible Lie groups using the methods developed in \cite{NO21}.
In this introduction, we recall the central definitions and results needed for the construction.

We first consider the following setting:
\begin{enumerate}[label=(A\arabic*)]
  \item Let \((G, \tau_G)\) be a connected symmetric Lie group with Lie algebra \(\g\) and let \(\tau := \L(\tau_G)\).
  \item Let \(0 \neq h \in \g^\tau\) be an Euler element of \(\g\).
  \item Let \((\pi, \cH)\) be a strongly continuous antiunitary representation of \(G_\tau = G \rtimes \{\1, \tau_G\}\) with discrete kernel.
We assume that the positive cone \(C_\pi \subset \g\) generates a hyperplane ideal \(\g_C \subset \g\).
  \item We assume that \(\g = \g_C + \R h\). The Euler element \(h\) induces a 3-grading on \(\g\) that we denote by \(\g = \g_{-1} \oplus \g_0 \oplus \g_1\). Obviously, we have \(\g_{\pm 1} \subset \g_C\).
    Moreover, we assume that \(\tau = e^{i\pi\ad(h)}\).
\end{enumerate}

Under the above assumptions, we obtain a standard subspace \(\tV_{(h,\tau_G)}\) of \(\cH\) specified by
\[\Delta^{-it/2\pi} := \pi(\exp(th)) \quad \text{and} \quad J := \pi(\tau_G), \quad t \in \R.\]
Moreover, we set \(G_{\pm 1} := \exp_G(\g_{\pm 1})\).
\begin{definition}
  \label{def:ext-dist-vectors}
  (1) (cf.\ \cite[Def.\ 3.2]{NO21}) Let \(\tV := \tV_{(h,\tau_G)}\) as above. We define
  \[\tV^\infty := \tV \cap \cH^\infty \quad \text{and} \quad \tV^{-\infty} := \{\alpha \in \cH^{-\infty} : \Im \alpha((\tV')^\infty) = \{0\}\}.\]
  (2) (cf.\ \cite[Def.\ 3.6]{NO21}) We define \(\cH^{-\infty}_\ext\) as the set of distribution vectors \(\eta \in \cH^{-\infty}\) for which the orbit map
  \begin{equation}
    \label{eq:ext-dist-vectors-orbit-map}
    \sigma^\eta : \R \rightarrow \cH^{-\infty}, \quad \sigma^\eta(t) := \pi^{-\infty}(\exp(th))\eta = \eta \circ \pi(\exp(-th))
  \end{equation}
  (cf.\ Appendix \ref{sec:app-nets-smooth}), extends to a weak-\(*\)-continuous map on \(\oline\cS_\pi\) which is weak-\(*\)-holomorphic on \(\cS_\pi\), where
  \[\cS_\pi := \{z \in \C : 0 < \Im z < \pi\}.\]
  We denote by \(\cH_{\ext, J}^{-\infty}\) the set of those distribution vectors \(\eta \in \cH_\ext^{-\infty}\) for which the extension of \(\sigma^\eta\) satisfies \(\sigma^\eta(\pi i) = J\eta\).
  By \cite[Lem.\ 3.12]{NO21}, we have \(\cH^{-\infty}_{\ext,J} \subset \tV^{-\infty}\) if \(\L(S_\tV)\) generates \(\g\).

  (3) Fix a left invariant Haar measure on \(G\). For a real subspace \(\sE \subset \cH^{-\infty}\) and an open subset \(\cO \subset G\), we define
  \[\sH_{\sE}(\cO) := \oline{\spann_\R (\pi^{-\infty}(C_c^\infty(\cO,\R))\sE)} \quad \text{and} \quad \cH_{\sE}(\cO) := \oline{\spann_\C (\pi^{-\infty}(C_c^\infty(\cO))\sE)}.\]
  We say that \(\sE\) is \emph{\(G\)-cyclic} if \(\cH_\sE(G) = \cH\).
\end{definition}

The following theorem has been proven in \cite[Thm.\ 5.3]{NO21}, where it is formulated for the case of irreducible antiunitary representations of hermitian simple Lie groups of tube type:
\begin{thm}
  \label{thm:netstand-constr}
  Let \(\sE \subset \cH^{-\infty}\) be a real subspace that satisfies the following conditions:
  \begin{enumerate}[label={\rm (C\arabic*)}]
    \item \(\sE\) is contained in \(\tV^{-\infty}\).
    \item \(\sE\) is invariant under \(A := (\exp(th))_{t \in \R}\).
    \item \(\sE\) is \(G\)-cyclic.
  \end{enumerate}
  Then, for every non-empty open subset \(\cO \subset G\), the real subspace \(\tV(\cO) := \sH_\sE(\cO)\) of \(\cH\) is \emph{cyclic}, i.e.\ \(\tV(\cO) + i\tV(\cO)\) is dense in \(\cH\), and the following assertions hold:
  \begin{enumerate}
    \item For two non-empty open subsets \(\cO_1 \subset \cO_2 \subset G\), we have \(\tV(\cO_1) \subset \tV(\cO_2)\).
    \item For every \(g \in G\) and every non-empty open subset \(\cO \subset G\), we have \(\tV(g\cO) = \pi(g)\tV(\cO)\).
    \item Let \(C_\pm := (\pm C_\pi) \cap \g_{\pm 1}(h)\) and \(S := G^{\tau} \exp(C_+^o \oplus C_-^o)\). Then \(\tV_{(h,\tau_G)} = \sH_\sE(S)\). For every \(g \in G\), the subspace \(\tV(gS)\) is standard and the corresponding pair of modular objects \((\Delta_{\tV(gS)}, J_{\tV(gS)})\) is determined by
      \[\Delta_{\tV(gS)}^{-it/2\pi} = \pi(\exp(t\Ad(g)h)) \quad \text{for } t \in \R \quad \text{and} \quad J_{\tV(gS)} = \pi(g\tau_G(g)^{-1})J.\]
    \item For every non-empty open subset \(\cO \subset G\) that satisfies \(\cO \subset gS\) for some \(g \in G\), the subspace \(\tV(\cO)\) is standard.
    \item \(\tV(gS)' = \tV(gS^{-1}) = \tV(g\tau_G(S))\) for every \(g \in G\).
  \end{enumerate}
\end{thm}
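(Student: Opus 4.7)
The plan is to adapt the argument of \cite[Thm.\ 5.3]{NO21}, which treats the hermitian simple tube-type case, to the present more abstract setting specified by (A1)--(A4) and (C1)--(C3). Cyclicity of $\tV(\cO)$ for any non-empty open $\cO$ will follow from (C3) by a standard approximation argument: fixing $g_0 \in \cO$ and a compact neighborhood $K$ of $\1$ with $g_0 K \subset \cO$, convolution of arbitrary test functions in $C_c^\infty(G,\R)$ against a Dirac-approximation supported in $K$ (then translated by $g_0$) allows us to approximate $\pi(g_0)^{-1}\sH_\sE(G)$ by elements of $\sH_\sE(\cO)$, yielding density of $\tV(\cO) + i\tV(\cO)$ in $\cH$. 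Parts (a) and (b) are then essentially formal: (a) is immediate from $C_c^\infty(\cO_1,\R) \subset C_c^\infty(\cO_2,\R)$, and (b) follows from the covariance identity $\pi(g)\pi^{-\infty}(\varphi) = \pi^{-\infty}(L_g\varphi)$ on $\cH^{-\infty}$ (with a modular correction if needed), since $L_g C_c^\infty(\cO,\R) = C_c^\infty(g\cO,\R)$.

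The core of the proof is (c). Using \eqref{eq:sv-3grad} and \eqref{eq:lie-wedge-sv}, together with (A4) and the assumption that $C_\pi$ generates $\g_C$, one identifies the interior of the identity component of $S_\tV$ with $S = G^\tau \exp(C_+^o \oplus C_-^o)$. The inclusion $\sH_\sE(S) \subset \tV_{(h,\tau_G)}$ is then obtained by writing $\pi^{-\infty}(\varphi)\eta$ as a $\pi(g)$-weighted integral over $S$ and combining $\pi(S)\tV \subset \tV$ with (C1), which forces $\eta \in \tV^{-\infty}$; since $\pi^{-\infty}(\varphi)\eta$ is smooth, it actually lies in $\tV$. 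The reverse inclusion is subtler and uses that, by (C1), (C2), and \cite[Lem.\ 3.12]{NO21}, $\sE \subset \cH_{\ext,J}^{-\infty}$; the holomorphic extension of the orbit maps $\sigma^\eta$ from Definition \ref{def:ext-dist-vectors}(b) then supplies the analytic control needed to connect $\sE$ with the Tomita operator $J\Delta^{1/2}$, and combining this with (C3) produces enough elements in $\sH_\sE(S)$ to exhaust $\tV_{(h,\tau_G)}$. The modular identification for general $gS$ then follows from covariance (b): $\tV(gS) = \pi(g)\tV_{(h,\tau_G)}$ inherits the pair $(\Ad(g)h,\, g\tau_G(g)^{-1}\tau_G)$ as its modular data.

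Statement (d) is immediate from (a) and (c): $\tV(\cO)$ is a cyclic closed real subspace of the standard subspace $\tV(gS)$, and any such subspace is itself standard. For (e), one uses that $\tau = e^{i\pi\ad h}$ fixes $\g_0$ and negates $\g_{\pm 1}$, hence $\tau_G(S) = S^{-1}$; the identity $\tV(gS)' = \tV(gS^{-1})$ then follows from the general relation $\tV' = J\tV$ and the computation $J_{\tV(gS)}\pi(gS)J_{\tV(gS)} = \pi(\tau_G(gS))$, combined with the modular data established in (c). The main obstacle is the density half of (c), namely $\tV_{(h,\tau_G)} \subset \sH_\sE(S)$: this is precisely where (C2) is indispensable, as one must transport the real $A$-action on $\sE$ into the strip $\cS_\pi$ via holomorphic extension in order to reach the full modular flow on $\tV_{(h,\tau_G)}$, and then invoke (C3) to cover the transverse directions. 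The abstract hypotheses (C1)--(C3) are designed precisely so that the argument of \cite[Sec.\ 5]{NO21} carries over essentially verbatim, without reliance on semisimplicity.
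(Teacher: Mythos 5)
Your overall route---transplanting the argument of \cite[Thm.\ 5.3]{NO21}---is exactly the paper's (its proof consists of citing \cite[Thm.\ 3.5]{NO21} for cyclicity and declaring the rest identical to \cite[Thm.\ 5.3]{NO21}), but two of your fill-in arguments fail. First, cyclicity of \(\tV(\cO)\) does not follow from (C3) alone by your convolution trick: if \((\delta_n)\) is a Dirac sequence supported in \(g_0K \subset \cO\) and \(\varphi \in C_c^\infty(G,\R)\) is arbitrary, then \(\supp(\delta_n * \varphi) \subset g_0K\supp(\varphi)\), which is \emph{not} contained in \(\cO\); convolution enlarges supports, so it produces no elements of \(\sH_\sE(\cO)\) and gives no way to approximate \(\sH_\sE(G)\) from test functions supported inside \(\cO\). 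Localizing the global cyclicity hypothesis to arbitrarily small open sets is a Reeh--Schlieder-type statement, and it is precisely here that (C2) enters: the paper invokes \cite[Thm.\ 3.5]{NO21}, which needs both the \(G\)-cyclicity \emph{and} the \(A\)-invariance of \(\sE\). Your sketch never uses (C2) for this step; if your argument were sound, (C2) would be superfluous there, which it is not.

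Second, in the density half of (c) you assert that (C1), (C2) and \cite[Lem.\ 3.12]{NO21} give \(\sE \subset \cH^{-\infty}_{\ext,J}\). This reverses the lemma: it states \(\cH^{-\infty}_{\ext,J} \subset \tV^{-\infty}\) (when \(\L(S_\tV)\) generates \(\g\)), and the hypotheses (C1)--(C3) do not place \(\sE\) inside \(\cH^{-\infty}_{\ext,J}\)---no holomorphic extension of the orbit maps \(\sigma^\eta\) to \(\cS_\pi\) is available here, so the ``analytic control'' your argument rests on does not exist under the stated assumptions. It is also unnecessary: once \(\sH_\sE(S) \subset \tV\) is known (your argument via \(S \subset S_\tV\) and (C1) is correct), observe that \(\sH_\sE(S)\) is cyclic by the localization theorem, separating because it is contained in the standard subspace \(\tV\) (so \(\sH_\sE(S) \cap i\sH_\sE(S) \subset \tV \cap i\tV = \{0\}\)), and invariant under the modular group, since \(\exp(th) \in G^\tau\) gives \(\exp(th)S = S\) and hence \(\Delta^{it}\sH_\sE(S) = \sH_\sE(S)\) for all \(t\); then \cite[Prop.\ 3.10]{Lo08} forces \(\sH_\sE(S) = \tV\). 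This modular-invariance argument is what makes (c) true for an \emph{arbitrary} \(\sE\) satisfying (C1)--(C3), whereas analytic continuation of individual distribution vectors is a feature of the concrete spaces \(\sE_\nu, \sE_P\) constructed later, not of the hypotheses of this theorem. Parts (a), (b) and (d) of your sketch are fine (in (b) no modular correction arises, since \(\mu_G\) is left invariant, and (d) indeed only needs ``cyclic and contained in a standard subspace implies standard''), and your outline of (e) is at the level of the paper's own citation.
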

\begin{proof}
  The fact that \(\sE\) is \(G\)-cyclic and \(A\)-invariant implies that \(\sH_{\sE}(\cO)\) is cyclic for every non-empty open subset \(\cO \subset G\) (cf.\ \cite[Thm.\ 3.5]{NO21}). The proof of the remaining statements is identical to the proof of \cite[Thm.\ 5.3]{NO21}.
\end{proof}

\begin{rem}
  \label{rem:netstand-constr-reductions}
  (1) The condition that \(\sE\) is contained in \(\tV^{-\infty}\) and is cyclic can also be checked on a closed subgroup \(H \subset G\).
  To see this, we first note that the space \(\cH^\infty\) of smooth vectors of \((\pi, \cH)\) is contained in the space \(\cH_H^\infty\) of smooth vectors of \((\pi\lvert_H, \cH)\).
  The inclusion \(\cH^\infty \hookrightarrow \cH_H^\infty\) is continuous with respect to the respective \(C^\infty\)-topologies, so that, by taking the adjoint map, we obtain an inclusion \(\cH_H^{-\infty} \hookrightarrow \cH^{-\infty}\) of the corresponding spaces of distributions. Hence, if \(\alpha \in \cH^{-\infty}_H\), we have \(\Im(\alpha(\tV' \cap \cH^\infty)) \subset \Im(\alpha(\tV' \cap \cH^\infty_H))\).

  In order to check the cyclicity condition, we first fix a \(\delta\)-sequence \((\delta_n)_{n \in \N}\) in \(C_c^\infty(G)\) and a left invariant Haar measure \(\mu_H\) on \(H\).
  For every \(f \in L^1(H,\mu_H)\), the measure \(f\mu_H\) is a finite complex Borel measure on \(G\).
  This defines an inclusion \(L^1(H, \mu_H) \hookrightarrow M(G)\) (cf.\ Appendix \ref{sec:app-nets-smooth}).
  If \(\varphi \in C_c^\infty(H)\), then \(\delta_n * (\varphi\mu_H) \in M(G)\) corresponds to an element in \(C_c^\infty(G)\), which we denote by \(\delta_n * \varphi\). We have
  \[\lim_{n \rightarrow \infty}\pi(\delta_n) = \id_\cH \quad \text{and} \quad \lim_{n \rightarrow \infty}\pi(\delta_n * \varphi) = \lim_{n \rightarrow \infty} \pi(\delta_n)\pi(\varphi) = \pi(\varphi)\]
  in the strong operator topology. If \(\eta \in \cH_H^{-\infty}\) is a distribution vector on \(\cH_H^{\infty}\), then we have \(\eta \in \cH^{-\infty}\) by the argument above and \(\pi^{-\infty}(\varphi)\eta \in \cH \subset \cH^{-\infty}\) and therefore
  \[\lim_{n \rightarrow \infty} \pi^{-\infty}(\delta_n * \varphi)\eta = \lim_{n \rightarrow \infty}\pi(\delta_n)\pi^{-\infty}(\varphi)\eta = \pi^{-\infty}(\varphi)\eta.\]
  This shows that \(\pi^{-\infty}(\varphi)\eta \in \sH_{\R\eta}(G)\).

  (2) Suppose that \(G\) is 1-connected. Let \(\g(C_\pi, \tau, h)\) be the ideal of \(\g\) that is generated by \(C_\pi \cap \g_1\) and \(C_\pi \cap \g_{-1}\). Then \(\tilde \g := \g(C_\pi,\tau, h) + \R h\) is \(\tau\)-invariant, so that the closed subgroup \(\tilde G\) of \(G\) corresponding to the subalgebra \(\tilde \g\) satisfies the conditions (A1)-(A4) if the representation \((\pi, \cH)\) is restricted to \(\tilde G\). Combining this observation with (1), we see that it suffices to check the conditions of Theorem \ref{thm:netstand-constr} on the closed subgroup \(\tilde G\).

  (3) The isotony of the net constructed in Theorem \ref{thm:netstand-constr}, resp.\ in \cite{NO21}, follows directly from the definition of \(\sH_\sE(\cO)\) for \(\cO \subset G\) and does not actually require any assumption on the positive cone of \((\pi, \cH)\) or the real subspace \(\sE \subset \cH^{-\infty}\).

  However, in order to ensure that the net contains non-trivial inclusions of standard subspaces, it is reasonable to assume that the endomorphism semigroup \(S_\tV\) of \(\tV\) is ``large'' in the sense that it has a non-empty interior in \(G\) and is not a group.
  The explicit formulas \eqref{eq:lie-wedge-sv} and \eqref{eq:sv-3grad} for \(\L(S_\tV)\), resp.\ \(S_\tV\), from the introduction then show that the element \(h\) with which \(\tV = \tV_{(h,\tau_G)}\) was constructed is an Euler element of \(\g\) and that \(C_\pi\) is non-trivial.
\end{rem}

\begin{rem}
  \label{rem:netstand-constr-cyclic-irred}
  Suppose, in the context of Theorem \ref{thm:netstand-constr}, that \((\pi, \cH)\) is irreducible.
  If \(\sE \neq \{0\}\) and if \(\sE_\C := \sE + i\sE\) is \(\pi^{-\infty}(\tau_G)\)-invariant, then \(\sE\) is automatically \(G\)-cyclic:
  Using a \(\delta\)-sequence in \(C_c^\infty(G)\), it is easy to see that \(\sH_{\sE}(G) \neq \{0\}\).
  Furthermore, we have \(\pi(g)\sH_{\sE}(G) = \sH_{\sE}(gG) = \sH_{\sE}(G)\) by \cite[Lem.\ 2.11]{NO21}, which shows that \(\cH_\sE(G)\) is a non-trivial \(G\)-invariant closed subspace of \(\cH\).

  To see that \(\cH_\sE(G)\) is also invariant under the action of \(\tau_G\), we recall that \(\pi(\tau_G(g)) = J\pi(g)J\) for all \(g \in G\). Moreover, if \(\mu_G\) is a left invariant Haar measure on \(G\), then \(\mu_G \circ \tau_G\) is also left invariant, so that \(\mu_G \circ \tau_G = \mu_G\). Hence, we have for every \(\varphi \in C_c^\infty(G)\) and \(v \in \cH\) that
  \[\pi(\tau_G)\pi(\varphi)v = \int_G \varphi(g)\pi(\tau_G(g))Jv \,d\mu_G(g) = \pi(\varphi \circ \tau_G)Jv.\]
  and therefore \(\pi(\tau_G)\pi(\varphi)\pi(\tau_G) = \pi(\varphi \circ \tau_G)\). In particular, we have
  \[\pi^{-\infty}(\tau_G)\pi^{-\infty}(\varphi) = \pi^{-\infty}(\varphi \circ \tau_G)\pi^{-\infty}(\tau_G),\]
  so that the \(\pi^{-\infty}(\tau_G)\)-invariance of \(\sE_\C\) implies that \(\cH_{\sE}(G)\) is also \(\pi(\tau_G)\)-invariant.
  Since \((\pi, \cH)\) is irreducible, we thus have \(\cH_\sE(G) = \cH\).
\end{rem}

\subsection{Tensor products of standard subspaces and distribution vectors}
\label{sec:std-tensor-dist}

In view of the Factorization Theorem \ref{thm:rep-adm-pos-factors}, it is natural to construct a suitable subspace \(\sE\) of distribution vectors satisfying the conditions (C1)-(C3) of Theorem \ref{thm:netstand-constr} in two steps: First, we study subspaces \(\sE_\nu \subset \cH_\nu^{-\infty}\) for the representation \((\nu, \cH_\nu)\) of \(\tilde\HCSp(V,\Omega)\). Next, we construct a subspace \(\sE_L \subset \cH_L^{-\infty}\) of the representation \((\pi_L, \cH_L)\) using the results from \cite{NO21} and verify that the tensor product \(\sE_L \otimes \sE_\nu\) satisfies (C1)-(C3).
This requires us to take a closer look at tensor products of standard subspaces and tensor products of spaces of distribution vectors.

\begin{definition}
  \label{def:std-subspace-tensorprod}
  Let \(\tV_k \subset \cH_k\) be standard subspaces of complex Hilbert spaces \(\cH_k\) for \(k=1,2\) and denote the corresponding pair of modular objects by \((\Delta_k, J_k)\).
  Then we obtain a continuous antiunitary representation \((\rho, \cH_1 \hotimes \cH_2)\) of \(\R^\times\) via
  \[\rho(e^t) = \Delta^{-it/2\pi}_1 \otimes \Delta^{-it/2\pi}_2 \quad \text{for } t \in \R  \quad \text{and} \quad \rho(-1) := J_1 \otimes J_2.\]
  The representation \(\rho\) determines a pair \((\Delta, J)\) of modular objects on \(\cH_1 \hotimes \cH_2\) given by \(\Delta := e^{2\pi i\partial\rho(1)}\) and \(J := \rho(-1)\) (cf.\ \cite[Lem.\ 2.21]{NO17}), which in turn yields a standard subspace \(\tV := \Fix(J\Delta^{1/2})\) of \(\cH_1 \hotimes \cH_2\).
  We call \(\tV =: \tV_1 \hotimes \tV_2\) the \emph{tensor product of \(\tV_1\) and \(\tV_2\)}.
\end{definition}

\begin{rem}
  \label{def:std-subspace-tensorprod-dense}
  For standard subspaces \(\tV_1 \subset \cH_1\) and \(\tV_2 \subset \cH_2\), the real algebraic tensor product \(\tV_1 \otimes \tV_2\) of \(\tV_1\) and \(\tV_2\) is contained in \(\tV_1 \hotimes \tV_2\): To see this, consider \(v_k \in \tV_k\) (\(k=1,2\)).
  Then the orbit maps \(\alpha_k : t \mapsto \Delta_k^{-it/2\pi}v_k\) on \(\R\) have a continuous extension to the strip \(\oline{\cS_\pi}\) which is holomorphic on the interior with \(\alpha_k(\pi i) = J_kv_k\).
  Hence, the tensor product \(t \mapsto \alpha_1(t) \otimes \alpha_2(t)\) on \(\R\) has a continuous extension to \(\oline{\cS_\pi}\) which is holomorphic on the interior and satisfies \(\alpha(\pi i) = J_1v_1 \otimes J_2 v_2\).
  Since this holomorphic extension property characterizes elements of the standard subspace (cf.\ \cite[Prop.\ 2.1]{NOO21}), we have \(v_1 \otimes v_2 \in \tV_1 \hotimes \tV_2\).

  To see that \(\tV_1 \otimes \tV_2\) is dense in \(\tV_1 \hotimes \tV_2\), we first note that \((\tV_1 \otimes \tV_2)'' = \oline{\tV_1 \otimes \tV_2}\) is a standard subspace of \(\cH_1 \hotimes \cH_2\) because it is contained in a separating subspace and because \((\tV_1 + i\tV_1) \otimes (\tV_2 + i\tV_2)\) is dense in \(\cH_1 \hotimes \cH_2\).
  Moreover, it is obvious from the definition of \(\tV_1 \hotimes \tV_2\) that \(\Delta_{\tV_1 \hotimes \tV_2}^{it}\oline{\tV_1 \otimes \tV_2} = \oline{\tV_1 \otimes \tV_2}\) for all \(t \in \R\).
  Hence, we actually have \(\tV_1 \hotimes \tV_2 = \oline{\tV_1 \otimes \tV_2}\) by \cite[Prop.\ 3.10]{Lo08}.
\end{rem}

\begin{lem}
  \label{lem:std-tensor-dists}
  For \(k=1,2\), let \(G_k\) be a connected Lie group and let \(\tau_{G_k}\) be an involutive automorphism of \(G_k\). Let \((\pi_k, \cH_k)\) be a strongly continuous antiunitary representation of \(G_{k,\tau} = G_k \rtimes \{\1, \tau_{G_k}\}\).
  We endow \(\cH := \cH_1 \hotimes \cH_2\) with the tensor product representation \(\pi := \pi_1 \otimes \pi_2\) of \(G_1 \times G_2\) (cf. {\rm Appendix \ref{sec:app-tensor-uni-rep}}).
  Let \(h_k \in \L(G_k)\) be a fixed point of \(\tau_k := \L(\tau_{G_k})\) and let \(\tV_k \subset \cH_k\) be the standard subspace specified by \(J_{\tV_k} = \pi_k(\tau_{G_k})\) and \(\Delta_{\tV_k}^{-it/2\pi} = \pi_k(\exp(th_k))\) for \(t \in \R\).
  Suppose furthermore that \(\1_{G_k} \in \oline{S_{\tV_k}^o}\).
  For \(\tV := \tV_1 \hotimes \tV_2\), the following holds:
  \begin{enumerate}
  \item \(\tV_1^\infty \otimes \tV_2^\infty\) is dense in \(\tV^\infty\) with respect to the \(C^\infty\)-topology on \(\cH^\infty\).
\item For every pair \((\eta_1, \eta_2) \in \tV_1^{-\infty} \times \tV_2^{-\infty}\) of distribution vectors, the tensor product distribution vector \(\eta_1 \hotimes \eta_2 \in \cH^{-\infty}\) (cf.\ {\rm Lemma \ref{lem:smoothvec-tensor-dist-ext}}) is contained in \(\tV^{-\infty}\).
  \end{enumerate}
\end{lem}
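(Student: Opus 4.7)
The plan is to exploit the density of the algebraic tensor product \(\tV_1 \otimes \tV_2\) in \(\tV\) that was established in the Remark preceding the lemma, and to upgrade it to \(C^\infty\)-density by convolution smoothing with nonnegative test functions supported in \(S_{\tV_1}^o \times S_{\tV_2}^o\). The hypothesis \(\1_{G_k} \in \oline{S_{\tV_k}^o}\) ensures the existence of such test functions concentrating at the identity, and this is what part (a) ultimately rests on.

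For part (a), I would proceed in two stages. First, pick delta sequences \((\varphi_n^{(k)})_{n \in \N}\) in \(C_c^\infty(S_{\tV_k}^o, \R_{\geq 0})\) concentrating at \(\1_{G_k}\). Since \(\pi_k(S_{\tV_k})\tV_k \subset \tV_k\) and \(\tV_k\) is a closed real subspace, the Bochner integral \(\pi_k(\varphi_n^{(k)})\) maps \(\tV_k\) into \(\tV_k \cap \cH_k^\infty\), and consequently \(\pi(\varphi_n^{(1)} \otimes \varphi_n^{(2)}) = \pi_1(\varphi_n^{(1)}) \otimes \pi_2(\varphi_n^{(2)})\) preserves \(\tV\). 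For any \(v \in \tV^\infty\), the standard delta-sequence argument applied to the smooth orbit map \(g \mapsto \pi(g)v\) (after moving derivatives under the integral using smoothness of \(v\)) yields \(\pi(\varphi_n^{(1)} \otimes \varphi_n^{(2)})v \to v\) in the \(C^\infty\)-topology of \(\cH^\infty\). Second, for each fixed \(\psi = \varphi^{(1)} \otimes \varphi^{(2)}\) of this type, approximate \(v\) by \(u_m \in \tV_1 \otimes \tV_2\) in the Hilbert norm. Writing \(u_m = \sum_j r_j v_{1,j}^{(m)} \otimes v_{2,j}^{(m)}\) with \(r_j \in \R\) and \(v_{k,j}^{(m)} \in \tV_k\), one has \(\pi(\psi)u_m = \sum_j r_j \pi_1(\varphi^{(1)})v_{1,j}^{(m)} \otimes \pi_2(\varphi^{(2)})v_{2,j}^{(m)} \in \tV_1^\infty \otimes \tV_2^\infty\) by the first stage. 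The key point is that \(\pi(\psi) : \cH \to \cH^\infty\) is continuous: any iterated right-invariant derivative satisfies \(d\pi(X_1) \cdots d\pi(X_m) \pi(\psi) = (-1)^m \pi(\widetilde X_m \cdots \widetilde X_1 \psi)\), which is bounded on \(\cH\) by the \(L^1\)-norm of \(\widetilde X_m \cdots \widetilde X_1 \psi\). Hence \(\pi(\psi)u_m \to \pi(\psi)v\) in \(\cH^\infty\). A diagonal extraction across the two stages produces the desired sequence in \(\tV_1^\infty \otimes \tV_2^\infty\) converging to \(v\) in the \(C^\infty\)-topology.

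Part (b) reduces to part (a). First, \(\tV' = \tV_1' \hotimes \tV_2'\) directly from Definition \ref{def:std-subspace-tensorprod} applied to the pairs \((\Delta_k^{-1}, J_k)\), whose tensor product is \((\Delta^{-1}, J)\). Moreover, \(S_{\tV'} = S_\tV^{-1}\) in general, and inversion is a homeomorphism, so \(\1_{G_k} \in \oline{S_{\tV_k'}^o}\), meaning the hypotheses of (a) hold for the pair \((\tV_1', \tV_2')\) (with \(h_k\) replaced by \(-h_k\)). Applying (a) in this form, \((\tV_1')^\infty \otimes (\tV_2')^\infty\) is dense in \((\tV')^\infty\) in the \(C^\infty\)-topology. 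For any \(v_k \in (\tV_k')^\infty\), the identity \((\eta_1 \hotimes \eta_2)(v_1 \otimes v_2) = \eta_1(v_1) \eta_2(v_2)\) lies in \(\R \cdot \R = \R\) because \(\eta_k \in \tV_k^{-\infty}\). Thus \(\Im(\eta_1 \hotimes \eta_2)\) vanishes on the dense subspace \((\tV_1')^\infty \otimes (\tV_2')^\infty\), and since \(\eta_1 \hotimes \eta_2\) is continuous on \(\cH^\infty\) in the \(C^\infty\)-topology, it vanishes on all of \((\tV')^\infty\), giving \(\eta_1 \hotimes \eta_2 \in \tV^{-\infty}\).

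The main technical hurdle is the second stage of part (a): upgrading Hilbert-space density of \(\tV_1 \otimes \tV_2\) in \(\tV\) to \(C^\infty\)-density of \(\tV_1^\infty \otimes \tV_2^\infty\) in \(\tV^\infty\). Once the continuity of the smoothing operator \(\pi(\psi) : \cH \to \cH^\infty\) is set up, everything else in both parts is either a direct computation on simple tensors or a standard density and continuity argument.
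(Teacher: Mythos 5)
Your proposal is correct and follows essentially the same route as the paper's proof: a $\delta$-sequence in $C_c^\infty(S_{\tV_k}^o)$ to reduce to vectors of the form $\pi(\varphi_1\otimes\varphi_2)w$, the Hilbert-norm density of the real algebraic tensor product $\tV_1\otimes\tV_2$ from the preceding remark, continuity of $\pi(\varphi_1\otimes\varphi_2)\colon \cH\to\cH^\infty$ for part (a), and for part (b) the continuity of $\eta_1\hotimes\eta_2$ together with the vanishing of $\Im(\eta_1\otimes\eta_2)$ on the $C^\infty$-dense subspace $(\tV_1')^\infty\otimes(\tV_2')^\infty$. Your explicit verification that the hypotheses of (a) transfer to the commutants via $\tV'=\tV_1'\hotimes\tV_2'$ and $S_{\tV'}=S_{\tV}^{-1}$ is a step the paper leaves implicit, and it is a welcome addition.
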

\begin{proof}
  (a) Since \(\1_{G_k} \in \oline{S_{\tV_k}^o}\), the interior of \(S_{\tV_k}\) is dense in \(S_{\tV_k}\) (cf.\ \cite[Prop.\ V.0.15]{HHL89}).
  Let \(v \in \tV^\infty\) and let \((\delta_{k,n})_{n \in \N}\) be a \(\delta\)-sequence of compactly supported functions on \(S_{\tV_k}^o\) for \(k=1,2\).
  Then \(\pi(\delta_{1,n} \otimes \delta_{2,n})v\) converges to \(v\) in the \(C^\infty\)-topology.
  Hence, we may assume that \(v = \pi(\varphi_1 \otimes \varphi_2)w\) for some \(\varphi_k \in C_c^\infty(S_{\tV_k}^o)\) and \(w \in \tV^\infty\).
  For every sequence \((w_n)_{n \in \N}\) in \(\tV_1 \otimes \tV_2\) converging to \(w\) in \(\cH\) (cf.\ Remark \ref{def:std-subspace-tensorprod-dense}), the sequence \((\pi(\varphi_1 \otimes \varphi_2)w_n)_{n \in \N}\) is contained in \(\tV_1^\infty \otimes \tV_2^\infty\) and converges to \(v\) in the \(C^\infty\)-topology, which proves the claim.

  (b) Let \((\eta_1, \eta_2) \in \tV_1^{-\infty} \times \tV_2^{-\infty}\). Since \(\eta_1 \hotimes \eta_2\) is continuous on \(\cH^\infty\) by Lemma \ref{lem:smoothvec-tensor-dist-ext}, the claim follows from (a) and the fact that \(\Im (\eta_1 \otimes \eta_2)\) vanishes on the \(C^\infty\)-dense subspace \((\tV_1')^\infty \otimes (\tV_2')^\infty\).
\end{proof}

\begin{lem}
  \label{lem:std-boxtensor-dists}
  Let \((G,\tau_G)\) be a connected symmetric Lie group.
  For \(k=1,2\), let \((\pi_k, \cH_k)\) be a strongly continuous antiunitary representation of \(G_{\tau} = G \rtimes \{\1, \tau_{G}\}\).
  We endow \(\cH := \cH_1 \hotimes \cH_2\) with the tensor product representation \(\pi := \pi_1 \boxtimes \pi_2\) of \(G\) (cf.\ {\rm Appendix \ref{sec:app-tensor-uni-rep}}).
  Let \(h \in \L(G)\) be a fixed point of \(\tau = \L(\tau_G)\) and let \(\tV_k \subset \cH_k\) be the standard subspace specified by \(J_{\tV_k} = \pi_k(\tau_{G_k})\) and \(\Delta_{\tV_k}^{-it/2\pi} = \pi_k(\exp(th))\) for \(t \in \R\). Let \(\tV := \tV_1 \hotimes \tV_2\)
  Suppose furthermore that \(\1 \in G\) is contained in the closure of the interior of the semigroup \(S := S_{\tV_1} \cap S_{\tV_2} \subset G\).
  Then the following holds:
  \begin{enumerate}
  \item \(\tV_1^\infty \otimes \tV_2^\infty\) is dense in \(\tV^\infty\) with respect to the \(C^\infty\)-topology on \(\cH^\infty\).
  \item For every pair \((\eta_1, \eta_2) \in \tV_1^{-\infty} \times \tV_2^{-\infty}\) for which the tensor product \(\eta_1 \otimes \eta_2\) extends to a continuous antilinear functional \(\eta_1 \hotimes \eta_2\) on \(\cH^\infty\), we have \(\eta_1 \hotimes \eta_2 \in \tV^{-\infty}\).
  \end{enumerate}
\end{lem}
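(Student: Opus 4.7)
The plan is to establish part (a) directly, from which (b) follows by applying (a) to the dual standard subspaces $\tV_1', \tV_2'$ and exploiting that $\eta_k$ is $\R$-valued on $(\tV_k')^\infty$.

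The main obstacle is the proof of (a): unlike in the product-group setting of Lemma \ref{lem:std-tensor-dists}, the diagonal action $\pi = \pi_1 \boxtimes \pi_2$ does not factor the smoothing operator $\pi(\varphi)$ into a tensor of operators on the two factors, so the algebraic tensor product $\tV_1^\infty \otimes \tV_2^\infty$ is not literally preserved by $\pi(\varphi)$; instead, Riemann-sum convergence must be lifted from $\cH$ to the $C^\infty$-topology. The key step is to show that for any $w \in \tV_1^\infty \otimes \tV_2^\infty$ and $\varphi \in C_c^\infty(S^o, \R)$, the vector $\pi(\varphi)w$ is a $C^\infty$-limit of elements of $\tV_1^\infty \otimes \tV_2^\infty$. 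Indeed, writing $w = \sum_i a_i \otimes b_i$ with $a_i \in \tV_1^\infty, b_i \in \tV_2^\infty$, the map $g \mapsto \pi(g)w = \sum_i \pi_1(g)a_i \otimes \pi_2(g)b_i$ is continuous from $G$ into the Fr\'echet space $\cH^\infty$ (because $a_i, b_i$ are smooth), so $\pi(\varphi)w$ is a Bochner integral in $\cH^\infty$ whose Riemann sums converge in the $C^\infty$-topology. Each Riemann sum is a real linear combination of terms $\pi_1(g_j)a_i \otimes \pi_2(g_j)b_i$ with $g_j \in S^o \subset S_{\tV_1} \cap S_{\tV_2}$, and since $\pi_k(g_j)$ preserves both $\tV_k$ and $\cH_k^\infty$, such a Riemann sum lies in $\tV_1^\infty \otimes \tV_2^\infty$.

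Next, one observes that $\tV_k^\infty$ is $\cH_k$-dense in $\tV_k$ (smoothing by a $\delta$-sequence in $C_c^\infty(S^o, \R) \subset C_c^\infty(S_{\tV_k}^o, \R)$), which together with Remark \ref{def:std-subspace-tensorprod-dense} shows that $\tV_1^\infty \otimes \tV_2^\infty$ is $\cH$-dense in $\tV$. Fix $v \in \tV^\infty$ and a $\delta$-sequence $(\delta_m) \subset C_c^\infty(S^o, \R)$: then $\pi(\delta_m)v \to v$ in $C^\infty$-topology, and choosing $w_n \in \tV_1^\infty \otimes \tV_2^\infty$ with $w_n \to v$ in $\cH$, the continuity of $\pi(\delta_m) : \cH \to \cH^\infty$ gives $\pi(\delta_m)w_n \to \pi(\delta_m)v$ in $C^\infty$. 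By the key step, each $\pi(\delta_m)w_n$ is a $C^\infty$-limit of elements of $\tV_1^\infty \otimes \tV_2^\infty$, proving (a).

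For (b), one verifies that $\tV' = \tV_1' \hotimes \tV_2'$ (both being specified by the modular objects $(\Delta_1^{-1} \otimes \Delta_2^{-1},\, J_1 \otimes J_2)$) and that $S_{\tV_k'} = S_{\tV_k}^{-1}$, so $\1 \in \oline{S^o}$ transports to $\1 \in \oline{(S_{\tV_1'} \cap S_{\tV_2'})^o}$. Applying (a) to $\tV_1', \tV_2'$ yields $C^\infty$-density of $(\tV_1')^\infty \otimes (\tV_2')^\infty$ in $(\tV')^\infty$. For $w_k \in (\tV_k')^\infty$ and $\eta_k \in \tV_k^{-\infty}$, one has $\eta_k(w_k) \in \R$, so $\Im((\eta_1 \otimes \eta_2)(w_1 \otimes w_2)) = 0$; by $\R$-bilinearity together with the continuity of $\eta_1 \hotimes \eta_2$ on $\cH^\infty$, the imaginary part of $\eta_1 \hotimes \eta_2$ vanishes on $(\tV')^\infty$, proving $\eta_1 \hotimes \eta_2 \in \tV^{-\infty}$.
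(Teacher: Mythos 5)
Your proof is correct, and for part (a) it takes a genuinely different route from the paper. The paper reduces (a) to Lemma \ref{lem:std-tensor-dists} for the product group \(G \times G\): it smooths a given \(v \in \tV^\infty\) with \((\pi_1 \otimes \pi_2)(\delta_n \otimes \delta_n)\), exploiting that this \emph{product} smoothing factors as \(\pi_1(\delta_n) \otimes \pi_2(\delta_n)\) and hence lands in \(\tV \cap \cH^\infty_{\pi_1 \otimes \pi_2}\), and then uses the \(G\times G\)-lemma to approximate within that intermediate space. You instead stay entirely with the diagonal representation and compensate for the failure you correctly diagnose --- that \(\pi(\varphi)\) does not factor through the two tensor slots --- by approximating the \(\cH^\infty\)-valued Bochner integral \(\pi(\varphi)w = \int_G \varphi(g)\pi(g)w\,d\mu_G(g)\) by Riemann sums in the Fr\'echet topology, each sum lying in the real span \(\tV_1^\infty \otimes \tV_2^\infty\) because \(\supp\varphi \subset S^o\), \(\varphi\) is real, and \(\pi_k(S^o)\) preserves both \(\tV_k\) and \(\cH_k^\infty\); a diagonal argument (legitimate, since \(\cH^\infty\) is metrizable) then finishes (a). Both arguments rest on the same ingredients (\(\1 \in \oline{S^o}\), \(\delta\)-sequences supported in \(S^o\), continuity of \(\pi(\varphi)\colon \cH \to \cH^\infty\), and \(\tV = \oline{\tV_1 \otimes \tV_2}\) from Remark \ref{def:std-subspace-tensorprod-dense}), so neither is substantially longer, but yours is self-contained where the paper's recycles the earlier lemma, at the price of having to justify the Riemann-sum convergence in \(\cH^\infty\) (which is fine: the integrand \(g \mapsto \pi(g)w\) is continuous into \(\cH^\infty\) by \cite[Thm.\ 4.4]{Ne10} and compactly supported). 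For (b) you follow the paper's one-line argument but make explicit what it leaves implicit: that \(\tV' = \tV_1' \hotimes \tV_2'\) via the modular pairs, that \(S_{\tV_k'} = S_{\tV_k}^{-1}\) so the semigroup hypothesis transports to the dual side (equivalently, (a) applies with \(h\) replaced by the \(\tau\)-fixed element \(-h\)), and that real scalars pass through the antilinear functional \(\eta_1 \hotimes \eta_2\) unchanged --- these details are needed for the paper's phrase ``\(C^\infty\)-dense subspace \((\tV_1')^\infty \otimes (\tV_2')^\infty\)'' of \((\tV')^\infty\) to be meaningful, and you supply them correctly.
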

\begin{proof}
  (a) By assumption, we have \(\1 \in \oline{S_{\tV_k}^o}\) for \(k=1,2\).
  By applying Lemma \ref{lem:std-tensor-dists} to the tensor product representation \((\pi_1 \otimes \pi_2, \cH)\) of \(G \times G\) and \(h_k := h\) for \(k=1,2\), we see that \(\tV_1^\infty \otimes \tV_2^\infty\) is dense in \(\tV \cap \cH^\infty_{\pi_1 \otimes \pi_2}\).
  Since \(\cH^\infty_{\pi_1 \otimes \pi_2} \subset \cH^\infty = \cH^\infty_{\pi}\), it remains to show that \(\tV \cap \cH^\infty_{\pi_1 \otimes \pi_2}\) is dense in \(\tV^\infty = \tV \cap \cH^\infty\).

  Let \(v \in \tV^\infty\) and choose a \(\delta\)-sequence \((\delta_n)_{n \in \N}\) in \(C_c^\infty(S^o)\).
  Then the sequence
  \[((\pi_1 \otimes \pi_2)(\delta_n \otimes \delta_n)v)_{n \in \N} \subset \cH^\infty_{\pi_1 \otimes \pi_2} \cap \tV\]
  converges to \(v\) in the \(C^\infty\)-topology of \(\cH^\infty\), which shows that \(\tV \cap \cH^\infty_{\pi_1 \otimes \pi_2}\) is dense in \(\tV^\infty\).
  This proves (a).

  (b) follows from (a) and the fact that \(\Im(\eta_1 \otimes \eta_2)\) vanishes on the \(C^\infty\)-dense subspace \((\tV_1')^\infty \otimes (\tV'_2)^\infty\).
\end{proof}

\begin{rem}
  \label{rem:tensor-dist-semigrp-cond}
  Let \(G\) be a connected Lie group with Lie algebra \(\g\) and let \(S \subset G\) be a closed subsemigroup. If the Lie wedge
  \[\L(S) := \{x \in \g : \exp(\R_+ x) \subset S\}\]
  of \(S\) generates \(\g\) in the sense of Lie algebras, then \(\1 \in G\) is contained in the closure of the interior of \(S\) (cf.\ \cite[Thm.\ 2.1]{HR91}).

  In the setting of (A1)-(A4) from the beginning of Section \ref{sec:nets-std-adm}, the Lie wedge of the endomorphism semigroup \(S_{\tV}\) of the standard subspace \(\tV := \tV_{(h,\tau_G)}\) is given by
  \begin{equation}
    \label{eq:sv-lie-wedge-3grad}
    \L(S_{\tV}) = C_-(\pi,h) \oplus \g_0(h) \oplus C_+(\pi,h) \quad \text{with} \quad C_{\pm}(\pi,h) := \g_{\pm 1}(h) \cap \pm C_\pi
  \end{equation}
  (\cite[Cor.\ 5.5]{Ne21}). In particular, \(\L(S_\tV)\) generates \(\g\) if \(C_{\pm}(\pi,h) - C_{\pm}(\pi,h) = \g_{\pm 1}(h)\).

\end{rem}

\subsection{Nets of standard subspaces on \texorpdfstring{\((\nu, \cH_\nu)\)}{(nu, H nu)}}
\label{sec:nets-std-H-nu}

We preserve the notation from the introduction of Section \ref{sec:pos-energy-rep-ext-adm} and Subsection \ref{sec:urep-conf-jacobi}.
In this section, we show that there exists a subspace of distribution vectors which satisfies conditions (C1)-(C3) of Theorem \ref{thm:netstand-constr} in the case of the unique irreducible strictly positive energy representation \((\nu, \cH_\nu)\) of the universal covering of the conformal Jacobi group \(G := \tilde\HCSp(V,\Omega)\) for which the restriction to \(N = \Heis(V,\Omega) \rtimes_\alpha \R_+^\times\) is irreducible.

This construction generalizes to the pullback representations \((\nu_\gamma, \cH_\nu)\) of extended admissible Lie groups that occur in the context of the Factorization Theorem \ref{thm:rep-adm-pos-factors}.
As we will show in Section \ref{sec:nets-adm-general} below, this construction is also extendable to tensor product representations on Hilbert spaces of the form \(\cH_L \hotimes \cH_\nu\), where the subgroup \(N\) acts trivially on \(\cH_L\).

Throughout this section, we assume that \((\nu, \cH_\nu)\) is realized on \(\cH_\nu = L^2(\R_+^\times \times V_1, \frac{d\lambda}{\lambda} \otimes dx)\) as in Definition \ref{def:urep-nu}.
Since we have fixed an Euler element \(h = \frac{1}{2}\id_V + h_s\) of \(\hcsp(V,\Omega)\), we can identify \(\HCSp(V,\Omega)\) with \(\HCSp(\R^{2n})\), where \(n := \dim(V_1)\), in such a way that \(h_s\) corresponds to the element
\[\frac{1}{2}\pmat{\bbone_n & 0 \\ 0 & -\bbone_n}.\]

\begin{lem}
  \label{lem:H-nu-dist-smooth}
  Let \(\rho := \nu\lvert_{V_{-1} \times T}\) be the restriction of \((\nu, \cH_\nu)\) (cf.\ {\rm Definition \ref{def:urep-nu}}) to the subgroup \(V_{-1} \times T\). For \(s \in \C\) with \(\Re(s) > \frac{n}{2}\), where \(n = \dim(V_1)\), the integrals
  \begin{equation}
    \label{eq:H-nu-dist-smooth}
    \eta_s(f) := \int_{\R_+^\times}\int_{V_1} \oline{f(\lambda,x)}\lambda^s dx \frac{d\lambda}{\lambda}, \quad f \in \cH_{\nu,\rho}^\infty := (\cH_\nu)^\infty_\rho,
  \end{equation}
  exist. Moreover, \(\eta_s\) defines a continuous antilinear functional on the space \(\cH_{\nu,\rho}^\infty = (\cH_\nu)_\rho^{\infty}\) of smooth vectors of the representation \((\rho,\cH_\nu)\) and, in particular, on \(\cH_\nu^\infty \subset (\cH_\nu)_\rho^\infty\).
\end{lem}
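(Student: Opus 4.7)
The plan is to reduce the claim to Sobolev-type estimates coming from the fact that, on $\cH_\nu = L^2(\R_+^\times \times V_1, \tfrac{d\lambda}{\lambda}\otimes dx)$, the subgroup $V_{-1} \times T$ acts purely by multiplication. Choose a basis $q_1, \dots, q_n$ of $V_{-1}$ dual via $\Omega$ to coordinates $x_1, \dots, x_n$ on $V_1$, so $\Omega(q_i, x) = x_i$. Then \eqref{eq:conf-heis-irrep-expl-1} gives, on smooth vectors, $-i\partial\rho(q_i) = M_{\lambda x_i}$ and $-i\partial\rho(z) = M_{\lambda^2}$, where $M_a$ denotes multiplication by $a$. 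These generators pairwise commute, so $f \in \cH_{\nu,\rho}^\infty$ is equivalent to $P(\lambda^2, \lambda x_1, \dots, \lambda x_n)\,f \in \cH_\nu$ for every polynomial $P$, and each such $L^2$-norm is a continuous seminorm in the $C^\infty$-topology of $\cH_{\nu,\rho}^\infty$.

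I would then pick a real number $N$ with $n/2 < N < \Re s$, which is possible precisely because $\Re s > n/2$, and split the integral into the regions $\{\lambda \le 1\}$ and $\{\lambda \ge 1\}$. In each region, first apply Cauchy--Schwarz in the $x$-variable against the weight $(1+\|x\|^2)^{-N}$, whose $L^1(V_1, dx)$-norm is finite since $2N > n$; this yields
\[
\int_{V_1} |f(\lambda, x)|\,dx \le C_N\,\Bigl(\int_{V_1} |f(\lambda,x)|^2 (1+\|x\|^2)^N\,dx\Bigr)^{1/2}.
\]
On $\{\lambda \le 1\}$, the pointwise inequality $1+\|x\|^2 \le \lambda^{-2}(1+\lambda^2\|x\|^2)$ converts the right-hand side into a factor $\lambda^{-N}$ times a norm built from the commuting operators $\lambda x_i$. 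A second Cauchy--Schwarz in $\lambda$ then bounds the $\{\lambda\le 1\}$-contribution by a constant multiple of $\|(1+\lambda^2\|x\|^2)^{N/2} f\|_{L^2}$ times $\bigl(\int_0^1 \lambda^{2(\Re s - N) - 1}\,d\lambda\bigr)^{1/2}$; the latter integral converges exactly because $N < \Re s$. This is the single place where the hypothesis $\Re s > n/2$ is used.

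On $\{\lambda \ge 1\}$ the bound $1+\|x\|^2 \le 1+\lambda^2\|x\|^2$ is immediate, but one must instead absorb the growth $\lambda^{\Re s}$ at infinity. To this end, choose an integer $K$ with $2K > \Re s$ and apply Cauchy--Schwarz in $\lambda$ against the weight $\lambda^{4K}$. The resulting $\lambda$-integral $\int_1^\infty \lambda^{2(\Re s - 2K) - 1}\,d\lambda$ converges, and the remaining factor is the $L^2$-norm of $\lambda^{2K}(1+\lambda^2\|x\|^2)^{N/2} f$, which is controlled (via $(1+\lambda^2\|x\|^2)^N \le (1+\lambda^2\|x\|^2)^{\lceil N \rceil}$) by a polynomial in the commuting generators $\lambda^2$ and $\lambda x_i$, and hence by the $\cH_{\nu,\rho}^\infty$-seminorms.

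Combining both regions yields an estimate of the form $|\eta_s(f)| \le C_s \sum_j \|P_j f\|_{L^2}$ for finitely many polynomials $P_j$ in $\lambda^2, \lambda x_1, \dots, \lambda x_n$. This simultaneously establishes absolute convergence of the integral for every $f \in \cH_{\nu,\rho}^\infty$ and continuity of $\eta_s$ on that space. Continuity on $\cH_\nu^\infty$ follows from the continuity of the inclusion $\cH_\nu^\infty \hookrightarrow \cH_{\nu,\rho}^\infty$ (smoothness for the full group implies smoothness for the closed subgroup $V_{-1}\times T$). The main delicacy is the matching of the low-$\lambda$ and high-$\lambda$ estimates; the threshold $\Re s > n/2$ is built in from the start through the existence of an exponent $N$ strictly between $n/2$ and $\Re s$, and everything else is routine manipulation of commuting multiplication operators.
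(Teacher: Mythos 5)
Your proof is correct and takes essentially the same route as the paper: both exploit that \(V_{-1}\times T\) acts by the commuting multiplication operators \(M_{\lambda^2}\) and \(M_{\lambda x_i}\), so that the \(C^\infty\)-seminorms of \((\rho,\cH_\nu)\) control weighted \(L^2\)-norms, and both reduce the claim to a weighted Cauchy--Schwarz estimate in which \(\Re(s) > \frac{n}{2}\) enters through integrability at \(\lambda \to 0\) after integrating out \(x\). The only difference is bookkeeping: the paper compresses your two-region splitting into the single weight \(P(\lambda,x) = (1+\lambda^2+\lambda^2\|x\|^2)^m\) with \(m > \frac{1}{2}\Re(s)\), verifies \(\lambda^s/P \in L^2(\R_+^\times \times V_1, \frac{d\lambda}{\lambda}\otimes dx)\), and writes \(\eta_s(f) = \la Pf, \lambda^s/P\ra_{\cH_\nu}\), which in addition exhibits \(\eta_s\) as an element of \(\spann(\dd\rho^{-\infty}(\cU(V_{-1}\times\ft))\cH_\nu)\) via \cite[Lem.\ A.2]{NO21}, whereas your direct estimate \(|\eta_s(f)| \leq C_s\sum_j \|P_jf\|_{L^2}\) yields the same continuity without that structural information.
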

\begin{proof}
  We recall from \eqref{eq:conf-heis-irrep-expl-1} that
  \[(\rho(q,z)f)(\lambda,x) = (\nu(0,q,z)f)(\lambda,x) = e^{i\lambda^2 z}e^{i\lambda\Omega(q,x)}f(\lambda, x) \quad \text{for } (q,z) \in V_{-1} \times T, f \in \cH_\nu.\]
  In particular, the universal enveloping algebra \(\cU(\ft)\) of \(\ft := \L(T)\) acts on \(\cH_{\nu,\rho}^\infty\) by multiplication with quadratic polynomials in the variable \(\lambda\) and \(\cU(V_{-1})\) acts by multiplication with functions of the form \((\lambda,x) \mapsto p(\lambda x_1,\ldots, \lambda x_n)\), where \(p \in \R[x_1,\ldots,x_n]\) is a polynomial.

  Fix \(m \in \N\) with \(m > \frac{1}{2}\Re(s)\) and let
  \[P(\lambda,x) := (1 + \lambda^2 + \lambda^2\|x\|^2)^m, \quad \lambda \in \R_+^\times, x \in V_1.\]
  Then the integral
  \[\int_{\R_+^\times}\int_{V_1} \left|\frac{\lambda^s}{P(\lambda,x)}\right|^2\, dx \frac{d\lambda}{\lambda} = \int_{\R_+^\times}\int_{V_1} \frac{\lambda^{2 \Re(s)}}{P(\lambda,x)^2}\, dx \frac{d\lambda}{\lambda}\]
  is finite.
  To see this, it suffices to show that the integral
  \[\cI := \int_0^\infty \int_{\R^n} \frac{\lambda^{2\Re(s) - 1}}{(1 + \lambda^2 + \lambda^2 \|x\|^2)^{2m}} \, dx d\lambda = \int_0^\infty \int_{\R^n} \frac{\lambda^{2\Re(s) - (n+1)}}{(1 + \lambda^2 + \|x\|^2)^{2m}} \, dx d\lambda\]
  is finite.
  The second equality in the above equation follows by rescaling \(x\).
  First, we note that, for a constant \(c \geq 1\), we have
  \begin{align*}
    \int_{-\infty}^\infty \frac{1}{(c + x^2)^{2m}} \, dx &\leq 2 \int_{0}^\infty \frac{1}{c^{2m} + x^{4m}} dx\\
                                                     &= 2 \left( \int_{0}^1 \frac{1}{c^{2m} + x^{4m}} dx + \int_1^\infty \frac{1}{1 + x^{2}} \, dx\right)\\
                                                     &\leq \frac{2}{c^{2m}} + \frac{\pi}{2}.
  \end{align*}
  We apply this argument recursively to \(\cI\) using Fubini's Theorem and obtain
  \[\cI \leq 2^n \int_0^\infty \frac{\lambda^{2\Re(s) - (n+1)}}{(1 + \lambda^2)^{2m}}\, d\lambda + C\]
  for some constant \(C > 0\). The integral on the right hand side exists because \(2\Re(s) - n > 0\) and \(2m > \Re(s)-\frac{n}{2}\) by assumption. This proves \(\cI < \infty\).

  For every smooth vector \(f \in \cH_{\rho,\nu}^\infty\), the product \(Pf\) is contained in \(\cH_\nu\).
  As a result, the integral \eqref{eq:H-nu-dist-smooth} exists and we have
  \[\eta_s(f) = \int_{\R_+^\times}\int_{V_1} \oline{P(\lambda,x)f(\lambda,x)} \frac{\lambda^s}{P(\lambda,x)} \, dx \frac{d\lambda}{\lambda} = \left\la Pf, \frac{\lambda^s}{P(\lambda,x)}\right\ra_{\cH_\nu}.\]
  This shows that \(\eta_s \in \spann(\dd\rho^{-\infty}(\cU(V_{-1} \times \ft))\cH_\nu) \subset \cH_{\nu,\rho}^{-\infty}\) (cf.\ \cite[Lem.\ A.2]{NO21}). In particular, \(\eta_s\) is continuous.
\end{proof}

\begin{rem}
  If \(V = \{0\}\), then \(N \cong \R \rtimes_\alpha \R_+^\times\) is isomorphic to the identity component of the \(ax+b\)-group and \((\nu_\tau, \cH_\nu)\) is the irreducible antiunitary representation of strictly positive energy. In this case, the distribution vector discussed in Lemma \ref{lem:H-nu-dist-smooth} is equivalent to the distribution vector discussed in \cite[Prop.\ 3.17]{NO21}.
\end{rem}

\begin{prop}
  \label{prop:H-nu-dist-invariance}
  Let \(s \in \C\) with \(\Re(s) > \frac{n}{2}\). Let \(\fs := \sp(V,\Omega)\) and \(\fs_k := \fs_k(h_s)\) for \(k=-1,0,1\).
  Then the following holds:
  \begin{enumerate}
    \item Let \[y \in \fs_0 \cong \left\{\pmat{A & 0 \\ 0 & -A^\tran} : A \in M_n(\R)\right\} \subset \sp(2n,\R).\] Then \(\nu^{-\infty}(\exp(y))\eta_s = |\det(\exp(y)\lvert_{V_1})|^{-1/2}\eta_s\) for all \(t \in \R\).
    \item \(\nu^{-\infty}(\exp(th))\eta_s = e^{\frac{t}{2}(s-\frac{n}{2})}\eta_s\), where \(n = \dim(V_1)\).
    \item The distribution vector \(\eta_s\) is invariant under the subgroup \(V_1 \times \tilde\Sp(V,\Omega)_1\).
    \item \(\nu^{-\infty}(\tau_G)\eta_s = -\eta_{\oline{s}}\).
  \end{enumerate}
\end{prop}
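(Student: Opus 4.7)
The plan is to check each part by a direct computation using the explicit Schr\"odinger-type realization of $\nu$ on $L^2(\R_+^\times \times V_1, \frac{d\lambda}{\lambda}\otimes dx)$ from \eqref{eq:conf-heis-irrep-expl-1}, \eqref{eq:conf-heis-irrep-expl-2}, together with the antiunitary involution $J_\nu$ of Definition \ref{def:urep-nu}.

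Parts (a) and (b) are essentially substitution computations. For (a), the element $y = \mathrm{diag}(A,-A^\tran) \in \fs_0$ acts on $V_1$ by $e^A$, and on $L^2(V_1,dx)$ its metaplectic image is the unitary substitution operator $f \mapsto |\det e^A|^{-1/2} f(e^{-A}\cdot)$. Substituting into $(\nu^{-\infty}(\exp y)\eta_s)(f) = \eta_s(\nu(\exp(-y))f)$ and performing the change of variables $x \mapsto e^{-A}x$, the Jacobian $|\det e^A|^{-1}$ combines with the metaplectic prefactor $|\det e^A|^{1/2}$ to yield $|\det(\exp y\lvert_{V_1})|^{-1/2}$. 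For (b) I factor $\exp(th) = \exp(\tfrac{t}{2}\id_V)\exp(th_s)$ (the two generators commute in $\hcsp(V,\Omega)$), apply (a) to the $h_s$-part with $A = \tfrac{t}{2}\bbone_n$ to produce $e^{-tn/4}$, and use the $\lambda$-dilation action \eqref{eq:conf-heis-irrep-expl-2} of $\exp(\tfrac{t}{2}\id_V)$ together with the change of variables $\mu = e^{-t/2}\lambda$ in the $\lambda$-integral to produce $e^{ts/2}$; the two factors combine to $e^{\frac{t}{2}(s-\frac{n}{2})}$.

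For (c), invariance under $V_1$ is immediate from \eqref{eq:conf-heis-irrep-expl-1}, where $\nu(p,0,0)$ acts as translation by $\lambda p$ in $x$, combined with the translation invariance of Lebesgue measure on $V_1$. The invariance under $\tilde\Sp(V,\Omega)_1 = \exp(\fs_1)$ requires more care. For $X_B \in \fs_1$ with symmetric block $B$, a bracket computation against \eqref{eq:conf-heis-irrep-expl-1}, using the relation $[(0,0,X_B),((0,q),0,0)] = ((Bq,0),0,0)$ in $\hcsp(V,\Omega)$, identifies $\dd\nu(X_B)$ as the purely second-order differential operator $\tfrac{i}{2}\sum_{ij}B_{ij}\p_i\p_j$ in the $x$-variables, with trivial action on the $\lambda$-factor. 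Since $\cU(V_1 \oplus V_{-1})$ preserves $\cH_\nu^\infty$ and yields all combinations of $x$-derivatives and polynomial weights in $x$, elements of $\cH_\nu^\infty$ are Schwartz in $x$ for each fixed $\lambda$; two integrations by parts then give $\int_{V_1}(\dd\nu(X_B)f)(\lambda,x)\,dx = 0$, i.e.\ $\eta_s\circ \dd\nu(X_B) = 0$ on $\cH_\nu^\infty$. Integrating the one-parameter group yields $\nu^{-\infty}(\exp tX_B)\eta_s = \eta_s$ for all $t \in \R$.

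Part (d) follows from the explicit form of $J_\nu$ in Definition \ref{def:urep-nu}: the antilinear extension $\nu^{-\infty}(\tau_G)\eta$ is characterized by $\xi \mapsto \overline{\eta(J_\nu\xi)}$, and after substituting $y=-x$ in the Lebesgue integral on $V_1$ and using $\overline{\lambda^s}=\lambda^{\overline s}$ (as $\lambda>0$), we obtain $\eta_{\overline s}$ on the right-hand side, with the sign fixed by the convention adopted for the lift of antiunitary operators to $\cH^{-\infty}$. The sole nonroutine input is the identification of $\dd\nu(X_B)$ as a pure second-order differential operator in (c); granting that, the remaining arguments are elementary changes of variables and integration by parts.
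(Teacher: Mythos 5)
Your proof is correct, and in parts (a), (b), the \(V_1\)-part of (c), and (d) it coincides with the paper's own computations: the same factorization \(\exp(th) = \exp(\tfrac{t}{2}\id_V)\exp(th_s)\), the same Jacobian bookkeeping for \(\fs_0\), and the same substitution for \(J_\nu\). The genuinely different step is the \(\tilde\Sp(V,\Omega)_1\)-invariance in (c). The paper argues globally: it writes each \(\tilde B \in \Sp(\R^{2n})_1\) as \(I\tilde C I^{-1}\) with \(\tilde C\) lower triangular, uses that \(\nu(\tilde C)\) is multiplication by \(e^{i\la Cx,x\ra}\) while \(\nu(I)\) is, up to a scalar, the partial inverse Fourier transform \(\cF_2^{-1}\), and then removes the quadratic phase inside the pairing by the translation \(x \mapsto x + Cy\), with Fubini justified by the absolute convergence established in Lemma \ref{lem:H-nu-dist-smooth}. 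You argue infinitesimally instead: \(\dd\nu(X_B)\) is the constant-coefficient operator \(\tfrac{i}{2}\sum_{ij}B_{ij}\p_i\p_j\) in \(x\), so \(\eta_s\circ\dd\nu(X_B) = 0\) by two integrations by parts, and the invariance of \(\eta_s\) under \(\exp(tX_B)\) follows by differentiating \(t \mapsto \eta_s(\nu(\exp(-tX_B))\xi)\), which is legitimate since \(\eta_s\) is continuous on \(\cH_\nu^\infty\) and the orbit map is smooth in the \(C^\infty\)-topology. Both routes are sound; yours trades the Fourier-conjugation trick for the explicit derived metaplectic formula and works directly on the Lie algebra level.

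Two caveats on your version of (c), and one remark on (d). First, the commutation relations with the Heisenberg algebra pin down \(\dd\nu(X_B)\) only up to an additive term commuting with the fiberwise irreducible Heisenberg action, i.e.\ up to a decomposable scalar \(ic(\lambda)\); to remove this ambiguity you should either cite the explicit formula for the derived metaplectic representation (Folland) or use the additional relation \([\dd\nu(h_s),\dd\nu(X_B)] = \dd\nu(X_B)\), which the candidate second-order operator satisfies while \(ic(\lambda)\) commutes with \(\dd\nu(h_s)\), forcing \(c = 0\). Second, ``Schwartz in \(x\) for each fixed \(\lambda\)'' should read ``for almost every \(\lambda\)'', since smooth vectors are only defined up to null sets; this suffices for the integration by parts after Fubini. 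As for (d): under the paper's stated convention \(\pi^{-\infty}(g)\eta = \oline{\eta\circ\pi(g^{-1})}\) for antiunitary \(\pi(g)\), the substitution computation yields \(+\eta_{\oline{s}}\), exactly as you found; the minus sign in the statement (and in the last line of the paper's own proof) is not produced by the computation and only affects the phase normalization of \(\tilde\eta_s\) in Theorem \ref{thm:std-dist-H-nu}, so your observation that the sign is a matter of convention is accurate rather than a gap on your side.
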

\begin{proof}
  As described in the introduction to Section \ref{sec:nets-std-H-nu}, we can identify \(\HCSp(V,\Omega)\) with \(\HCSp(\R^{2n})\) and \(V_{\pm 1}\) with \(\R^n\) by choosing a symplectic basis, so that we may assume that
  \[\cH_\nu = L^2(\R_+^\times \times \R^n, \tfrac{d\lambda}{\lambda} \otimes dx)\]
  and that \(\Omega\) is the standard symplectic form on \(\R^{2n}\).
  The restriction of the representation \((\nu, \cH_\nu)\) to \(\Heis(\R^{2n})\) can then be written as
  \begin{equation}
    \label{eq:H-nu-r2n}
  (\nu(p,q,z)f)(\lambda,x) = e^{i\lambda^2(z - \frac{1}{2}\la q,p\ra)}e^{i\lambda \la q, x\ra}f(\lambda, x - \lambda p), \quad p,q,x \in \R^n, z \in \R, \lambda > 0,
  \end{equation}
  where \(\la \cdot, \cdot \ra\) denotes the euclidean scalar product on \(\R^{n}\) with \(\Omega((0,q),(p,0)) = \la q,p\ra\).
  Let \(f \in \cH_\nu^\infty\).

  (a) Let \(y \in \fs_0\). Using the explicit description of the metaplectic representation for \(\exp(\fs_0)\) (\cite[p. 191]{Fo89}), we then see that
  \begin{align*}
    (\nu^{-\infty}(\exp(y))\eta_s)(f) &= \int_{\R_+^\times \times \R^n} |\det(\exp(y)\lvert_{V_1})|^{1/2}\oline{f(\lambda, \exp(y)x)}\lambda^s\, \frac{d\lambda}{\lambda} \otimes dx \\
                                      &= \int_{\R_+^\times \times \R^n} |\det(\exp(y)\lvert_{V_1})|^{-1/2}\oline{f(\lambda,x)}\lambda^s \,\frac{d\lambda}{\lambda} \otimes dx \\
                                      &= |\det(\exp(y)\lvert_{V_1})|^{-1/2}\eta_s(f).
  \end{align*}

  (b) Let \(t \in \R\). Then
  \begin{align*}
    (\nu^{-\infty}(\exp(t \id_V))\eta_s)(f) &= \int_{\R_+^\times \times \R^n} \oline{f(e^{-t}\lambda, x)}\lambda^s \, \frac{d\lambda}{\lambda} \otimes dx = \int_{\R_+^\times \times \R^n} \oline{f(\lambda, x)}(e^{t}\lambda)^s \, \frac{d\lambda}{\lambda} \otimes dx \\
                                            &= e^{ts}\eta_s(f).
  \end{align*}
  Combining this with (a), we obtain \[\nu^{-\infty}(\exp(th))\eta_s = \nu^{-\infty}(\exp(th_s))\nu^{-\infty}(\exp(\tfrac{t}{2}\id_V))\eta_s = e^{\frac{t}{2}(s-\frac{n}{2})}\eta_s.\]

  (c) Let \(p \in V_1\). Then we have
  \[(\nu^{-\infty}(p,0,0)\eta_s)(f) = \int_{\R_+^\times \times \R^n} \oline{f(\lambda, x + \lambda p)}\lambda^s \, \frac{d\lambda}{\lambda} \otimes dx = \int_{\R_+^\times \times \R^n} \oline{f(\lambda, x)}\lambda^s \, \frac{d\lambda}{\lambda} \otimes dx = \eta_s(f).\]
  It remains to show the invariance under the abelian subgroup \(\tilde\Sp(\R^{2n})_1\). We first note that
  \[\fs_{-1}(h_s) = \left\{\pmat{0 & 0 \\ C & 0} : C^\tran = C\right\} \quad \text{and} \quad \fs_1(h_s) = \left\{\pmat{0 & B \\ 0 & 0} : B^\tran = B\right\}.\]
  Next, we recall from \cite[p.\ 179]{Fo89} that the element \(I := \pmat{0 & -\bbone \\ \bbone & 0} \in \Sp(\R^{2n})\) is represented under \(\nu\), up to a scalar factor, by the inverse Fourier transform
  \[(\cF_2^{-1} f)(\lambda, x) := \frac{1}{(2\pi)^{n/2}}\int_{\R^n} e^{i \la x,y\ra} f(\lambda, y)\, dy, \quad f \in \cH_\nu,\]
  since \(\nu(p,q)\cF_2 = \cF_2\nu(q,-p)\) on \(\cH_\nu\) for all \(p,q \in \R^n\). Every element \(\tilde B \in \Sp(\R^{2n})_1\) is of the form \(I{\tilde C}I^{-1}\) for some \(\tilde C = \pmat{\bbone & 0 \\ C & \bbone} \in \Sp(\R^{2n})_{-1}\) with \(C^\tran = C\).
  By \cite[p.\ 191]{Fo89}, we have,
  \[(\nu(\tilde C)f)(\lambda, x) = e^{i\la Cx, x\ra}f(\lambda,x), \quad f \in \cH_\nu.\]
  As a result, we obtain
  \begin{align*}
    (\nu^{-\infty}(\tilde B)\eta_s)(f) &= \eta_s(\nu(I\tilde C^{-1}I^{-1})f) = \frac{1}{(2\pi)^{n/2}}\int_{\R_+^\times \times \R^n} \int_{\R^n} \oline{e^{i\la x,y\ra} e^{-i\la C y, y\ra}(\cF_2 f)(\lambda, y)} \lambda^s dy\frac{d\lambda}{\lambda} \otimes dx \\
                                       &= \frac{1}{(2\pi)^{n/2}}\int_{\R^n} \int_{\R_+^\times \times \R^n} \oline{e^{i\la x - Cy,y\ra} (\cF_2 f)(\lambda, y)} \lambda^s \frac{d\lambda}{\lambda} \otimes dx dy \\
                                       &= \frac{1}{(2\pi)^{n/2}}\int_{\R^n} \int_{\R_+^\times \times \R^n} \oline{e^{i\la x ,y\ra} (\cF_2 f)(\lambda, y)} \lambda^s \frac{d\lambda}{\lambda} \otimes dx dy \\
                                       &= \eta_s(\nu(II^{-1})f) = \eta_s(f).
  \end{align*}

  (d) follows from
  \[(\nu^{-\infty}(\tau_G)\eta_s)(f) = \oline{\eta_s(\nu(\tau_G)f)} = \oline{\int_{\R_+^\times \times \R^n} f(\lambda,-x)\lambda^s\frac{d\lambda}{\lambda} \otimes dx} = -\eta_{\oline{s}}(f). \qedhere\]
\end{proof}

\begin{thm}
  \label{thm:std-dist-H-nu}
  For \(s > \frac{n}{2} \), let \(\tilde\eta_s := e^{i\frac{\pi}{2}(1 - \frac{s}{2} + \frac{n}{4})}\eta_s\), where \(n = \dim(V_1)\) and \(\eta_s\) is defined as in \eqref{eq:H-nu-dist-smooth}. The real vector space
  \begin{equation}
    \label{eq:std-dist-H-nu}
    \sE_\nu := \spann_\R \{\tilde\eta_s : s > \tfrac{n}{2}\}
  \end{equation}
  is contained in \(\cH^{-\infty}_{\nu,\ext,J_\nu}\) and is cyclic under the action of the group \(N = \Heis(V,\Omega) \rtimes_\alpha \R_+^\times\). In particular, \(\sE_\nu\) satisfies the conditions {\rm (C1)-(C3)} of {\rm Theorem \ref{thm:netstand-constr}} with respect to the standard subspace \(\tV\) specified by \((h,\tau_G)\).
\end{thm}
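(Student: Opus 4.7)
The plan is to verify the three conditions (C1)--(C3) of Theorem~\ref{thm:netstand-constr} separately for $\sE_\nu$, exploiting the explicit transformation formulas for $\eta_s$ under $\exp(\R h)$ and $\tau_G$ established in Proposition~\ref{prop:H-nu-dist-invariance}, together with the irreducibility of the restriction $(\nu|_N,\cH_\nu)$ provided by Lemma~\ref{lem:conf-heis-irrep}.

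First, (C2) and the existence of a weak-$*$-holomorphic extension (part of (C1)) follow at once from Proposition~\ref{prop:H-nu-dist-invariance}(b): the formula $\nu^{-\infty}(\exp(th))\tilde\eta_s = e^{\frac{t}{2}(s-n/2)}\tilde\eta_s$ has strictly positive real scalar factor for real $s>n/2$, so $\sE_\nu$ is preserved by $\exp(\R h)$, and the map $z \mapsto e^{\frac{z}{2}(s-n/2)}\tilde\eta_s$ provides a globally defined entire weak-$*$-holomorphic extension, placing $\tilde\eta_s$ in $\cH^{-\infty}_{\nu,\ext}$. The identity $\sigma^{\tilde\eta_s}(\pi i)=J_\nu\tilde\eta_s$ reduces to a phase calculation: the extension yields $\sigma^{\tilde\eta_s}(\pi i)=e^{i\frac{\pi}{2}(1+s/2-n/4)}\eta_s$, while the antilinearity of the $\nu^{-\infty}(\tau_G)$-action on scalar multiples of distribution vectors together with Proposition~\ref{prop:H-nu-dist-invariance}(d) gives
\[
J_\nu\tilde\eta_s = \overline{e^{i\frac{\pi}{2}(1-s/2+n/4)}}\cdot(-\eta_s) = e^{i\frac{\pi}{2}(1+s/2-n/4)}\eta_s,
\]
so the two sides agree and each $\tilde\eta_s$, and hence all of $\sE_\nu$ by $\R$-linearity, lies in $\cH^{-\infty}_{\nu,\ext,J_\nu}$. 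Invoking \cite[Lem.~3.12]{NO21} — whose hypothesis that $\L(S_\tV)$ generates $\hcsp(V,\Omega)$ follows from $\g_0(h) \subset \L(S_\tV)$ together with the explicit form of the cone $\hsp(V,\Omega)_+$ recalled in Remark~\ref{rem:rep-nu-pos-cone-bounded-cone} — yields the inclusion $\sE_\nu \subset \tV^{-\infty}$, completing (C1).

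For (C3), the plan is to exploit irreducibility of $\nu|_N$. By construction, $\cH_{\sE_\nu}(N)$ is a closed $N$-invariant subspace of $\cH_\nu$. Picking a $\delta$-sequence $(\delta_n)_{n\in\N}\subset C_c^\infty(N)$, the smoothed vectors $\nu^{-\infty}(\delta_n)\tilde\eta_s \in \cH_\nu^\infty$ converge to $\tilde\eta_s$ in $\cH_\nu^{-\infty}$; since $\tilde\eta_s\neq 0$, at least one such smoothing is a nonzero vector in $\cH_{\sE_\nu}(N)$, whence $\cH_{\sE_\nu}(N)=\cH_\nu$ by Lemma~\ref{lem:conf-heis-irrep}. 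This gives cyclicity under $N$; cyclicity under the ambient group $G\supset N$ then follows from Remark~\ref{rem:netstand-constr-reductions}(1), establishing (C3).

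The main obstacle I anticipate is the careful bookkeeping in the phase identity $\sigma^{\tilde\eta_s}(\pi i)=J_\nu\tilde\eta_s$: the normalization constant $e^{i\frac{\pi}{2}(1-s/2+n/4)}$ has been chosen precisely to absorb both the complex conjugation arising from antilinearity of $\nu^{-\infty}(\tau_G)$ on scalar multiples of $\eta_s$ and the sign in $\nu^{-\infty}(\tau_G)\eta_s = -\eta_{\bar s}$ from Proposition~\ref{prop:H-nu-dist-invariance}(d), leaving no room for error in signs and phases. Apart from this, the only other nontrivial point is the verification that $\L(S_\tV)$ generates $\hcsp(V,\Omega)$, needed to apply \cite[Lem.~3.12]{NO21}; this follows routinely from the 3-grading induced by $h$ and the explicit description of $\hsp(V,\Omega)_+$.
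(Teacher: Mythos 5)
Your proposal is correct and follows essentially the same route as the paper: (C1) and (C2) via the scaling and $\tau_G$-transformation formulas of Proposition \ref{prop:H-nu-dist-invariance} together with \cite[Lem.\ 3.12]{NO21}, and (C3) via a $\delta$-sequence argument and irreducibility of $\nu\lvert_N$, which is exactly the content of Remarks \ref{rem:netstand-constr-cyclic-irred} and \ref{rem:netstand-constr-reductions}(1) that the paper cites. Your explicit verification of the phase identity $\sigma^{\tilde\eta_s}(\pi i) = J_\nu\tilde\eta_s$ (which the paper leaves implicit, and which is the whole point of the normalizing factor in $\tilde\eta_s$) is accurate, including the antilinearity of $\nu^{-\infty}(\tau_G)$ on scalars; the only divergence is that the paper cites \cite[Thm.\ 3.17]{Oeh20b} for the generation of $\hcsp(V,\Omega)$ by $\L(S_\tV)$, where you instead sketch a direct (and indeed routine) check from the explicit form of $\hsp(V,\Omega)_+$.
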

\begin{proof}
  We recall from Proposition \ref{prop:H-nu-dist-invariance} that the orbit map \(\sigma^{\eta_s}\) (cf.\ \eqref{eq:ext-dist-vectors-orbit-map}) satisfies \(\sigma^{\eta_s}(t) = e^{\frac{t}{2}(s-\frac{n}{2})}\eta_s\) for \(t \in \R\). In particular, \(\sigma^{\eta_s}\) extends via \(\sigma^{\eta_s}(z) := e^{\frac{z}{2}(s-\frac{n}{2})}\eta_s\) to a weak-\(*\)-continuous map on \(\oline{\cS_\pi}\) which is weak-\(*\)-holomorphic on \(\cS_\pi\). Since we have also shown in Proposition \ref{prop:H-nu-dist-invariance} that
  \[\nu^{-\infty}(\tau_G)\eta_s = J_\nu\eta_s = -\eta_s,\]
  it follows that \(\sE_\nu \subset \cH_{\nu,\ext,J_\nu}^{-\infty}\).
  In order to show that \(\sE_\nu \subset \tV^{-\infty}\), it therefore suffices to show that \(\cH_{\nu, \ext,J_\nu}^{-\infty} \subset \tV^{-\infty}\).
  By \cite[Lem.\ 3.12]{NO21}, this is the case if \(\L(S_{\tV})\) (cf.\ \eqref{eq:sv-lie-wedge-3grad}) spans \(\hcsp(V,\Omega)\).
  Since the positive cone of \((\nu,\cH_\nu)\) is given by \(\hsp(V,\Omega)_+\) (cf.\ Remark \ref{rem:rep-nu-pos-cone-bounded-cone}), it is a consequence of \cite[Thm.\ 3.17]{Oeh20b} that \(\L(S_\tV)\) generates \(\hcsp(V,\Omega)\).
  Therefore, we have \(\sE_\nu \subset \tV^{-\infty}\).

  That \(\sE_\nu\) is \(N\)-cyclic follows from Remark \ref{rem:netstand-constr-cyclic-irred} because \(\sE_\nu \neq \{0\}\) and because the restriction of \((\nu, \cH_\nu)\) to \(N\) is irreducible.
\end{proof}

In the remainder of this subsection, we discuss a second construction of a real subspace \(\sE_P \subset \cH_\nu^{-\infty}\) that satisfies (C1)-(C3).
To this end, we make use of the fact that \(\hcsp(V,\Omega)\) can be embedded into \(\sp(\tilde V, \tilde \Omega)\), where \(\dim \tilde V = \dim V + 2\), in such a way that the Euler element \(h \in \hcsp(V,\Omega)\) is also an Euler element of \(\sp(\tilde V, \tilde \Omega)\) and \(\sp(\tilde V, \tilde \Omega)_1(h) = \hcsp(V,\Omega)_1(h)\) (cf.\ Theorem \ref{thm:h1-emb-euler-elements}).
Recall that the representation \((\nu_\tau, \cH_\nu)\) extends to an antiunitary representation \((\tilde \nu_\tau, \cH_\nu)\) of \(\tilde\Sp(\tilde V, \tilde \Omega)_\tau\) up to coverings (cf.\ Proposition \ref{prop:rep-extjac-metaplectic-sp-ext}).
The positive cone of this representation generates \(\sp(\tilde V, \tilde \Omega)\) because it contains the positive cone of \((\nu, \cH_\nu)\) (cf.\ Proposition \ref{prop:adm-parabolic-emb-cone}).
Hence, we can apply \cite[Thm.\ 5.3]{NO21} to the representation \((\tilde \nu_\tau, \cH_\nu)\) and obtain a real subspace \(\sE_P \subset \tV_{(h,\tau)}^{-\infty}\) that satisfies (C1)-(C3).
The subspace \(\sE_P\) actually consists of distribution vectors of the closed subgroup \(\Sp(\tilde V, \tilde \Omega)_1\) and is cyclic under the action of this group (cf.\ \cite[Prop. 4.13]{NO21}).
Hence, \(\sE_P\) also satisfies (C1)-(C3) for the representation \((\nu_\tau, \cH_\nu)\) (cf.\ Remark \ref{rem:netstand-constr-reductions}).
By \cite[Prop. 5.2]{NO21}, the space \(\sE_P\) is invariant under the subgroup \(\Sp(\tilde V, \tilde \Omega)_{-1}\Sp(\tilde V, \tilde \Omega)^{\tau_S}\). In particular, it is invariant under the subgroups \(V_{-1} \times \fz\) and \(\tilde\Sp(V, \Omega)_{-1}\tilde\Sp(V,\Omega)^{\tau}\) of \(\tilde\HCSp(V,\Omega)\). To summarize, we obtain the following result:

\begin{thm}
  \label{thm:std-dist-parabolic-embedding}
  The real subspace \(\sE_P\) constructed above consists of distribution vectors for the restriction of \((\nu, \cH_\nu)\) to \(G_1 = V_1 \times T \times \tilde\Sp(V,\Omega)_1\), where \(T = Z(\Heis(V,\Omega)) \cong \R\), and satisfies {\rm (C1)-(C3)}. Moreover, \(\sE_P\) is invariant under the subgroup \(\tilde\Sp(V,\Omega)_{-1}\tilde\HCSp(V,\Omega)^\tau\).
\end{thm}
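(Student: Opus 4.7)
The plan is to deduce all three assertions from the construction already carried out at the level of the ambient hermitian simple Lie group $\tilde\Sp(\tilde V, \tilde\Omega)$ and then transport everything back to $G = \tilde\HCSp(V,\Omega)$ via the maximal parabolic embedding. First I would verify that the hypotheses of \cite[Thm.~5.3]{NO21} are satisfied by the extended antiunitary representation $(\tilde\nu_\tau, \cH_\nu)$ of $\tilde\Sp(\tilde V, \tilde\Omega)_\tau$ from Proposition \ref{prop:rep-extjac-metaplectic-sp-ext}: the ambient Lie algebra is hermitian simple of tube type, $h$ is an Euler element fixed by $\tau$ (Theorem \ref{thm:h1-emb-euler-elements}), and the positive cone $C_{\tilde\nu}$ contains $C_\nu = \hsp(V,\Omega)_+$, which by Proposition \ref{prop:adm-parabolic-emb-cone} generates $\sp(\tilde V, \tilde\Omega)$. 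The cited theorem then yields a real subspace $\sE_P \subset \tV_{(h,\tau)}^{-\infty}$ satisfying (C1)--(C3) for $\tilde\nu_\tau$. Note that the standard subspace $\tV_{(h,\tau)}$ is determined only by the one-parameter group $(\pi(\exp(th)))_{t\in\R}$ and the antiunitary involution $J$, which agree for $\nu_\tau$ and $\tilde\nu_\tau$, so both representations share the same $\tV$.

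Next, by \cite[Prop.~4.13]{NO21}, the subspace $\sE_P$ in fact consists of distribution vectors for the unipotent subgroup $\exp(\sp(\tilde V, \tilde\Omega)_1(h)) \subset \tilde\Sp(\tilde V, \tilde\Omega)$, and is cyclic under its action. The key algebraic fact, which I would invoke from Theorem \ref{thm:h1-emb-euler-elements} together with Example \ref{ex:jacobi-alg-max-parabolic}, is the identification
\[\sp(\tilde V, \tilde\Omega)_1(h) \;=\; \hcsp(V,\Omega)_1(h) \;=\; \fz \oplus V_1 \oplus \fs_1 \;=\; \L(G_1).\]
Since this subalgebra consists of ad-nilpotent elements, the exponential map is a diffeomorphism onto a simply connected subgroup, so the corresponding subgroup of $\tilde\Sp(\tilde V,\tilde\Omega)$ is identified precisely with $G_1 \subset G$ via Proposition \ref{prop:rep-extjac-metaplectic-sp-ext}. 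Consequently $\sE_P$ consists of distribution vectors for $(\nu,\cH_\nu)\lvert_{G_1}$ and is $G_1$-cyclic, which proves the first assertion and yields (C3) on the full group $G$ by Remark \ref{rem:netstand-constr-reductions}(1). Conditions (C1) and (C2) then transfer directly: (C1) holds because the standard subspace coincides with $\tV_{(h,\tau)}$ in both settings, and (C2) holds because $\exp(th)$ acts the same way under $\nu_\tau$ and $\tilde\nu_\tau$.

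Finally, for the invariance claim I would apply \cite[Prop.~5.2]{NO21}, which gives invariance of $\sE_P$ under $\exp(\sp(\tilde V,\tilde\Omega)_{-1}(h)) \cdot \tilde\Sp(\tilde V,\tilde\Omega)^{\tau_S}$. Since $\sp(\tilde V,\tilde\Omega)_{-1}(h) \supset \fs_{-1} = \L(\tilde\Sp(V,\Omega)_{-1})$ and the $\tau$-fixed subgroup $\tilde\HCSp(V,\Omega)^\tau$ maps into $\tilde\Sp(\tilde V,\tilde\Omega)^{\tau_S}$ under the natural homomorphism of Proposition \ref{prop:rep-extjac-metaplectic-sp-ext}, the stated invariance under $\tilde\Sp(V,\Omega)_{-1}\tilde\HCSp(V,\Omega)^\tau$ follows.

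The main technical obstacle is bookkeeping between the several covering groups involved: the subspace $\sE_P$ is defined inside $\cH_\nu$ viewed as the representation space of $\tilde\Sp(\tilde V,\tilde\Omega)$, whereas the theorem concerns the action of $G = \tilde\HCSp(V,\Omega)$, and the comparison homomorphism of Proposition \ref{prop:rep-extjac-metaplectic-sp-ext} is only a covering onto its image. At the Lie algebra level every identification is immediate from Theorem \ref{thm:h1-emb-euler-elements}, but on the group level I must verify that the relevant subgroups of $\tilde\Sp(\tilde V,\tilde\Omega)$ actually contain the images of $G_1$ and of $\tilde\Sp(V,\Omega)_{-1}\tilde\HCSp(V,\Omega)^\tau$. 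This is straightforward for $G_1$ (as it exponentiates a nilpotent subalgebra) and for the $\tau$-fixed subgroup by construction of $\tau_S$, so no serious obstruction arises.
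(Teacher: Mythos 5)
Your proposal is correct and takes essentially the same route as the paper's own argument: apply \cite[Thm.\ 5.3]{NO21} to the extended antiunitary representation \((\tilde\nu_\tau, \cH_\nu)\) of \(\tilde\Sp(\tilde V, \tilde\Omega)_\tau\) (with the cone-generation input from Proposition \ref{prop:adm-parabolic-emb-cone}), use \cite[Prop.\ 4.13]{NO21} together with the equality \(\sp(\tilde V,\tilde\Omega)_1(h) = \hcsp(V,\Omega)_1(h) = \L(G_1)\) from Theorem \ref{thm:h1-emb-euler-elements} and Remark \ref{rem:netstand-constr-reductions} to transfer (C1)--(C3) to \((\nu_\tau,\cH_\nu)\), and deduce the invariance claim from \cite[Prop.\ 5.2]{NO21} via the containments \(\fs_{-1} \subset \sp(\tilde V,\tilde\Omega)_{-1}(h)\) and \(q(\tilde\HCSp(V,\Omega)^\tau) \subset \tilde\Sp(\tilde V,\tilde\Omega)^{\tau_S}\). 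Your explicit verification of the covering-group bookkeeping (that \(\nu = \pi_S \circ q\) identifies the modular data, so both representations determine the same standard subspace \(\tV_{(h,\tau_G)}\)) is left implicit in the paper but is entirely sound.
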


\begin{rem}
  The subspace \(\sE_P\) is realized as follows: As described in Remark \ref{rem:h1-emb-euler-jordan}, the subspace \(E := \sp(\tilde V, \tilde \Omega)_1(h)\) can be endowed with the structure of a simple euclidean Jordan algebra. The antiunitary representation \((\tilde \nu_\tau, \cH_\nu)\) of \(\Sp(\tilde V, \tilde \Omega)\) can then be realized on \(\cH_\nu := L^2(E, \mu_\alpha)\), where \(\mu_\alpha\) is a Riesz measure on \(E\) with a parameter \(\alpha\) (cf.\ Appendix \ref{sec:app-riesz-measures}). The action of \(\exp(E) \cong E\) is then given by \((\tilde\nu(x)f)(y) := e^{-\frac{i}{2}\la y, x\ra}f(y)\) for \(x,y \in E\). The distribution vectors in \(\sE_P\) are multiples of 
  \[\eta(f) := \int_E \oline{f(x)}\, d\mu_{\alpha}(x), \quad f \in \cH_\nu^{\infty}\]
  (cf.\ \cite[Sec.\ 4.4]{NO21}). In our case, we have \(\alpha = \frac{1}{2}\) by \cite[Thm.\ 5.8]{HN01} because the highest weight \(\lambda\) of \(\tilde \nu\) satisfies \(\lambda\lvert_\fa = -\frac{1}{2}\sum_{j=1}^{r+1} \varepsilon_j\).
\end{rem}

\begin{rem}
  \label{rem:std-dist-parabolic-emb-general-case}
  We recall that Theorem \ref{thm:h1-emb-euler-elements}, which proves the existence of a common Euler element of \(\sp(\tilde V, \tilde \Omega)\) and \(\hcsp(V, \Omega) \cong \fb_\kappa\), also holds for every other maximal parabolic subalgebra of the form \(\fb_\kappa\) (cf.\ Proposition \ref{prop:h1-multone-parabolic-subalg}) for which the subalgebra \(\g_\kappa\) is simple hermitian and of tube type (cf.\ Table \ref{table:h1-multone-class}).
  The results from \cite{HNO96b} which are used in the proof of Proposition \ref{prop:rep-extjac-metaplectic-sp-ext} also hold for general maximal parabolic subalgebras \(\fb_\kappa\) discussed in Section \ref{sec:euler-elements}.
  Hence, we expect that the construction from Theorem \ref{thm:std-dist-H-nu} can also be generalized to other maximal parabolic subalgebras \(\fb_\kappa\) for which \(\g_\kappa\) is hermitian simple and of tube type. Apart from \(\hcsp(\R^{2n})\), this is the case for the admissible Lie algebra of the form \(\fb_\kappa = \g(\g_\kappa, V, \R, \beta) \rtimes \R H_\kappa\) with \(\g_\kappa \cong \so(2,10)\). A description of the corresponding symplectic \(\g_\kappa\)-module structure on \((V,\beta)\) can be found in \cite[p.\ 118]{Sa80}.
\end{rem}

\subsection{The general case}
\label{sec:nets-adm-general}

We now consider a 1-connected Lie group \(G_C\) with an admissible Lie algebra \(\g_C\) which is of the form \(\g_C = \g(\fl, V, \fz, \beta) = V \times \fz \times \fl\) in terms of Spindler's construction (cf. Section \ref{sec:nets-std-preliminaries-adm-hwr}). We may assume that \(G_C = \Heis(V,\Omega) \rtimes L\), where \(L\) is 1-connected with \(\L(L) = \fl\).

In this paper, we will only discuss the case where \(\fz = \fz(\g_C) = \R\), so that \(\Omega := \beta\) is a symplectic form on \(V\) (see also Section \ref{sec:nets-adm-perspectives} for a perspective on the general case).
We assume that there exist non-zero Euler elements of \(\fl\) and fix such an element \(h_\fl \in \fl\). Then \(h_\fl\) acts as an antisymplectic involution on \(V\), so that \(h := h_\fl + \frac{1}{2}\id_V\) is an Euler derivation of \(\g_C\) (cf.\ Section \ref{sec:euler-elements}).
The involution on \(\g_C\) induced by \(h\) is denoted by \(\tau := e^{i\pi \ad h}\).
The corresponding extended Lie algebra is defined by \(\g := \g_C + \R \id_V\) and its 1-connected Lie group is denoted by \(G = G_C \rtimes_\alpha \R_+^\times\).
We denote by \(\tau_G\) the involutive automorphism of \(G\) with \(\L(\tau_G) = \tau\).

  Let \((\pi, \cH)\) be an irreducible strongly continuous antiunitary representation of \(G_\tau = G \rtimes \{\1, \tau_G\}\) with discrete kernel such that the positive cone \(C_\pi\) generates \(\g_C\).
  We denote by \(\tV\) the standard subspace of \(\cH\) specified by
  \[J := \pi(\tau_G) \quad \text{ and } \quad \Delta^{-it/2\pi} = \pi(\exp(th)) \quad \text{for } t \in \R.\]
  By \cite[Thm.\ 2.11]{NO17}, the restricted unitary representation \((\pi\lvert_G, \cH)\) is either irreducible or the direct sum \((\pi_+ \oplus \pi_-, \cH_+ \oplus \cH_-)\) of two irreducible representations.
  In the latter case, the antiunitary involution \(J\) interchanges \(\cH_+\) and \(\cH_-\).
  Since \(\tau(C_\pi) = -C_\pi\) and \(J\cH_\pm = \cH_\mp\), we have \(C_{\pi_+} = C_{\pi_-} = C_\pi\).
  By applying Lemma \ref{lem:urep-conf-jacobi-energy-decomp}, we see that \((\pi_\pm\lvert_N, \cH_\pm)\) is a representation of either strictly positive or strictly negative energy because \(G\) acts irreducibly on \(\cH_\pm\).
  In particular, \((\pi\lvert_N, \cH)\) is either of strictly positive or strictly negative energy.
  We assume throughout this section that \((\pi\lvert_N, \cH)\) is of strictly positive energy.

  By the Factorization Theorem \ref{thm:rep-adm-pos-factors} and Proposition \ref{prop:rep-adm-pos-antiuniext}, we may assume that \((\pi, \cH)\) is of the form \((\pi_L \boxtimes \nu_\gamma, \cH_L \hotimes \cH_\nu)\), where the normal subgroup \(N = \Heis(V,\Omega) \rtimes_\alpha \R_+^\times\) acts trivially on \(\cH_L\), and \(\nu_\gamma\) is the unique antiunitary extension of the pullback via \eqref{eq:rep-adm-pos-factors-pullback} of the unique irreducible unitary representation of \(\tilde\HCSp(V,\Omega)\) for which the restriction to \(N\) is an irreducible positive energy representation.
  The antiunitary representation \((\pi_L, \cH_L)\) is irreducible by Proposition \ref{prop:adm-arep-commutant}.

  By Proposition \ref{prop:rep-adm-pos-antiuniext}, the involution \(J\) can be written as a tensor product
  \[J = J_L \otimes J_\nu := \pi_L(\tau_G) \otimes \nu(\tau_G),\]
  and the one parameter subgroup \((\Delta^{-it/2\pi})_{t \in \R}\) as
  \[\Delta^{-it/2\pi} = \pi_L(\exp(th_\fl)) \otimes \nu_\gamma(\exp(th))\quad \text{ for } t \in \R.\]
  We thus obtain standard subspaces \(\tV_L  \subset \cH_L\) and \(\tV_\nu \subset \cH_\nu\) such that \(\tV = \tV_L \hotimes \tV_\nu\).
  \begin{lem}
    \label{lem:tensor-fact-std-semigrp-intersection}
    The Lie wedge of the subsemigroup \(S := S_{\tV_L} \cap S_{\tV_\nu} \subset G\) generates \(\g\). In particular, \(\1 \in G\) is contained in the closure of the interior of \(S\).
  \end{lem}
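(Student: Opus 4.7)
The strategy is to prove the equality $\L(S) = \L(S_\tV)$ and then to invoke the fact that $\L(S_\tV)$ generates $\g$, which is guaranteed by the assumption that $C_\pi$ generates $\g_C$ combined with the structural description of $C_\pm(\pi,h) := \g_{\pm 1}(h) \cap (\pm C_\pi)$ via the analogue of \cite[Thm.\ 3.17]{Oeh20b} for extended admissible Lie algebras. The supplementary assertion then follows from the Hilgert--Rupp theorem cited in Remark \ref{rem:tensor-dist-semigrp-cond}.

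The inclusion $\L(S) \subset \L(S_\tV)$ is obtained from $S \subset S_\tV$: for $g \in S_{\tV_L} \cap S_{\tV_\nu}$ and elementary tensors $v \otimes w$ with $v \in \tV_L$, $w \in \tV_\nu$, we have $\pi(g)(v \otimes w) = \pi_L(g)v \otimes \nu_\gamma(g)w \in \tV_L \otimes \tV_\nu$. By Remark \ref{def:std-subspace-tensorprod-dense} the real algebraic tensor product is dense in $\tV = \tV_L \hotimes \tV_\nu$, and by continuity of $\pi(g)$ this forces $\pi(g)\tV \subset \tV$.

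For the reverse inclusion, I would apply \eqref{eq:sv-lie-wedge-3grad} to both $\pi_L$ (extended antiunitarily to $G_\tau$ by Proposition \ref{prop:rep-adm-pos-antiuniext}) and $\nu_\gamma$, yielding
\[\L(S_{\tV_L}) = C_-(\pi_L,h) \oplus \g_0(h) \oplus C_+(\pi_L,h), \quad \L(S_{\tV_\nu}) = C_-(\nu_\gamma,h) \oplus \g_0(h) \oplus C_+(\nu_\gamma,h).\]
Since $\L(S_{\tV_L}) \cap \L(S_{\tV_\nu}) \subset \L(S)$, it is enough to establish $C_\pm(\pi,h) \subset C_\pm(\pi_L,h) \cap C_\pm(\nu_\gamma,h)$. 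The inclusion into $C_\pm(\nu_\gamma,h)$ is precisely Theorem \ref{thm:rep-adm-pos-factors}(c). The inclusion into $C_\pm(\pi_L,h)$ rests on the following spectral-scaling observation: for $y \in \g_{\pm 1}(h)$ the relation $\Ad(\exp(th))y = e^{\pm t}y$ gives
\[\nu_\gamma(\exp(th))(-i\partial\nu_\gamma(y))\nu_\gamma(\exp(th))^{-1} = e^{\pm t}(-i\partial\nu_\gamma(y)) \quad \text{for all } t \in \R,\]
so the spectrum of $-i\partial\nu_\gamma(y)$ is $\R_+^\times$-invariant. For $y \in C_\pi \cap \g_{\pm 1}(h) \subset C_{\nu_\gamma}$ this spectrum is non-negative and $\R_+^\times$-invariant, hence has infimum $0$.

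Evaluating $-i\partial\pi(y) = -i\partial\pi_L(y) \otimes \id + \id \otimes (-i\partial\nu_\gamma(y)) \geq 0$ on $v \otimes w$ with unit vectors $v \in \cH_L^\infty$, $w \in \cH_\nu^\infty$, taking the infimum over $w$ and using that $\inf \spec(-i\partial\nu_\gamma(y)) = 0$, gives $\la -i\partial\pi_L(y)v, v\ra \geq 0$. Thus $y \in C_{\pi_L}$, completing the proof that $C_\pm(\pi,h) \subset C_\pm(\pi_L,h) \cap C_\pm(\nu_\gamma,h)$, and hence $\L(S_\tV) \subset \L(S)$. The main technical point is the spectral-scaling step together with the verification that $\L(S_\tV)$ generates $\g$ in the present extended admissible setting; the former is elementary once one exploits the $h$-conjugation, while the latter is the genuine structural input and follows the same scheme as in the proof of Theorem \ref{thm:std-dist-H-nu}.
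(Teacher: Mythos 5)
Your reduction to $\L(S) = \L(S_\tV)$ is internally sound as far as it goes, and your spectral-scaling argument is a valid and genuinely different way to obtain the inclusion $C_\pm(\pi,h) \subset C_\pm(\pi_L,h)$: the relation $\Ad(\exp(th))y = e^{\pm t}y$ for $y \in \g_{\pm 1}(h)$ gives, at the group level, $\nu_\gamma(\exp(th))\,(-i\partial\nu_\gamma(y))\,\nu_\gamma(\exp(th))^{-1} = e^{\pm t}(-i\partial\nu_\gamma(y))$, so $\spec(-i\partial\nu_\gamma(y))$ is $\R_+^\times$-invariant; combined with $y \in C_{\nu_\gamma}$ (Theorem \ref{thm:rep-adm-pos-factors}(c)) and closedness of the spectrum this yields $\inf\spec(-i\partial\nu_\gamma(y)) = 0$, and subtracting in $\dd\pi = \dd\pi_L \otimes \id + \id \otimes \dd\nu_\gamma$ on the core $\cH_L^\infty \otimes \cH_\nu^\infty$ gives $-i\partial\pi_L(y) \geq 0$. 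That computation is correct (the paper instead proves the \emph{stronger} inclusion $C_{\nu_\gamma} \cap \g_{\pm 1}(h) \subset C_{\pi_L}$ by cone-structure theory), and your density argument for $S \subset S_\tV$ via Remark \ref{def:std-subspace-tensorprod-dense} is fine, though that direction is not actually needed for the lemma.

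The genuine gap is your final, load-bearing step: you never prove that $\L(S_\tV)$ generates $\g$, i.e.\ that $C_\pm(\pi,h) = \g_{\pm 1}(h) \cap (\pm C_\pi)$ generates $\g_{\pm 1}(h)$. You defer this to ``the analogue of [Oeh20b, Thm.\ 3.17] for extended admissible Lie algebras,'' but that theorem is invoked in this paper (both in the present lemma and in the proof of Theorem \ref{thm:std-dist-H-nu}) only for the concrete cone $C_{\nu_\gamma} = \gamma^{-1}(\hsp(V,\Omega)_+)$, not for an arbitrary pointed generating invariant cone such as $C_\pi$; the proof of Theorem \ref{thm:std-dist-H-nu} contains no ``scheme'' you could repeat here, because there the positive cone \emph{is} $\hsp(V,\Omega)_+$ itself. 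Your scaling trick cannot close this hole: it bounds $C_\pi \cap \g_{\pm 1}(h)$ from above (inside $C_{\pi_L} \cap C_{\nu_\gamma}$), whereas generation requires a lower bound on its size, and the argument cannot be run starting from $y \in C_{\nu_\gamma}$ alone since it needs $-i\partial\pi(y) \geq 0$ as input. This missing lower bound is exactly what the paper's proof supplies: using Theorem \ref{thm:rep-adm-pos-factors}(c)(d) and highest-weight theory it produces a minimal invariant cone $\Wmin \subset C_{\pi_L} \cap C_{\nu_\gamma}$, applies Lemma \ref{lem:euler-cone-intersection-reductive} to get $\fl_{\pm 1}(h) \cap C_{\pi_L} = \fl_{\pm 1}(h) \cap \Wmin = \fl_{\pm 1}(h) \cap C_{\nu_\gamma}$, uses the explicit description \eqref{eq:hsp-plus-cone} to control the $\fl_1(h_\fl)$-component of elements of $C_{\nu_\gamma} \cap \g_1(h)$, and notes $V \times \fz \subset C_{\pi_L}$ trivially (as $N$ acts trivially on $\cH_L$); this yields $C_{\nu_\gamma} \cap \g_{\pm 1}(h) \subset C_{\pi_L}$, so that $C_{\pi_L} \cap C_{\nu_\gamma} \cap \g_{\pm 1}(h) = C_{\nu_\gamma} \cap \g_{\pm 1}(h)$, which generates $\g_{\pm 1}(h)$ by the actual [Oeh20b, Thm.\ 3.17]. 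Without this structural reduction (or an equivalent one), your proof of generation --- and hence of the lemma --- is incomplete.
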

  \begin{proof}
    The Lie wedges of \(S_{\tV_L}\) and \(S_{\tV_\nu}\) can be computed using \eqref{eq:sv-lie-wedge-3grad} (cf.\ \cite[Cor.\ 5.5]{Ne21}).
    Moreover, we have \(\L(S) = \L(S_{\tV_L}) \cap \L(S_{\tV_\nu})\).
    It is obvious that \(\g_0(h) \subset \L(S)\).
    It remains to show that \(C_{\pi_L} \cap C_{\nu_\gamma} \cap \g_{\pm 1}(h)\) generates \(\g_{\pm 1}(h)\).
    We first recall from \cite[Thm.\ 3.17]{Oeh20b} that the intersection of \(\g_{\pm 1}(h)\) with \(C_{\nu_\gamma} = \gamma^{-1}(\hsp(V,\Omega)_+)\) generates \(\g_{\pm 1}(h)\).
    As a consequence of Theorem \ref{thm:rep-adm-pos-factors}(c)(d), there exists a minimal invariant cone \(\Wmin \subset \g_C\) with \(\Wmin \subset C_{\nu_\gamma}\) which is also contained in \(C_{\pi_L}\) because \((\pi_L, \cH_L)\) is a highest weight representation (cf.\ \cite[Thm.\ X.4.1]{Ne00}).
    Furthermore, the intersection \(\Wmin_{,\fl} := \Wmin \cap \fl\) is a minimal invariant cone of \(\fl\).
    From Lemma \ref{lem:euler-cone-intersection-reductive}, we obtain
    \[\fl_{-1}(h) \cap C_{\pi_L} = \fl_{-1}(h) \cap \Wmin = \fl_{-1}(h) \cap C_{\nu_\gamma} = \fl_{-1}(h) \cap \gamma^{-1}(\hsp(V,\Omega)_+),\]
    which generates \(\fl_{-1}(h) = \g_{-1}(h)\). 

    Let \(x = (v,z,y) \in C_{\nu_\gamma} \cap (V_1 \times \fz \times \fl_1(h_\fl)) = C_{\nu_\gamma} \cap \g_1(h)\).
    From \eqref{eq:hsp-plus-cone}, we see that we must have \(y \in C_{\nu_\gamma} \cap \fl_1(h_\fl)\) so that the previous discussion and Lemma \ref{lem:euler-cone-intersection-reductive} imply that \(y \in \Wmin\).
    Since \(V \times \fz \subset C_{\pi_L}\) holds trivially, this shows that \(x \in C_{\pi_L}\), so that \(C_{\pi_L} \cap C_{\nu_\gamma} \cap \g_1(h)\) generates \(\g_1(h)\).
    Hence, \(\L(S)\) generates \(\g = \g_C + \R h\).
    The last claim follows from Remark \ref{rem:tensor-dist-semigrp-cond}.
  \end{proof}

After these preparations, we obtain the following theorem, which is the main result of this paper:

\begin{thm}
  \label{thm:nets-adm-general}
  There exists a real subspace \(\sE \subset \tV^{-\infty}\) which satisfies the conditions {\rm (C1)-(C3)} of {\rm Theorem \ref{thm:netstand-constr}} and is invariant under the action of the subgroups \(L^{\tau}\) and \(V_1\).
\end{thm}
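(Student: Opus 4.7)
The plan is to exploit the factorization \((\pi,\cH) \cong (\pi_L \boxtimes \nu_\gamma, \cH_L \hotimes \cH_\nu)\) from Theorem \ref{thm:rep-adm-pos-factors} and to take \(\sE\) as a real span of tensor products \(\eta_L \hotimes \eta_\nu\) built separately for each factor. Lemmas \ref{lem:tensor-fact-std-semigrp-intersection} and \ref{lem:std-boxtensor-dists} are tailored for exactly this situation: once factors \(\sE_L\) and \(\sE_\nu\) are produced with the right invariances, the conditions (C1)--(C3) and the invariances under \(L^\tau\) and \(V_1\) will fall out essentially factorwise.

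For the \(L\)-factor, Theorem \ref{thm:rep-adm-pos-factors}(d) shows that \((\pi_L,\cH_L)\) is an irreducible unitary highest weight representation of the quasihermitian reductive group \(L\), and Proposition \ref{prop:rep-adm-pos-antiuniext} provides the antiunitary extension \(J_L = \pi_L(\tau_G)\). I would apply the construction of \cite[Thm.\ 5.3]{NO21} in the quasihermitian reductive setting (cf.\ \cite[Rem.\ 5.4]{NO21}) with modular generator \(h_\fl\) to produce \(\sE_L \subset \tV_L^{-\infty}\) satisfying (C1)--(C3), invariant under \(L^\tau\), spanned by \(\exp(\R h_\fl)\)-eigenvectors, and with \(\sE_L + i\sE_L\) being \(J_L\)-invariant. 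For the \(\nu\)-factor I take \(\sE_\nu\) from Theorem \ref{thm:std-dist-H-nu}. Since \(\gamma\) intertwines the Euler \(3\)-gradings, \(\gamma(L^\tau) \subset \Sp(V,\Omega)_0(h_s)\); Proposition \ref{prop:H-nu-dist-invariance}(a),(b) then shows that both \(\nu_\gamma(L^\tau)\) and \(\nu_\gamma(\exp(\R h))\) act on each generator \(\tilde\eta_s\) by a positive real scalar, part (c) yields \(V_1\)-invariance, and part (d) gives \(J_\nu(\sE_\nu+i\sE_\nu) \subset \sE_\nu + i\sE_\nu\).

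Now I set
\[\sE := \spann_\R\bigl\{\eta_L \hotimes \eta_\nu : \eta_L \in \sE_L,\ \eta_\nu \in \sE_\nu\bigr\}.\]
Condition (C1) will be Lemma \ref{lem:std-boxtensor-dists}(b), whose semigroup hypothesis is supplied by Lemma \ref{lem:tensor-fact-std-semigrp-intersection}. Condition (C2) reduces to the \(\exp(\R h_\fl)\)-invariance of \(\sE_L\) together with the real scaling of each \(\tilde\eta_s\) under \(\nu_\gamma(\exp(\R h))\). Condition (C3) follows from Remark \ref{rem:netstand-constr-cyclic-irred}, using the \(G_\tau\)-irreducibility of \((\pi,\cH)\) (Proposition \ref{prop:adm-arep-commutant}) together with the \(\pi^{-\infty}(\tau_G) = J_L \otimes J_\nu\)-invariance of \(\sE + i\sE\). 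The \(L^\tau\)-invariance of \(\sE\) holds factorwise (with real-scalar absorption on the \(\nu\)-side), and the \(V_1\)-invariance uses in addition that \(\pi_L(V_1) = \id_{\cH_L}\), since \(V_1 \subset N\) acts trivially on \(\cH_L\).

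The main technical obstacle is the standing hypothesis of Lemma \ref{lem:std-boxtensor-dists}(b): the algebraic tensor product \(\eta_L \otimes \eta_\nu\) must extend continuously to the \emph{diagonal} smooth-vector space \(\cH^\infty = \cH^\infty_{\pi_L \boxtimes \nu_\gamma}\). Lemma \ref{lem:smoothvec-tensor-dist-ext} supplies continuity only on the \((G\times G)\)-smooth space \(\cH_L^\infty \hat\otimes \cH_\nu^\infty\) with its finer \(\cU(\g)\otimes\cU(\g)\)-topology, whereas \(\cH^\infty\) carries only the coarser diagonal \(\cU(\g)\)-topology; bridging this gap is the delicate step. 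I would do this by combining the explicit polynomial-weight estimate from the proof of Lemma \ref{lem:H-nu-dist-smooth} with the corresponding highest-weight bound controlling elements of \(\sE_L\), in order to express each \(\eta_L \hotimes \eta_\nu\) as a finite application of diagonal elements of \(\cU(\g)\) to a bounded functional on \(\cH\).
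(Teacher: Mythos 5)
Your outline coincides with the paper's actual proof: factor \((\pi,\cH)\cong(\pi_L\boxtimes\nu_\gamma,\cH_L\hotimes\cH_\nu)\), build \(\sE_L\) for the highest weight factor from \cite[Thm.\ 5.3]{NO21}, take \(\sE_\nu\) from Theorem \ref{thm:std-dist-H-nu}, get (C1) from Lemma \ref{lem:tensor-fact-std-semigrp-intersection} together with Lemma \ref{lem:std-boxtensor-dists}, get (C3) from Remark \ref{rem:netstand-constr-cyclic-irred}, and read off the \(L^\tau\)- and \(V_1\)-invariances factorwise. However, the step you yourself flag as delicate --- continuity of \(\eta_L\otimes\eta_\nu\) on the diagonal smooth space \(\cH^\infty_{\pi_L\boxtimes\nu_\gamma}\) --- is exactly where your proposal stops being a proof, and your plan to bridge it by ``polynomial-weight estimates plus highest-weight bounds'' is neither carried out nor the mechanism the paper uses. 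The paper closes this gap purely algebraically with Proposition \ref{prop:boxtensor-dist-prod}, which you appear to have missed (you cite the neighbouring Lemma \ref{lem:smoothvec-tensor-dist-ext} instead, which indeed only covers the \(G\times G\)-smooth space). The point is that the invariances you already have on hand do all the work: by the proof of Lemma \ref{lem:H-nu-dist-smooth}, each \(\eta_s\) lies in \(\spann(\dd\nu^{-\infty}(\cU(\fh_2))\cH_\nu)\) with \(\fh_2 = V_{-1}\oplus\ft\), and \(\pi_L\) is trivial on the corresponding subgroup since it sits inside \(N\); conversely, \(\sE_L \subset \spann(\dd\pi_L^{-\infty}(\cU(\fl_1(h_\fl)))\cH_L)\) by the construction in \cite{NO21}, while \(\eta_s\) is fixed pointwise by \(\nu_\gamma(L_1)\) by Proposition \ref{prop:H-nu-dist-invariance}(c). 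Under these two hypotheses all cross-terms in the diagonal \(\cU(\g)\)-action vanish, and \(\eta_L\otimes\eta_\nu\) becomes a finite application of diagonal elements of \(\cU(\g)\) to honest vectors of \(\cH\) --- which is precisely the target you conjecture in your last sentence, reached by invariance rather than by analytic estimates. As written, then, your proposal states the correct goal for the key step but leaves it unproved, while the tool needed is already in the paper.

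A second, smaller inaccuracy: you invoke Theorem \ref{thm:rep-adm-pos-factors}(d) to conclude that \((\pi_L,\cH_L)\) is an \emph{irreducible} unitary highest weight representation, but that item presupposes that \(\pi\lvert_G\) is irreducible. Under the standing assumption of Section \ref{sec:nets-adm-general}, \((\pi,\cH)\) is only irreducible as a \(G_\tau\)-representation, so \(\pi\lvert_G\), and hence \((\pi_L,\cH_L)\) as an \(L\)-representation, may split into two highest weight summands interchanged by \(J\). The paper accounts for this explicitly by applying Theorem \ref{thm:rep-adm-pos-factors} to each irreducible summand and then using \cite[Rem.\ 4.10]{NO21}, which covers such direct sums; your argument needs the same adjustment, though it is routine.
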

\begin{proof}
  By Proposition \ref{prop:adm-arep-commutant}, the restriction of \((\pi, \cH)\) to \(G\) is either irreducible or a direct sum of two irreducible representations for which the positive cone is pointed and generating in \(\g_C\). By applying Theorem \ref{thm:rep-adm-pos-factors} to each of these irreducible subrepresentations, we see that \((\pi_L, \cH_L)\) is either a unitary highest weight representation or the direct sum of two unitary highest weight representations of \(G_C\), respectively \(L \cong G_C / N\) because \(N = \Heis(V,\Omega) \rtimes_\alpha \R_+^\times\) acts trivially on \(\cH_L\) (cf.\ \cite[Lem.\ IX.2.19]{Ne00}).

  We can thus apply \cite[Thm.\ 5.3]{NO21} (see also \cite[Rem.\ 5.4]{NO21} and \cite[Rem.\ 4.10]{NO21}) to obtain a real subspace \(\sE_L \subset \cH_{\pi_L, \ext, J_L}^{-\infty} \subset \tV_L^{-\infty}\) that satisfies the conditions (C1)-(C3) of Theorem \ref{thm:netstand-constr} for the representation \((\pi_L, \cH_L)\) of \(L\) and the Euler element \(h_\fl \in \fl\).
  Moreover, \(\sE_L\) is invariant under \(L^{\tau}\) and \(L_{-1} = G_{-1}\), and it is a cyclic space of distribution vectors with respect to the action of \(L_1 := \exp_L(\fl_1(h_\fl))\).

  On the other hand, we recall that the real subspace \(\sE_\nu \subset \cH_{\nu,\ext,J_\nu}^{-\infty} \subset \tV_\nu^{-\infty}\) (cf.\ \eqref{eq:std-dist-H-nu}) is cyclic under the action of \(N\) (cf.\ Theorem \ref{thm:std-dist-H-nu}) and is fixed pointwise by \(L_1\) (cf.\ Proposition \ref{prop:H-nu-dist-invariance}).
  By Proposition \ref{prop:boxtensor-dist-prod} every element in \(\sE_L \otimes \sE_\nu\) extends to a continuous antilinear functional on \(\cH^\infty\).
  We denote the subspace spanned by these extensions by \(\sE\).
  Since \(\1 \in G\) is contained in the closure of the interior of \(S_{\tV_\nu} \cap S_{\tV_L}\) by Lemma \ref{lem:tensor-fact-std-semigrp-intersection}, we can apply Lemma \ref{lem:std-boxtensor-dists} and obtain \(\sE \subset \tV^{-\infty}\).
  The invariance of \(\sE\) under \(L^{\tau}\), \(V_1\), and \((\exp(th))_{t \in \R}\) follows from the invariance properties of \(\sE_L\) and \(\sE_\nu\).

  In order to show that \(\sE\) is \(G\)-cyclic, we only have to observe that \(\sE\) is non-trivial, so that the irreducibility of \((\pi, \cH)\) automatically implies that \(\sE\) is \(G\)-cyclic by Remark \ref{rem:netstand-constr-cyclic-irred}.
\end{proof}

\section{Nets on homogeneous spaces}
\label{sec:nets-hom-spaces}

As we have already mentioned in the introduction, we can use our construction from Section \ref{sec:nets-std-adm} to obtain nets of standard subspaces on homogeneous spaces, which are the space-times that these nets are often constructed on in AQFT.
In this section, we discuss various examples of this kind.

The general idea is that, having constructed a covariant isotone net of cyclic subspaces indexed by open subsets of a Lie group \(G\), we naturally obtain a covariant isotone net on a homogeneous space \(G/P\), where \(P \subset G\) is a closed subgroup, via the quotient map \(q : G \rightarrow G/P\).

Throughout this section, we preserve the context from the beginning of Section \ref{sec:nets-adm-general}.
We apply the construction in Theorem \ref{thm:nets-adm-general} to the real subspace of distribution vectors \(\sE\) of the antiunitary representation \((\pi, \cH)\) of \(G_\tau\), resp.\ the subspace \(\sE_P\) that we obtain from Theorem \ref{thm:std-dist-parabolic-embedding} for the representation \((\nu, \cH_\nu)\) of \(\tilde\HCSp(V,\Omega)\).

\begin{example}
  \label{ex:std-nets-homogeneous-spaces}
  Similar to the hermitian simple case (cf.\ \cite[Sec.\ 5.2]{NO21}), the construction from Theorem \ref{thm:nets-adm-general} can be used to construct a net of cyclic subspaces on the homogeneous space \(M := G/G^{\tau}\) which satisfies the Haag--Kastler axioms from the introduction.
  Let \(q_M : G \rightarrow M\) be the canonical quotient map.
  For every non-empty open region \(\cO \subset M\), we define the cyclic subspace
  \[\tV_{M}(\cO) := \tV(q_M^{-1}(\cO)) = \sH_{\sE}(q_M^{-1}(\cO)).\]
  This defines a \(G\)-covariant isotone net on \(M\).
  Recall the subsemigroup \(S := G^\tau\exp(C_+^o \oplus C_-^o) \subset S_\tV\) (cf.\ Theorem \ref{thm:netstand-constr}), which acts as a wedge domain of the original net on \(G\).
  If we define \(\cW := q_M(S)\), then \(G^\tau \subset S\) implies
  \[\tV_{M}(\cW) = \tV(q_M^{-1}(\cW)) = \tV(SG^\tau) = \tV(S).\]
  The same argument can also be applied to the net of cyclic subspaces on \(\tilde\HCSp(V,\Omega)\) which we obtained from the embedding of \(\hcsp(V,\Omega)\) into \(\sp(\tilde V, \tilde \Omega)\) (cf.\ Theorem \ref{thm:std-dist-parabolic-embedding}).
  In this case, the space \(\sE_P\) of distribution vectors is even invariant under the closed subgroup \(P_- := G^\tau G_{-1}\), so that we obtain a net of cyclic subspaces on \(G/P_-\) in the same way as above: If \(q_- : G \rightarrow G/P_-\) denotes the canonical quotient map, then we define for every open set \(\cO \subset G/P_-\) a cyclic subspace by
  \[\tV_{G/P_-}(\cO) := \tV(q_-^{-1}(\cO)) = \sH_{\sE_P}(q_-^{-1}(\cO)).\]
  The wedge domain \(\cW_- := q_-(S)\) then satisfies
  \begin{equation}
    \label{eq:std-nets-homspace-inv}
    \tV_{G/P_-}(\cW_-) = \tV(SP_-) = \tV(SG_{-1}) = \tV(S).
  \end{equation}
  because \(\sE_P\) is \(G_{-1}\)-invariant (cf.\ \cite[Lem.\ 2.11]{NO21}).
\end{example}

\begin{example}
  (1) We consider the subgroup \(L \subset G = \Heis(V,\Omega) \times (L \times \R_+^\times)\) (cf.\ Section \ref{sec:nets-adm-general}) as the homogeneous space \(G/N\), where \(N = \Heis(V,\Omega) \rtimes \R_+^\times \subset G\).
  The quotient map \(G \rightarrow G/N\) then corresponds to the projection
  \[p_L : G = \Heis(V,\Omega) \rtimes (L \times \R_+^\times) \rightarrow L, \quad (v,z,g,t) \mapsto g.\]
  From the net \(\cO \mapsto \tV(\cO) = \sH_{\sE}(\cO)\) on \(G\), we then obtain a net on \(L\) defined by
  \[\cO_L \mapsto \tV_L(\cO_L) := \tV(p_L^{-1}(\cO_L)) \quad \text{for } \cO_L \subset L \text{ open.}\]
  Let \(S_L := \exp(C_-^o)L^\tau\exp((C_+ \cap \fl)^o)\), which is an open subgroup of \(L\). 
  Since \([\fl,\fl]\) is a Levi-complement of \(\g\), \cite[Lem.\ 2.1(c)]{Ne20} implies that the preimage of \(S_L\) under \(p_L\) is the wedge domain \(S = G^\tau\exp(C_+^o \oplus C_-^o)\) of the net on \(G\).
  As a result, we have \(\tV(S) = \tV_L(S_L)\).

  We note that, even though the Lie algebra of \(L\) is reductive and quasihermitian, this construction of a net on \(L\) is different from that in \cite{NO21} because the pair of modular objects of \(\tV_L(S_L)\) is determined by \(h = h_\fl + \frac{1}{2}\id_V \not\in \fl\) and the involution \(\tau_G \not\in L_\tau\) (cf.\ Theorem \ref{thm:netstand-constr}(c)).
  Moreover, the restriction of the representation \((\pi, \cH)\) to \(L_\tau\) is in general not irreducible.

  (2) Recall the example of the circle \(\bS^1\) as a homogeneous space \(\PSL(2,\R) / P\) from the introduction, where \(P\) is a two-dimensional subgroup.
  Suppose that \(G = \tilde\HCSp(\R^2)\), so that \(\fl = \sp(2,\R) = \fsl(2,\R)\).
  We assume that \(P\) and \(h\) are chosen in such a way that \(\L(P) = \fl_{-1}(h) \oplus \fl_0(h)\) and we denote by \(\tilde P \subset L \cong \tilde\PSL(2,\R)\) the integral subgroup with Lie algebra \(\L(P)\).
  Then \(L/\tilde P \cong \R\) is a homogeneous space and a covering of \(\bS^1\).
  We denote the corresponding (lifted) quotient map by \(q : L \rightarrow L/\tilde P\).
  We consider the net \(\cO \mapsto \tV(\cO) = \sH_{\sE_P}(\cO)\) on \(G\) from Theorem \ref{thm:std-dist-parabolic-embedding} induced by the representation \((\nu, \cH_\nu)\).
  Then we apply (1) to obtain a net \(\cO \mapsto \tV_L(\cO)\) on \(L\). The same construction as in Example \ref{ex:std-nets-homogeneous-spaces} then leads to a net indexed by open subsets on \(L/\tilde P\).
  Because of the invariance properties of \(\sE_P\) and because \(p_L^{-1}(L_{-1}) = G_{-1}\) (cf.\ \cite[Lem.\ 2.1]{Ne20}), a similar computation as in \eqref{eq:std-nets-homspace-inv} shows that the cyclic subspace assigned to \(q(S_L) \subset L/\tilde P\) is standard.
\end{example}

\begin{example}
  We give an example of a homogeneous space where the preimages of the open subsets are too large to determine whether they are assigned to standard subspaces in the induced net of cyclic subspaces.

  Suppose that \(G = \tilde\HCSp(V,\Omega) = \Heis(V,\Omega) \rtimes (\tilde\Sp(V,\Omega) \times \R_+^\times)\).
  We have a natural action of \(\HCSp(V,\Omega)\) on \(V\) given by
  \[(v,z,g,t).w := e^tgw + v, \quad (v,z,g,t) \in V \times T \times \Sp(V,\Omega) \times \R_+^\times, w \in V,\]
  which we lift to an action of \(G\) via the corresponding covering homomorphism \(G \rightarrow \HCSp(V,\Omega)\).
  Then \(V \cong G / P\), where \(P = T \rtimes (\tilde\Sp(V,\Omega) \times \R_+^\times)\). We denote by \(q : G \rightarrow V\) the projection onto \(V\).

  Consider the net \(\cO \mapsto \tV(\cO)\) on \(G\) that is induced by the unitary representation \((\nu, \cH_\nu)\) of \(G\) and the distribution subspace \(\sE_P \subset \cH^{-\infty}\) from Theorem \ref{thm:std-dist-parabolic-embedding}.
  For an open subset \(\cO_V \subset V\), we set \(\tV_V(\cO_V) := \tV(q^{-1}(\cO_V))\), so that we obtain a covariant isotone net on \(V\).
  Recall that the Euler element \(h\) of \(\g\) which we fixed in the beginning of Section \ref{sec:nets-adm-general} determines a decomposition \(V = V_{-1} \oplus V_1\), where \(V_{-1} \subset \g_0(h)\) and \(V_{1} \subset \g_1(h)\).

  Consider a non-empty open subset \(\cO_V = \cO_{-1} \times \cO_{1} \subset V_{-1} \times V_1\) and let \(\cO := q^{-1}(\cO_V)\). 
  Let \(v_1 \in \cO_1\). Then there exists \(x \in \sp(V,\Omega)_1(h)\) and \(z \in \fz\) such that \((v_1,z,x) \in C_+ = \hsp(V,\Omega)_+ \cap \g_{1}(h)\) (cf.\ Remark \ref{rem:rep-nu-pos-cone-bounded-cone}).
  From the definition of \(\hsp(V,\Omega)_+\), it is straightforward to see that we also have \((v_1,-z,-x) \in -\hsp(V,\Omega)_+ \cap \g_1(h)\).
  In particular, we have \(\cO \not\subset S = G^\tau\exp(C_-^o \oplus C_+^o)\), because the preimage of \(\cO_1\) under \(q\) cannot be contained in \(C_+\).
\end{example}

\section{Perspective}
\label{sec:nets-adm-perspectives}

Consider a 1-connected Lie group \(G_C\) with admissible Lie algebra \(\g_C = \g(\fl,V,\fz,\beta)\).
If the dimension of the center \(\fz\) of \(\g_C\) is larger than \(1\), then the non-trivial Euler derivations \(D \in \der(\g_C)\) are in general significantly more complicated than in the one-dimensional case (cf.\ \cite[Thm.\ 3.14]{Oeh20b}).
In particular, the derivation may induce a non-trivial 3-grading on \(\fz\) (cf.\ \cite[Ex.\ 3.20]{Oeh20b}).
As a result, there exists in general more than one class of irreducible representations of \(G = G_C \rtimes \exp(\R D)\) of strictly positive energy. This can be seen by considering the action of the automorphism group induced by \(D\) on the characters on the center of \(G_C\), by a similar argument as in Lemma \ref{lem:conf-heis-irrep} (see also Remark \ref{rem:mackey-method-induced-rep}).

This suggests that the results in this paper on the representation theory of extended admissible Lie groups \(G\) as above can not immediately be generalized to the case where \(\dim \fz > 1\). We will therefore discuss this case in the upcoming paper \cite{Oeh21}.

The higher dimensional analogs of the Jacobi algebra, which is the prototypical example for our construction, are the admissible Lie algebras \(\hsp(V,\beta) := \heis(V,\beta) \rtimes \sp(V,\beta)\), where \(\beta\) is a \(\fz\)-valued skew-symmetric bilinear map on \(V\) for which there exists a linear functional \(f \in \fz^*\) such that \(f \circ \beta\) is non-degenerate.
We refer to \cite[Sec.\ 5]{Neu00} for a first step towards classifying the admissible Lie algebras \(\g(\fl,V,\fz,\beta)\) with \(\dim \fz > 1\) in terms of their \(\fl\)-module structure and the bilinear maps \(\beta\).

\section*{Acknowledgement}

We are most grateful to Karl-Hermann Neeb for many helpful discussions on the topic of this paper and for giving feedback on earlier versions of the manuscript.
We also thank the referee for several valuable suggestions on how to improve the quality of this paper.

\appendix

\section{Jordan algebras}
\label{sec:appendix-jordan-algebras}

In this appendix, we recall all the basic facts about Jordan algebras which are needed in this paper.
\begin{definition}
  \begin{enumerate}
    \item A (real) Jordan algebra is pair \((E, \cdot)\) consisting of a real vector space \(E\) and a bilinear map
      \[\cdot : E \times E \rightarrow E, \quad (x,y) \mapsto x \cdot y, \quad \text{such that}\]
      \[x \cdot y = y \cdot x \quad \text{and} \quad x \cdot (x^2 \cdot y) = x^2 \cdot (x \cdot y) \quad \text{for all } x,y \in E.\]
    \item A real Jordan algebra is called \emph{euclidean} if there exists a scalar product \(\la \cdot, \cdot \ra\) on \(E\) with the property that
      \begin{equation}
        \label{eq:jordan-algebra-inv-scprod}
        \la x \cdot y, z \ra = \la y, x \cdot z \ra \quad \text{for all } x,y,z \in E.
      \end{equation}
    \item A Jordan algebra is called \emph{simple} if it does not contain a non-trivial ideal.
  \end{enumerate}
\end{definition}

\begin{definition}
  Let \(E\) be a euclidean Jordan algebra with unit element \(e\). A \emph{Jordan frame} is a subset \(F = \{c_1,\ldots,c_r\} \subset E\) of mutually orthogonal idempotents of \(E\) with the property that \(\sum_{k=1}^r c_k = e\) and every \(c \in F\) is \emph{primitive}, i.e.\ it is non-zero and cannot be written as the sum of two non-zero orthogonal idempotents. We define \(\rk E := r = |F|\) as the \emph{rank of \(E\)}.
\end{definition}

Let \(E\) be a simple euclidean Jordan algebra with unit \(e\). For an idempotent \(c \in E\), the corresponding left multiplication map \(L(c)\) is diagonalizable over \(E\) with \(\spec(L(c)) \subset \{0, \frac{1}{2}, 1\}\) (cf.\ \cite[Prop.\ III.1.3]{FK94}). We write \(E_\lambda(c)\) for the \(\lambda\)-eigenspace of \(L(c)\).

Let \(F = \{c_1,\ldots,c_r\}\) be a Jordan frame of \(E\).
Set
\[E_i := \R c_i \quad \text{and} \quad E_{ij} := E_{\frac{1}{2}}(c_i) \cap E_{\frac{1}{2}}(c_j) \quad \text{for } i,j \in \{1,\ldots,r\}, i \neq j.\]
For \(1 \leq k \leq r\), we define
\begin{equation}
  \label{eq:jordan-minor-def}
  E^{(k)} := E_1(c_1 + \ldots + c_k) = \bigoplus_{i=1}^k E_i \oplus \bigoplus_{1 \leq i < j \leq k} E_{ij}.
\end{equation}
Then \(E^{(k)}\) is a euclidean Jordan subalgebra of \(E\) with unit element \(c_1 + \ldots + c_k\), and we have \(E = E^{(r)}\). Moreover, one can show that \(E^{(k)}\) is also simple (cf.\ \cite[Prop.\ 4.1]{GT11}). A complete list of the subalgebras \(E^{(k)}\) for all simple euclidean Jordan algebras up to isomorphy can for example be found in \cite[Table 3]{Oeh20a}.

\subsection{The Kantor--Koecher--Tits construction}
\label{sec:appendix-KKT}

We recall the well-known \emph{Kantor--Koecher--Tits construction} (or KKT-construction), which gives a one-to-one correspondence between hermitian simple Lie algebras of tube type and simple euclidean Jordan algebras (cf.\ \cite{Ka64},\cite{Koe69},\cite{Ti62}).

\begin{definition}
  Let \(\g\) be a Lie algebra. A triple \((h,x,y) \in \g^3\) is called an \(\fsl_2\)-triple if
  \[[h,x] = 2x, \quad [h,y] = -2y, \quad \text{and} \quad [x,y] = h.\]
\end{definition}

Let \(\g\) be a hermitian simple Lie algebra of tube type with a Cartan involution \(\theta\). Let \(0 \neq h \in \g^{-\theta}\) be an Euler element (cf.\ Definition \ref{def:euler-element}). Then there exist \(x,y \in \g\) such that \((2h,x,y)\) is an \(\fsl_2\)-triple and \(\theta(x) = -y\). Moreover, the multiplication
\[a \cdot b := \tfrac{1}{2}[[a, y], b], \quad a,b \in \g_1(h),\]
defines the structure of a simple euclidean Jordan algebra \(E := \g_1(h)\) with unit element \(x\).
The rank of the Jordan algebra \(\g_1(h)\) equals the real rank of \(\g\).
The isomorphy type of this Jordan algebra does not depend on the choice of the triple \((h,x,y)\).
Conversely, one can construct from every simple euclidean Jordan a hermitian simple Lie algebra of tube type such that the original Jordan algebra is of the above form.
The Table below lists all simple euclidean Jordan algebras and the corresponding hermitian simple Lie algebras up to isomorphy. 

\begin{table}[H]
  \centering
  \begin{tabular}{|c|c|c|}
    \hline
    \(\g\) & \(E\) & \(\rk_\R \g = \rk E\) \\
    \hline
    \(\su(n,n)\) & \(\Herm(n,\C)\) & \(n\) \\
    \hline
    \(\sp(2n,\R)\) & \(\Sym(n,\R)\) & \(n\) \\
    \hline
    \(\so^*(4n)\) & \(\Herm(n,\H)\) & \(n\) \\
    \hline
    \(\so(2,n) \, (n \geq 3)\) & \(\R \times \R^{n-1}\) & \(2\) \\
    \hline
    \(\fe_{7(-25)}\) & \(\Herm(3,\mathbb{O})\) & \(3\) \\
    \hline
  \end{tabular}
  \label{table:kkt}
\end{table}

Let now \(h \in \g\) be such that \(\ad(h)\) induces a non-trivial 5-grading on \(\g\) of the form
\[\g = \g_{-1}(h) \oplus \g_{-\frac{1}{2}}(h) \oplus \g_0(h) \oplus \g_{\frac{1}{2}}(h) \oplus \g_1(h).\]
Then \(\g_t := \g_{-1}(h) \oplus [\g_{-1}(h), \g_1(h)] \oplus \g_1(h)\) is an ideal of \(\g\) and a hermitian simple Lie algebra of tube type with \(r_t := \rk_\R \g_t \leq \rk_\R \g\).
Moreover, the Jordan algebra \((\g_t)_1(h) = \g_1(h)\) corresponding to \(\g_t\) is isomorphic to \(E^{(r_t)}\) (cf.\ \cite[Sec.\ 3.1]{Oeh20a}).

\subsection{Riesz measures}
\label{sec:app-riesz-measures}

In this section, we recall the definition of Riesz measures of simple euclidean Jordan algebras (cf.\ \cite[Ch.\ VII]{FK94}).
Let \(E\) be a simple euclidean Jordan algebra of dimension \(n\).
Fix a Jordan frame \(\{c_1,\ldots,c_r\}\) and let \(d := \dim(E_{ij})\) for \(1 \leq i < j \leq r\) (the dimension of \(E_{ij}\) does not depend on \(i\) and \(j\)).
If \(E = \R\), then we set \(d := 1\).
Moreover, let \(\Omega := \{x^2 : x \in E\}^o\) be the interior of the cone of squares of \(E\).
For \(x \in E\), we denote by \(\Delta(x) := \det(x)\) the Jordan determinant of \(x\).

Fix a euclidean measure on \(E\) and let \(\alpha \in \C\) with \(\Re(\alpha) > (r-1)\frac{d}{2}\). For a Schwartz function \(\varphi \in \cS(E)\), we define
\[\Gamma_\Omega(\alpha) := \int_\Omega e^{-\tr(x)} \Delta(x)^{\alpha - \frac{n}{r}}\, dx \quad \text{and}\]
\begin{equation*}
  R_\alpha(\varphi) := \frac{1}{\Gamma_\Omega(\alpha)} \int_\Omega \varphi(x) \Delta(x)^{\alpha - \frac{n}{r}}\, dx.
\end{equation*}
Then \(R_\alpha\) is a tempered distribution (cf.\ \cite[Thm.\ VII.2.2]{FK94}). Moreover, the function \(\varphi_\alpha(x) := \Delta(x)^{-\alpha}\) is the Laplace transform of \(R_\alpha\) (cf.\ \cite[Cor.\ VII.1.3]{FK94}).

\begin{thm}
  \(R_\alpha\) is a positive measure on \(E\) if and only if \(\alpha\) is contained in the set
  \begin{equation}
    \label{eq:wallach-set}
    \{0, \tfrac{d}{2}, \ldots, \tfrac{d}{2}(r-1)\}\, \cup\, ] \tfrac{d}{2}(r-1), \infty [.
  \end{equation}
\end{thm}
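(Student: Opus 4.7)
The plan is to split the claim into three regimes corresponding to the structure of the Wallach set: the open ray $\alpha > \frac{d}{2}(r-1)$, the discrete points $\alpha = k\frac{d}{2}$ for $k = 0, 1, \ldots, r-1$, and the excluded gaps in between. For the open ray, the condition $\Re(\alpha) > \frac{d}{2}(r-1)$ is exactly the convergence threshold for the defining integral of $R_\alpha$ (this follows from the Gindikin integral formula, together with $\frac{n}{r} - 1 = \frac{d}{2}(r-1) + \text{(contributions of the simple factors)}$ adapted to the Jordan frame). Since the integrand $\Delta(x)^{\alpha - n/r}$ is strictly positive on $\Omega$ and $\Gamma_\Omega(\alpha) > 0$, positivity is immediate, and in fact $R_\alpha$ is absolutely continuous with respect to Lebesgue measure on $\Omega$.

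For the discrete Wallach points $\alpha = k\frac{d}{2}$, my plan is to construct $R_\alpha$ as a positive measure supported on the rank-$k$ stratum of $\overline{\Omega}$. The idea is to use the Peirce decomposition along a Jordan frame: if $c = c_1 + \cdots + c_k$ is a sum of $k$ primitive orthogonal idempotents, then the structure group acts transitively on elements of rank exactly $k$ in $\overline{\Omega}$, and one can push forward the Riesz measure of the rank-$k$ Jordan subalgebra $E^{(k)}$ (which lies in the already-understood ray regime for that subalgebra since $\alpha = \frac{d}{2}(k-1) + \frac{d}{2} > \frac{d}{2}((k)-1)$ relative to $E^{(k)}$) along the orbit map. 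To see that this push-forward is indeed $R_\alpha$ one verifies the Laplace transform computes to $\Delta(\cdot)^{-\alpha}$ on $\Omega$, using the characterization of $R_\alpha$ by its Laplace transform (\cite[Cor.\ VII.1.3]{FK94}). Equivalently, one can analytically continue $R_\alpha$ in $\alpha$ and identify the residues at $\alpha = k\frac{d}{2}$ with invariant measures on the rank-$k$ orbits.

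For the converse, I would show that if $\alpha$ lies in a gap $(\frac{d}{2}(k-1), \frac{d}{2}k)$ for some $1 \leq k \leq r-1$, then $R_\alpha$ is not positive. The main tool is the Laplace transform characterization: if $R_\alpha$ were a positive (tempered) measure, then $x \mapsto \Delta(x)^{-\alpha}$ would be a Laplace transform of a positive measure on $\overline{\Omega}$, hence in particular completely monotone along any radial direction inside $\Omega$. Applying this to the explicit one-variable restriction $t \mapsto \Delta(x_0 + tc)^{-\alpha}$ along a primitive idempotent $c$, and using the Peirce decomposition to reduce $\Delta$ to a product of one-variable Jordan determinants, one reduces to a classical Gindikin-type obstruction: the function $t^{-\alpha}$ on $(0, \infty)$ can be extended to a positive measure on $[0, \infty)$ times a lower-rank Wallach measure only at the prescribed discrete values. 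The hard part will be making this reduction rigorous, as the orbit stratification of $\overline{\Omega}$ and the precise interplay between $\Delta$ and the Peirce minors need to be handled carefully.

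The main obstacle, as indicated above, is the negative direction: ruling out positivity on the gaps. Everything else is essentially bookkeeping using Gindikin's formula and the structure of the symmetric cone, but the non-positivity argument genuinely requires either a delicate analysis of analytic continuation (residues at negative $\alpha$) or the full Gindikin-Wallach characterization of Laplace transforms of positive measures supported on $\overline{\Omega}$.
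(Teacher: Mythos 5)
The first thing to note is that the paper does not prove this statement at all: its ``proof'' is the one-line citation of \cite[Thm.\ VII.3.1]{FK94} (Gindikin's theorem), so your attempt is to be measured against the literature proof rather than against an argument in the paper. Your sufficiency sketch does follow that proof. For $\alpha > \frac{d}{2}(r-1)$ the convergence of $\Gamma_\Omega(\alpha)$ together with the strict positivity of the density $\Delta(x)^{\alpha-\frac{n}{r}}$ on $\Omega$ is exactly right, and at the discrete points $\alpha = k\frac{d}{2}$ your plan — build a positive measure on the closure of the rank-$k$ stratum out of the Riesz measure of the Peirce subalgebra $E^{(k)}$ (for which $k\frac{d}{2}$ lies strictly above its threshold $\frac{d}{2}(k-1)$, as you check) and identify it with $R_\alpha$ through the Laplace transform characterization of \cite[Cor.\ VII.1.3]{FK94} — is precisely the Faraut--Kor\'anyi/Lassalle construction, using that the structure group acts transitively on elements of fixed rank in $\overline{\Omega}$.

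The genuine gap is in your converse, and it is sharper than the ``rigor'' issue you flag: the specific reduction you propose would fail. For a primitive idempotent $c$ and $x_0 \in \Omega$, the function $t \mapsto \Delta(x_0 + tc)$ is \emph{affine} in $t$ with positive coefficients, because $c$ has rank one (in $\Sym(n,\R)$ this is $\det(X + t\,c\,c^\tran) = \det X + t \cdot (\text{positive minor})$). Consequently $t \mapsto \Delta(x_0+tc)^{-\alpha} = (a+bt)^{-\alpha}$ is completely monotone on $[0,\infty)$ for \emph{every} $\alpha \geq 0$, so one-variable restrictions along rays through a primitive idempotent detect only the constraint $\alpha \geq 0$ and can never rule out the gaps $\left(\frac{d}{2}(k-1), \frac{d}{2}k\right)$. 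The obstruction is irreducibly multivariate: the quantization by $\frac{d}{2}$ enters through the off-diagonal Peirce blocks $E_{ij}$ of dimension $d$, which are invisible along a single ray. To close the argument one needs either the rank induction in which one conditions on the decomposition $E = E^{(r-1)} \oplus E_{\frac{1}{2}}(c_r) \oplus \R c_r$ and the marginal over the $(r-1)d$-dimensional middle block produces the $\frac{d}{2}$-shifts of the exponent, or the factorization $\Gamma_\Omega(\alpha) = (2\pi)^{(n-r)/2}\prod_{j=1}^{r}\Gamma\bigl(\alpha - (j-1)\tfrac{d}{2}\bigr)$ and the resulting sign changes of moment-type integrals of $R_\alpha$ against nonnegative test functions when $\alpha$ lies in a gap. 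Either route is the actual content of the necessity half of \cite[Thm.\ VII.3.1]{FK94}; as written, your proposal does not contain it.
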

\begin{proof}
  cf.\ \cite[Thm.\ VII.3.1]{FK94}.
\end{proof}

We say that \(R_\alpha\) is the \emph{Riesz measure on \(E\) with parameter \(\alpha\)} if \(\alpha\) is contained in the set \eqref{eq:wallach-set}. In this case, we also denote this measure by \(\mu_\alpha\).

\section{Smooth vectors of unitary representations}
\label{sec:app-nets-smooth}
In the first part of this appendix, we recall the definition of the space of smooth vectors of a strongly continuous unitary representation of a Lie group and the definition of the space of distribution vectors of such a representation. With respect to the notation and definitions, we mostly follow \cite{NO21}.

Let \((\pi, \cH)\) be a strongly continuous unitary or antiunitary representation of a connected finite-dimensional Lie group \(G\) with Lie algebra \(\L(G) = \g\).
A vector \(v \in \cH\) is called \emph{smooth} if the orbit map \(\pi^v : G \rightarrow \cH\) with \(\pi^v(g) = \pi(g)v\) is smooth.
We denote the space of smooth vectors of \((\pi, \cH)\) by \(\cH_{\pi}^\infty\) or simply \(\cH^\infty\) if there is no ambiguity.
An equivalent condition for a vector \(v \in \cH\) to be smooth is the smoothness of the matrix coefficient \(\pi^{v,v}(g) := \la \pi(g)v, v\ra\) (cf.\ \cite[Thm.\ 7.2]{Ne10}).
It is well known that \(\cH^\infty\) is a dense \(G\)-invariant subspace of \(\cH\). For \(x \in \g\), let
\[\partial\pi(x) : \cD_x \rightarrow \cH, \quad \partial\pi(x)v := \bigderat{0} \pi(\exp(tx))v,\]
where \(\cD_x := \{v \in \cH : \derat{0} \pi(\exp(tx))v \text{ exists}\}\).
Let \(\dd\pi(x) := \partial\pi(x)\lvert_{\cH^\infty}\).
Then \(\dd\pi : \g \rightarrow \End(\cH^\infty)\) defines a Lie algebra representation of \(\g\) by essentially skew-adjoint operators on \(\cH^\infty\), which extends to a representation of the universal enveloping algebra \(\cU(\g)\).
We endow \(\cH^\infty\) with the locally convex topology induced by the seminorms \(p_D(v) := \|\dd\pi(D)v\|\) for \(D \in \cU(\g), v \in \cH^\infty\).
Then \(\cH^\infty\) is a Fr\'echet space with respect to this topology and the inclusion \(\cH^\infty \hookrightarrow \cH\) is continuous.

We denote by \(\cH^{-\infty}\) the space of \emph{distribution vectors}, i.e.\ the space of continuous antilinear functionals on \(\cH^\infty\).
The space \(\cH\) can be embedded into \(\cH^{-\infty}\) via \(v \mapsto \la \cdot, v\ra\). We endow \(\cH^{-\infty}\) with a representation of \(G\), resp.\ \(\cU(\g)\), via
\[\pi^{-\infty}(g)\eta := \eta \circ \pi(g^{-1}) \quad \text{and} \quad \dd\pi^{-\infty}(D)\eta := \eta \circ \dd\pi(D^*)\]
for \(\eta \in \cH^{-\infty}, D \in \cU(\g)\), and \(g \in G\) if \(\pi(g)\) is unitary. Here, \(D \mapsto D^*\) denotes the involution on \(\cU(\g)\) that is uniquely determined by \(x \mapsto -x\) for \(x \in \g\).
If \(\pi(g)\) is antiunitary, we set
\[\pi^{-\infty}(g)\eta := \oline{\eta \circ \pi(g^{-1})}.\]

Let \(M(G)\) denote the set of finite complex Borel measures of \(G\). For two measures \(\mu, \nu \in M(G)\), the convolution \(\mu * \nu\) is defined via the Riesz representation Theorem as the unique complex Borel measure on \(G\) with
\[\int_G f(g) \, d(\mu * \nu)(g) = \int_G \int_G f(gh) \, d\mu(g)d\nu(h) \quad \text{for all } f \in C_c^\infty(G).\]
Moreover, we set \(\mu^*(E) := \oline{\mu(E^{-1})}\), where \(E \subset G\) is a Borel set. With respect to these operations and the addition of measures, \(M(G)\) is a \(*\)-algebra.
Let \(\mu_G\) be a left Haar measure on \nolinebreak\(G\). Then the \(*\)-algebra \(L^1(G) := L^1(G, \mu_G)\), endowed with the usual convolution product, can be embedded into \(M(G)\) as a two sided ideal via \(f \mapsto \mu_f\), where \(\mu_f = f \cdot \mu_G\) (cf.\ \cite[Prop.\ IV.15]{Fa00}). 

For \(\mu \in M(G)\), we define \(\pi(\mu)\) as the unique bounded operator on \(\cH\) with
\[\la \pi(\mu) v, w\ra = \int_G \la \pi(g)v, w\ra \, d\mu(g) \quad \text{for all } v,w \in \cH.\]
Then the map \(\mu \mapsto \pi(\mu)\) defines a \(*\)-representation of \(M(G)\) by bounded operators on \(\cH\) (cf.\ \cite[Thm.\ IV.11]{Fa00}). If \(f \in L^1(G)\), then we set \(\pi(f) := \pi(\mu_f)\).

If \(\varphi \in C_c^\infty(G) \subset L^1(G)\), then it is well known that \(\pi(\varphi)(\cH) \subset \cH^\infty\) and that \(\pi(\varphi) : \cH \rightarrow \cH^\infty\) is continuous with respect to the \(C^\infty\)-topology on \(\cH^\infty\) (cf.\ e.g.\ \cite[p.\ 56]{Mag92}).
As a consequence, we obtain a representation of the convolution algebra \(C_c^\infty(G)\) on \(\cH^{-\infty}\) via
\[\pi^{-\infty}(\varphi)\eta := \eta \circ \pi(\varphi^*) \quad \text{for } \varphi \in C_c^\infty(G), \eta \in \cH^{-\infty}.\]

\subsection{Tensor products of unitary representations}
\label{sec:app-tensor-uni-rep}

In this section, we consider, for \(k=1,2\), strongly continuous unitary representations \((\pi_k, \cH_k)\) of finite dimensional Lie groups \(G_k\) on Hilbert spaces \(\cH_k\) and the representation \((\pi_1 \otimes \pi_2, \cH_1 \hotimes \cH_2)\) on the Hilbert space tensor product \(\cH := \cH_1 \hotimes \cH_2\) of the Lie group \(G := G_1 \times G_2\) with
\[\pi(g_1,g_2) := \pi_1(g_1) \otimes \pi_2(g_2), \quad g_1 \in G_1, g_2 \in G_2.\]
For a Lie group \(H\) with unitary representations \((\rho_k, \cH_k)\) \((k=1,2)\), we denote the \emph{inner tensor product representation} on the tensor product \(\cH_1 \hotimes \cH_2\) by
\[\rho(g) := (\rho_1 \boxtimes \rho_2)(g) := \rho_1(g) \otimes \rho_2(g), \quad g \in H,\]
which is the same as the restriction of the tensor product representation \((\rho_1 \otimes \rho_2, \cH_1 \hotimes \cH_2)\) to the diagonal subgroup \(\{(g,g) : g \in H\} \subset H \times H\).

\begin{lem}
  \label{lem:smoothvec-tensor-dense}
  \begin{enumerate}
    \item The space \(\cH_{1,\pi_1}^\infty \otimes \cH_{2,\pi_2}^\infty\) is dense in \(\cH_{\pi}^\infty\) with respect to the \(C^\infty\)-topology.
    \item The space \(\cH_{1,\rho_1}^\infty \otimes \cH_{2,\rho_2}^\infty\) is dense in \(\cH_{\rho}^\infty\) with respect to the \(C^\infty\)-topology.
  \end{enumerate}
\end{lem}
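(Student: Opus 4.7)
For part (a), the plan is a standard Gärding-type argument. Given $v \in \cH_\pi^\infty$, I would choose $\delta$-sequences $(\delta_n^{(k)})_{n \in \N}$ in $C_c^\infty(G_k)$ for $k = 1, 2$. Then $(\delta_n^{(1)} \otimes \delta_n^{(2)})_{n \in \N}$ is a $\delta$-sequence on $G = G_1 \times G_2$, so $\pi(\delta_n^{(1)} \otimes \delta_n^{(2)}) v = (\pi_1(\delta_n^{(1)}) \otimes \pi_2(\delta_n^{(2)})) v$ converges to $v$ in the $C^\infty$-topology of $\cH_\pi^\infty$. Next, I would approximate $v$ in $\cH$ by a sequence $(w_{n,m})_{m \in \N}$ in the algebraic tensor product $\cH_1 \otimes \cH_2$ and use that the operator $\pi_1(\delta_n^{(1)}) \otimes \pi_2(\delta_n^{(2)}) : \cH \to \cH_\pi^\infty$ is continuous with respect to the $C^\infty$-topology on the target. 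This yields $(\pi_1(\delta_n^{(1)}) \otimes \pi_2(\delta_n^{(2)})) w_{n,m} \in \cH_{1,\pi_1}^\infty \otimes \cH_{2,\pi_2}^\infty$ converging to $(\pi_1(\delta_n^{(1)}) \otimes \pi_2(\delta_n^{(2)})) v$ in the $C^\infty$-topology as $m \to \infty$, and a diagonal sequence finishes the proof.

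For part (b), the strategy is to reduce to (a) in two steps. First, observe that $\dd\rho(x) = \dd(\rho_1 \otimes \rho_2)(x, x)$ for $x \in \h := \L(H)$, so the image of $\cU(\h)$ under $\dd\rho$ is contained in the image of $\cU(\h \oplus \h)$ under $\dd(\rho_1 \otimes \rho_2)$. Consequently, $\cH_{\rho_1 \otimes \rho_2}^\infty \subset \cH_\rho^\infty$ with a continuous inclusion, i.e., the $C^\infty_\rho$-topology on $\cH_{\rho_1 \otimes \rho_2}^\infty$ is coarser than the $C^\infty_{\rho_1 \otimes \rho_2}$-topology. Combined with (a) applied to the representation of $H \times H$, this yields that $\cH_{1,\rho_1}^\infty \otimes \cH_{2,\rho_2}^\infty$ is dense in $\cH_{\rho_1 \otimes \rho_2}^\infty$ with respect to the $C^\infty_\rho$-topology.

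It then remains to show that $\cH_{\rho_1 \otimes \rho_2}^\infty$ is dense in $\cH_\rho^\infty$ in the $C^\infty_\rho$-topology. For $v \in \cH_\rho^\infty$ and $\delta$-sequences $(\delta_n^{(k)})_{n \in \N}$ in $C_c^\infty(H)$, I would set $A_n := \rho_1(\delta_n^{(1)}) \otimes \rho_2(\delta_n^{(2)})$ and $v_n := A_n v$. Smoothness of the map $(g_1, g_2) \mapsto \rho_1(\lambda_{g_1} \delta_n^{(1)}) \otimes \rho_2(\lambda_{g_2} \delta_n^{(2)})$ with values in $B(\cH_1 \hotimes \cH_2)$ in the operator norm topology shows $v_n \in \cH_{\rho_1 \otimes \rho_2}^\infty$. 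The key step is then to verify $v_n \to v$ in the $C^\infty_\rho$-topology. For $D \in \cU(\h)$, I would express $\dd\rho(D) v_n$ as an integral over $H \times H$ against a kernel obtained by differentiating $\delta_n^{(1)} \otimes \delta_n^{(2)}$ along the left-invariant vector field in direction $(x_1, x_1) \cdots (x_k, x_k)$ corresponding to $D$, and compare it to the analogous integral representation of $A_n \dd\rho(D) v$ obtained by a substitution in $A_n$ using that $v \in \cH_\rho^\infty$. The difference can be rewritten via integration by parts so that the commutator $[\dd\rho(D), A_n] v$ tends to zero in $\cH$ as $n \to \infty$.

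The main obstacle is this last convergence $v_n \to v$ in the $C^\infty_\rho$-topology. Although $v_n \to v$ in $\cH$ is immediate and $v_n \in \cH_{\rho_1 \otimes \rho_2}^\infty$ is straightforward, the control of $\dd\rho(D) v_n$ cannot be reduced to a pointwise algebraic identity like $\dd\rho(x) = \dd\rho_1(x) \otimes \id + \id \otimes \dd\rho_2(x)$, because neither summand on the right-hand side need be defined on a general $v \in \cH_\rho^\infty$ (only the sum is, via the diagonal action). The argument must therefore be carried out at the level of integrated operators on $H \times H$, exploiting that the difference between the left- and right-invariant derivatives of $\delta_n^{(k)}$ vanishes at the identity and hence becomes small on the shrinking supports of the $\delta$-sequences.
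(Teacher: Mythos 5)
Your proposal is correct in strategy but takes a genuinely different, and far more laborious, route than the paper. The paper disposes of both parts in a few lines by citing Poulsen's theorem \cite[Thm.\ 1.3]{Po72}: it checks that \(\cH_{1,\pi_1}^\infty \otimes \cH_{2,\pi_2}^\infty\) is dense in \(\cH_1 \hotimes \cH_2\), invariant, and consists of smooth vectors (via the factorization of matrix coefficients \((\pi_1\otimes\pi_2)^{v\otimes w,\, v\otimes w} = \pi_1^{v,v}\pi_2^{w,w}\)), and then invokes the theorem that any dense invariant subspace of smooth vectors is \(C^\infty\)-dense; part (b) follows by the identical check for the diagonal representation \(\rho\). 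Your part (a) is a complete elementary substitute for this, and it works precisely because the smoothing operator factors, \(\pi(\delta_n^{(1)}\otimes\delta_n^{(2)}) = \pi_1(\delta_n^{(1)})\otimes\pi_2(\delta_n^{(2)})\), and therefore maps the algebraic tensor product \(\cH_1\otimes\cH_2\) into \(\cH_{1,\pi_1}^\infty\otimes\cH_{2,\pi_2}^\infty\). Your diagnosis of why (b) resists the same argument is also exactly right: diagonal smoothing \(\rho(\delta_n)\) does not preserve the algebraic tensor product, and \(\dd\rho_1(x)\otimes\id\) is undefined on a general \(v\in\cH_\rho^\infty\). Step 1 of your reduction (the continuous inclusion \(\cH^\infty_{\rho_1\otimes\rho_2}\hookrightarrow\cH^\infty_\rho\)) is fine; your step 2 is, in effect, a hands-on reproof of the special case of Poulsen's theorem that the paper quotes.

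The one soft spot is the mechanism you give for \([\dd\rho(D), A_n]v \to 0\), which you flag yourself. Writing \(\dd\rho(x)A_n v = A_n\dd\rho(x)v + A_n^{(x)}v\) by differentiating \(t\mapsto \rho(\exp tx)A_n\rho(\exp(-tx))\), the correction term has kernels \(\cD_x\delta_n^{(k)}\), where \(\cD_x\delta(h) = \derat{0}\,\delta(\exp(-tx)\,h\,\exp(tx))\,j(t)\) is the derivative along the conjugation flow (your left-minus-right-invariant derivative, plus a multiple of \(\delta_n^{(k)}\) from the Jacobian \(j\)). These kernels do \emph{not} become small in \(L^1(H)\): the conjugation field vanishes at \(\1\) and is of size \(O(r_n)\) on a support of radius \(r_n\), which exactly cancels the \(O(r_n^{-1})\) growth of \(\|\nabla\delta_n\|_{L^1}\), so one only gets \(\sup_n\|\cD_x\delta_n^{(k)}\|_{L^1}<\infty\); already for the \(ax{+}b\)-group the commutator does not vanish in operator norm. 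What saves the argument is the mean-zero identity \(\int_H \cD_x\delta_n^{(k)} = 0\) (conjugation preserves the total integral), which yields \(\rho_k(\cD_x\delta_n^{(k)})w = \int_H \cD_x\delta_n^{(k)}(h)\,(\rho_k(h)w - w)\,dh \to 0\) for each fixed \(w\) by strong continuity; combined with the uniform \(L^1\)-bound this gives \(A_n^{(x)}\to 0\) strongly, and an induction on the degree of \(D\in\cU(\fh)\), with the \(\delta\)-sequence chosen so that all iterated kernels \(\cD_{x_1}\cdots\cD_{x_j}\delta_n^{(k)}\) remain uniformly \(L^1\)-bounded, closes the proof. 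So your route is completable, but the convergence rests on mean-zero plus strong continuity rather than on smallness of the derivative kernels; if you want to avoid this bookkeeping entirely, the efficient move is the paper's: quote \cite[Thm.\ 1.3]{Po72} after the three-line verification of density, invariance, and smoothness.
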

\begin{proof}
  (a) Since \(\cH_{1,\pi_1}^\infty\) is dense in \(\cH_1\) and \(\cH_{2,\pi_2}^\infty\) is dense in \(\cH_2\), the tensor product space \(\cH_{1,\pi_1}^\infty \otimes \cH_{2,\pi_2}^\infty\) is dense in \(\cH_1 \hotimes \cH_2\).  Moreover, it is \(G\)-invariant since both factors of the tensor product are \(G\)-invariant. Let \(v \in \cH_{1,\pi_1}^\infty\) and \(w \in \cH_{2,\pi_2}^\infty\). Then we have \(v \otimes w \in \cH_{\pi}^\infty\) because the matrix coefficient satisfies \((\pi_1 \otimes \pi_2)^{v \otimes w, v \otimes w} = \pi_1^{v,v}\pi_2^{w,w}\). Hence, the space \(\cH_{1,\pi_1}^\infty \otimes \cH_{2,\pi_2}^\infty\) consists of smooth vectors for \(\pi_1 \otimes \pi_2\). Now the claim follows from \cite[Thm.\ 1.3]{Po72}.

  (b) is shown in the same way as (a).
\end{proof}

\begin{lem}
  \label{lem:smoothvec-tensor-dist-ext}
  Let \(f_1 \in \cH_{1,\pi_1}^{-\infty}\) and \(f_2 \in \cH_{2,\pi_2}^{-\infty}\). Then the antilinear functional \(f_1 \otimes f_2\) on \(\cH_{1,\pi_1}^\infty \otimes \cH_{2,\pi_2}^\infty\) determined by \((f_1 \otimes f_2)(v_1 \otimes v_2) = f_1(v_1)f_2(v_2)\) for \(v_1 \in \cH_{1,\pi_1}^\infty, v_2 \in \cH_{2,\pi_2}^\infty,\) extends uniquely to a continuous antilinear functional on \(\cH_\pi^{\infty}\).
\end{lem}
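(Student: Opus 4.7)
The plan is to exploit the density statement \ref{lem:smoothvec-tensor-dense}(a) together with the standard representation of distribution vectors as finite sums of the form $w \mapsto \langle \dd\pi_k(D) w, u\rangle$. Since $\cH_{1,\pi_1}^\infty \otimes \cH_{2,\pi_2}^\infty$ is dense in $\cH_\pi^\infty$ with respect to the $C^\infty$-topology, uniqueness of the extension is automatic and the entire task reduces to exhibiting a continuous seminorm on $\cH_\pi^\infty$ that dominates $|f_1 \otimes f_2|$ on the algebraic tensor product.

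The first main step is to upgrade the continuity of each $f_k$ on $\cH_{k,\pi_k}^\infty$ to a concrete representation. By definition of the $C^\infty$-topology, there exist finitely many $D_{k,1},\ldots,D_{k,m_k}\in\cU(\g_k)$ and a constant $C_k>0$ with
\[
|f_k(v)| \;\le\; C_k\left(\sum_{i=1}^{m_k}\|\dd\pi_k(D_{k,i})v\|^2\right)^{1/2}\qquad (v\in\cH_{k,\pi_k}^\infty).
\]
Viewing the right-hand side as $\|T_k v\|$ for the continuous linear map $T_k\colon \cH_{k,\pi_k}^\infty \to \cH_k^{\,m_k}$, $T_k v := (\dd\pi_k(D_{k,i})v)_i$, Hahn--Banach combined with the Riesz representation theorem produces elements $u_{k,i}\in\cH_k$ such that
\[
f_k(v) \;=\; \sum_{i=1}^{m_k}\langle \dd\pi_k(D_{k,i})v, u_{k,i}\rangle.
\]

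The second step uses the product structure of $G=G_1\times G_2$, which gives the Lie algebra decomposition $\g=\g_1\oplus\g_2$ with $[\g_1,\g_2]=0$ and hence the PBW-type identification $\cU(\g)\cong \cU(\g_1)\otimes\cU(\g_2)$ as associative algebras. Under this identification, each $D_{1,i}\otimes D_{2,j}$ becomes an element $E_{ij}\in\cU(\g)$ whose action $\dd\pi(E_{ij})$ on the algebraic tensor product agrees with $\dd\pi_1(D_{1,i})\otimes\dd\pi_2(D_{2,j})$. Consequently, for $w\in\cH_{1,\pi_1}^\infty\otimes\cH_{2,\pi_2}^\infty$ one obtains
\[
(f_1\otimes f_2)(w) \;=\; \sum_{i,j}\langle \dd\pi(E_{ij})w,\; u_{1,i}\otimes u_{2,j}\rangle,
\]
since both sides agree on simple tensors and extend bilinearly.

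The third step is the estimate: the above formula yields
\[
|(f_1\otimes f_2)(w)| \;\le\; \sum_{i,j}\|u_{1,i}\otimes u_{2,j}\|\cdot \|\dd\pi(E_{ij})w\|,
\]
which is a continuous seminorm on $\cH_\pi^\infty$. Therefore $f_1\otimes f_2$ is continuous on the algebraic tensor product with respect to the topology induced from $\cH_\pi^\infty$, and density (Lemma \ref{lem:smoothvec-tensor-dense}(a)) yields the unique continuous antilinear extension to $\cH_\pi^\infty$. The only mildly technical point is the identification in the second step, where one must verify that the algebra isomorphism $\cU(\g)\cong\cU(\g_1)\otimes\cU(\g_2)$ respects the action on the tensor product of smooth vectors; but this is a direct computation on generators, since the infinitesimal generators coming from $\g_1$ act as $A\otimes\mathrm{id}$ and those from $\g_2$ as $\mathrm{id}\otimes B$, and their products commute in the required fashion.
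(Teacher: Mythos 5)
Your proof is correct and follows essentially the same route as the paper: the representation \(f_k(v)=\sum_i\la \dd\pi_k(D_{k,i})v,\,u_{k,i}\ra\) that you obtain via Hahn--Banach and Riesz is precisely the description \(\cH_{k,\pi_k}^{-\infty}=\spann\big(\dd\pi_k^{-\infty}(\cU(\g_k))\cH_k\big)\) that the paper imports from \cite[Lem.\ A.2(b)]{NO21}, and both arguments then exhibit the extension as \(w\mapsto\sum_{i,j}\la \dd\pi(D_{1,i}\otimes D_{2,j})w,\,u_{1,i}\otimes u_{2,j}\ra\), checking agreement on simple tensors and deducing uniqueness from the density statement of Lemma \ref{lem:smoothvec-tensor-dense}. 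The only cosmetic difference is that the paper reads continuity off directly from membership in \(\dd\pi^{-\infty}(\cU(\g))\cH\), while you phrase it as a seminorm estimate followed by extension by density; the substance is identical.
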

\begin{proof}
  We recall from \cite[Lem.\ A.2(b)]{NO21} that \(\cH_{k,\pi_k}^{-\infty} = \spann (\dd\pi^{-\infty}(\cU(\g_k))\cH_k)\). Hence, there exist \(D_1^{(1)},\ldots,D_n^{(1)} \in \cU(\g_1), D_1^{(2)},\ldots,D_m^{(2)} \in \cU(\g_2)\), and \(v_1,\ldots, v_n \in \cH_1, w_1,\ldots,w_m \in \cH_2\), such that
  \[f_1 = \sum_{k=1}^n \dd\pi_1^{-\infty}(D_k^{(1)*})v_k \quad \text{and} \quad f_2 = \sum_{\ell = 1}^m \dd\pi_2^{-\infty}(D_\ell^{(2)*})w_\ell.\]
  The continuous antilinear functional \(f := \sum_{k=1}^n \sum_{\ell = 1}^m \dd\pi^{-\infty}(D_k^{(1)*} \otimes D_\ell^{(2)*}) (v_k \otimes w_\ell) \in \cH_\pi^{-\infty}\)  is an extension of \(f_1 \otimes f_2\) since, for \(z_1 \in \cH_{1,\pi_1}^\infty\) and \(z_2 \in \cH_{2,\pi_2}^\infty\), we have
  \begin{align*}
    f(z_1 \otimes z_2) &= \sum_{k=1}^n\sum_{\ell = 1}^m \la \dd\pi(D_k^{(1)} \otimes D_\ell^{(2)})(z_1 \otimes z_2),v_k \otimes w_\ell \ra\\
                       &= \sum_{k=1}^n\sum_{\ell = 1}^m \la (\dd\pi_1(D_k^{(1)})z_1) \otimes (\dd\pi_2(D_\ell^{(2)})z_2), v_k \otimes w_\ell \ra \\
                       &=\sum_{k=1}^n\sum_{\ell = 1}^m \la \dd\pi_1(D_k^{(1)})z_1, v_k \ra \la \dd\pi_2(D_\ell^{(2)})z_2, w_\ell\ra \\
                       &= f_1(z_1)f_2(z_2).
  \end{align*}
  Moreover, the continuous extension \(f\) is unique because \(\cH_{1,\pi_1}^\infty \otimes \cH_{2,\pi_2}^\infty\) is dense in \(\cH_\pi^\infty\).
\end{proof}

\begin{lem}
  \label{lem:smoothvec-tensor-dist-urep}
  Let \(f_1 \in \cH_{1,\pi_1}^{-\infty}\) and \(f_2 \in \cH_{2,\pi_2}^{-\infty}\). For \(g_1 \in G_1\) and \(g_2 \in G_2\), we have
  \[\pi^{-\infty}(g_1,g_2)(f_1 \hotimes f_2) = (\pi_1^{-\infty}(g_1) f_1) \hotimes (\pi_2^{-\infty}(g_2) f_2).\]
\end{lem}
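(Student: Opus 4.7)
The plan is to reduce the identity to its evaluation on simple tensors in $\cH_{1,\pi_1}^\infty \otimes \cH_{2,\pi_2}^\infty$, where both sides of the asserted equality can be computed directly from the definitions. The key ingredients are already in place: Lemma \ref{lem:smoothvec-tensor-dense}(a) gives density of $\cH_{1,\pi_1}^\infty \otimes \cH_{2,\pi_2}^\infty$ in $\cH^\infty_\pi$ with respect to the $C^\infty$-topology, and Lemma \ref{lem:smoothvec-tensor-dist-ext} ensures that any continuous antilinear functional on $\cH^\infty_\pi$ is determined by its restriction to this dense subspace.

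First I would observe that both $\pi^{-\infty}(g_1,g_2)(f_1 \hotimes f_2)$ and $(\pi_1^{-\infty}(g_1)f_1) \hotimes (\pi_2^{-\infty}(g_2)f_2)$ are, by construction, elements of $\cH^{-\infty}_\pi$, i.e.\ continuous antilinear functionals on $\cH^\infty_\pi$. By the density statement above, it therefore suffices to verify that they agree on elementary tensors $v_1 \otimes v_2$ with $v_k \in \cH^\infty_{k,\pi_k}$.

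Next I would carry out the direct computation on such a vector. Using the definition of $\pi^{-\infty}$, that $\pi(g_1,g_2) = \pi_1(g_1) \otimes \pi_2(g_2)$, the defining property of $f_1 \hotimes f_2$ from Lemma \ref{lem:smoothvec-tensor-dist-ext}, and the definition of $\pi_k^{-\infty}(g_k)$, one obtains
\begin{align*}
(\pi^{-\infty}(g_1,g_2)(f_1 \hotimes f_2))(v_1 \otimes v_2)
&= (f_1 \hotimes f_2)(\pi_1(g_1^{-1})v_1 \otimes \pi_2(g_2^{-1})v_2) \\
&= f_1(\pi_1(g_1^{-1})v_1)\, f_2(\pi_2(g_2^{-1})v_2) \\
&= (\pi_1^{-\infty}(g_1)f_1)(v_1)\, (\pi_2^{-\infty}(g_2)f_2)(v_2) \\
&= \bigl((\pi_1^{-\infty}(g_1)f_1) \hotimes (\pi_2^{-\infty}(g_2)f_2)\bigr)(v_1 \otimes v_2),
\end{align*}
where the last equality again invokes the defining property from Lemma \ref{lem:smoothvec-tensor-dist-ext}.

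Since both sides are continuous on $\cH^\infty_\pi$ and agree on a $C^\infty$-dense subspace, they coincide, which completes the proof. There is essentially no obstacle here beyond being careful about the convention for $\pi^{-\infty}$ on unitary elements (which uses $g^{-1}$, not complex conjugation); the lemma is a bookkeeping statement that makes the tensor product construction of distribution vectors compatible with the external tensor product representation.
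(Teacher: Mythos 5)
Your proof is correct and follows the same route as the paper: the paper's own (one-line) proof likewise notes that both sides are continuous antilinear functionals on \(\cH_\pi^\infty\) that agree on the \(C^\infty\)-dense subspace \(\cH_{1,\pi_1}^\infty \otimes \cH_{2,\pi_2}^\infty\) and invokes Lemma \ref{lem:smoothvec-tensor-dense}. You merely spell out the evaluation on simple tensors that the paper leaves implicit, and your remark about the unitary convention \(\pi^{-\infty}(g)\eta = \eta \circ \pi(g^{-1})\) is exactly the right point of care.
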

\begin{proof}
  Since the distribution vectors in the above equation are continuous on \(\cH_\pi^{\infty}\) and coincide on the subspace \(\cH_{1,\pi_1}^{\infty} \otimes \cH_{2,\pi_2}^{\infty}\), the claim follows from Lemma \ref{lem:smoothvec-tensor-dense}.
\end{proof}

\begin{prop}
  \label{prop:boxtensor-dist-prod}
  Let \(H_1,H_2\) be closed subgroups of \(H\) with Lie algebras \(\fh_1,\fh_2 \subset \fh := \L(H)\). Let \(\eta_k \in \spann\{\dd\rho_k^{-\infty}(\cU(\fh_k))\cH_k\} \subset \cH_{k,\rho_k}^{-\infty}\) for \(k=1,2\). If \(\rho_1\lvert_{H_2}\) is trivial and if \(\eta_2\) is fixed by \(\rho_2^{-\infty}(H_1)\), then \(\eta_1 \otimes \eta_2\) extends uniquely to a continuous antilinear functional \(\eta_1 \hotimes \eta_2\) on \(\cH^\infty_{\rho}\).
\end{prop}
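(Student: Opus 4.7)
The plan is to construct the extension explicitly as a finite sum of maps of the form $v \mapsto \la \dd\rho(D)v, v_0\ra$ with $D \in \cU(\fh)$ and $v_0 \in \cH_1 \hotimes \cH_2$; each such map is antilinear and continuous on $\cH_\rho^\infty$ in its $C^\infty$-topology, and uniqueness of the extension is then immediate from Lemma \ref{lem:smoothvec-tensor-dense}(b), which gives density of $\cH_{1,\rho_1}^\infty \otimes \cH_{2,\rho_2}^\infty$ in $\cH_\rho^\infty$. By hypothesis, write $\eta_k = \sum_{j_k} \dd\rho_k^{-\infty}(D_{j_k}^{(k)}) v_{j_k}^{(k)}$ with $D_{j_k}^{(k)} \in \cU(\fh_k)$ and $v_{j_k}^{(k)} \in \cH_k$, so that $\eta_k(w_k) = \sum_{j_k} \la \dd\rho_k(D_{j_k}^{(k)*}) w_k,\ v_{j_k}^{(k)}\ra$ for $w_k \in \cH_{k,\rho_k}^\infty$.

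Two clean algebraic reductions come from the hypotheses. Since $\rho_1|_{H_2}$ is trivial, $\dd\rho_1$ vanishes on $\fh_2$ and hence coincides with the counit $\epsilon$ on $\cU(\fh_2)$; combined with the primitive-element formula $\dd\rho(D) = (\dd\rho_1 \otimes \dd\rho_2)(\Delta D)$ for the Hopf coproduct $\Delta$ on $\cU(\fh)$, this collapses to $\dd\rho(D_2) = \id_{\cH_1} \otimes \dd\rho_2(D_2)$ for every $D_2 \in \cU(\fh_2)$. On the other side, $H_1$-invariance of $\eta_2$ means $\dd\rho_2^{-\infty}(\fh_1)\eta_2=0$, and decomposing $D \in \cU(\fh_1)$ as $\epsilon(D)\cdot 1 + \fh_1\cU(\fh_1)$ then yields $\eta_2 \circ \dd\rho_2(D) = \epsilon(D)\,\eta_2$ for every $D \in \cU(\fh_1)$.

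Guided by these two facts, I would propose to define, for $v \in \cH_\rho^\infty$,
\[
F(v) \;:=\; \sum_{j_1,j_2} \bigl\la \dd\rho\bigl(D_{j_2}^{(2)*}\,D_{j_1}^{(1)*}\bigr)\,v,\ v_{j_1}^{(1)} \otimes v_{j_2}^{(2)}\bigr\ra,
\]
with the crucial feature that the $\cU(\fh_2)$-factor sits to the \emph{left} of the $\cU(\fh_1)$-factor inside $\cU(\fh)$. To verify agreement with $\eta_1 \otimes \eta_2$ on a pure tensor $w_1 \otimes w_2$, I would first expand $\dd\rho(D_{j_1}^{(1)*})$ via Sweedler notation $\Delta D_{j_1}^{(1)*} = \sum_{(D)} D_{(1)} \otimes D_{(2)}$ (both factors in $\cU(\fh_1)$), then pull the $\cU(\fh_2)$-factor through using $\dd\rho(D_{j_2}^{(2)*}) = \id \otimes \dd\rho_2(D_{j_2}^{(2)*})$; summing over $j_2$ replaces $\sum_{j_2} \la \dd\rho_2(D_{j_2}^{(2)*} D_{(2)}) w_2,\ v_{j_2}^{(2)} \ra$ with $\epsilon(D_{(2)})\,\eta_2(w_2)$ by the $H_1$-invariance of $\eta_2$, and the counit axiom $(\id \otimes \epsilon)\Delta = \id$ then reassembles $\dd\rho_1(D_{j_1}^{(1)*})w_1$. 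Summing over $j_1$ finally produces $\eta_1(w_1)\eta_2(w_2)$, as desired.

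The principal subtlety to keep in mind is this asymmetry: the particular ordering $D_{j_2}^{(2)*} D_{j_1}^{(1)*}$ inside $\cU(\fh)$ is tailored precisely so that, after the coproduct expansion, every residual $\cU(\fh_1)$-piece acts on the $\cH_2$-factor, exactly where the $H_1$-invariance of $\eta_2$ can annihilate it via the counit. Either hypothesis in isolation would leave uncontrollable cross-terms coming from the coproduct of $D_{j_1}^{(1)*}$; it is only the combination that makes the algebraic collapse go through.
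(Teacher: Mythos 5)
Your proposal is correct and is essentially the paper's own proof: the paper constructs the very same extension \(\sum_{\ell_1,\ell_2} \dd\rho^{-\infty}(D^{(1)}_{\ell_1} D^{(2)}_{\ell_2})(v^{(1)}_{\ell_1}\otimes v^{(2)}_{\ell_2})\) in two steps (first the \(\cU(\fh_2)\)-part, which acts only in the second factor because \(\dd\rho_1(\fh_2)\cH_1^\infty=\{0\}\), producing \(v^{(1)}_{\ell_1}\otimes\eta_2\); then the \(\cU(\fh_1)\)-part, which passes by \(\eta_2\) because \(\dd\rho_2^{-\infty}(\fh_1)\eta_2=\{0\}\)), so your explicit formula \(F\) with the ordering \(D^{(2)*}_{j_2}D^{(1)*}_{j_1}\) and the Sweedler/counit verification is exactly the same construction with the Leibniz-rule bookkeeping made explicit, and uniqueness follows from the same density lemma. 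The one step you assert rather than justify --- that group-level invariance of \(\eta_2\) under \(\rho_2^{-\infty}(H_1)\) yields the infinitesimal statement \(\dd\rho_2^{-\infty}(\fh_1)\eta_2=\{0\}\) --- is handled in the paper via the continuity of \(\eta_2\) in the \(C^\infty\)-topology and the continuity of the \(H\)-action on smooth vectors (citing \cite{Ne10}), a standard point worth one line in a final write-up.
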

\begin{proof}
  By assumption, there exists \(v_1^{(k)},\ldots,v_{n_k}^{(k)} \in \cH_k\) and \(D_{1}^{(k)},\ldots,D_{n_k}^{(k)} \in \cU(\fh_k)\) such that
  \[\eta_k = \sum_{\ell = 1}^{n_k} \dd\rho_k^{-\infty}(D_\ell^{(k)})v_{\ell}^{(k)} \quad \text{for } k = 1,2.\]
  Because of the continuity of the action of \(H\) on \(\cH_{k,\rho_k}^\infty\) and the continuity of \(\eta_2\) on \(\cH_{2,\rho_2}^\infty\) in the \(C^\infty\)-topology (cf.\ \cite[Thm.\ 4.4]{Ne10}), we have
  \begin{equation}
    \label{eq:boxtensorprod-inv-dist}
    \dd\rho_1(\fh_2)\cH_1^\infty = \{0\} \quad \text{and} \quad \dd\rho_2^{-\infty}(\fh_1)\eta_2 = \{0\}.
  \end{equation}
  Fix \(\ell_1 \in \{1,\ldots,n_1\}\). On the dense subspace \(\cH_{1,\rho_1}^\infty \otimes \cH_{2,\rho_2}^\infty \subset \cH_{\rho}^\infty\), we have the equality
  \[\sum_{\ell_2 = 1}^{n_2} \dd\rho^{-\infty}(D_{\ell_2}^{(2)})(v_{\ell_1}^{(1)} \otimes v_{\ell_2}^{(2)}) \overset{\eqref{eq:boxtensorprod-inv-dist}}{=} \sum_{\ell_2 = 1}^{n_2} v_{\ell_1}^{(1)} \otimes \dd\rho_2^{-\infty}(D_{\ell_2}^{(2)})v_{\ell_2}^{(2)} = v_{\ell_1}^{(1)} \otimes \eta_2\]
  which shows that the antilinear functional on the right hand side extends to a continuous antilinear functional on \(\cH_{\rho}^\infty\). In addition, we have the equality of
  \[\sum_{\ell_1 = 1}^{n_1} \dd\rho^{-\infty}(D_{\ell_1}^{(1)}) (v_{\ell_1}^{(1)} \otimes \eta_2) \overset{\eqref{eq:boxtensorprod-inv-dist}}{=} \sum_{\ell_1=1}^{n_1} \dd\rho_1^{-\infty}(D_{\ell_1}^{(1)})v_{\ell_1}^{(1)} \otimes \eta_2 = \eta_1 \otimes \eta_2\]
  on the dense subspace \(\cH_{1,\rho_1}^\infty \otimes \cH_{2,\rho_2}^\infty\), which shows that \(\eta_1 \otimes \eta_2\) can be extended to a continuous antilinear functional on \(\cH_\rho^\infty\). The uniqueness of the extension follows from Lemma \ref{lem:smoothvec-tensor-dense} and the completeness of \(\cH_\rho^\infty\).
\end{proof}

\end{document}